\documentclass[12pt]{amsart}

\usepackage{graphicx}

\usepackage{amssymb}

\usepackage{graphics,color}

\usepackage{dsfont}

\usepackage{psfrag}

\usepackage{amsthm}

\def\twomatrix(#1 #2\\#3 #4){\begin{pmatrix}#1&#2\\#3&#4\end{pmatrix}}   

\def\<#1,#2>{\left< #1, #2\right>}

\newcommand{\PSLR}{{\bf {PSL}}(2,\R)}

\newcommand{\M}{{\bf {M}}^{3}}

\newcommand{\Ha}{{\mathbb {H}}^{2}}

\newcommand{\Hao}{ {\mathbb {H}}^{3}}

\newcommand{\R} {\mathbb {R} }

\newcommand{\J} {\mathds {1}}

\newcommand{\Q} {\mathbb {Q} }

\newcommand{\Z} {\mathbb {Z}}

\newcommand{\N}{\mathbb {N}}

\newcommand{\TT} {\mathbb {T}}

\newcommand{\ph}{\widehat{\partial}}

\newcommand{\pt}{\widetilde{\partial}}

\newcommand{\wt}{\widetilde}

\newcommand{\wh}{\widehat}

\newcommand{\ul}{\underline}

\newcommand{\ov}{\bar}

\newcommand{\cd}{\cdot}

\newcommand{\Su}{{\bf S}}

\newcommand{\Pant}{{\bf {\Pi}}}

\newcommand{\hl}{{\bf {hl}}}

\newcommand{\len}{{\bf l}}

\newcommand{\Ang}{{\bf {\Theta}}}

\newcommand{\Col}{{\mathcal {C}}}

\newcommand{\TB}{T^{1}}

\newcommand{\NB}{N^{1}}

\newcommand{\Mes}{{\mathcal {M}}}

\newcommand{\Ne}{{\mathcal {N}}}

\newcommand{\Rel}{{\mathcal {R}}}

\newcommand{\Me}{{\mathcal {M}}}

\newcommand{\Ze}{{\mathcal {Z}}}

\newcommand{\Le}{{\mathcal {L}}}

\newcommand{\KG}{{\mathcal {G}}}

\newcommand{\Xe}{{\mathcal {X}}}

\newcommand{\Area}{\operatorname{Area}}

\newcommand{\foot}{\operatorname{foot}}

\newcommand{\diam}{{\operatorname {diam}}}

\newcommand{\dist}{{\operatorname {dis}}}

\newcommand{\id}{{\operatorname {id}}}

\newcommand{\Orb}{{\operatorname {Orb}}}

\newcommand{\proj}{{\operatorname {proj}}}

\newcommand{\Conn}{{\operatorname {Conn}}}

\newcommand{\FConn}{{\operatorname {{\mathcal F}Conn}}}

\newcommand{\Vol}{{\operatorname {Vol}}}

\newcommand{\at}{\mathbin{@}}

\newcommand{\from}{\colon}

\newtheorem{corollary}{Corollary}[section]

\newtheorem{theorem}{Theorem}[section]

\newtheorem{definition}{Definition}[section]

\newtheorem{lemma}{Lemma}[section]

\newtheorem{proposition}{Proposition}[section]

\newtheorem*{claim}{Claim}

\newtheorem*{remark}{Remark}

\newtheoremstyle{rr}
  {3pt}
  {3pt}
  {\slshape}
  {}
  {\bfseries}
  {:}
  {.5em}
  {}

\theoremstyle{rr}

\newtheorem*{random}{Randomization}

\begin{document}

\let\johnny\thefootnote
\renewcommand{\thefootnote}{}

\footnotetext{JK is supported by NSF grant number DMS 0905812}
\let\thefootnote\johnny

\title[The Ehrenpreis conjecture] {The good pants homology and the Ehrenpreis conjecture}

\author[Kahn]{Jeremy Kahn}

\address{Mathematics Department \\  Brown University \\ Providence, 02912 \\  RI, USA}

\email{kahn@math.brown.edu}

\author[Markovic]{Vladimir Markovic}

\address{Mathematics Department  \\ California Institute of Technology \\ Pasadena, 91105 \\ CA, USA}

\email{markovic@caltech.edu}

\today

\subjclass[2000]{Primary 20H10}

\begin{abstract} We develop the notion of the good pants homology  and show that it agrees with the standard  homology on closed surfaces 
(good pants are pairs of pants whose cuffs have the length nearly equal to some large number $R>0$).  Combined   with  our previous work on the Surface Subgroup Theorem \cite{kahn-markovic-1}, 
this yields a proof of the Ehrenpreis conjecture.

\end{abstract}

\maketitle

\section{Introduction} Let $S$ and $T$ denote two closed Riemann surfaces (all
closed surfaces in this paper are assumed to have genus at least $2$). The
well-known Ehrenpreis conjecture asserts that for any $K > 1$, one can find
finite degree covers $S_1$ and $T_1$, of $S$ and $T$ respectively, such that
there exists a $K$-quasiconformal map $f : S_1 \to T_1$. The purpose of this
paper is to prove this conjecture. Below we outline the strategy of the proof.

Let $R > 1$ and let $\Pi (R)$ be the hyperbolic pair of pants (with geodesic boundary) whose  three cuffs have
the length $R$. We define the surface $S(R)$ to be the genus two surface that
is obtained by gluing two copies of $\Pi (R)$ along the cuffs with the twist
parameter equal to $+ 1$ (these are the Fenchel-Nielsen coordinates for 
$S(R)$). By $\Orb (R)$ we denote the quotient orbifold of the surface $S(R)$
(the quotient of $S (R)$ by the group of automorphisms of $S (R)$). For a
fixed $R > 1$, we sometimes refer to $\Orb (R)$ as the model orbifold. The
following theorem is the main result of this paper.

\begin{theorem}
  \label{thm-main-1}Let $S$ be a closed hyperbolic Riemann surface. Then for
  every $K > 1$, there exists $R_0 = R_0 (K, S) > 0$ such that for every $R >
  R_0$ there are finite covers $S_1$ and $O_1$ of the surface $S$ and the
  model orbifold $\Orb (R)$ respectively, and a $K$-quasiconformal map $f :
  S_1 \to O_1$.
\end{theorem}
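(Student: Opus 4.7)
The plan is to reduce the theorem to building a finite cover $S_{1}$ of $S$ that admits an \emph{almost-regular pants decomposition}: every pair of pants has all three cuff lengths within $\epsilon$ of $R$, and every gluing twist lies within $\epsilon$ of $+1$, for some $\epsilon = \epsilon(K) \to 0$ as $K \to 1$. Once such an $S_{1}$ is produced, the combinatorial model built from the same number of copies of $\Pi(R)$ glued with twist $+1$ in the same pattern is itself a finite cover $O_{1}$ of $\Orb(R)$, and the tautological pants-by-pants identification $S_{1} \to O_{1}$ is $K$-quasiconformal provided $R$ is large and $\epsilon$ small. The entire theorem therefore reduces to producing such a cover.

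My starting point is \cite{kahn-markovic-1}, which constructs exponentially many good pants in $S$ (cuff lengths near $R$) and shows that the feet of their cuffs equidistribute in $\TB S$. This supplies an enormous reservoir of good pants whose cuffs can be paired off with almost-perfect twist; but in order for such a pairing to close up into a cover of $S$, the multiset of pants must satisfy a global consistency condition. I would formalize this by asserting that the formal sum of the chosen pants is a cycle in an appropriate chain complex and that cycles are realizable by honest branched covers.

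The natural such complex is the good pants homology $H_{1}^{\mathrm{GP}}(S)$: the free abelian group on good pants in $S$, modulo the relations coming from cancellations that can be realized geometrically by gluing along a shared good cuff with twist $+1$. There is a tautological map $H_{1}^{\mathrm{GP}}(S) \to H_{1}(\TB S;\Z)$, and the heart of the argument is to show that this map is an isomorphism (at least after clearing bounded denominators). Granted this, one begins with any large balanced multiset of good pants supplied by the equidistribution in \cite{kahn-markovic-1}; its class in standard homology vanishes (the easy side, since cuffs pair up, up to tiny perturbation); by the isomorphism its class in good pants homology also vanishes; and the good pants relations then allow one to modify the multiset by adding and subtracting good pants until it literally assembles into an almost-regular pants decomposition of a cover of $S$.

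The hard part will be proving that good pants homology agrees with standard homology. This amounts to showing that every standard relation---in particular every closed geodesic written as a signed sum of good cuffs---can be realized by a chain of good pants. I expect this to require fine equidistribution in products $\TB S \times \TB S$ that simultaneously controls cuff length, foot location, and twist, so that for any prescribed good cuff one can exhibit many good pants with that cuff on one side and a precisely matching cuff on the other. The proof will presumably proceed by induction, first reducing relations among closed geodesics to relations among good cuffs, then rewriting good cuffs in terms of good pants boundaries, each reduction resting on a counting estimate fed by the equidistribution machinery. Once this homological mixing is in place, the remainder of the argument is perturbative and Theorem~\ref{thm-main-1} follows.
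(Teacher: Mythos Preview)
Your overall architecture is right: reduce to building a cover with an $(\epsilon,R)$-good pants decomposition, use the equidistribution of good pants from \cite{kahn-markovic-1}, and prove that good pants homology agrees with standard homology so that the obstruction to assembling the pants into a closed cover can be killed. But several points are either misidentified or genuinely missing.

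First, the obstruction is more concrete than ``a cycle condition in a chain complex.'' In a 3-manifold the unit normal bundle of a closed geodesic is connected, so equidistribution alone lets you pair feet with twist near $+1$. On a surface the normal bundle of $\gamma$ has two components, and a pair of pants with cuff $\gamma$ lies on one side or the other. The gluing problem is solvable if and only if, for every good $\gamma$, the number of pants on the left equals the number on the right. The equidistribution theorem only gives that these counts are \emph{nearly} equal; the imbalance is a formal sum of oriented good curves, and the role of the homology statement is to realise this imbalance as the boundary of a formal sum of good pants. So the correct comparison is between good pants homology of \emph{curves} (good curves modulo boundaries of good pants) and $H_1(S;\Q)$, not a homology theory on pants mapping to $H_1(\TB S)$.

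Second, and this is the serious gap: a purely qualitative isomorphism of homologies is not enough. After you correct the imbalance by adding $\phi(\text{imbalance})$ to your pants measure, the result must remain positive and must still be nearly Lebesgue on each $N^1(\sqrt\gamma)$. Since the original measure has weight $\asymp e^{-3R}$ on each pair of pants, you need the correction $\phi$ to satisfy a bound of the form $\sum_{\gamma}|\phi(\gamma)(\Pi)|\le P(R)e^{-R}$ for every $\Pi$. This forces you to build $\phi$ \emph{constructively and efficiently}: every step of the homology argument must be tracked quantitatively so that a single good curve is rewritten using a controlled (polynomial in $R$) number of pants, each chosen essentially at random among exponentially many candidates. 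The paper handles this via a ``semirandomness'' calculus that shadows every lemma. Your sketch (``fine equidistribution in products $\TB S\times\TB S$'') does not supply this, and an abstract isomorphism would not either.

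Third, the actual mechanism for proving the homology equality is not more equidistribution but an algebraic development inside $\pi_1(S,*)$: an Algebraic Square Lemma giving four-term relations among good curves, a well-defined element $A_T$ for each $A\in\pi_1$, and an ``$XY$ theorem'' $(XY)_T=X_T+Y_T$ which lets one reduce any good curve to a fixed basis $(g_i)_T$. You should expect the proof to look like group theory with length estimates, not like mixing in a product space.
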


The Ehrenpreis conjecture is an immediate corollary of this theorem.

\begin{corollary}
  \label{cor-main-1}Let $S$ and $T$ denote two closed Riemann surfaces. For
  any $K > 1$, one can find finite degree covers $S_1$ and $T_1$ of $S$ and
  $T$ respectively, such that there exists a $K$-quasiconformal map $f : S_1
  \to T_1$.
\end{corollary}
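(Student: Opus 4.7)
The plan is to apply Theorem \ref{thm-main-1} separately to $S$ and to $M$, so that both surfaces land, up to finite covers, on covers of a common model orbifold $\Orb(R)$, and then to promote the two resulting quasiconformal maps into a single one by passing to a common further cover.

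First I would apply Theorem \ref{thm-main-1} with $\sqrt{K}$ in place of $K$, once to $S$ and once to $M$, and then choose $R$ larger than both $R_0(\sqrt{K},S)$ and $R_0(\sqrt{K},M)$. This produces a finite cover $\tilde S$ of $S$, a finite cover $O$ of $\Orb(R)$, and a $\sqrt{K}$-quasiconformal map $f \from \tilde S \to O$; and symmetrically a finite cover $\tilde M$ of $M$, a finite cover $O'$ of $\Orb(R)$, and a $\sqrt{K}$-quasiconformal map $g \from \tilde M \to O'$. Because $f$ and $g$ are homeomorphisms whose domains are closed Riemann surfaces, $O$ and $O'$ must be genuine closed Riemann surfaces as well, so the finite index subgroups $\Gamma, \Gamma' \leq \pi_1^{orb}(\Orb(R))$ that classify them are torsion-free.

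Next I would construct a common cover. The intersection $\Gamma \cap \Gamma'$ is a finite index torsion-free subgroup of $\pi_1^{orb}(\Orb(R))$, and thus corresponds to a closed Riemann surface $O''$ that is simultaneously a finite cover of $O$ and of $O'$. Pulling back the cover $O'' \to O$ along the homeomorphism $f$ yields a finite cover $\tilde S'$ of $\tilde S$, and therefore of $S$, together with a $\sqrt{K}$-quasiconformal lift $\tilde f \from \tilde S' \to O''$. Analogously, pulling back $O'' \to O'$ along $g$ yields a finite cover $\tilde M'$ of $M$ and a $\sqrt{K}$-quasiconformal lift $\tilde g \from \tilde M' \to O''$. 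The composition $\tilde g^{-1} \circ \tilde f \from \tilde S' \to \tilde M'$ is then a $K$-quasiconformal homeomorphism between finite covers of $S$ and $M$, which is precisely the content of Corollary \ref{cor-main-1}.

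In this deduction there is no genuine obstacle; every step is a routine covering-space and group-theoretic manipulation, with all of the hard hyperbolic geometry absorbed into Theorem \ref{thm-main-1}. The only subtlety worth flagging is verifying that $O$ and $O'$ are genuine surfaces rather than orbifolds with residual cone points, but this is automatic from the existence of quasiconformal homeomorphisms from the closed surfaces $\tilde S$ and $\tilde M$.
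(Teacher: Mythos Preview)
Your proof is correct and follows essentially the same approach as the paper: apply Theorem~\ref{thm-main-1} with $\sqrt{K}$ to each surface, find a common finite cover of the two resulting covers of $\Orb(R)$, lift, and compose. Your extra remark that $O$ and $O'$ are honest surfaces (via the intersection $\Gamma\cap\Gamma'$ being torsion-free) is a nice touch of care that the paper leaves implicit.
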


\begin{proof}
  Fix $K > 1$. It follows from Theorem \ref{thm-main-1} that for $R$ large
  enough, there exist
  \begin{enumerate}
    \item Finite covers $S_1$, $T_1$, of $S$ and $T$ respectively, and
    
    \item Finite covers $O_1$ and $O'_1$ of $\Orb (R)$,
    
    \item $\sqrt{K}$-quasiconformal mappings $f : S_1 \to O_1$, and $g : T_1
    \to O'_1$.
  \end{enumerate}
  Let $O_2$ denote a common finite cover of $O_1$ and $O'_1$ (such $O_2$
  exists since $O_1$ and $O'_1$ are covers of the same orbifold $\Orb (R)$).
  Then there are finite covers $S_2$ and $T_2$, of $S_1$ and $T_1$,
  respectively, and the $\sqrt{K}$-quasiconformal maps $\wt f : S_2 \to O_2$,
  and $\wt g : T_2 \to O_2$, that are the lifts of $f$ and $g$. Then $\wt g^{-
  1} \circ \wt f : S_2 \to T_2$ is $K$-quasiconformal map, which proves the
  corollary.
\end{proof}

In the remainder of the paper we prove Theorem \ref{thm-main-1}. This paper builds on our previous paper \cite{kahn-markovic-1} where we used immersed skew pants in a given hyperbolic 3-manifold to prove the Surface Subgroup Theorem. We note that Lewis Bowen \cite{bowen} was the first to attempt to build finite covers of Riemann surfaces by putting together immersed pairs of pants.
We also note that it follows from the work of Danny Calegari \cite{calegari} that the pants homology is equal to the standard homology. 
This means that every sum of closed curves on a closed surface 
$S$ that is zero in the standard homology ${\bf H}_1(S)$ is the boundary of a sum of immersed pairs of pants in $S$.

We are grateful to Lewis Bowen for carefully reading the manuscript and suggesting numerous improvements and corrections. The second named author would like to acknowledge that the first named author has done most of the work in the second part of the paper concerning the Correction Theory.

\subsection*{Outline} In our previous paper \cite{kahn-markovic-1} we proved a theorem very similar to Theorem \ref{thm-main-1}, namely
that given a closed hyperbolic 3-manifold $\M$, and $K>1$, $R>R_0(K,\M)$, we can find a finite cover $O_1$ 
of $\Orb(R)$ and a map $f:O_1 \to \M$ that lifts to a map  $\wt{f}:\Ha \to  \Hao$ such that $\partial \wt{f}:\partial{\Ha} \to \partial{\Hao}$
has a $K$-quasiconformal extension.   We proved that theorem by finding a large collection of  ``skew pairs of pants" whose cuffs have complex half-lengths close to $R$,
and which are ``evenly distributed" around every closed geodesic that appears as a boundary.

We then assemble these pants by (taking two copies of each and then) pairing off the pants that have a given geodesic as boundary, so that the resulting complex twist-bends (or reduced Fenchel-Nielsen coordinates) are close to 1. We can then construct a cover $O_1$ of Orb(R) and a function $f \from O_1 \to M$ whose image is the closed surface that results from this pairing. The function $f$ will then have  the desired property.

We would like to proceed in the same manner in dimension 2, that is when a 3-manifold $\M$ is replaced with a closed surface $\Su$. 
We can, as before, find a collection of good immersed pants (with cuff length close to $R$) that is ``evenly distributed" around each  good geodesic (of length close to $R$)
that appears as boundary. If and when we can assemble the pants to form a closed surface with (real) twists close to 1, we will have produced a $K$-quasiconformal immersion of a cover of $\Orb(R)$ into $\Su$.

There is only one minor detail: the unit normal bundle of a closed geodesic in $\Su$ has two components. In other words, an immersed pair of pants that has a closed geodesic 
$\gamma$ as a boundary can lie on one of the two sides of $\gamma$. If, in our formal sum of pants, we find we have more pants on one side of $\gamma$ than the other, then we have no chance to form a closed surface out of this formal sum of pants. It is for this very reason that the Ehrenpreis Conjecture is more difficult to prove than the Surface Subgroup Theorem.

Because our initial collection is evenly distributed, there are almost the same number of good  pants on both sides of any good geodesic, so it is natural  to try to adjust the number of pairs of pants so that it is balanced (with the same number of pants on both sides of each geodesic). This leads us to look for a ``correction function" $\phi$ from formal sums of (normally) oriented closed geodesics (representing the imbalance) to formal sums of good pants, such that the boundary of $\phi(X)$ is $X$.

The existence of this correction function then implies that ``good boundary is boundary good", that is any sum of good geodesics that is a boundary in $H_1(\Su)$ (the first homology group of $\Su$)
is the boundary of a sum of good pants. Thus we define the good pants homology to be formal sums of good geodesics (with length close to $R$) modulo boundaries of formal sums of good pants. We would like to prove that the good pants homology is the standard homology on good curves.

The natural approach is to show that any good curve is homologous in good pants homology to a formal sum of $2g$ particular good curves that represent a basis in $H_1(\Su)$ ($g$ is the genus of $\Su$).
That is, we want to show that there are $\{ h_1,...,h_{2g} \}$ good curves that generate $H_1(\Su)$ (here $H_1(\Su)$ is taken with rational coefficients) such that every good curve $\gamma$ is homologous in the good pants homology to a formal sum $\sum a_i h_i$, for some $a_i \in \Q$. Then any sum of good curves is likewise homologous to a sum of good generators $h_i$, but if the original sum of good curves is zero in $H_1(\Su)$ then the corresponding sum of $h_i$'s is zero as well.

To prove the Good Correction Theorem , we must first develop the theory of good pants homology. Let $*$ denote a base point on $\Su$. For $A \in \pi_1(\Su,*) \setminus \{\id \}$, we let $[A]$ denote the closed geodesic freely homotopic to $A$. Our theory begins with the Algebraic Square Lemma, which states that, under certain geometric conditions, 
$$
\sum_{i,j=0,1} (-1)^{i+j}[A_iUB_jV] = 0,
$$
in the good pants homology. (The curves $[A_iUB_jV]$ must be good curves, the words  $A_iUB_jV$ reasonably efficient, and the length of $U$ and $V$ sufficiently large). This then permits us to define, for $A,T \in \pi_1(\Su,*)$,
$$
A_T=\frac{1}{2}\left( [TAT^{-1}U]-[TA^{-1}T^{-1}U] \right),
$$
where $U$ is fairly arbitrary. Then $A_T$ in good pants homology is independent of the choice of $U$.

We then show through a series of lemmas that 
$(XY)_T=X_T+Y_T$ in good pants homology, and therefore 
$$
X_T=\sum\sigma(j)(g_{i_{j}})_T,
$$
where by $X=g^{\sigma(1)}_{i_{1}}...g^{\sigma(k)}_{i_{k}}$ we have written $X$ as a product of generators (here  $g_1,...,g_{2g}$ are the generators for $\pi_1(\Su,*)$ and $\sigma(j)=\pm 1$). 
With a little more work we can show 
$$
[X]=\sum \sigma(j)(g_{i_{j}})_T
$$
as well, and thus we can correct any good curve to good generators. 

We are then finished except for one last step: We must show that our correction function, which gives an explicit sum of pants with a given boundary, is well-behaved in that it maps sums of curves, with bounded weight on each curve, to sums of pants, with bounded weight on each pair of pants. We call such a function semirandom, because if we pick a curve at random, the expected consumption of a given pair of pants is not much more than if we picked the pair of pants at random.

We  define the correction function implicitly, through a series of lemmas, each of which asserts the existence of a formal sum of pants with given boundary, and which is in principle constructive.
The notion of being semirandom is sufficiently natural to permit  us to say that the basic operations, such as the group law, or forming $[A]$ from $A$, as well as composition and formal addition, are all semirandom. So in order to verify that our correction function is semirandom, we need only to go through each lemma observing that the function we have defined is built out of the functions that we have previously defined  using the standard operations which we have proved are semirandom.

To make the paper as easy to read as possible, we have relegated the verification of semi-randomness to the ``Randomization remarks" which follow our homological lemmas, and which use the notation and results (that basic constructions are semirandom) that we have placed in the Appendix. We strongly recommend that the reader skip over these Randomization remarks in the first reading, and to interpret the word ``random" in the text to simply mean ``arbitrary".

\subsection*{A word on notation} When we use the letter K to denote a constant then we mean a universal constant or if K depends on parameters $X,Y,Z...$ then we write $K(X,Y,Z...)$ or we may leave out some of the parameters. In sections where we fixed certain parameters we may leave out the dependence of constants on these parameters.

\section{Constructing good covers of a Riemann surface}

\subsection{The reduced Fenchel-Nielsen coordinates and the model orbifolds}

Let $S^0$ be an oriented closed topological surface with a given pants
decomposition $\Col$, where $\Col$ is a maximal collection of disjoint simple
closed curves that cut $S^0$ into the corresponding pairs of pants. We will say that
$\Col$ makes $S^0$ into a panted surface.

Denote by $\Col^{\ast}$ the set of oriented curves from $\Col$ (each curve in $\Col$ is
taken with both orientations). The set of pairs $(\Pi, C^{\ast})$, where $\Pi$
is a pair of pants from the pants decomposition and $C^{\ast} \in \Col^{\ast}$
is an oriented boundary cuff of $\Pi$, is called the set of marked pants and
it is denoted by $\Pant (S^0)$. For $C \in \Col$ there are exactly two pairs
$(\Pi_i, C^{\ast}_i) \in \Pant (S^0)$, $i = 1, 2$, such that $C^{\ast}_1$ and
$C^{\ast}_2$ are the two orientations on $C$ (note that $\Pi_1$ and $\Pi_2$
may agree as pairs of pants).

Let $(S, \Col)$ be a panted Riemann surface. Then for every cuff $C \in \Col$ we can define  the reduced Fenchel-Nielsen coordinates $( \hl (C), s (C))$ from
{\cite{kahn-markovic-1}}. Here $\hl (C)$ is the half-length of the geodesic
homotopic to $C$, and $s (C) \in \R / \hl(C) \Z$ is the reduced twist parameter
which lives in the circle $\R / \hl(C)  \Z$ (when we write $s(C) = x \in \R$, we
really mean $s (C) \equiv  x  \, \text{mod} ( \hl (C) \Z)$). The following theorem was
proved in {\cite{kahn-markovic-1}} (see Theorem 2.1 and Corollary 2.1 in
{\cite{kahn-markovic-1}}).

\begin{theorem}
  \label{thm-geometry}There exist constants $\wh \epsilon, \wh R > 0$ such
  that the following holds.  Let $S$ denote a panted Riemann
  surface whose reduced Fenchel-Nielsen coordinates satisfy the inequalities.
  \[ | \hl (C) - R| < \epsilon, \hspace{0.25em} \hspace{0.25em} \text{and}
     \hspace{0.25em} \hspace{0.25em} |s (C) - 1| < \frac{\epsilon}{R}, \]
  {\noindent}for some $\wh \epsilon > \epsilon > 0$ and $R > \wh R$. Then
  there exists a marked surface $M_R$, with the reduced Fenchel-Nielsen
  coordinates $\hl (C) = R$ and $s (C) = 1$, and a $K$-quasiconformal map $f :
  S \to M_R$, where
  \[ K = \frac{\wh \epsilon + \epsilon}{\wh \epsilon - \epsilon} . \]
\end{theorem}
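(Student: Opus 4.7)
The plan is to construct $f : S \to M_R$ explicitly, piece by piece, by exploiting the shared topological pants decomposition $\Col$. Each cuff in $\Col$ is realized by a simple closed geodesic on either surface, and each topological pair of pants is realized by a unique hyperbolic pair of pants determined up to isometry by its cuff half-lengths. The construction therefore reduces to producing a QC map on each pair of pants and then patching these maps together with a small twist correction near each cuff.

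On each pair of pants I would work via the right-angled hexagon decomposition: a hyperbolic pair of pants is the double of a right-angled hexagon along alternating sides whose lengths equal the half-lengths $\hl(C_i)$ of the three cuffs. For a pair of pants of $S$ with cuff half-lengths satisfying $|\hl(C_i) - R| < \epsilon$, I would map it to the corresponding model pair of pants (all cuff half-lengths equal to $R$) by extending an explicit QC map of right-angled hexagons written in Fermi coordinates, whose Beltrami coefficient has $L^\infty$ norm of order $\max_i |\hl(C_i) - R|/\wh\epsilon$. After doing this for every pair of pants, the resulting piecewise map identifies cuffs as sets but may disagree in the twist parameter by up to $\epsilon/R$; I would absorb this mismatch by a shearing QC map supported in a standard collar neighborhood of each cuff (an annulus of modulus bounded below in terms of $R$), whose Beltrami coefficient again has $L^\infty$ norm of order $\epsilon/\wh\epsilon$.

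The main obstacle is the sharp form $K = (\wh\epsilon + \epsilon)/(\wh\epsilon - \epsilon)$, rather than a cruder bound of the shape $K = 1 + C\epsilon$. This requires arranging the Beltrami coefficients coming from the pants maps and from the collar shears so that they do not compound multiplicatively; equivalently, one should realize the perturbation of the reduced Fenchel-Nielsen coordinates $(\hl(C), s(C))$ away from $(R,1)$ as a single Beltrami differential on $M_R$ whose $L^\infty$ norm is at most $\epsilon/\wh\epsilon$. The constant $\wh\epsilon$ is then fixed small enough that all the geometric ingredients (non-degeneracy of the hexagons, existence of collars with definite modulus, non-overlap of the pants and twist corrections) remain valid uniformly for $|\epsilon| < \wh\epsilon$ and $R > \wh R$, after which the QC constant $K = (1 + \epsilon/\wh\epsilon)/(1 - \epsilon/\wh\epsilon)$ follows from the standard relation between $\|\mu\|_\infty$ and dilatation.
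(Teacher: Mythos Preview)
The paper does not actually prove this theorem: it is stated with the remark that it ``was proved in \cite{kahn-markovic-1} (see Theorem 2.1 and Corollary 2.1 in \cite{kahn-markovic-1})'' and no argument is given here. So there is nothing in the present paper to compare your sketch against.

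That said, your outline is the standard and expected approach, and it matches in spirit what is done in the cited earlier paper: build the map pair-of-pants by pair-of-pants using the right-angled hexagon description, then correct the twists in collar neighborhoods, controlling the Beltrami coefficient throughout. You have correctly identified the only nontrivial point, namely that the naive composition of a ``length-adjustment'' map and a ``twist-adjustment'' map would give a dilatation bound of the form $(1+C\epsilon)^2$ rather than the stated $K=(\wh\epsilon+\epsilon)/(\wh\epsilon-\epsilon)$. In the original argument this is handled by arranging the two corrections to have disjoint supports (the length adjustment is made away from the cuffs, the twist shear is supported in the collars), so that the Beltrami coefficients add in $L^\infty$ rather than compound; your phrasing ``realize the perturbation \ldots\ as a single Beltrami differential'' is the right idea but you should be explicit that disjoint supports is the mechanism. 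Beyond that, what you have written is a correct high-level plan, though of course the actual estimates (the hexagon map in Fermi coordinates, the collar modulus bound for large $R$) need to be carried out to justify the specific constant $\wh\epsilon$.
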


Let $R > 1$, and let $\Orb (R)$ denote the corresponding model orbifold
(defined in the introduction). In the next subsection we will see that the
significance of the above theorem comes from the observation that any Riemann
surface $M_R$ with  reduced Fenchel-Nielsen coordinates $\hl (C) = R$ and
$s (C) = 1$ is a finite cover of $\Orb (R)$.

\subsection{A proof of Theorem \ref{thm-main-1}} Below we state the theorem
which is then used to prove Theorem \ref{thm-main-1}.

\begin{theorem}
  \label{thm-main-2}Let $S$ denote a closed Riemann surface, and let $\epsilon
  > 1$. There exists $R (S, \epsilon) > 1$ such that for every $R > R (S,
  \epsilon)$ we can find a finite cover $S_1$ of $S$ that has a pants
  decomposition whose reduced Fenchel-Nielsen coordinates satisfy the
  inequalities
  \[ | \hl (C) - R| < \epsilon, \hspace{0.25em} \hspace{0.25em} \text{and}
     \hspace{0.25em} \hspace{0.25em} |s (C) - 1| < \frac{\epsilon}{R} . \]
\end{theorem}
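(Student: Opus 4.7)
The plan is to construct $S_1$ by assembling finitely many immersed ``good'' pairs of pants in $S$, closely paralleling the 3-manifold construction of \cite{kahn-markovic-1}, with an additional homological correction that accounts for the two-sidedness of geodesics on a surface. Fix a small auxiliary $\delta > 0$, to be chosen much smaller than $\epsilon$. The first step is to produce a finite integer-weighted formal sum $\mathcal{P}_0$ of immersed good pants in $S$ whose cuffs all have half-length within $\delta$ of $R$, and such that, for every good closed geodesic $\gamma$ appearing as a cuff, the feet of the pants in $\mathcal{P}_0$ along $\gamma$ are nearly equidistributed in the unit normal bundle $\NB(\gamma)$. Such a collection is harvested exactly as in \cite{kahn-markovic-1}, by counting connections between perpendicular normal segments to good geodesics and invoking exponential mixing of the frame flow on $\TB S$.

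Here the difficulty special to dimension two enters: $\NB(\gamma)$ has two connected components, one for each side of $\gamma$, and equidistribution only forces the numbers of pants on the two sides of each $\gamma$ to be close, not equal. Any side-discrepancy obstructs assembling $\mathcal{P}_0$ into a closed surface. Record the discrepancies as a formal rational sum $X = \sum_\gamma n_\gamma \vec{\gamma}$ of normally oriented good geodesics, where $n_\gamma$ is the difference in pants-counts on the two sides of $\gamma$. Since each pair of pants is null-homologous relative to its boundary, $X$ represents $0$ in $H_1(S;\Q)$. The main technical contribution of the paper, the Good Pants Homology Theorem (that good pants homology agrees with the ordinary homology on good curves), then yields a formal rational sum $\mathcal{Q}$ of good pants whose oriented boundary equals $X$. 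Replacing $\mathcal{P}_0$ by $\mathcal{P}_0 - \mathcal{Q}$ and clearing denominators produces a balanced integer formal sum $\mathcal{P}$ of good pants. Semirandomness of the correction guarantees that $\mathcal{Q}$ contributes only a small, still well-distributed, amount of feet to each $\NB(\gamma)$, so $\mathcal{P}$ remains $\delta'$-equidistributed for some $\delta'$ only slightly worse than $\delta$. This step is by far the main obstacle; essentially the remainder of the paper is devoted to establishing it.

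With a balanced equidistributed collection $\mathcal{P}$ in hand, it remains to pair off the pants on the two sides of each cuff $\gamma$ so that the resulting reduced twist $s(\gamma)$ lies within $\epsilon/R$ of $1$. Because the feet on each side of $\gamma$ are nearly equidistributed in the respective component of $\NB(\gamma)$, for every foot $f$ on one side there is an abundance of feet $f'$ on the other side whose position is within $O(\delta')/R$ of the ``twist $1$'' partner of $f$; a Hall-type matching argument realizes a perfect pairing with this property simultaneously at all cuffs. Gluing $\mathcal{P}$ along this matching produces a closed topological surface $S_1$ whose reduced Fenchel-Nielsen coordinates satisfy $|\hl(C) - R| < \epsilon$ and $|s(C) - 1| < \epsilon/R$ at every cuff $C$. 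Because each pair of pants in $\mathcal{P}$ is an isometric immersion into $S$ and the gluings respect this local isometry, the tautological map $S_1 \to S$ is a local isometry between closed surfaces, hence a finite covering, completing the proof.
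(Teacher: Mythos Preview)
Your proposal is correct and follows essentially the same route as the paper: an equidistributed collection of good pants coming from mixing (the paper's Theorem~\ref{thm-equidistribution}), a correction of the side-imbalance via the good pants homology / Correction Theorem (Theorem~\ref{thm-correction}), positivity and continued equidistribution of the corrected collection via the semirandomness bounds and Lemma~\ref{lemma-transport}, and finally a Hall-marriage matching to achieve twists close to $1$. The only refinement you omit is the paper's intermediate step $\mu_1=q_{600}(-\partial\mu_0)$ (Lemma~\ref{lemma-correction}), which first pushes the imbalance onto curves in a narrower band $\Gamma_{\epsilon/600,R}$ before applying $\phi$; this is needed only to control the $300$-fold $\epsilon$-blowup of $\phi$, and your device of starting with an auxiliary $\delta\ll\epsilon$ serves the same purpose.
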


This theorem will be proved at the end of section. 

$Proof$ $of$  Theorem \ref{thm-main-1}: Let $K > 1$. It follows from Theorem \ref{thm-geometry} that
for $\epsilon > 0$ small enough, and every $R$ large enough, there is a
$K$-quasiconformal map $f : S_1 \to M_R$, where $M_R$ is a Riemann surface
with reduced Fenchel-Nielsen coordinates $\hl (C) = R$ and $s (C) = 1$, and
$S_1$ is the finite cover of $S$ from Theorem \ref{thm-main-2}. Recall the
corresponding model orbifold $\Orb (R)$ (defined in the introduction). As we
observed, the surface $M_R$ is a finite cover of $\Orb (R)$. This completes
the proof of the theorem.
 
\subsection{The set of immersed pants in a given Riemann surface} From now on
$\Su = \Ha / \KG$ is a fixed closed Riemann surface and $\KG$ a suitable
Fuchsian group. By $\Gamma$ we denote the collection of oriented closed
geodesics in $\Su$. By $-\gamma$ we denote the opposite orientation of an oriented geodesic
$\gamma \in \Gamma$. We sometimes write $\gamma^{*} \in \Gamma$ to emphasize a choice of 
orientation.

Let $\Pi_0$ denote a hyperbolic pair of pants (that is $\Pi_0$ is equipped with
a hyperbolic metric such that the cuffs of $\Pi_0$ are geodesics). Let $f :
\Pi_0 \to \Su$ be a local isometry (such an $f$ must be an immersion). We say
that $\Pi = (f, \Pi_0)$ is an immersed pair of pants in $\Su$. The set of all
immersed pants in $\Su$ is denoted by $\Pant$. Let $C^{\ast}$ denote an
oriented cuff of $\Pi_0$ (the geodesic $C^{\ast}$ is oriented as a boundary
component of $\Pi_0$). Set $f (C^{\ast}) = \gamma \in \Gamma$.
We say that $\gamma$ is an oriented cuff of $\Pi$. The set of such
pairs $(\Pi, \gamma)$ is called the set of marked immersed pants and
denoted by $\Pant^{\ast}$. The half-length $\hl (\gamma)$ associated to the
cuff $\gamma$ of $\Pi$ is defined as the half-length $\hl (C)$ associated to
the cuff $C$ of $\Pi_0$.

Let $\gamma \in \Gamma$ be a closed oriented geodesic in $\Su$. Denote by $\NB
(\gamma)$ the unit normal bundle of $\gamma$. Elements of $\NB (\gamma)$ are
pairs $(p, v)$, where $p \in \gamma$ and $v$ is a unit vector at $p$ that is
orthogonal to $\gamma$. The bundle $\NB (\gamma)$ is a differentiable manifold
that has two components which we denote by $\NB_+ (\gamma)$ and $\NB_{-}
(\gamma)$ (the right-hand side and the left-hand side components). Each
component inherits the metric from the geodesic $\gamma$, and both $\NB_{+}
(\gamma)$ and $\NB_{-} (\gamma)$ are isometric (as metric spaces) to the circle
of length $2 \hl (\gamma)$. By $\dist$ we denote the corresponding distance
functions on $\NB_{+} (\gamma)$ and $\NB_{-} (\gamma)$.

Let $(p, v) \in \NB (\gamma)$, and denote by $(p_1, v_1) \in \NB (\gamma)$ the
pair such that $(p, v)$ and $(p_1, v_1)$ belong to the same component of $\NB
(\gamma)$, and $\dist (p, p_1) = \hl (\gamma)$. Set $A (p, v) = (p_1, v_1)$.
Then $A$ is an involution that leaves invariant each component of $\NB
(\gamma)$. Define the bundle $\NB ( \sqrt{\gamma}) = \NB (\gamma) / A$. The
two components are denoted by $\NB_{+} ( \sqrt{\gamma})$ and $\NB_{-} (
\sqrt{\gamma})$, and both are isometric (as metric spaces) to the circle of
length $\hl (\gamma)$. The disjoint union of all such bundles is denoted by
$\NB ( \sqrt{\Gamma})$.

We now define the foot of a pair of pants. Let $\Pi \in \Pant$ be an immersed pants and $f : \Pi_0 \to \Pi$ the
corresponding local isometry. Let $C^{\ast}$ denote an oriented cuff of
$\Pi_0$ and $\gamma = f (C^{\ast})$. Let $C_1$ and $C_2$ denote the
other two cuffs of $\Pi_0$, and let $p'_1, p'_2 \in C^{\ast}$ denote the two
points that are the feet of the shortest geodesic segments in $\Pi_0$ that
connect $C$ and $C_1$, and $C$ and $C_2$, respectively. Let $v'_1$ denote the
unit vector at $p'_1$ that is orthogonal to $C$ and points towards the
interior of $\Pi_0$. We define $v'_2$ similarly. Set $(p_1, v_1) = f_{*} (p'_1,
v'_1)$ and $(p_2, v_2) = f_{*} (p'_2, v'_2)$. Then $(p_1, v_1)$ and $(p_2, v_2)$
are in the same component of $\NB (\gamma)$, and the points $p_1$ and $p_2$
separate $\gamma$ into two intervals of length $\hl (\gamma)$. Therefore, the
vectors $(p_1, v_1)$ and $(p_2, v_2)$ represent the same point $(p, v) \in \NB
( \sqrt{\Gamma})$, and we set
\[ \foot (\Pi, \gamma) = (p, v) \in \NB (\sqrt\gamma) . \]
We call the vector $(p, v)$ the foot of the immersed pair of pants $\Pi$ at
the cuff $\gamma$. This defines the map
\[ \foot : \Pant^{\ast} \to \NB ( \sqrt{\Gamma}) . \]

\subsection{Measures on pants and the $\ph$ operator} By $\Mes ( \Pant)$ we
denote the space of real valued Borel measures with finite support on the set
of immersed pants $\Pant$, and likewise, by $\Mes ( \NB ( \sqrt{\Gamma}))$ we
denote the space of real valued Borel measures with compact support on the
manifold $\NB ( \sqrt{\Gamma})$ (a measure from $\Mes^+ ( \NB (
\sqrt{\Gamma}))$ has a compact support if and only if its support is contained
in at most finitely many bundles $\NB ( \sqrt{\gamma}) \subset \NB (
\sqrt{\Gamma})$). By $\Mes^+ ( \Pant)$ and $\Mes^+ ( \NB ( \sqrt{\Gamma}))$,
we denote the corresponding spaces of positive measures.

We define the operator
\[ \ph : \Mes ( \Pant) \to \Mes ( \NB ( \sqrt{\Gamma})), \]
as follows. The set $\Pant$ is a countable set, so every measure from $\mu \in
\Mes ( \Pant)$ is determined by its value $\mu (\Pi)$ on every $\Pi \in
\Pant$. Let $\Pi \in \Pant$ and let $\gamma_i \in \Gamma$, $i =
0, 1, 2$, denote the corresponding oriented geodesics so that $(\Pi,
\gamma_i) \in \Pant^{\ast}$. Let $\alpha^{\Pi}_i \in \Mes ( \NB (
\sqrt{\Gamma}))$ be the atomic measure supported at the point $\foot (\Pi,
\gamma_i) \in \NB ( \sqrt{\gamma_i})$, where the mass of the atom is
equal to $1$. Let
\[ \alpha^{\Pi} = \sum_{i = 0}^2 \alpha^{\Pi}_i, \]
and define
\[ \ph \mu = \sum_{\Pi \in \Pant} \mu (\Pi) \alpha^{\Pi} . \]
We call this the $\ph$ operator on measures. If $\mu \in \Mes^+ ( \Pant)$,
then $\ph \mu \in \Mes^+ ( \NB ( \sqrt{\Gamma}))$, and the total measure of
$\ph \mu$ is three times the total measure of $\mu$.

We recall the notion of equivalent measures from Section 3 in
{\cite{kahn-markovic-1}}. Let $(X, d)$ be a metric space. By $\Mes^+ (X)$ we
denote the space of positive, Borel measures on $X$ with compact support. For
$A \subset X$ and $\delta > 0$ let
\[ \Ne_{\delta} (A) =\{x \in X : \hspace{1em} \text{there exists} \hspace{1em}
   a \in A \hspace{1em} \text{such that} \hspace{1em} d (x, a) \le \delta\},
\] be the $\delta$-neighbourhood of $A$.

\begin{definition}
  Let $\mu, \nu \in \Mes^+ (X)$ be two measures such that $\mu (X) = \nu (X)$,
  and let $\delta > 0$. Suppose that for every Borel set $A \subset X$ we have
  $\mu (A) \le \nu ( \Ne_{\delta} (A))$. Then we say that $\mu$ and $\nu$ are
  $\delta$-equivalent measures.
\end{definition}

\begin{remark}
We observe that this definition is symmetric because $\nu (A) \le \mu(N_\delta(A))$ whenever $\mu(X \setminus N_\delta(A)) \le \nu (N_\delta(X \setminus N_\delta(A))$.
\end{remark}

In our applications $X$ will be either a 1-torus (a circle) or $\R$. In this case, 
$\mu(A) \le \nu(\Ne_{\delta}(A))$ for all Borel sets $A$ if it holds for all intervals $A$.
We recall that if $\mu$ and $\nu$ are discrete and integer valued  measures that are $\epsilon$-equivalent
then there is a ``matching"  between $\mu$ and $\nu$ that matches each point $x$ to a point within $\epsilon$ of $x$.
In other words, letting $E_n$ be $\{1,2...,n\}$ with the counting measure, if $\mu=f_{*} E_n$ and $\nu=g_{*} E_n$ and 
$\mu$ and $\nu$ are $\epsilon$-equivalent, then we can find $\sigma:E_n \to E_n$ such that $d(f(k),g(\sigma(k))) \le \epsilon$ for each
$k \in E_n$. This holds when $\mu$ and $\nu$ are measures on any metric space (by the Hall's Marriage Theorem) and is even more elementary 
when the metric space is a 1-torus (that is a circle) or an interval.     
  
We observe that if $\mu$ and $\nu$ are $\epsilon_1$-equivalent and $\nu$ and $\rho$ are $\epsilon_2$-equivalent then
$\mu$ and $\rho$ are $(\epsilon_1+\epsilon_2)$-equivalent.

Let $\gamma \in \Gamma$ and $\alpha \in \Mes ( \NB ( \sqrt{\gamma}))$. The
bundle $\NB ( \sqrt{\gamma})$ has the two components $\NB_+ ( \sqrt{\gamma})$
and $\NB_- ( \sqrt{\gamma})$ (the right-hand and left-hand side components),
each isometric to the circle of length $\hl (\gamma)$. The restrictions of
$\alpha$ to $\NB_+ ( \sqrt{\gamma})$ and $\NB_- ( \sqrt{\gamma})$ are denoted
by $\alpha_+$ and $\alpha_-$ respectively. In particular, by $\ph_+ \mu$ and
$\ph_- \mu$ we denote the decomposition of the measure $\ph \mu$.

\begin{definition}
  Fix $\gamma \in \Gamma$, and let $\alpha, \beta \in \Mes^+ ( \NB (
  \sqrt{\gamma}))$. We say that $\alpha$ and $\beta$ are $\delta$-equivalent
  if the pairs of measures $\alpha_+$ and $\beta_+$, and $\alpha_-$ and
  $\beta_-$, are respectively $\delta$-equivalent. Also, by $\lambda (\gamma)
  \in \Mes^+ ( \NB ( \sqrt{\gamma}))$, we denote the measure whose components
  $\lambda_+ (\gamma)$ and $\lambda_- (\gamma)$, are the standard Lebesgue
  measures on the metric spaces $\NB ( \sqrt{\gamma})^+$ and $\NB (
  \sqrt{\gamma})^-$, respectively. In particular, the measure $\lambda
  (\gamma)$ is invariant under the full group of isometries of $\NB (
  \sqrt{\gamma})$.
\end{definition}

Let $\epsilon, R > 0$. By $\Gamma_{\epsilon, R} \subset \Gamma$ we denote the
closed geodesics in the Riemann surface $\Su$ whose half-length is in the
interval $[R - \epsilon, R + \epsilon]$. We define $\Pant_{\epsilon, R}
\subset \Pant$, as the set of immersed pants whose cuffs are in
$\Gamma_{\epsilon, R}$. We will often call $\Gamma_{\epsilon, R}$ the set of ``good curves" and $\Pant_{\epsilon, R}$ the set of ``good pants''.
Our aim is to prove the following theorem, which in
turn yields the proof of Theorem \ref{thm-main-2} stated above.

We adopt the following convention. In the rest of the paper by $P (R)$ we
denote a polynomial in $R$ whose degree and coefficients depend only on the
choice of $\epsilon$ and the surface $\Su$.

\begin{theorem} \label{thm-mes} Let $\epsilon > 0$. There exists $q > 0$
(depending only  on the surface $\Su$ and $\epsilon$)
so that for every $R > 0$ large enough, there exists a measure 
$\mu \in \Mes^+ ( \Pant_{\epsilon, R})$ with the following properties. Let $\gamma
\in \Gamma_{\epsilon, R}$ and let $\ph \mu (\gamma)$ denote the restriction
of $\ph \mu$ to $\NB ( \sqrt{\gamma})$. If $\ph \mu (\gamma)$ is not the
zero measure then there exists a constant $K_{\gamma} > 0$ such that the
measures $\ph \mu (\gamma)$ and $K_{\gamma} \lambda (\gamma)$ are $P (R)
e^{- q R}$-equivalent.
\end{theorem}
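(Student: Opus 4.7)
The plan is to construct $\mu$ via the exponential mixing of the frame flow on $T^1\Su$, following the template of \cite{kahn-markovic-1} adapted to dimension two. I would parametrize good pants by \emph{tripods}: configurations $(p, v_0, v_1, v_2)$ where $p \in \Su$ and $v_0, v_1, v_2 \in T_p^1\Su$ make equal angles $2\pi/3$, together with three long connectors whose endpoints pair up (approximately) to close off geodesic cuffs of half-length close to $R$. Integrating a smooth bump function enforcing approximate closure to tolerance $\epsilon$ against the Liouville measure on the tripod space and pushing forward to $\Pant_{\epsilon,R}$ yields a candidate $\mu_0$; I would then symmetrize, $\mu = \tfrac{1}{2}(\mu_0 + \iota_* \mu_0)$, where $\iota$ reverses the orientation class of the tripod, to guarantee left--right symmetry across every cuff.

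Next, for $\gamma \in \Gamma_{\epsilon,R}$ I would compute $\ph_\pm \mu(\gamma)$ on each component $\NB_\pm(\sqrt\gamma)$. The foot at $\gamma$ is the point where the perpendicular seam from the other two cuffs of $\Pi$ meets $\gamma$, so under the tripod parametrization $\ph_\pm \mu(\gamma)$ is the push-forward of a Liouville-derived measure on pairs of long geodesic arcs issuing from $\gamma$ whose far endpoints close up into the other two cuffs. The symmetrization step ensures that $\ph_+\mu(\gamma)$ and $\ph_-\mu(\gamma)$ have equal total mass, which we call $K_\gamma \cdot \hl(\gamma)$; this is crucial, since the definition of $\delta$-equivalence demands equal total masses on each side.

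Then I would realize $\ph_\pm\mu(\gamma)$ on the circle $\NB_\pm(\sqrt\gamma) \cong \R/\hl(\gamma)\Z$ as a convolution of two measures, one for each of the other two cuffs, each representing a ``long geodesic arc closes up to a cuff'' count. The key analytic input is exponential mixing of the geodesic flow on $T^1\Su$, which follows from the spectral gap of the Laplacian on the compact hyperbolic surface $\Su$: against $C^k$ test functions there is a uniform bound of the form $P(R) e^{-qR}$ with $q = q(\Su) > 0$ and $P(R)$ polynomial (the polynomial encodes the $C^k$-norms of the bump functions, which grow polynomially in $R$). Applied to each constituent measure, mixing gives $P(R)e^{-qR}$-closeness to Lebesgue on the circle; convolution only improves the approximation, so $\ph_\pm\mu(\gamma)$ is $P(R)e^{-qR}$-equivalent to $K_\gamma \lambda_\pm(\gamma)$, yielding the theorem.

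The main obstacle is twofold. First, one must produce a construction of $\mu$ that is \emph{exactly} left--right symmetric across every $\gamma$, since even a sub-exponential imbalance in the total masses of $\ph_+\mu(\gamma)$ and $\ph_-\mu(\gamma)$ would preclude $\delta$-equivalence to any multiple of $\lambda(\gamma)$ at any $\delta$; the explicit symmetrization by $\iota$ is designed precisely to kill this. Second, one must track the dependence on $R$ carefully enough to obtain \emph{uniform} constants $q > 0$ and a single polynomial $P(R)$ working for all $\gamma \in \Gamma_{\epsilon,R}$---this reduces to uniform exponential mixing against test functions whose $C^k$-norms are bounded by $P(R)$, which is standard but requires careful bookkeeping to match the framework of the rest of the paper.
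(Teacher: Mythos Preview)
Your proposal has a genuine gap at the symmetrization step, and it is precisely the gap that the paper is written to close.

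You assert that $\mu = \tfrac12(\mu_0 + \iota_*\mu_0)$, with $\iota$ ``reversing the orientation class of the tripod'', forces $|\ph_+\mu(\gamma)| = |\ph_-\mu(\gamma)|$ for every $\gamma$. But no such $\iota$ exists with this property. Relabelling the legs of a tripod (swapping the cyclic order of $v_0,v_1,v_2$) produces the \emph{same} $\theta$-graph and hence the same immersed pair of pants; it does not move the foot from $\NB_+(\sqrt\gamma)$ to $\NB_-(\sqrt\gamma)$. An involution that genuinely swapped sides would have to come from an orientation-reversing isometry of $\Su$, which a generic closed hyperbolic surface does not possess; and composing the immersion with an orientation-reversal of $\Pi_0$ yields an orientation-reversing map into $\Su$, which is not an element of $\Pant_{\epsilon,R}$. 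In short, the two-component normal bundle in dimension two is exactly what distinguishes this problem from the three-dimensional Surface Subgroup Theorem, and it cannot be symmetrized away. Your Liouville-based $\mu_0$ will indeed satisfy $\big||\ph_+\mu_0(\gamma)| - |\ph_-\mu_0(\gamma)|\big| \le P(R)e^{-qR}|\ph\mu_0(\gamma)|$ (this is the Equidistribution Theorem, Theorem~\ref{thm-equidistribution}), but the residual imbalance is nonzero and blocks $\delta$-equivalence to any $K_\gamma\lambda(\gamma)$.

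The paper's proof proceeds quite differently: it takes $\mu_0$ from Theorem~\ref{thm-equidistribution}, then \emph{adds corrections} $\mu_1$ (from Lemma~\ref{lemma-correction}) and $\mu_2 = \phi(-\partial(\mu_0+\mu_1))$ to obtain $\mu = \mu_0+\mu_1+\mu_2$ with $\partial\mu = 0$ exactly. Here $\phi$ is the map from the Good Correction Theorem (Theorem~\ref{thm-correction}), whose existence is the main content of Sections~4--9 and rests on the entire good pants homology machinery (the Geometric and Algebraic Square Lemmas, the definition of $A_T$, the $XY$ Theorem, and the randomization bookkeeping). The estimates on $\mu_1,\mu_2$ show they are of order $P(R)e^{-(3+q)R}$ per pair of pants, small enough that Lemma~\ref{lemma-transport} guarantees the perturbed $\ph\mu(\gamma)$ remains $P(R)e^{-qR}$-equivalent to $K_\gamma\lambda(\gamma)$. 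So Theorem~\ref{thm-mes} is not provable by equidistribution alone; it is where the correction theory is cashed in.
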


\begin{remark} We say that $\alpha \in \Mes (\NB(\sqrt{\gamma}))$ is $\delta$ symmetric (for $\delta>0$)  if for every 
isometry $\iota:\NB(\sqrt{\gamma}) \to \NB(\sqrt{\gamma})$ the measures $\alpha$ and $\iota_{*} \alpha$ are $\delta$-equivalent. 
If $\alpha$ and  $K_{\gamma} \lambda (\gamma)$  are $\delta$-equivalent then $\alpha$ is $2\delta$ symmetric because $\lambda(\gamma)$ is $0$ symmetric.
\end{remark}

The proof of Theorem \ref{thm-main-2} follows from Theorem \ref{thm-mes} in
the same way as it was done in Section 3 in {\cite{kahn-markovic-1}}. The
brief outline is as follows. We may assume that the measure $\mu$ from the
above theorem has integer coefficients. Then we may think of $\mu$ as a formal
sum of immersed pants such that the restriction of the measure $\ph \mu$ on
any $\NB ( \sqrt{\gamma})$ is $P (R) e^{- q R}$-equivalent with
some multiple of the Lebesgue measure (unless the restriction is the zero
measure). Considering $\mu$ as the multiset (formally one may use the notion
of a labelled collection of immersed pants) we can then define a pairing between
marked immersed pants, such that two marked pants $(\Pi_1, \gamma_1)$
and $(\Pi_2, \gamma_2)$, are paired if $\gamma_1 = -
\gamma_2$, and the corresponding twist parameter between these two
pairs is $P (R) e^{- pR}$ close to $+ 1$, for some universal constant $p > 0$.
After gluing all the marked pants we have paired we obtain
the finite cover from Theorem \ref{thm-main-2}.

\section{Equidistribution and self-Correction}

We introduce in this section the Equidistribution Theorem (Theorem \ref{thm-equidistribution}) and the Correction Theorem
(Theorem \ref{thm-correction}), and we use them to prove Theorem \ref{thm-mes}. Theorem \ref{thm-equidistribution} follows from our previous work \cite{kahn-markovic-1},
and provides us with an evenly distributed collection of good pants. The Correction Theorem allows us to correct the slight imbalance (as described in the introduction) that may be found in the original collection of pants.

\subsection{Generating  essential immersed pants in $\Su$} Let us first describe how we generate a pair of pants from a $\Theta$-graph. Let $\Pi$ denote a pair of pants whose three cuffs have the same length, and  let $\omega:\Pi \to \Pi$ denote the standard (orientation preserving) isometry of order three that permutes the cuffs of $\Pi$. Let $a$ and $b$ be the fixed points of $\omega$. Let $\gamma_0 \subset \Pi$ denote a simple oriented geodesic arc that starts at $a$ and terminates at $b$.  Set $\omega^{i}(\gamma_0)=\gamma_i$.  The union of two different arcs $\gamma_i$ and $\gamma_j$ is a closed curve in $\Pi $ homotopic to a cuff. One can think of the union of these three segments as the spine of $\Pi$. Moreover, there is an obvious projection from $\Pi$ to the spine $\gamma=\gamma_0 \cup \gamma_1 \cup \gamma_2$, and this projection is a homotopy equivalence.

Let $p$ and $q$ be two (not necessarily) distinct points in $\Su$, and let $\alpha_0$, $\alpha_1$, and $\alpha_2$, denote three distinct 
oriented geodesic arcs, each starting at $p$ and terminating at $q$.  We let  $\alpha= \alpha_0 \cup \alpha_1 \cup \alpha_2$ (we call $\alpha$ a $\Theta$-graph). Let $i(\alpha_j) \in \TB_{p} \Su$ and $t(\alpha_j) \in \TB_{q} \Su$ denote  the initial and terminal unit tangent vectors to $\alpha_j$ at $p$ and $q$ respectively. Suppose that the triples of vectors $(i(\alpha_0),i(\alpha_1),i(\alpha_2))$ and $(t(\alpha_0), t(\alpha_1),t(\alpha_2))$, have opposite cyclic orders on the unit circle. 

We define the map $f:\Pi \to \Su$ by first projecting the pants $\Pi$ onto its spine $\gamma$, and then by mapping  $\gamma_j$ onto $\alpha_j$ by a map that is a local (orientation preserving) homeomorphism.  Then the induced conjugacy class of maps  $f_*:\pi_1(\Pi) \to \pi_1(\Su)$ is  injective. 

Moreover we can homotop the map $f$ to an immersion $g:\Pi \to \Su$ as follows. We can write the pants $\Pi$ as a (non-disjoint) union of three strips $G_0,G_1,G_2$, where each $G_i$ is a fattening of the geodesic arc $\gamma_i$. Then we define a map $g_i:G_i \to \Su$ to be a local homeomorphism on each $G_i$ by extending the restriction of the map $f$ on $\gamma_i$. 
The condition on the cyclic order of the $\alpha_i$'s at the two vertices enables us to define $g_i$ and $g_j$ on $G_i$ and $G_j$ respectively, so that $g_i=g_j$ on $G_i \cap G_j$. Then we set $g=g_i$.  

We say that  $g:\Pi \to \Su$ is the essential immersed pair of pants generated by the three geodesic segments $\alpha_0,\alpha_1$ and $\alpha_2$.

Often we will be given two geodesic segments, say $\alpha_0$ and $\alpha_1$, and then find a third geodesic segment $\alpha_2$ so to obtain an essential immersed pair of pants. 
We then say that $\alpha_2$ is a third connection. In this paper we will often be given a closed geodesic $C$ on a Riemann surface $\Su$, with two marked points $p,q \in C$. Then every geodesic arc $\alpha$ between $p$ and $q$, that meets $p$ and $q$ at the same sides of $C$ will be called a third connection, since then $C$ and $\alpha$ generate an immersed pair of pants as described above.
In particular, this represents an efficient way of generating pants that contain a given closed geodesic $C$ as its cuff.

\subsection{Preliminary lemmas} Let $\TB \Ha$ denote the unit tangent bundle.
Elements of $\TB ( \Ha)$ are pairs $(p, u)$, where $p \in \Ha$ and $u \in
\TB_p \Ha$. Sometimes we write $u = (p, u)$ and refer to $u$ as a unit vector
in $\TB_p \Ha$. By $\TB \Su$ we denote the unit tangent bundle over $\Su$. For
$u, v \in \TB_p \Ha$ we let $\Ang (u, v)$ denote the unoriented angle between
$u$ and $v$. The function $\Ang$ takes values in the interval $[0, \pi]$.

For $L, \epsilon > 0$, and $(p, u), (q, v) \in \TB \Su$, we let
$\Conn_{\epsilon, L} ((p, u), (q, v))$ be the set of unit speed geodesic
segments $\gamma : [0, l] \to \Su$ such that
\begin{itemize}
  \item $\gamma (0) = p$, and $\gamma (l) = q$,
  
  \item $|l - L| < \epsilon$,
  
  \item $\Ang (u,  \gamma' (0)), \Ang (v, \gamma' (l)) < \epsilon$.
\end{itemize}

The next lemma will be referred to as the Connection Lemma. 
It provides a good lower bound on the size of the Connection set we have just defined.
This lemma also follows from discussion in the appendix.

\begin{lemma}
  \label{lemma-connection} Given $\epsilon > 0$, we can find $L_0 = L_0 ( \Su,
  \epsilon)$ such that for any $L > L_0$, and any two vectors $(p,u)$ and $(q,v)$, the set $\Conn_{\epsilon, L}\big( (p,u),(q,v)  \big)$ is
  non-empty and
  \[ \big| \Conn_{\epsilon, L}\big( (p,u),(q,v)  \big) \big| \ge e^{L - L_0} . \]
\end{lemma}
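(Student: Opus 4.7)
The plan is to reduce the count to an orbit-counting problem for the Fuchsian group $\KG$ acting on the unit tangent bundle $\TB \Ha$ of the universal cover, and then invoke the classical equidistribution of $\KG$-orbits, which in our setting follows from the exponential mixing of the geodesic flow on the closed hyperbolic surface $\Su = \Ha / \KG$.

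First, I would lift the data: choose $\tilde p \in \Ha$ over $p$ and $\tilde u \in \TB_{\tilde p} \Ha$ over $u$, and similarly fix $(\tilde q, \tilde v)$ over $(q, v)$. Every element of $\Conn_{\epsilon, L}((p,u), (q,v))$ corresponds bijectively to a unique $\gamma \in \KG$ such that the geodesic segment from $\tilde p$ to $\gamma \tilde q$ has length in $[L-\epsilon, L+\epsilon]$, initial direction at $\tilde p$ within $\epsilon$ of $\tilde u$, and terminal direction at $\gamma \tilde q$ within $\epsilon$ of $\gamma \tilde v$. These conditions define a region $T \subset \TB \Ha$ by declaring $(y, w) \in T$ iff $d(\tilde p, y) \in [L-\epsilon, L+\epsilon]$, the direction from $\tilde p$ toward $y$ is within $\epsilon$ of $\tilde u$, and $w$ is within $\epsilon$ of the forward unit tangent at $y$ of the geodesic ray $\tilde p \to y$. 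Then
\[
|\Conn_{\epsilon, L}((p,u),(q,v))| = \#\{\gamma \in \KG : (\gamma \tilde q, \gamma \tilde v) \in T\}.
\]

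Using geodesic polar coordinates $(r, \theta)$ at $\tilde p$ together with the direction angle $\phi$ in the tangent circle at each endpoint, the volume form on $\TB \Ha$ is $\sinh r \, dr \, d\theta \, d\phi$, and each of the three defining conditions cuts out an interval of length $\asymp \epsilon$, so a direct integration gives $\Vol(T) \asymp \epsilon^3 \sinh L \asymp \epsilon^3 e^L$. By equidistribution of the $\KG$-orbit of $(\tilde q, \tilde v)$ in $\TB \Ha$ (a Margulis/Eskin--McMullen-type counting statement that for compact hyperbolic surfaces follows from exponential mixing of the geodesic flow via Selberg's spectral gap and a standard thickening argument), the count is asymptotic to $\Vol(T)/\Vol(\TB \Su)$ as $L \to \infty$, uniformly in the choice of base data $(p, u), (q, v) \in \TB \Su$ by compactness of $\Su$. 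Hence the count is at least $c(\Su)\,\epsilon^3 e^L$ for $L$ large, and selecting $L_0$ so that $e^{-L_0} \le c(\Su)\,\epsilon^3$ yields $|\Conn_{\epsilon, L}| \ge e^{L - L_0}$; non-emptiness is then immediate for $L > L_0$.

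The main technical obstacle is making the equidistribution effective enough to produce a genuine lower bound valid for every $L > L_0$ and uniformly over all choices of base data, applied to a region $T$ that is thin in all three coordinate directions. Exponential mixing of the geodesic flow on $\TB \Su$, which holds because $\Su$ is a closed hyperbolic surface, handles this uniformly via a standard smoothing-and-thickening reduction; alternatively, a direct Fuchsian sector-counting argument in the spirit of Margulis's thesis delivers the same bound.
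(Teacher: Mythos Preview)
Your proposal is correct and rests on the same underlying mechanism as the paper's proof, namely mixing of the geodesic flow on $\TB\Su$, but the two arguments are packaged differently. You lift to $\Ha$, translate the count into $\#\{\gamma\in\KG:(\gamma\tilde q,\gamma\tilde v)\in T\}$ for an explicit region $T$ of volume $\asymp\epsilon^3 e^L$, and then invoke a Margulis/Eskin--McMullen lattice-point counting theorem to get the asymptotic $\Vol(T)/\Vol(\TB\Su)$. The paper instead stays on the quotient: it places $C^\infty$ bump functions $\psi_{(p,u)}$ and $\psi_{(q,v)}$ of unit mass in small neighbourhoods of the two frames, uses uniform mixing to show that the correlation $\int\psi_{(q,v)}(g_t(x,w))\psi_{(p,u)}(x,w)\,d(x,w)$ tends to $1/(2\pi^2|\chi(\Su)|)$, decomposes the support of the integrand into pieces $E_\alpha$ indexed by connecting segments $\alpha$, and bounds each piece by $K(\psi)e^{-t}$, forcing at least $K(\psi)e^t$ connecting segments.

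The content is the same --- the standard proof of the orbit-counting result you cite is exactly this mixing-plus-thickening argument --- but the paper's version is more self-contained, needing only the mixing statement rather than a black-box counting theorem, and it sidesteps the ``well-rounded region'' verification since the bump functions are smooth. Your version has the advantage of being conceptually cleaner and making the $\epsilon^3$ dependence transparent.
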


\begin{proof} The unit tangent bundle $\TB \Su$ is naturally identified with $G  \backslash \PSLR$, where $G$ is a lattice. By $\dist$ we denote a distance function on $\TB \Su$ (we defined $\dist$ explicitly later in the paper). Then we can find a neighbourhood $U$ of the identity in $\PSLR$ so that if $(q,v) \in  \TB \Su = G  \backslash \PSLR$, and $\xi \in U$, then $\dist((q,v),(q,v) \cdot \xi)<\frac{\epsilon}{16}$. 

We let $\psi:U \to [0,\infty)$ be a $C^{\infty}$ function with compact support in $U$, with $\int_{U} \psi=1$. For any $(q,v) \in \TB \Su$ we let $\Ne_U(q,v)=\{(q,v) \cd s: s \in U \}$, and let 
$\psi_{(q,v)}((q,v)\cd s)=\psi(s)$ on $\Ne_U(q,v)$. (If $\epsilon$ is small, then $s \mapsto  (q,v) \cd s$ is injective). The $C^{k}$ norm of $\psi_{(q,v)}$ is independent of $(q,v)$. 

Let $g_t:\TB \Su \to \TB \Su$ be the geodesic flow. By uniform mixing for uniformly $C^{\infty}$ functions on $\Su$,
\begin{equation}\label{jed-1}
\int\limits_{\TB \Su} \psi_{(q,v)}(g_t(x,w))\psi_{(p,u)}(x,w) \, d(x,w) \to 1,
\end{equation}
uniformly in $(p,u)$ and $(q,v)$, as $t \to \infty$ (we always assume that the Liouville measure is normalized so that the total measure of the tangent bundle is one). If $\psi_{(q,v)}(g_t(x,w))\psi_{(p,u)}(x,w)>0$, then $(x,w) \in \Ne_U(p,u)$ and $g_t(x,w) \in \Ne_U(q,v)$. 

The segment $g_{[0,t]}(x,w)$ is $\epsilon$-nearly homotopic (see the definition after this proof) to a unique geodesic segment $\alpha$ connecting $p$ and $q$. The reader can verify that $\alpha \in \Conn_{\epsilon,t}((p,u),(q,v))$. We let $E_{\alpha,t} \subset \Ne_U(p,u)$ be the set of $(x,w)$ for which $g_t(x,w) \in \Ne_U(q,v)$, and $g_{[0,t]}(x,w)$ is $\epsilon$-homotopic to $\alpha$.
Then by (\ref{jed-1}),
$$
\sum_{\alpha} \int\limits_{E_{\alpha,t}} \psi_{(q,v)}(g_t(x,w))\psi_{(p,u)}(x,w)\, d(x,w)=1+o(1).
$$

On the other hand, we can easily verify that $V(E_{\alpha,t}) \le K(\psi)e^{-t}$ (here $V(E_{\alpha,t})$ is the volume of $E_{\alpha,t}$). Hence

\begin{align*}
\int\limits_{E_{\alpha,t}}  \psi_{(q,v)}(g_t(x,w))\psi_{(p,u)}(x,w)\, d(x,w)   &\le \int\limits_{E_{\alpha,t}} (\sup_U  \psi)^{2}\, d(x,w) \\
&\le K(\psi)e^{-t}.
\end{align*}

So the number of good $\alpha$ is at least $K(\psi)e^{t}$, as long as $t$ is large given $\Su$ and $\epsilon$.

\end{proof}

\begin{definition}
  Let $E \ge 0$. We say that  two paths $A$ and $B$ in $\Ha$ are
  $E$-nearly homotopic if the distance between the endpoints of 
  $A$ and $B$ is at most $E$. Two paths on the closed surface
  $\Su$ are $E$-nearly homotopic if they have lifts to $\Ha$ that are $E$-nearly homotopic.
\end{definition}

The following  lemma gives the estimate for the number of good pants that are bounded by a given (good) cuff.

\begin{lemma}
  \label{lemma-number}Let $0<\epsilon<1$. We let $\gamma \in \Gamma_{\epsilon,
  R}$ and let $\Pant_{\epsilon, R} (\gamma)$ denote the set of pants in $\Pant_{\epsilon,R}$ that
  contain $\gamma$ as a cuff. Then
  \[ | \Pant_{\epsilon, R} (\gamma) | \asymp Re^R \]
  where the constant for $\asymp$ depends only on $\Su$ and $\epsilon$.
\end{lemma}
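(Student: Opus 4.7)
I count elements of $\Pant_{\epsilon,R}(\gamma)$ via the third-connection parameterization from the previous subsection: each $\Pi \in \Pant_{\epsilon,R}(\gamma)$ is generated, up to an $O(1)$ ambiguity (the $\Z_2$ swap of the two non-$\gamma$ cuffs), by $\gamma$ together with a geodesic arc $\alpha$ joining two points $p, q \in \gamma$ on opposite sides of $\gamma$; conversely any such $(p, q, \alpha)$ in the correct geometric regime generates an immersed pants with $\gamma$ as a cuff.

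\textbf{Step 1 (parameter regime).} The two cuffs of $\Pi$ other than $\gamma$ are the closed geodesics freely homotopic to $\alpha \beta_i^{-1}$, where $\beta_0, \beta_1$ are the two arcs of $\gamma$ between $p$ and $q$. A standard right-angled-hexagon computation expresses their lengths as smooth functions of $|\alpha|$, $d_\gamma(p,q)$, and the two angles $\theta_p, \theta_q$ at which $\alpha$ meets $\gamma$, with Jacobian in $(|\alpha|, d_\gamma(p,q))$ bounded away from zero. Requiring both non-$\gamma$ cuff-lengths (and $\hl(\gamma)$) to lie in $[R-\epsilon, R+\epsilon]$ then forces, up to additive $O(\epsilon)$: (i) $|\alpha| \approx R$; (ii) $p$ and $q$ are approximately antipodal on $\gamma$ (so each $|\beta_i| \approx \hl(\gamma)$); and (iii) $\theta_p, \theta_q$ lie within bounded distance of $\pi/2$. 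Conversely, any $(p,q,\alpha)$ in this regime yields $\Pi \in \Pant_{O(\epsilon), R}(\gamma)$.

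\textbf{Step 2 (lower bound).} Place $N \asymp R/\epsilon$ points $p_1, \ldots, p_N$ at spacing $\epsilon/10$ on $\gamma$, each with right-side unit normal $u_i$; let $(q_i, v_i)$ be the antipode on $\gamma$ with left-side unit normal. Applying the Connection Lemma \ref{lemma-connection} to $(p_i, u_i)$ and $(q_i, v_i)$ with target length $L=R$ and tolerance $\epsilon/4$ produces $\ge e^{R-L_0}$ geodesic arcs $\alpha$ in the Step 1 regime; each serves as the third connection of a distinct $\Pi \in \Pant_{\epsilon,R}(\gamma)$, and pants produced from distinct indices $i$ are themselves distinct since their third connections meet $\gamma$ in disjoint $\epsilon/10$-buckets. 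Multiplying yields $|\Pant_{\epsilon,R}(\gamma)| \gtrsim R e^R$.

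\textbf{Step 3 (upper bound) and main obstacle.} Conversely, every $\Pi \in \Pant_{\epsilon,R}(\gamma)$ determines its triple $(p, q, \alpha)$ in the admissible regime with $O(1)$ multiplicity. Discretize the position of $p$ on $\gamma$ into $\asymp R/\epsilon$ buckets of size $\epsilon$; then $q$ lies in a determined bucket up to $O(1)$ choices, and the admissible $\alpha$'s form geodesic arcs of length in a window of size $O(\epsilon)$ around $R$ with endpoint directions $O(1)$-close to the perpendiculars. The number of such arcs per bucket is $\lesssim e^R$, either by the matching upper bound inherent in the mixing argument of Lemma \ref{lemma-connection} (the integral $\int \psi_{(q,v)}(g_t(x,w))\psi_{(p,u)}(x,w)\,dx$ is $O(1)$, and each $E_\alpha$ has volume $\asymp e^{-t}$, giving count $\asymp e^t$) or by a direct volume estimate in $\Ha$. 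Summing over buckets gives $|\Pant_{\epsilon,R}(\gamma)| \lesssim R e^R$. The technical core of the argument is Step 1: one must execute the right-angled-hexagon trigonometry precisely enough to show that the cuff-length map is a submersion with Jacobian bounded above and below in the relevant regime, so that the $\epsilon$-preimage of the target is a genuine $O(\epsilon)$-neighborhood of the expected parameter values; everything else is a direct application of the Connection Lemma together with a bucket-counting argument.
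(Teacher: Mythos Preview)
Your approach is essentially the paper's: parameterize pants by third connections, use the Connection Lemma for the lower bound, and count arcs between bucketed endpoint pairs for the upper bound. Two remarks. First, the paper sidesteps what you call ``the technical core'' (the hexagon Jacobian/submersion argument in Step~1) by working directly with the \emph{orthogonal} arc as the canonical third connection: its angles are exactly $\pi/2$, its feet are uniquely determined by $\Pi$, and its length is $R+\log 4 + O(\epsilon)$ by a single hexagon identity, so no open-mapping argument is needed. Second, your distinctness claim in Step~2 (``pants produced from distinct indices $i$ are themselves distinct since their third connections meet $\gamma$ in disjoint $\epsilon/10$-buckets'') is not quite right: two third connections with different endpoints on $\gamma$ generate the same pants whenever they are homotopic through arcs with endpoints sliding on $\gamma$, and since your arcs are only $\epsilon/4$-close to orthogonal, adjacent buckets can produce the same $\Pi$. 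The correct (and sufficient) statement is that the multiplicity is bounded---which is exactly how the paper phrases it (``two $\widehat\alpha$ with the same $\alpha$ must have nearby endpoints'').
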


\begin{proof}
  For the upper bound, let $F_{\gamma}$ denote a set of $\lceil 2 R \rceil$
  evenly distributed points on $\gamma$. If $\Pi \in \Pant_{\epsilon, R}$ and
  if $\gamma \in \partial \Pi$, we let $\alpha$ be the geodesic segment in
  $\Pi$ that is orthogonal to $\gamma$ at its endpoints and simple on $\Pi$. 
  Then we let $\alpha'$ be a geodesic segment connecting two points of
  $F_{\gamma}$, such that $\alpha'$ is $\frac{1}{2}$-nearly homotopic to $\alpha$, and hence $\len (\alpha') \le \len (\alpha) + 1$. It can easily be verified that the length of $\alpha$ is at most $R+9$ 
thus the length of $\alpha'$ is at most $R+10$. 

We leave it to the reader to verify that the number of geodesic segments of length at most $L$ 
that connect two given points on the closed surface $\Su$ is at most $e^{R+\diam(\Su)} / \Area(\Su)$.
  
  If two pants produce identical $\alpha'$, then the two pants are the same.
  Between any two points of $F_{\gamma}$ there are at most $Ne^R$ such arcs
  $\alpha'$ (the constant $N$ depends only on $\Su$), and the endpoints of $\alpha'$ are $R \pm 1$ apart,
  so the total number of such arcs is at most $10 NRe^R$.
  
  For the lower bound: By Lemma \ref{lemma-connection} we can find $N ( \Su,
  \epsilon) e^R$ geodesic segments $\wh \alpha$ connecting two given
  diametrically opposite points of $\gamma$, of length within
  $\frac{\epsilon}{100}$ of $R + \log 4$, and such that the angle between $\wh
  \alpha$ and $\gamma$ is within $\frac{\epsilon}{100}$ of $\frac{\pi}{2}$.
  Here we assume that the two vectors (at the two diametrically opposite
  points) at $\gamma$ that are tangent to $\wh \alpha$ are on the same side of
  $\gamma$. Then to any such $\wh \alpha$ there is an
  $\frac{\epsilon}{10}$-nearly homotopic $\alpha$ with endpoints on $\gamma$
  (homotopic on $\Su$ through arcs with endpoints on $\gamma$) and such that
  $\alpha$ is orthogonal to $\gamma$. Each such $\alpha$ produces a pair of
  pants $\Pi \in \Pant_{\epsilon, R}$ that contains $\gamma$ as a cuff.
  
  Different $\alpha$'s give different pants. Two $\wh \alpha$ with the same
  $\alpha$ must have nearby endpoints along $\gamma$ and be $10 \epsilon$-nearly homotopic. So we get at least
  \[ \frac{2 RN ( \Su, \epsilon)}{10 \epsilon} e^R \]
  of the $\alpha$'s, and hence of the pants.
\end{proof}

\begin{remark}
  Let $M > 1$ and let $X_{\gamma} (M)$ denote the number of pants in $\Pant_{\epsilon,
  R} (\gamma)$ whose two other cuffs are in $\Gamma_{\frac{\epsilon}{M}, R}$.
  Then $X_{\gamma} (M) \asymp Re^R$. The upper bound follows from the upper bound of
  the lemma. If the segment $\wh \alpha$ is of length within
  $\frac{\epsilon}{100 M}$ of $2 R - \hl (\gamma) + \log 4$, and if that the
  angle between $\wh \alpha$ and $\gamma$ is within $\frac{\epsilon}{100}$ of
  $\frac{\pi}{2}$, then the induced pants have the desired property that the
  other two cuffs are in $\Gamma_{\frac{\epsilon}{M}, R}$. On the other hand
  it follows from the Connection Lemma that the number of such $\wh \alpha$'s
  is $\asymp$ to $N ( \Su, \epsilon, M) e^R$ for some constant $N ( \Su,
  \epsilon, M)$.
\end{remark}

\subsection{The Equidistribution Theorem} The following is the  Equidistribution Theorem. This is a stronger equidistribution result than the one proved in our previous paper
{\cite{kahn-markovic-1}}.

\begin{theorem}\label{thm-equidistribution} Let $\epsilon > 0$. Let $\mu$ be the measure on $\Pant_{\epsilon,R}$ that assigns to each pants in $\Pant_{\epsilon,R}$ the value $1$. 
Then for $R$ large enough the measure $\mu$ has the following properties:
\begin{enumerate}
\item $\mu (\Pant_{\epsilon, R}) \asymp e^{3R}$.
\item For every $\gamma \in \Gamma_{\epsilon, R}$ the measure $\ph_{\pm}\mu (\gamma)$ is $Ce^{- {q}  R}$ equivalent to 
$K^{\pm}_{\gamma} \lambda_{\pm} (\gamma)$, for some constants $K^+_{\gamma}$ and $K^-_{\gamma}$ that satisfy the inequality
\[ 
\bigg{|} \frac{K^+_{\gamma}}{K^-_{\gamma}} - 1 \bigg{|} < C e^{-q R}, 
\]
where $C,q>0$ depend only on $\Su$ and $\epsilon$.
\item Moreover, $K^{\pm}_{\gamma} \asymp Re^{R}$ for $\gamma \in \Gamma_{\epsilon, R}$.
\end{enumerate}

\end{theorem}   

\begin{proof} The well know result of Margulis \cite{margulis} asserts that 
$$
\big|\Gamma_{\epsilon,R}\big| \asymp \frac{e^{2R}}{R}. 
$$
The claim (1) of the theorem follows from this estimate and  Theorem \ref{lemma-number}. The claim (3) follows from the claim (2) and Lemma \ref{lemma-number}. Thus, it remains to prove (2).
To prove (2) we need be able to estimate the number of good pants that bound gamma and whose feet belong to a given interval of $\gamma$. Moreover, we need to show that the number of such pants that are to the left of $\gamma$ is very close to the number of such pants that are to the right of $\gamma$. We first explain how to effectively estimate the two numbers.

Let $\eta_1$ and $\eta_2$ be two geodesic arcs on $\Su$ that connect the same two points on $\Su$. Denote by $[\cdot \eta_1 \cdot \eta_2 \cdot]$ the corresponding  closed broken geodesic on $\Su$ (see the second paragraph of Section 4.1  for more on this notation).  We assume that $\eta_1$ and $\eta_2$ meet at the right angles and  that we can orient $[\cdot \eta_1 \cdot \eta_2 \cdot]$ so that  both right turns are ``to the right'', or both right turns are ``to the left''.

Let $\gamma$ be the closed geodesic freely homotopic to $[\cdot \eta_1 \cdot \eta_2 \cdot]$. Then we can write
$$
l(\gamma) = h(l (\eta_1), l (\eta_2) )
$$
for some smooth symmetric function $h$ (where $l$ denotes the length function), for which
$$
h(e_1, e_2) = e_1 + e_2 - \log 2 + O(e^{-\min(e_1, e_2)/2}) .
$$
(the function $h$ can be computed explicitly from the basic formulas in hyperbolic geometry).

Suppose that $\gamma$ is a (good) closed geodesic on $S$, and suppose that $\eta$ is a ``third connection" for $\gamma$. So $\eta$ is a geodesic segment which meets $\gamma$ at two points $x$ and $y$, and is orthogonal to $\gamma$ at $x$ and $y$, and lies on the same side of $\gamma$ at both points: the two normal vectors to $\gamma$ pointing into $\eta$ at $x$ and $y$ are on the same component of $N^1(\gamma)$. 
Then there is a unique pair of pants $\Pi$ for which $\gamma \in \partial \Pi$, and for which $\eta$ is an orthogeodesic on $\Pi$ that lifts to be embedded in $\Pi$. Letting $\sigma_1$ and $\sigma_2$ be the two segments of $\gamma \setminus \{x, y \}$, we find that the two other cuffs $\gamma_1, \gamma_2$ of $\Pi$ are freely homotopic to $[\cdot \sigma_i \cdot \eta^{\pm 1} \cdot]$. So we have
$$
l (\gamma_i) = h(l(\eta), l(\sigma_i) )
$$
where $h$ is defined as above. 
Moreover, the two feet of $\Pi$ on $\gamma$ lie at the two midpoints of $x$ and $y$ on $\gamma$. 
(Really, we should think of $x$ and $y$ as lying on the parameterizing 1-torus for $\gamma$, and likewise the feet, but we will say that they're on $\gamma$, by a mild abuse of notation). 

Suppose that $I \subset N^1(\sqrt \gamma)$ is an interval. So $I$ comprises a choice of component of $N^1(\gamma)$---a \emph{side} of $\gamma$---along with a pair of intervals in $\gamma$, of equal length and placed halfway along $\gamma$ from each other. We should think of $\gamma$ as long, say longer than 10, and $I$ as short, say shorter than 1/10. 

We define the region
$$
\Rel(\gamma, I) \subset \gamma \times \gamma \times \R^+
$$
as the set of $(x, y, l)$ for which 
$$
h(l,s_i) \in [2R - 2\epsilon, 2R + 2 \epsilon]
$$
and for which the two midpoints of $x$ and $y$ lie in the two intervals on $\gamma$ associated to $I$. 
Here $s_1, s_2$ are the lengths of the two arcs in $\gamma$ between $x$ and $y$. 
Then suppose $\eta$ is a third connection for $\gamma$ and that $\eta$ lies on the same side of $\gamma$ as $I$ does. Let $x$ and $y$ be the endpoints of $\eta$, and let $\Pi_\eta$ be the associated pair of pants for $\eta$ (and $\gamma$). 
Then $(x, y, l(\eta)) \in \Rel(\gamma, I)$ if and only if $\Pi_\eta$ is a good pair of pants, and the pair of feet of $\Pi_\eta$ on $N^1(\sqrt \gamma)$ lies in $I$.  
Thus, the number of good pants whose feet belong to the interval $I$ is equal to the  number of third connections $\eta$ (on a given side of $\gamma$) for which the associated triple $(x, y, l(\eta))$ lies in $\Rel(\gamma, I)$. 
So our goal is simply to count the number of third connections $\eta$ (on a given side of $\gamma$) for which the associated triple $(x, y, l(\eta))$ lies in $\Rel(\gamma, I)$.

One can see that the volume of $\Rel(\gamma,I)$ is on the order of $\epsilon^2 |I|$ as follows. For any choice of a point in $I$, $x$ and $y$ are determined by $s_0$, and possible pairs $(l,s_0)$ lie in a diamond of  size about $\epsilon$. Since the area of the diamond is about $\epsilon^2$ it follows that the volume of $\Rel(\gamma,I)$ is about $\epsilon^2|I|$.

If we denote by $C_{\gamma}$ the set of associated triples $(x,y,l(\eta))$ for all third connections $\eta$, our goal is simply to count $C_{\gamma}\cap \Rel(\gamma, I)$. 
The following counting formula is the main ingredient we need to finish the proof.

Let $A$ and $B$ be two oriented geodesic segments on $\Su$ of lengths $a>0$ and $b>0$ respectively and let $0<L_0<L_1$. Define
$$
\Conn_{A, B}(L_0, L_1) 
$$
to be the set of \emph{geodesic connections} between $A$ and $B$. That is, $\eta \in \Conn_{A, B}(L_0, L_1)$ if $\eta$ is a geodesic segment on $\Su$ of length at least $L_0$ and at most $L_1$ such that $\eta$ connects the right side of $A$ and the left side of $B$ and is orthogonal to the arcs $A$ and $B$ ($\eta$ is an orthogeodesic connecting the appropriate sides of $A$ and $B$). The following theorem is stated and proved in the Appendix as Theorem \ref{thm-count-1}.

\begin{theorem} \label{main estimate-0}  There exist constants $C=C(\Su), q=q(\Su)>0$  with the following properties. Let $\delta=e^{-qL}$, and suppose $a=b=\delta^2$. Then 
$$
\#\Conn_{A, B}(L, L+\delta^2) =\frac{1}{8\pi^2 \chi(\Su)} \delta^6 e^{L}\big(1+O(\delta) \big).
$$ 
\end{theorem}

This type of counting result appears often in literature  (for example see \cite{e-m}  \cite{p-p}) and it goes back to Margulis \cite{margulis}.

Now let $Q$ be any cube of the form $J_1\times J_2 \times [L_0,L_1] \subset \gamma\times \gamma \times \R^{+}$, with $|J_1|=|J_2|=L_1-L_0=\delta^2$ (and suppose $L_0$, $L_1$ are about $R$ which is large). Then Theorem   \ref{main estimate-0} implies

\begin{align*}
\# (Q \cap C_{\gamma})  &= E_{\Su}\delta^6e^L\big( 1+O(\delta)\big) \\
&=\left(1+O(\delta) \right) \int_Q E_{\Su} e^{L} \, dx \, dy \, dL,
\end{align*}
where 
$$
E_{\Su}=\frac{1}{8\pi^8|\chi(\Su)|}.
$$

It follows that for any region $\Rel \subset \gamma\times \gamma \times [R-1,R+1]$ tiled by such cubes
$$
\#(\Rel \cap C_{\gamma})=\big(1+O(\delta) \big) \int_{\Rel} E_{\Su} e^{L} \, dx \, dy \, dL
$$
and, more generally, for any region    $\Rel \subset \gamma\times \gamma \times [R-1,R+1]$ 
$$
\#(\Rel \cap C_{\gamma})=\big(1+O(\delta ) \big) \left( \int_{\Rel} E_{\Su} e^{L} \, dx \, dy \, dL \pm \int_{\Ne_{3\delta^{2}}(\partial {\Rel}  } E_{\Su}  e^{L} \, dx \, dy \, dL \right),
$$
where $\Ne_{3\delta^{2}}(\partial{\Rel})$ is the neighbourhood of $\partial{\Rel}$. (Here $A\pm B$ means a number in $[A-B,A+B]$).

Now let $\Rel=\Rel(\gamma,I)$ and we assume that $\epsilon \ge |I|> \delta $. Then 
$$
\Vol\big( \Ne_{3\delta^{2}} (\Rel) \big) \approx \epsilon^{2} \delta^2=O \big( \delta \text{Vol}(\Rel) \big).
$$

Therefore
$$
\#(\Rel \cap C_{\gamma})=\big( 1+ O(\delta) \big) \int_{\Rel} E_{\Su} e^{L} \, dx \, dy \, dL \approx e^{R}\epsilon^{2}|I|.
$$

On the other hand, 
$$
\int_{\Rel(\gamma,I)} E_{\Su} e^{L} \, dx \, dy \, dL=K_{\gamma}|I|,
$$
for some $K_{\gamma}>0$ because the integral clearly depends only on  $|I|$ and
$$
\int_{\Rel(\gamma,I)} \cdot= \int_{\Rel(\gamma,I_{1})} \cdot  \, +  \, \int_{\Rel(\gamma,I_{2})} \cdot
$$
where $I_1, I_2$ is a partition of $I$.

Therefore
$$
\#(\Rel(\gamma,I) \cap C_{\gamma})=K_{\gamma}|I|\big(1+O(\delta) \big)
$$
for every interval $I$ of length at least $\delta$, and the claim (2) of the theorem follows. 

\end{proof}

\subsection{The Good Correction Theorem}

To prove Theorem \ref{thm-mes}  we need to produce a measure $\mu$ on good pants such that for each good geodesic $\gamma$, $\ph \mu(\gamma)$ is $P(R)e^{-qR}$-equivalent to some $K_{\gamma}\lambda(\gamma)$, where $\lambda$ is the Lebesgue measure on $\NB (\gamma)$.  In particular, $\ph \mu(\gamma)$ must have the same total measure on both sides of $\gamma$. In other words there must be the same number of pants on both sides of $\gamma$. We can write this as $\partial{\mu}(\gamma)=0$.

Now let us construct the measure $\mu_0$ on good pants as the counting measure on the good pair of pants. Theorem \ref{thm-equidistribution} says that $\ph \mu(\gamma)$ is $Ce^{-qR}$ equivalent to $K^{\pm}\lambda_{\pm}(\gamma)$, with 
$$
\left| \frac{K^{+}_{\gamma}}{K^{-}_{\gamma}} -1\right|<Ce^{-qR}.
$$
So we have the desired equidistribution on each side of $\gamma$, and we have a small discrepancy between the number of pants on the two sides of $\gamma$. What we want to do is to make a small change in the number of each pair of good pants in order to correct the discrepancy. So we want to replace $\mu_0$ with $\mu_0+X$ where $\partial{X}=-\partial{\mu}_0$, and $X$ is small compared to $\mu_0$.

To do this we consider the more general problem of finding $X$ such that
\begin{equation}\label{eq-newest}
\partial{X}=\alpha,
\end{equation}
and ask two questions:

1. For which $\alpha$ can we solve (\ref{eq-newest})?

2. What bound can we get on the size of $X$ given a suitable bound on the size of $\alpha$? 

It turns out that we can get fairly sharp answers for both questions. First, if $\partial{X}=\alpha$, then $\alpha\equiv 0$ in $H_1(\Su)$; we will prove that we can always solve $\partial{X}=\alpha$ when $\alpha$ is zero in singular homology. Second,
if $\gamma$ is a single good closed geodesic, and $\partial{X}=\gamma$, then
$$
|X|(\{\Pi \, : \gamma \in \pt \Pi \} \ge 1,
$$
and therefore 
$$
||X||_{\infty} \ge \frac{1}{Re^{R}}.
$$

We prove that we can solve $\partial{X}=\alpha$ such that
$$
||X||_{\infty} \le P(R)e^{-R}||\alpha||_{\infty}
$$
for some polynomial $P(R)$ depending on $\Su$ and $\epsilon$. These two results are stated essentially as Theorems \ref{thm-correction}, \ref{thm-correction-0}, and the proof of these theorems will be the object of the remainder of this paper.

Having proven these theorems, we can correct the discrepancy and prove Theorem \ref{thm-mes}. 

We let $\mu_1=\mu_0+X$, where $\partial{X}=-\partial{\mu}_0$, and 
\begin{align*}
||X||_{\infty} &\le P(R)e^{-R}||\partial{\mu}_0||_{\infty}  \\
& \le P(R)e^{-R}e^{(1-q(\Su))R} \\
&<<1.
\end{align*}
Then $\mu_1$ is a positive sum of good pants, has the same number of pants of both sides of every closed geodesic, and has the same equidistribution property as $\mu_0$ because it is a small perturbation. Therefore it satisfies the conclusions of
Theorem \ref{thm-mes}, and we can use it to build a good cover.

Recall that if $A$ is an Abelian group and $X$ any set then $AX$ is the group of $A$-weighted finite formal sums of elements from $X$.

\begin{definition} 
Let $s_0,s_1 \in \R \Gamma_{\epsilon,R}$ and let $M \ge 1$. We say that $s_0=s_1$ in $\Pant_{M\epsilon,R}$ homology if there exists $W \in \R \Pant_{M\epsilon,R}$ such that $\partial W=s_1-s_0$.
\end{definition}

The following theorem summarizes the main idea of this paper. It implies
that every sum of good curves that is zero in the standard homology is the
boundary of a sum of good pants. That is, if $s_0,s_1 \in \R \Gamma_{\epsilon,R}$ and $s_0=s_1$ in the standard homology on the surface $\Su$, then $s_0=s_1$ in $\Pant_{300\epsilon,R}$ homology. By $\bf{H}_1 ( \Su)$ we denote the first homology on $\Su$  with rational coefficients.

\begin{theorem}\label{thm-correction-1} Let $\epsilon > 0$. There exists $R_0=R_0(\Su,\epsilon)> 0$ such that for every $R > R_0$ the following holds. There 
exists a set $H =\{h_1, ..., h_{2 g} \} \subset \Q \Gamma_{ \epsilon, R}$ that form a basis  of $\bf{H}_1 ( \Su)$, such that  for every $\gamma \in \Gamma_{\epsilon,R}$ there are $a_i \in \Z$ so that
$$
\gamma=\sum_{i=1}^{2g} a_i h_i
$$
in $\Pant_{300\epsilon,R}$ homology. 
\end{theorem}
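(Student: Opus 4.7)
The plan is to carry out the program sketched in the introduction: build up enough algebraic machinery in good pants homology to reduce every $\gamma \in \Gamma_{\epsilon,R}$ to a fixed rational combination of $2g$ elements $h_1, \ldots, h_{2g}$ generating $\mathbf{H}_1(\Su)$. Fix a base point $* \in \Su$, generators $g_1, \ldots, g_{2g}$ of $\pi_1(\Su, *)$, and for each $i$ a formal sum $h_i \in \Q\Gamma_{\epsilon,R}$ that represents the $\mathbf{H}_1$-class of $g_i$. The existence of such $h_i$ follows from standard equidistribution arguments in the spirit of Lemma \ref{lemma-connection}, and abundance lets us arrange the $h_i$ to be linearly independent in $\mathbf{H}_1(\Su)$.

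The cornerstone is the Algebraic Square Lemma: for suitable words $A_0, A_1, B_0, B_1, U, V \in \pi_1(\Su, *)$ with $U, V$ sufficiently long and all four curves $[A_i U B_j V]$ lying in $\Gamma_{\epsilon', R}$,
$$\sum_{i, j \in \{0,1\}} (-1)^{i+j}\,[A_i U B_j V] = 0$$
in $\Pant_{C\epsilon', R}$ homology, for an absolute constant $C$. The proof produces the four cuffs as boundaries of a configuration of good pants constructed via third connections in the sense of the subsection on essential immersed pants, with the Connection Lemma supplying the required auxiliary geodesics; the identity follows from cancellation of the other cuffs in the resulting pants.

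Using the square relation, define for $A, T \in \pi_1(\Su, *)$ and an auxiliary long word $U$
$$A_T = \tfrac{1}{2}\bigl([TAT^{-1}U] - [TA^{-1}T^{-1}U]\bigr).$$
A pair of applications of the square lemma, comparing $U$ with another valid choice $U'$ through an intermediate $U''$ of larger length, shows that $A_T$ is independent of $U$ in good pants homology. A further square, built by decomposing $T(XY)T^{-1}$ as $TXT^{-1} \cdot TYT^{-1}$ and symmetrizing in the inverses, gives additivity $(XY)_T = X_T + Y_T$. Iterating on $X = g_{i_1}^{\sigma(1)} \cdots g_{i_k}^{\sigma(k)}$,
$$X_T = \sum_{j=1}^{k} \sigma(j)\,(g_{i_j})_T$$
in good pants homology, and a final square argument (which cancels the conjugation by $T$ against its inverse) replaces $X_T$ by $[X]$. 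Since $(g_i)_T$ lies in the $\mathbf{H}_1$-class of $g_i$, we may either take $h_i := (g_i)_T$ directly or relate $(g_i)_T$ to an independently chosen $h_i$ by a short controlled good-pants cobordism, yielding $\gamma = \sum a_i h_i$ in good pants homology.

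The main obstacle I anticipate is length control. A word in the $g_i$'s representing $\gamma \in \Gamma_{\epsilon, R}$ has length $O(R)$, so additivity is invoked $O(R)$ times; the delicate point is that each invocation is an identity in the \emph{quotient} good pants homology, so we are free to select the intermediate representatives and auxiliary $U$'s so that every participating curve remains in $\Gamma_{O(\epsilon), R}$. The constant $300$ in $\Pant_{300\epsilon, R}$ accommodates the finite chain of multiplicative losses---the square lemma itself, well-definedness of $A_T$, additivity, the passage from $X_T$ to $[X]$, and the final identification with the basis---each costing only a bounded multiplicative factor in the length tolerance. Semi-randomness of the resulting correction function, which expresses boundaries with controlled weights, is deferred to the Randomization remarks as per the outline.
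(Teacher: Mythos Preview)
Your outline captures the overall shape of the argument but has a genuine gap at the recursive step. The hypothesis of the $XY$ Theorem is not merely that the curves $[TXT^{-1}U]$ lie in $\Gamma_{O(\epsilon),R}$; it is that the inefficiencies $I(\cdot T\cdot X\cdot \overline T\cdot)$, $I(\cdot T\cdot Y\cdot \overline T\cdot)$, $I(\cdot T\cdot XY\cdot \overline T\cdot)$ are all bounded by a fixed $\Delta$. These quantities depend only on $T$ and on the directions $i(\cdot X\cdot)$, $t(\cdot X\cdot)$ at the base point, and are \emph{not} adjustable by choosing the auxiliary $U$. If you write $\gamma$ as a word $g_{i_1}^{\sigma(1)}\cdots g_{i_k}^{\sigma(k)}$ and peel off generators one by one, the intermediate prefixes are arbitrary elements of $\pi_1(\Su,*)$ with no control on their initial and terminal directions; for a single fixed $T$ there is no uniform $\Delta$ that bounds $I(\cdot T\cdot X\cdot \overline T\cdot)$ over all of them. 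The Good Direction Lemma only produces such a $T$ for a \emph{finite} set $W$, with $\Delta=\Delta(\Su,W)$.

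The paper's resolution is to avoid the algebraic decomposition entirely above a fixed length scale. One first passes from $\gamma$ to $(X_0)_T+(X_1)_T$ by a direct pants construction (cutting $\gamma$ at nearly antipodal points and attaching connections), and then repeatedly halves each $X$ \emph{geometrically}: $X=X_0X_1$ is obtained by inserting a connection near the midpoint of $\cdot X\cdot$, chosen so that the angles $\theta^T_{X_i}$ stay below $\pi/3$ (hence $I(\cdot T\cdot X_i\cdot\overline T\cdot)\le\log 4$). After $\lfloor\log_2 R\rfloor$ rounds the pieces have length bounded independently of $R$, lie in a fixed finite set $W_l$, and only then is the Good Direction Lemma invoked to pick $T$ and reduce to generators. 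Your linear scheme misses this geometric subdivision, which is exactly what makes the inefficiency hypothesis verifiable at every step; without it the recursive application of the $XY$ Theorem is not justified.
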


\begin{remark} Observe that if $\gamma=0$ in $\bf{H}_1 ( \Su)$ then  $\gamma=0$ $\Pant_{300\epsilon,R}$ homology. 
\end{remark}

The proof of this theorem occupies the primary text of sections 4-9. But to prove the Ehrenpreis conjecture we require the following stronger result.

\begin{theorem}
  \label{thm-correction} Let $\epsilon > 0$. There exists $R_0 > 0$ (that
  depend only on $\Su$ and $\epsilon$) such that for every $R > R_0$ there
  exists a set $H =\{h_1, ..., h_{2 g} \} \subset \Q \Gamma_{\epsilon, R}$
  and a map $\phi : \Gamma_{\epsilon, R} \to \Q \Pant_{300 \epsilon, R}$ such
  that
  \begin{enumerate}
    \item $h_1,...,h_{2g}$ is a basis for  $\bf{H}_1 ( \Su)$,
    
    \item $\partial (\phi (\gamma)) - \gamma \in \Z H$,
    
    \item
    \[ \sum _{\gamma \in \Gamma_{\epsilon, R}} | \phi (\gamma) (\Pi) | < P
       (R) e^{- R} , \]
for every $\Pi \in \Pant_{300\epsilon,R}$, where the polynomial $P(R)$ depends only on $\Su$ and $\epsilon$.
  \end{enumerate}
\end{theorem}

\begin{remark}
  Note that the map $\phi$ depends on $\epsilon$ and $R$, so sometimes we
  write $\phi = \phi_{\epsilon, R}$.
\end{remark}

\begin{remark} An inequality of the form 
$$
\sum_{\gamma \in \Gamma_{\epsilon,R}}|q(\gamma)(\Pi)| \le A
$$
is equivalent to saying 
$$
||q(\alpha)||_{\infty}\le A||\alpha||_{\infty}
$$
for all $\alpha \in \Q\Gamma_{\epsilon,R}$.
\end{remark}

\begin{remark} Observe that $(1)$ and $(2)$ imply that $\pt \phi \pt=\pt$.
\end{remark} 

The existence of the function $\phi$ that satisfies the conditions (1) and (2) follows from Theorem \ref{thm-correction-1}. The condition (3) will be proved using our randomization theory (see  Appendix 1). As we go along, after every relevant homological statement  we  make Randomization remarks. Theorem \ref{thm-correction} then follows from  these randomization remarks as we explain at the end of Section 9.

\begin{remark}  The estimate (3) in the statement of the theorem can be reformulated as follows. Consider the standard measures
$\sigma_{\Gamma}$ on $\Gamma_{\epsilon,R}$ and $\sigma_{\Pant}$ on $\Pant_{300\epsilon,R}$. Then the map $\phi$ is $P(R)$-semirandom with respect to $\sigma_{\Gamma}$ and $\sigma_{\Pant}$. See the Appendix for definitions of the standard measures and the notion of semirandom maps.
\end{remark}

The image of $\phi$ lies in $\Pant_{300\epsilon,R}$, and we want it to lie in $\Pant_{\epsilon,R}$, so we require the following:

Let $M>1$. The following lemma states that any curve $\gamma \in \Gamma_{\epsilon,R}$ is homologous to some  $s \in \R \Gamma_{\frac{\epsilon}{M},R}$ in $\Pant_{\epsilon,R}$ homology.

\begin{lemma}
  \label{lemma-correction}Let $\epsilon, M > 0$. Then there exists $R_0 > 0$
  such that for every $R > R_0$ we can find a map $q_M = q :
  \Gamma_{\epsilon, R} \to \Q^+ \Pant_{\epsilon, R}$ such that
  \begin{enumerate}
    \item For every $\gamma \in \Gamma_{\epsilon, R}$, $q (\gamma)$
    is a positive sum of pants, all of which have $\gamma$ as one
    boundary cuff (with the appropriate orientation), and two other cuffs in
    $\Gamma_{\frac{\epsilon}{M}, R}$, and  $\gamma - \partial
    q (\gamma) \in \Q \Gamma_{\frac{\epsilon}{M}, R}$,
    
    \item For every $\Pi \in \Pant_{\epsilon,R}$, we have
    \[ \sum _{\gamma \in \Gamma_{\epsilon, R}} |q
       (\gamma) (\Pi) | \le \frac{K}{R} e^{- R}, \]
    for some constant $K \equiv K ( \Su, \epsilon, M) > 0$, where $q
    (\gamma) (\Pi) \in \Q^+$ is the coefficient of $\Pi$ in $q
    (\gamma)$.
  \end{enumerate}
\end{lemma}

\begin{proof} It follows from the remark after Lemma \ref{lemma-number} that the number of pants  which have $\gamma$ as one
boundary cuff, and two other cuffs in  $\Gamma_{\epsilon, \frac{R}{M}}$ is of the order $Re^{R}$. Let $q(\gamma)$ be the average of these pants (the average of a finite set $S$ is the formal sum of elements from $S$ where each element in the sum has the weight $\frac{1}{|S|}$). The inequality in the condition (2) follows from the fact that for any $\Pi \in \Pant_{1,R}$ the sum  

\[ 
\sum _{\gamma \in \Gamma_{\epsilon, R}} |q
       (\gamma) (\Pi) |, 
\]
has at most 3 non-zero terms.

\end{proof}

We can now state the following improved version of Theorem \ref{thm-correction}. This new theorem is  exactly the same as the previous one except that the new function $\phi$, which is denoted by  $\phi_{\text{new}}$, maps  $\Gamma_{\epsilon, R}$ to 
$\Q \Pant_{\epsilon, R}$ whereas the old $\phi$ maps $\Gamma_{\epsilon, R}$ to  $\Q \Pant_{300\epsilon, R}$.

\begin{theorem}
  \label{thm-correction-0} Let $\epsilon > 0$. There exists $R_0 > 0$ (that
  depend only on $\Su$ and $\epsilon$) such that for every $R > R_0$ there
  exists a set $H =\{h_1, ..., h_{2 g} \} \subset \Q \Gamma_{\epsilon, R}$
  and a map $\phi_{\text{new}} : \Gamma_{\epsilon, R} \to \Q \Pant_{\epsilon, R}$ such
  that
  \begin{enumerate}
    \item $h_1,...,h_{2g}$ is a basis for  $\bf{H}_1 ( \Su)$,
    
    \item $\partial (\phi_{\text{new}} (\gamma)) - \gamma \in \Z H$,
    
    \item
    
\[ 
\sum _{\gamma \in \Gamma_{\epsilon, R}} | \phi_{\text{new}} (\gamma) (\Pi) | < 
P(R) e^{- R} . 
\]
where the polynomial $P(R)$ depends only on $\Su$ and $\epsilon$.
\end{enumerate}
\end{theorem}

\begin{proof} We extend the function $\phi$ to $\R \Gamma_{\epsilon, R}$ by linearity. For $\gamma \in \Gamma_{\epsilon,R}$ we let 
$$
\phi_{\text{new}}(\gamma)=\phi(\gamma-\partial{q}(\gamma))+q(\gamma),
$$
where $\phi$ is from the previous theorem and $q=q_{300}$ is the improvement function from Lemma \ref{lemma-correction}. Then since $H$ is a generating set for $\bf{H}_1 ( \Su)$ it follows that
$\partial (\phi_{\text{new}} (\gamma))=\partial (\phi(\gamma))$ and thus we obtain $\partial (\phi_{\text{new}} (\gamma)) - \gamma \in \Z H$. It remains to verify the inequality $(3)$ of the theorem.

For each $\Pi \in \Pant_{\epsilon,R}$ we have 
\begin{equation}\label{nova-1}
\sum\limits_{\gamma} \big|\phi_{\text{new}}(\gamma)(\Pi) \big| \le \sum\limits_{\gamma} \big| q(\gamma)(\Pi) \big| +\sum\limits_{\gamma} \big| \phi(\gamma-\partial{q}(\gamma))(\Pi) \big|.
\end{equation}
On the other hand, for each $\eta \in \Gamma_{\epsilon,R}$ we have the inequality
$$
\sum\limits_{\gamma} \big| \partial{q}(\gamma)(\eta) \big| \le C_1,
$$
for some universal constant $C_1>0$. In other words, the total weight of $\eta$ in the formal sum of curves 
$$
\sum\limits_{\gamma} \partial{q}(\gamma) \, \in \Q \Gamma_{\epsilon,R}
$$
is at most $C_1$. This follows from the last inequality of Lemma \ref{lemma-correction} and Lemma \ref{lemma-number}.

Thus, we have
$$
\sum\limits_{\gamma} \big| \phi(\gamma-\partial{q}(\gamma))(\Pi) \big|\le \sum\limits_{\gamma} \big| \phi(\gamma)(\Pi) \big|+
C_1\sum\limits_{\gamma} \big| \phi(\gamma)(\Pi) \big|.
$$
We replace this inequality in (\ref{nova-1}) and the theorem follows.

\end{proof}

Of course $\phi_{\text{new}}$ extends linearly to $\Q \Gamma_{\epsilon, R}$.

We observe that if $\gamma$ is zero in $\bf{H}_1 ( \Su)$ then $\partial{\phi_{\text{new}}(\gamma)}$ is equal to $\gamma$ because  $\partial (\phi_{\text{new}} (\gamma)) - \gamma \in \Z H$ and $\partial (\phi_{\text{new}} (\gamma))$ differs from $\gamma$ by a boundary. In particular, for any $\mu \in \Q \Pant_{\epsilon, R}$ the equality 
$$
\partial{\phi_{\text{new}}} \partial{\mu}=\partial{\mu}
$$
holds.

\subsection{A proof of Theorem \ref{thm-mes} } First we state and prove the
following lemma about equivalent measures on the circle $\R / 2 R \Z$, where
$R > 0$ is a parameter. Recall that $\lambda$ denotes the Lebesgue measure on
the circle $\R / 2 R \Z$.

\begin{lemma}
  \label{lemma-transport} If $\alpha$ is $\delta$-equivalent to $K \lambda$ on
  $\R / 2 R \Z$, for some $K > 0$, and $\beta$ is a measure on $\R /2R \Z$, then $\alpha + \beta$ is $( \frac{| \beta
  |}{2 K} + \delta)$-equivalent to $(K + \frac{| \beta |}{2 R}) \lambda$ on
  $\R / 2 R \Z$.
\end{lemma}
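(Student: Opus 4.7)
The plan is to verify the defining inequality of $(\delta+\tfrac{|\beta|}{2K})$-equivalence directly. Set $\epsilon := \delta+\tfrac{|\beta|}{2K}$. Since $X=\R/2R\Z$ is the circle, the remark made just after the definition of equivalent measures reduces the verification to the case that $A$ is an interval, so I would work only with intervals throughout.

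First I would check that the total masses match. Because $\alpha$ is $\delta$-equivalent to $K\lambda$ one has $\alpha(\R/2R\Z)=2KR$, hence
\[
(\alpha+\beta)(\R/2R\Z) \;=\; 2KR+|\beta| \;=\; \bigl(K+\tfrac{|\beta|}{2R}\bigr)\cdot 2R,
\]
which is the total mass of $(K+\tfrac{|\beta|}{2R})\lambda$.

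Next, for an interval $A$ of length $\ell$, the hypothesis on $\alpha$ together with the trivial bound $\beta(A)\le|\beta|$ yields
\[
(\alpha+\beta)(A) \;\le\; K\,\lambda(\Ne_\delta(A))+|\beta|.
\]
The remaining task is to dominate this by $(K+\tfrac{|\beta|}{2R})\,\lambda(\Ne_\epsilon(A))$, and this requires a short case analysis based on how $\Ne_\delta(A)$ and $\Ne_\epsilon(A)$ interact with the total length $2R$ of the circle. In the generic case $\ell+2\epsilon\le 2R$, both neighborhoods are honest intervals, and a direct computation using $2K(\epsilon-\delta)=|\beta|$ shows that the difference between right and left sides equals exactly $\tfrac{|\beta|(\ell+2\epsilon)}{2R}\ge 0$. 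In the borderline case $\ell+2\delta\le 2R<\ell+2\epsilon$, only the $\epsilon$-neighborhood fills the circle, and the required inequality reduces to $K(\ell+2\delta)\le 2KR$, i.e., the case hypothesis. In the wrapped case $\ell+2\delta\ge 2R$, both neighborhoods are the entire circle and the estimate collapses to $\alpha(A)\le 2KR$, which holds trivially since $\alpha(\R/2R\Z)=2KR$.

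The lemma is essentially a bookkeeping exercise in the arithmetic of $\delta$- and $\epsilon$-neighborhoods on a circle of length $2R$, so I do not anticipate any substantive obstacle. The only subtlety is the wrap-around behaviour addressed by the three cases; the choice $\epsilon-\delta=\tfrac{|\beta|}{2K}$ is precisely the one that makes the generic case close, and the other two cases then become automatic.
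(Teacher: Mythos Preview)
Your proof is correct and follows essentially the same approach as the paper's: bound $(\alpha+\beta)(I)$ by $K(|I|+2\delta)+|\beta|$ and then compare with $(K+\tfrac{|\beta|}{2R})\,|\Ne_{\delta'}(I)|$. Your treatment is in fact more careful than the paper's, which only writes out the generic case (where $\Ne_{\delta'}(I)$ is a proper subinterval) and leaves the total-mass check and the wrap-around cases implicit.
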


\begin{proof}
  Recall the definition of $\delta$-equivalent measures from the previous
  section. We need to prove that $(\alpha + \beta) (I) \le (K + \frac{| \beta
  |}{2 R}) |\Ne_{\delta'} (I)|$, where $\delta' = \delta + \frac{| \beta |}{2
  K}$, for any interval $I$ such that $\delta'$ neighbourhood of $I$ is a
  proper subset of the circle $\R / 2 R \Z$.
  
  We have 
  
  \begin{align*}
    ({\alpha}+{\beta})(I) & {\le}K(|I|+2{\delta})+|{\beta}|\\
    &
    {\le}\left(K+{\frac{|{\beta}|}{2R}}\right)\left(|I|+2{\delta}+{\frac{|{\beta}|}{K}}\right)\\
    &=\left(K+ \frac{|{\beta}|}{2R} \right)\big|{\Ne}_{{\delta}'}(I)\big|.
  \end{align*}
\end{proof}

We give the following definitions. To any measure $\alpha \in \Mes ( \NB (
\sqrt{\gamma}))$ we associate the number  $|\alpha|(\gamma)=|\alpha_{+}(\gamma)|+|\alpha_{-}(\gamma)|$. 

We proceed with the proof of Theorem \ref{thm-mes}. Let $\mu$ be the measure on $\Pant_{\epsilon, R}$ from Theorem \ref{thm-equidistribution}. 
Define the measure $\mu_1$ on $\Pant_{\epsilon, R}$  by letting $\mu_1=\mu-\phi(\partial{\mu})$, where $\phi=\phi_{\text{new}}$ is from Theorem \ref{thm-correction-0}. We show that $\mu_1$ is the measure that satisfies the conclusions of Theorem \ref{thm-mes}.

As observed before $\partial{\mu}_1=0$. It remains to show that $\ph{\mu}_1(\gamma)$ is $P(R)e^{-qR}$ equivalent to some multiple of the Lebesgue measure on  $\NB(\sqrt{\gamma})$.
Recall from Theorem \ref{thm-equidistribution} that  the measure $\ph_{\pm}\mu (\gamma)$ is $Ce^{-q R}$ equivalent to 
$K^{\pm}_{\gamma} \lambda_{\pm} (\gamma)$, for some constants $K^+_{\gamma}$ and $K^-_{\gamma}$ that satisfy the inequality
\begin{equation}\label{eq-novo-1}
\bigg{|} \frac{K^+_{\gamma}}{K^-_{\gamma}} - 1 \bigg{|} < C e^{-q R}, 
\end{equation}
and that $K^{\pm}_{\gamma} \asymp Re^{R}$. Then for all $\gamma$, from (\ref{eq-novo-1}) we get 
$$
\big| \partial \mu(\gamma) \big| \le CR e^{(1-q)R},
$$
which together with Theorem \ref{thm-correction-0} yields the inequality
$$
|\phi (\partial{\mu}) |(\Pi)\le P(R)e^{-qR},
$$
for each pair of pants $\Pi$, and we obtain
$$
|\ph{\phi (\partial{\mu})}|(\gamma)\le P(R)e^{(1-q)R},
$$
for all $\gamma$. Since $K^{\pm}_{\gamma} \asymp Re^{R}$ we conclude from Lemma \ref{lemma-transport} that $\ph{\mu}_1(\gamma)$ is $P(R)e^{-qR}$ equivalent to some multiple of the Lebesgue measure $\lambda(\gamma)$ on  $\NB(\sqrt{\gamma})$, and we are done.

\section{The theory of inefficiency} In this section we develop the theory of inefficiency.  This theory supports the geometric side of the correction theory that is used to prove our main technical result the Good Correction Theorem.

Before we begin with the estimates of this section, we will provide a brief overview of the remainder of this paper. Our goal is to prove identities in the good pants homology, which means that we need to generate a formal sum of good pants whose boundary is a certain given formal sum of good curves. How do we generate a pair of good pants?  

We generate a pair of good pants by constructing a $\Theta$-graph made out of geodesic segments; the Connection Lemma insures that we have enough geodesic segments with the desired properties, and the Theory of Inefficiency allows us to estimate the length of the cuffs of the associated pair of pants in terms of the length of the geodesic segments and the angle at which they meet.

In every identity we prove in the good pants homology we will state out hypothesis in terms of the Theory of Inefficiency, and every time we estimate the length of a geodesic segment or a closed geodesic we will use this theory as well.

\subsection{The inefficiency of a piecewise geodesic arc} By $\TB \Ha$ we denote the unit tangent bundle of $\Ha$. Elements of $\TB \Ha$ are pairs $(p, u)$, where $p \in \Ha$ and $u \in \TB_p \Ha$. 
For $u, v \in \TB_p \Ha$ we let $\Ang (u, v)$ denote the unoriented angle between $u$ and $v$.  Let $\alpha : [a, b] \to \Ha$ be a unit speed geodesic segment. We let $i(\alpha) = \alpha' (a)$, and $t (\alpha) = \alpha' (b)$.

Let  $\alpha_1,...,\alpha_n$ denote oriented piecewise geodesic arcs on $\Su$  such that the  terminal point of $\alpha_i$ is the initial point of $\alpha_{i+1}$.
By $\alpha_1\alpha_2...\alpha_n$ we denote the concatenation of the arcs $\alpha_i$. If the initial point of $\alpha_1$ and the terminal point of $\alpha_n$ are the same, by 
$[\alpha_1\alpha_2 ... \alpha_n]$ we denote the corresponding closed curve.

We define the inefficiency operator as follows. We first discuss the inefficiency of piecewise geodesic arcs and after that the inefficiency of piecewise geodesic closed curves.

\begin{definition}
Let $\alpha$ be an arc on a surface. By $\gamma$ we denote the  geodesic arc with the same endpoints and homotopic to $\alpha$. We let $I(\alpha)=\len(\alpha)-\len(\gamma)$. We call $I(\alpha)$ the inefficiency of $\alpha$ (the inefficiency $I(\alpha)$ is equal to 0 if and only if $\alpha$ is a geodesic arc). 
\end{definition}

We  observe  the monotonicity of inefficiency. Let $\alpha,\beta,\gamma \subset \Ha$ be three piecewise geodesic arcs in $\Ha$ such that  $\alpha \beta \gamma$ is a well defined piecewise geodesic arc. Then $I(\alpha \beta \gamma) \ge I(\beta)$. This is seen as follows. Let $\eta$ be the geodesic arc with the same endpoints as  $\alpha \beta \gamma$, and let $\beta'$ be the geodesic arc with the same endpoints as $\beta$. Then 
\begin{align*}
I(\alpha\beta\gamma) &= \len(\alpha \beta \gamma)-\len(\eta) \\
&\ge \len(\alpha \beta \gamma) -\len(\alpha \beta' \gamma) \\
&= \len(\beta)-\len(\beta') \\
&=I(\beta).
\end{align*}

We also define the inefficiency function of an angle $\theta \in [0,\pi]$ as follows. Let $\alpha_{\infty}$ and $\beta_{\infty}$ be two geodesic rays in $\Ha$ that have the same initial point, such that $\theta$ is the exterior angle between $\alpha_{\infty}$ and $\beta_{\infty}$ (then $\theta$ is also the bending angle of the  piecewise geodesic $\alpha^{-1}_{\infty} \beta_{\infty}$). 
We want to define $I(\theta)$ as the inefficiency of $\alpha^{-1}_{\infty} \beta_{\infty}$, but since the piecewise geodesic  $\alpha^{-1}_{\infty} \beta_{\infty}$ is infinite in length we need to prove that such a definition is valid.

Consider a geodesic triangle in $\Ha$ with sides $A,B$ and $C$, and let $\theta>0$ be the exterior angle opposite to $C$ (we also let $\len(A)=A$, $\len(B)=B$, $\len(C)=C$). 
Then 
$$
\cosh C = \cosh A \cosh B+\cos \theta \sinh A \sinh B, 
$$
and therefore
$$
\frac{\cosh C }{e^{A+B} }=\frac{\cosh A }{e^{A} } \frac{\cosh B }{e^{B} }+\cos \theta  \frac{\sinh A }{e^{A} } \frac{\sinh B }{e^{B} }.
$$
We conclude that
$$
\frac{\cosh C }{e^{A+B} } \to \frac{1}{4}(\cos \theta +1),
$$
when $A,B \to \infty$. Since 
$$
\frac{\cosh C }{e^{C} } \to \frac{1}{2}, \,\, C \to \infty,
$$
we get 
$$
e^{C-A-B} \to \left( \cos \frac{\theta}{2} \right) ^{2},
$$
and therefore 
$$
A+B-C \to 2 \log \sec \frac{\theta}{2}.
$$

Let $r,s>0$ and let $\alpha_r$ be the geodesic subsegment of $\alpha_{\infty}$ (with the same initial point) of length $r$. Similarly, $\beta_s$ is a geodesic subsegment of $\beta_{\infty}$ of length $s$.
Then we let
$$
I(\theta)= I(\alpha^{-1}_{\infty} \beta_{\infty}) = \lim_{r,s \to \infty} I(\alpha^{-1}_r \beta_s).
$$
It follows from the above discussion that this limit exists and
\begin{equation}\label{angle-ineff}
I(\theta)=2 \log \sec \frac{\theta}{2}.
\end{equation}

\subsection{Preliminary lemmas} We have the following lemma. 

\begin{lemma}\label{lemma-proj-arc} 
Let $\alpha$ denote an arc  on $\Su$, and let $\gamma$ be the appropriate geodesic arc with the same endpoints as $\alpha$ and homotopic to $\alpha$. 
Choose lifts of $\alpha$ and $\gamma$ in the universal cover $\Ha$ that have the same endpoints and let $\pi:\alpha \to \gamma$ be the nearest point projection. Let 
$$
E(\alpha)=\sup\limits_{x \in \alpha} d(x,\pi(x)). 
$$
Then 
$$
E(\alpha) \le \frac{I(\alpha)}{2}+ \log 2. 
$$

\end{lemma}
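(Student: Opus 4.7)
The plan is to work in $\Ha$ with the chosen lifts of $\alpha$ and $\gamma$ that share endpoints, since all the relevant quantities (hyperbolic distance, length, the projection $\pi$) are preserved under the covering map at the level of lifts with the same endpoints. Fix any $x \in \alpha$ and let $y = \pi(x) \in \gamma$, with $d := d_{\Ha}(x,y)$. In $\Ha$ the nearest-point projection to a geodesic is well defined and the segment $xy$ is orthogonal to $\gamma$ at $y$, so for each endpoint $p,q$ of $\gamma$ the triangle $(p,y,x)$ (resp.\ $(q,y,x)$) is a hyperbolic right triangle. Write $A = d_{\Ha}(p,y)$, $B = d_{\Ha}(q,y)$, $A' = d_{\Ha}(p,x)$, $B' = d_{\Ha}(q,x)$. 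The hyperbolic Pythagorean theorem gives
\[
\cosh A' = \cosh A \cosh d, \qquad \cosh B' = \cosh B \cosh d.
\]

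Next I would split $\alpha$ at $x$ into $\alpha = \alpha_1 \alpha_2$, so that $\len(\alpha_i)$ is bounded below by the hyperbolic distance between its endpoints. This gives $\len(\alpha) \ge A' + B'$, while $\len(\gamma) = A + B$. Hence
\[
I(\alpha) = \len(\alpha) - \len(\gamma) \ge (A' - A) + (B' - B).
\]
So the lemma reduces to the pointwise estimate $A' - A \ge d - \log 2$ (and the symmetric one for $B$).

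The key computation is therefore to prove, for all $A, d \ge 0$,
\[
\operatorname{arccosh}\!\big(\cosh A \cosh d\big) - A \ge \log \cosh d \ge d - \log 2.
\]
The right inequality is immediate from $\cosh d \ge e^d/2$. For the left inequality I would show instead the equivalent statement $\cosh A \cosh d \ge \cosh(A + \log \cosh d)$. Expanding the right side with the addition formula and using
\[
\cosh(\log \cosh d) = \tfrac{\cosh^2 d + 1}{2\cosh d}, \qquad \sinh(\log \cosh d) = \tfrac{\sinh^2 d}{2\cosh d},
\]
the inequality collapses, after clearing $2\cosh d$, to $\cosh A \sinh^2 d \ge \sinh A \sinh^2 d$, which holds since $\cosh A \ge \sinh A$.

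Combining the two pointwise estimates yields $I(\alpha) \ge 2d - 2\log 2$, and taking the supremum over $x \in \alpha$ gives $E(\alpha) \le I(\alpha)/2 + \log 2$, as required. The only subtlety I anticipate is making sure the splitting and lifting arguments are clean: one must pick the right lifts (those sharing endpoints) so that the projection on $\alpha$ matches the projection onto the lifted $\gamma$, but this is standard. The computational heart of the argument is the trigonometric inequality above, which is elementary once written in the form $\cosh A \ge \sinh A$.
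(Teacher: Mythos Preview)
Your proof is correct and follows essentially the same geometric idea as the paper: split at a point $x$ of (near-)maximal distance $d$ from $\gamma$, obtaining two hyperbolic right triangles with legs $d$ and the two pieces $A,B$ of $\gamma$, and show that in each triangle the hypotenuse exceeds the base by at least $d-\log 2$.

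The difference is purely in execution. The paper phrases the triangle estimate in its inefficiency language: the broken path ``up $E$ then along $L^{-}$'' has inefficiency at most $I(\tfrac{\pi}{2})=\log 2$ (by monotonicity of inefficiency and the formula $I(\theta)=2\log\sec\tfrac{\theta}{2}$), which is exactly your inequality $L^{-}+E-A^{-}\le\log 2$ in different notation. You instead prove this inequality directly from the hyperbolic Pythagorean theorem via the neat reduction to $\cosh A\ge\sinh A$. Your version has the advantage of being self-contained and of working at an arbitrary point $x\in\alpha$ (then taking the sup), whereas the paper appeals to a ``minimally inefficient arc'' pictured in a figure; the paper's version has the advantage of reusing the $I(\theta)$ machinery it has already set up.
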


\begin{proof}  Let $E>0$. The minimally inefficient arc  $\alpha$ (that has the same endpoints as $\gamma$ and is  homotopic to $\gamma$),  that is at the distance $E$ from $\gamma$ is given in Figure \ref{fig-open}. Here $\gamma$ is divided into two sub-segments of length $L^{-}$ and $L^{+}$. Let $A^{-}=\len(\alpha^{-})$ and $A^{+}=\len(\alpha^{+})$. By the monotonicity of inefficiency, and using the inefficiency for angles, we have
$$
E+L^{-}-A^{-}<I(\frac{\pi}{2}),
$$
and 
$$
E+L^{+}-A^{+}<I(\frac{\pi}{2}).
$$
Summing up yields
$$
E<\frac{I(\alpha)}{2}+I(\frac{\pi}{2})=\frac{I(\alpha)}{2}+\log 2,
$$
since by (\ref{angle-ineff}) we have $I(\frac{\pi}{2})=\log 2$.

\begin{figure}
  {
    
    \input{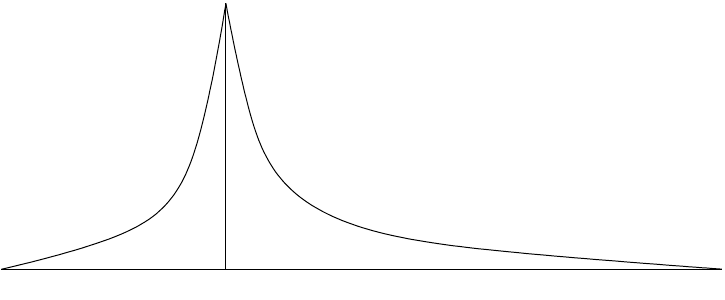_t}
  }
  \caption{The case where $\gamma$ is an arc}
  \label{fig-open}
\end{figure}

\end{proof}

The following is the New Angle Lemma.

\begin{lemma}[New Angle Lemma]\label{lemma-new-angle} Let $\delta,\Delta>0$ and  let $\alpha \beta \subset \Ha$ be a piecewise geodesic arc, where $\alpha$ is piecewise geodesic arc  and $\beta$ is a geodesic arc. Suppose that $\gamma$ is the geodesic arc with the same endpoints as $\alpha \beta$. There exists $L=L(\delta,\Delta)>0$ such that if $\len(\beta)>L$ and $I(\alpha \beta) \le \Delta$ then the unoriented angle between $\gamma$ and $\beta$ is at most $\delta$.
\end{lemma}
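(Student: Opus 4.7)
The plan is to invoke Lemma \ref{lemma-proj-arc} to locate the initial endpoint of $\beta$ within a bounded distance of $\gamma$, and then extract the angle bound from the hyperbolic law of cosines applied to the resulting very thin triangle.

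More precisely, let $P_0$ be the initial point of $\alpha$, let $p$ be the terminal point of $\alpha$ (equivalently the initial point of $\beta$), and let $q$ be the terminal point of $\beta$, so $\gamma$ is the geodesic segment in $\Ha$ from $P_0$ to $q$. Applying Lemma \ref{lemma-proj-arc} to $\alpha\beta$ (with its geodesic representative $\gamma$), the nearest-point projection $p'$ of $p$ onto the geodesic line containing $\gamma$ satisfies
\[
d(p,p') \le \tfrac{1}{2} I(\alpha\beta) + \log 2 \le E := \tfrac{\Delta}{2} + \log 2.
\]
Set $L = \len(\beta) = d(p,q)$ and $L' = d(p',q)$. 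The triangle inequality gives $L' \ge L - E$, so for $L \gg E$ the three points $p, p', q$ form a genuine, very thin triangle.

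Next, I will apply the hyperbolic law of cosines in the triangle with vertices $p, p', q$. Letting $\theta$ denote the interior angle at $q$, we get
\[
\cos\theta \;=\; \frac{\cosh L\,\cosh L' - \cosh d(p,p')}{\sinh L\,\sinh L'} \;\ge\; \frac{\cosh L\,\cosh L' - \cosh E}{\sinh L\,\sinh L'}.
\]
Since $L' \ge L - E$ with $E$ fixed, both $L$ and $L'$ tend to infinity together, so $\cos\theta \to 1$ uniformly in $\alpha$ (subject to $I(\alpha\beta) \le \Delta$). Choosing $L_0(\delta,\Delta)$ large enough so that the right-hand side exceeds $\cos\delta$ then yields $\theta < \delta$.

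The main subtlety, and the step I expect to require the most care, is identifying $\theta$ with the unoriented angle between $\gamma$ and $\beta$ at $q$. This identification requires that $p'$ lie on the segment $\gamma$ (or at least not beyond $q$ along the line), since in the latter case the interior angle would be the supplement of the desired quantity. To rule this case out, note that if $p'$ lay beyond $q$ on the line of $\gamma$, then $d(P_0,p') = \len(\gamma) + L'$, while $d(P_0,p') \le d(P_0,p) + E \le \len(\alpha) + E$; combined with the inefficiency bound $\len(\alpha) + L - \Delta \le \len(\gamma)$, this forces $L' \le E + \Delta - L$, which is impossible once $L > E + \Delta$. Absorbing this into the choice of $L_0$ finishes the proof.
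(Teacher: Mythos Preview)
Your proof is correct and follows essentially the same approach as the paper: bound the distance from the bend point $p$ to $\gamma$ via Lemma \ref{lemma-proj-arc}, then use hyperbolic trigonometry in the resulting thin triangle at $q$. The paper does this slightly more directly by exploiting the right angle at the foot $p'$ and applying the law of sines, $\sinh h/\sin\theta=\sinh(\len(\beta))$, in place of your law of cosines computation; your version works just as well. Your explicit check that $p'$ does not lie beyond $q$ (so that $\theta$ really is the angle between $\gamma$ and $\beta$ rather than its supplement) is a point the paper passes over, so in that respect your argument is actually more complete.
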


\begin{proof} Denote by $\theta$ the angle between $\gamma$ and $\beta$, and let $h$ be the distance between the other endpoint of $\beta$ and $\gamma$. Then 
$$
\frac{\sinh(h)}{\sin(\theta)}=\sinh(\len(\beta)).
$$
The lemma follows from this equation and the fact that $h$ is bounded in terms of $I(\alpha \beta)$ (see Lemma \ref{lemma-proj-arc}).

\end{proof}

We also have 

\begin{lemma}\label{lemma-pomoc} Suppose that $\alpha \beta \gamma$ is a concatenation of three geodesic arcs in $\Ha$, and let $\theta_{\alpha\beta}$ and $\theta_{\beta\gamma}$ be the two bending angles. 
Suppose that  $\theta_{\alpha\beta}, \theta_{\beta\gamma}<\frac{\pi}{2}$. Then 
$$
I(\alpha\beta\gamma) \le \log(\sec(\theta_{\alpha\beta})) +\log( \sec(\theta_{\beta\gamma})).
$$
\end{lemma}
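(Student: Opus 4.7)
The plan is to reduce to the case where $\alpha$ and $\gamma$ are geodesic rays by monotonicity of inefficiency, decompose the concatenation at the middle bend via the geodesic replacement of $\alpha\beta$, and bound each piece using formula (\ref{angle-ineff}) followed by a short trigonometric computation.

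First, by monotonicity of inefficiency, extending $\alpha$ backward and $\gamma$ forward to geodesic rays $\alpha_\infty, \gamma_\infty$ only increases $I(\alpha\beta\gamma)$, so it suffices to bound the limiting inefficiency $I_\infty := \lim_{r,s\to\infty}I(\alpha_r\beta\gamma_s)$, where $\alpha_r, \gamma_s$ are length-$r,s$ truncations of $\alpha_\infty, \gamma_\infty$. Let $D_r$ denote the geodesic from $\alpha_r(-r)$ to $P_3$ (the endpoint of $\beta$), so that $I(\alpha_r\beta\gamma_s) = I(\alpha_r\beta) + I(D_r\gamma_s)$ directly from the definitions; sending $r\to\infty$ makes $D_r$ converge to the geodesic ray $\delta$ from the ideal endpoint $\xi_\alpha$ of $\alpha_\infty$ to $P_3$, and then sending $s\to\infty$ yields
\[
I_\infty = I_\infty(\alpha_\infty\beta) + I_\infty(\delta\gamma_\infty).
\]

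Second, working in the upper half-plane with $\alpha_\infty$ along the positive imaginary axis and $P_2 = i$, the segment $\beta$ traces an arc of the Euclidean semicircle centered at $(-\cot\theta_{\alpha\beta},0)$ of radius $1/\sin\theta_{\alpha\beta}$; the hyperbolic arc-length element on this semicircle is $ds = d\phi/\sin\phi$ in the polar angle $\phi$. A direct integration gives $\tan(\phi_3/2) = \tan(\theta_{\alpha\beta}/2)\, e^{-\len(\beta)}$, where $\phi_3$ denotes the angle at $P_3$ between $\beta$'s incoming direction and the vertical (which is $\delta$'s incoming direction). Combining this with
\[
\cosh D_r = \cos^2(\theta_{\alpha\beta}/2)\cosh(r+\len(\beta)) + \sin^2(\theta_{\alpha\beta}/2)\cosh(r-\len(\beta))
\]
(from the hyperbolic law of cosines applied to the triangle with vertices $\alpha_r(-r), P_2, P_3$) and sending $r\to\infty$ gives
\[
I_\infty(\alpha_\infty\beta) = -\log\bigl(\cos^2(\theta_{\alpha\beta}/2) + \sin^2(\theta_{\alpha\beta}/2)e^{-2\len(\beta)}\bigr) = 2\log\sec(\theta_{\alpha\beta}/2) - 2\log\sec(\phi_3/2),
\]
the last equality following from the relation between $\phi_3$ and $\len(\beta)$.

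Third, the bending angle $\theta^*$ of $\delta\gamma_\infty$ at $P_3$ satisfies $\theta^* \le \phi_3 + \theta_{\beta\gamma}$ by the triangle inequality for angular distance on the unit circle in $T_{P_3}\Ha$, so formula (\ref{angle-ineff}) gives $I_\infty(\delta\gamma_\infty) \le 2\log\sec((\phi_3+\theta_{\beta\gamma})/2)$. The sum
\[
I_\infty \le 2\log\sec(\theta_{\alpha\beta}/2) - 2\log\sec(\phi_3/2) + 2\log\sec\bigl((\phi_3+\theta_{\beta\gamma})/2\bigr)
\]
is increasing in $\phi_3 \in [0,\theta_{\alpha\beta}]$ (its derivative is $\tan((\phi_3+\theta_{\beta\gamma})/2) - \tan(\phi_3/2) > 0$), so the maximum over this range occurs at $\phi_3 = \theta_{\alpha\beta}$, equal to $2\log\sec((\theta_{\alpha\beta}+\theta_{\beta\gamma})/2)$. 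The elementary identity $\cos(\theta_{\alpha\beta}-\theta_{\beta\gamma}) \le 1$ rearranges to $\cos^2((\theta_{\alpha\beta}+\theta_{\beta\gamma})/2) \ge \cos\theta_{\alpha\beta}\cos\theta_{\beta\gamma}$, which gives $2\log\sec((\theta_{\alpha\beta}+\theta_{\beta\gamma})/2) \le \log\sec\theta_{\alpha\beta}+\log\sec\theta_{\beta\gamma}$, as required. The main obstacle will be carrying out the upper-half-plane computation so as to package $I_\infty(\alpha_\infty\beta)$ into the clean form $2\log\sec(\theta_{\alpha\beta}/2) - 2\log\sec(\phi_3/2)$; once this identity is in hand, the remaining monotonicity and trigonometry steps fall out immediately.
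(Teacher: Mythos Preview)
Your argument is correct but takes a considerably more elaborate route than the paper's. The paper drops the geodesics $\eta_1,\eta_2$ perpendicular to $\beta$ at the two junction points; since $\theta_{\alpha\beta},\theta_{\beta\gamma}<\pi/2$, the endpoints of $\alpha\beta\gamma$ lie on opposite sides of the strip bounded by $\eta_1,\eta_2$, so the straightened geodesic $\eta$ must cross both and hence $\len(\eta)\ge \len(A_\alpha)+\len(\beta)+\len(A_\gamma)$, where $A_\alpha,A_\gamma$ are the perpendiculars from the endpoints to $\eta_1,\eta_2$. A single right-triangle relation $\sinh\len(A_\alpha)=\cos\theta_{\alpha\beta}\sinh\len(\alpha)$ then gives $\len(A_\alpha)>\len(\alpha)-\log\sec\theta_{\alpha\beta}$, and the bound follows in two lines---no limits, no coordinates, no optimization.

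Your approach instead passes to rays, computes $I_\infty(\alpha_\infty\beta)=2\log\sec(\theta_{\alpha\beta}/2)-2\log\sec(\phi_3/2)$ explicitly in the upper half-plane, bounds the residual bend via the triangle inequality, maximizes in $\phi_3$, and closes with the trigonometric inequality $\cos^2\tfrac{\theta_{\alpha\beta}+\theta_{\beta\gamma}}{2}\ge\cos\theta_{\alpha\beta}\cos\theta_{\beta\gamma}$. All steps check out, and as a byproduct you obtain the sharper intermediate bound $I(\alpha\beta\gamma)\le 2\log\sec\tfrac{\theta_{\alpha\beta}+\theta_{\beta\gamma}}{2}$, which the paper's method does not see. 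But for the purposes of this lemma the paper's perpendicular-strip argument is both shorter and more transparent; it is worth internalizing, since the same ``cross two disjoint geodesics, pick up the common perpendicular length'' trick recurs throughout hyperbolic geometry.
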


\begin{proof} Let $\eta_1$ be the geodesic that  is orthogonal to $\beta$ at the point where $\alpha$ and $\beta$ meet. Similarly let $\eta_2$ be the geodesic that is orthogonal to $\beta$ at the point where $\beta$ and $\gamma$ meet. Let $A_{\alpha}$ be the geodesic arc orthogonal to $\eta_1$ that starts at the initial point of $\alpha$, and  let $A_{\gamma}$ be the geodesic arc orthogonal to $\eta_2$ that starts at the terminal point of $\gamma$. 

Let $\eta$ be the geodesic arc with the same endpoints as $\alpha\beta \gamma$. Then $\len(\eta) \ge \len(A_{\alpha})+\len(\beta)+\len(A_{\gamma})$.

On the other hand, from the hyperbolic low of sines, we have

$$
\sinh A_{\alpha}= \sinh \alpha \cdot \cos(\theta_{\alpha\beta},
$$
and hence 
$$
\log \sinh \alpha - \log \sinh A_{\alpha} \le \log \sec \theta_{\alpha\beta},
$$
which implies
$$
\alpha-A_{\alpha} \le \log \sec \theta_{\alpha\beta},
$$
because the derivative of $\log \sinh$ is greater than one. Thus, we have proved that 
$\len(A_{\alpha})>\len(\alpha)-\log(\sec(\theta_{\alpha\beta}))$, and similarly $\len(A_{\gamma})>\len(\gamma)-\log(\sec(\theta_{\beta\gamma}))$. So

\begin{align*}
I(\alpha\beta\gamma) &<\len(\alpha)+\len(\gamma)-\len(A_{\alpha})-\len(A_{\gamma}) \\
&< \log(\sec(\theta_{\alpha\beta})) +\log( \sec(\theta_{\beta\gamma})).
\end{align*}

\end{proof}

\subsection{The Long Segment Lemmas for arcs} We now state and prove several lemmas called the Long Segment Lemmas. 
The following is the Long Segment Lemma for angles.

\begin{lemma}  [Long Segment Lemma] \label{lemma-long-angle} Let $\delta>0,\Delta>0$. There exists a constant $L=L(\delta,\Delta)>0$ such that if $\alpha$ and $\beta$ are oriented  geodesic arcs such that $I(\alpha\beta) \le \Delta$ (assuming that the terminal point of $\alpha$ is the initial point of $\beta$) and $\len(\alpha),\len(\beta)>L$, then
$I(\alpha\beta)<I(\Ang(t(\alpha),i(\beta)))<I(\alpha\beta)+\delta$.
\end{lemma}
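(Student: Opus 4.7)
\emph{Proof plan.} The plan is to refine the computation already carried out in the derivation of~(\ref{angle-ineff}), but retaining the finite-length error terms that were discarded when the arms were sent to infinity. Set $\theta := \Ang(t(\alpha), i(\beta))$, $A := \len(\alpha)$, $B := \len(\beta)$, and let $C$ denote the length of the geodesic segment with the same endpoints as $\alpha\beta$, so that $I(\alpha\beta) = A + B - C$. Applying the hyperbolic law of cosines
\[
\cosh C = \cosh A\,\cosh B + \cos\theta\,\sinh A\,\sinh B
\]
and expanding each hyperbolic function in terms of $e^{\pm A}, e^{\pm B}$ in exactly the same way as in the derivation of $e^{C-A-B}\to\cos^{2}(\theta/2)$, a direct computation gives the identity
\[
I(\theta) - I(\alpha\beta) = \log\!\left[1 + \tan^{2}(\theta/2)\bigl(e^{-2A}+e^{-2B}\bigr) + e^{-2A-2B}\right] + O(e^{-2C}),
\]
where the $O(e^{-2C})$ term accounts only for the discrepancy between $e^{C}$ and $2\cosh C$ and is utterly negligible once $C$ is large.

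The lower bound $I(\alpha\beta) < I(\theta)$ is then immediate since the quantity inside the logarithm is strictly greater than $1$ when $\theta>0$; the case $\theta=0$ is trivial as both sides vanish.

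For the upper bound I must control the $\tan^{2}(\theta/2)$ factor, and this is the one delicate point: a nearly-straight bend $\theta\approx\pi$ would make $I(\theta)$ enormous, so the error term in the identity could in principle blow up. The remedy is to combine the monotonicity of inefficiency observed in Subsection~4.1 with the length hypothesis. Passing to the symmetric sub-bend $\alpha_L\beta_L$ obtained by keeping only the sub-arcs of length $L$ adjacent to the junction, monotonicity gives $I(\alpha_L\beta_L)\le I(\alpha\beta)\le\Delta$. Applying the identity above to this sub-bend and substituting $\tan^{2}(\theta/2)=e^{I(\theta)}-1$ yields an inequality of the shape
\[
e^{I(\theta)}\bigl(1 - 2 e^{\Delta - 2L}\bigr) \le e^{\Delta}\bigl(1 + e^{-4L}\bigr),
\]
which for $L\ge L_{0}(\Delta)$ forces $I(\theta)\le\Delta+\log 4$, and hence $\tan^{2}(\theta/2)\le 4e^{\Delta}$.

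Feeding this $\Delta$-dependent bound back into the main identity produces an estimate of the shape $I(\theta)-I(\alpha\beta)\le C(\Delta)\,e^{-2L}$, and choosing $L=L(\delta,\Delta)$ so that this is smaller than $\delta$ completes the argument. The substance of the proof is thus concentrated in this identity-plus-bootstrap: the formula itself is an elementary manipulation of $\cosh$ and $\sinh$, but the absorption of the $\theta$-dependence via the symmetric sub-bend is what produces constants depending only on $\delta$ and $\Delta$ as required.
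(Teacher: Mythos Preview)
Your argument is correct and takes a genuinely different route from the paper's. The paper proceeds geometrically: it extends $\alpha$ and $\beta$ to infinite rays $\alpha_\infty,\beta_\infty$, and bounds $I(\alpha_\infty\beta_\infty)$ by $I(\alpha\beta)$ plus inefficiencies of the small angles $\theta_0,\theta_1,\theta_2$ that arise between the successive geodesic straightenings; those angles are shown to be small via the New Angle Lemma (Lemma~\ref{lemma-new-angle}), which in turn rests on Lemma~\ref{lemma-proj-arc}. Your approach instead computes $I(\theta)-I(\alpha\beta)$ directly from the hyperbolic law of cosines, obtaining an explicit formula whose only uncontrolled factor is $\tan^2(\theta/2)=e^{I(\theta)}-1$; you then bound this factor by a bootstrap on the symmetric sub-bend $\alpha_L\beta_L$, which plays the role that the New Angle Lemma plays in the paper. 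What you gain is self-containment and a sharper, quantitative conclusion: your estimate is of order $C(\Delta)e^{-2L}$ rather than an unspecified $o(1)$. What the paper's argument gains is brevity and modularity, since the New Angle Lemma is reused elsewhere (e.g.\ in Lemma~\ref{lemma-long-arc}).
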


\begin{proof} The left hand side of the above inequality follows from the monotonicity of inefficiency.  
For the right hand side, let $\alpha_{\infty}$ and $\beta_{\infty}$ denote the geodesic rays whose initial point  is the point where $\alpha$ and $\beta$ meet, and that contain $\alpha$ and $\beta$ respectively (we also assume that $\alpha_{\infty}$ has the same orientation as $\alpha$ and $\beta_{\infty}$ the same orientation as $\beta$).  Recall that $I(\alpha_\infty \beta_{\infty})$ was defined just above the formula (\ref{angle-ineff}).
Let $\eta$ be the geodesic arc  with the same endpoints as $\alpha\beta$, and $\eta_1$ the geodesic ray with the same endpoints as $\alpha_{\infty} \beta$. By $\theta_0$ we denote the angle between $\alpha$ and $\eta$, by $\theta_1$ the angle between $\eta$ and $\beta$, and by $\theta_2$ the angle between $\eta$ and $\eta_1$.

We observe that $\theta_0$, $\theta_1$ and $\theta_2$ are small (by the New Angle Lemma), and therefore 
\begin{align*}
I( \alpha_{\infty} \beta_{\infty} ) &< I(\alpha_{\infty} \beta)+I(\theta_1+\theta_2) \\
&<I(\alpha \beta) +I(\theta_0)+I(\theta_1+\theta_2) \\
&<I(\alpha \beta)+o(1),
\end{align*}
where $o(1) \to 0$ as $L \to \infty$.

\end{proof}

The following is the Long Segment Lemma for arcs.

\begin{lemma} [Long Segment Lemma for arcs]  \label{lemma-long-arc} Suppose we can write $\eta=\alpha  \beta  \gamma$, where $\alpha$ and $\gamma$ are piecewise geodesic arcs, and $\beta$ is a geodesic arc of length $l$.
Then
$$
\big| I(\alpha  \beta) +I(\beta  \gamma) -I(\alpha  \beta  \gamma) \big| \to 0,
$$
uniformly when $l \to \infty$ and  $I(\alpha  \beta) +I(\beta  \gamma)$ is bounded above.

\end{lemma}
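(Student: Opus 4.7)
The plan is to split $\beta$ at its midpoint $m$ into two halves $\beta_L$, $\beta_R$ of length $l/2$ each, rewrite each of the three inefficiencies $I(\alpha\beta)$, $I(\beta\gamma)$, $I(\alpha\beta\gamma)$ as an inefficiency on the $\alpha$-side of $m$ plus an inefficiency on the $\gamma$-side of $m$ plus a small ``bending error'' at $m$, and then cancel. I introduce the geodesic replacements $\eta_L$ of $\alpha\beta_L$ (from the initial point of $\alpha$ to $m$), $\eta_R$ of $\beta_R\gamma$ (from $m$ to the terminal point of $\gamma$), and $\eta_{12}, \eta_{23}, \eta_{123}$ of $\alpha\beta$, $\beta\gamma$, $\alpha\beta\gamma$. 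Since the piecewise geodesics $\eta_L\beta_R$, $\beta_L\eta_R$, $\eta_L\eta_R$ are respectively homotopic rel endpoints to $\alpha\beta$, $\beta\gamma$, $\alpha\beta\gamma$, they have the same geodesic replacements $\eta_{12}, \eta_{23}, \eta_{123}$, which gives the length identities $\len(\eta_{12}) = \len(\eta_L) + \len(\beta_R) - I(\eta_L\beta_R)$ and the two analogues.

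Substituting these into the definition of inefficiency and collecting terms yields
\[
I(\alpha\beta) = I(\alpha\beta_L) + I(\eta_L\beta_R), \quad I(\beta\gamma) = I(\beta_R\gamma) + I(\beta_L\eta_R),
\]
\[
I(\alpha\beta\gamma) = I(\alpha\beta_L) + I(\beta_R\gamma) + I(\eta_L\eta_R),
\]
so that
\[
I(\alpha\beta) + I(\beta\gamma) - I(\alpha\beta\gamma) = I(\eta_L\beta_R) + I(\beta_L\eta_R) - I(\eta_L\eta_R).
\]
As a byproduct, the first two identities also show $I(\alpha\beta_L) \le I(\alpha\beta)$ and $I(\beta_R\gamma) \le I(\beta\gamma)$, so both remain bounded by the given bound on $I(\alpha\beta) + I(\beta\gamma)$.

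It remains to show that each of the three terms on the right of the last display is small once $l$ is large. Applying the New Angle Lemma (Lemma \ref{lemma-new-angle}) to $\alpha\beta_L$, with $\beta_L$ as the long geodesic piece at the end, and symmetrically to $\gamma^{-1}\beta_R^{-1}$, gives exterior angles $\theta_1$ between $\eta_L$ and $\beta_L$ at $m$, and $\theta_2$ between $\beta_R$ and $\eta_R$ at $m$, with $\theta_1, \theta_2 \to 0$ as $l \to \infty$. Each of $\eta_L\beta_R$, $\beta_L\eta_R$, $\eta_L\eta_R$ is a two-arc piecewise geodesic with a single bend at $m$ of exterior angle at most $\theta_1$, $\theta_2$, and $\theta_1+\theta_2$ (the last by the triangle inequality on the unit tangent circle at $m$); the standard bound $I \le I(\theta) = 2\log\sec(\theta/2)$ that follows from the computation leading to equation \eqref{angle-ineff} (or from Lemma \ref{lemma-pomoc} with a trivial third arc) then forces each term to tend to $0$, yielding the required $\delta$. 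The main obstacle I anticipate is confirming that the New Angle Lemma really does deliver $\theta_1, \theta_2 \to 0$ uniformly in terms of the upper bound on $I(\alpha\beta) + I(\beta\gamma)$ and the length $l/2$ of each half of $\beta$, but this is exactly the content of Lemma \ref{lemma-new-angle} once the bounds $I(\alpha\beta_L) \le I(\alpha\beta)$ and $I(\beta_R\gamma) \le I(\beta\gamma)$ are in hand.
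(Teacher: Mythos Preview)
Your proof is correct and follows essentially the same strategy as the paper: split $\beta$ at its midpoint, introduce the geodesic replacements of $\alpha\beta_L$ and $\beta_R\gamma$ (the paper calls these $\wh\alpha$ and $\wh\beta$), and use the New Angle Lemma to show the bending angles at the midpoint are small, which bounds the three residual inefficiencies. Your explicit length identities make the cancellation $I(\alpha\beta)+I(\beta\gamma)-I(\alpha\beta\gamma)=I(\eta_L\beta_R)+I(\beta_L\eta_R)-I(\eta_L\eta_R)$ slightly more transparent than the paper's presentation via the three separate estimates (1)--(3), but the content is the same.
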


\begin{proof} If we replace $\alpha$ and $\gamma$ by the associated geodesics arcs, then $I(\alpha  \beta) +I(\beta  \gamma) -I(\alpha  \beta  \gamma)$ will be unchanged,
and  $I(\alpha  \beta) +I(\beta  \gamma)$ will be decreased, so we can assume that $\alpha$ and $\gamma$ are geodesic arcs. We divide $\beta$ at its midpoint into
$\beta^{-}$ and $\beta^{+}$, so $\beta=\beta^{-}  \beta^{+}$, and $\alpha  \beta  \gamma= \alpha  \beta^{-}  \beta^{+}  \gamma$. We will show the following estimates 
(for $\delta$ small when $l$ is large and $I(\alpha  \beta) +I(\beta  \gamma)$ is bounded above):

\begin{enumerate} 
\item $\big| I(\alpha  \beta^{-}) + I(\beta^{+}  \gamma) -I(\alpha  \beta  \gamma) \big|<\delta$,
\item $\big| I(\alpha  \beta) +I(\alpha  \beta^{-})) \big|<\delta$,
\item $\big| I(\beta  \gamma) +I(\beta^{+}  \gamma)) \big|<\delta$,
\end{enumerate}
The lemma then follows from (1), (2) and (3).

\begin{figure}
  {
    
    \input{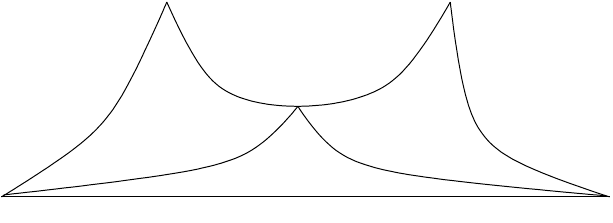_t}
  }
  \caption{The Long Segment Lemma}
  \label{fig-long-segment}
\end{figure}

For (1), we refer to the Figure \ref{fig-long-segment}. We find that 
$$
0 \le I(\wh{\alpha}  \wh{\beta} )= I(\alpha  \beta  \gamma)-I(\alpha  \beta^{-}) - I(\beta^{+}  \gamma).
$$ 
Moreover, when $I(\alpha  \beta^{-})$ (which is at most $I(\alpha \beta)$) and $I(\beta^{+}  \gamma)$ (which is at most $I(\beta  \gamma)$) are bounded above, and $\len(\beta^{-})=\len(\beta^{+})=\frac{\len(\beta)}{2}$ is a large, then $\theta^{-}$ and $\theta^{+}$ are small (by the New Angle Lemma), so $I(\wh{\alpha}  \wh{\beta}) \le I(\theta^{-}+ \theta^{+})$ is small.
Likewise, $I(\alpha  \beta)-I(\alpha  \beta^{-})=I(\wh{\alpha}  \beta^{+})$, and $0 \le I(\wh{\alpha}  \beta^{+}) \le I(\theta^{-})$. This proves (1) and (2), and (3) is the same as (2).

\end{proof}

\subsection{The inefficiency of a closed piecewise geodesic curve} Let  $\alpha_1,...,\alpha_n$ denote oriented piecewise geodesic arcs on $\Su$  such that the  terminal point of $\alpha_i$ is the initial point of $\alpha_{i+1}$. By $\alpha_1\alpha_2...\alpha_n$ we denote the concatenation of the arcs $\alpha_i$. Assume that the initial point of $\alpha_1$ and the terminal point of $\alpha_n$ are the same. By  $[\alpha_1\alpha_2 ... \alpha_n]$ we denote the corresponding closed curve.

We define the inefficiency operator as follows. 

\begin{definition}
Let $\alpha$ be  a closed curve on a surface. By $\gamma$ we denote the appropriate  closed geodesic that is freely homotopic to $\alpha$. We let $I(\alpha)=\len(\alpha)-\len(\gamma)$. 
We call $I(\alpha)$ the inefficiency of $\alpha$ (the inefficiency $I(\alpha)$ is equal to 0 if and only if $\alpha$ is  a closed geodesic). 
\end{definition}

The following is a closed curve  version of Lemma \ref{lemma-proj-arc}.

\begin{lemma}\label{lemma-proj-closed}  Let $\alpha$ denote a closed curve on $\Su$, and let $\gamma$ be the appropriate  closed geodesic freely  homotopic to $\alpha$. Choose lifts $\wt{\alpha}$ and $\wt{\gamma}$, of $\alpha$ and $\gamma$ respectively, in the universal cover $\Ha$ that have the same endpoints. The nearest point projection $\wt{\pi}:\wt{\alpha} \to \wt{\gamma}$  descends to the map $\pi:\alpha \to \gamma$. Let 
$$
E=E(\alpha)=\sup\limits_{x \in \wt{\alpha}} d(x,\wt{\pi}(x)). 
$$
Then providing $\len(\alpha)>L_0$ for some universal constant $L_0$ we have

$$
E \le \frac{I(\alpha)}{2}+1.
$$

\end{lemma}

\begin{proof} The minimally inefficient closed curve $\alpha$ that is freely homotopic to $\gamma$ and the distance $E$ from $\gamma$ is given in Figure \ref{fig-closed}. 
Denote by $\eta$ the corresponding geodesic segment of length $E$. Then by the Long Segment Lemma and monotonicity of inefficiency
\begin{align*}
I(\eta\gamma \eta^{-1}) &\le  I(\eta \gamma)+I(\gamma \eta^{-1})+\frac{1}{7} \\
&\le I \left( \frac{\pi}{2} \right)+ I\left( \frac{\pi}{2} \right)+\frac{1}{7} \\
&< 2, 
\end{align*}
providing that $\len(\gamma)>L_0$, where $L_0$ is a universal constant. Hence
$$
\len(\alpha)-\len(\gamma)\ge 2 \len(\eta)-2,
$$
or 
$$
E \le \frac{I(\alpha)}{2}+1.
$$

\begin{figure}
  {
    
    \input{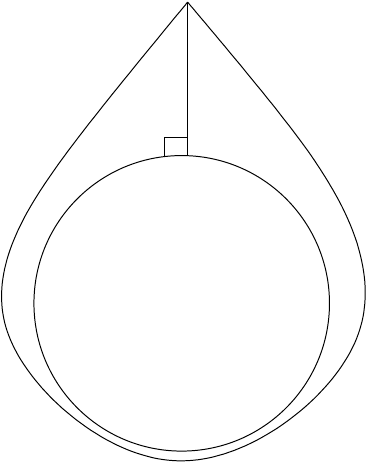_t}
  }
  \caption{$\alpha$ is the minimally inefficient curve with $\len(\eta)=E$}
  \label{fig-closed}
\end{figure}

\end{proof}

The following is the Long Segment Lemma for closed curves.

\begin{lemma}[Long Segment Lemma for closed curves]\label{lemma-long-closed} Let $\alpha$ be  an   piecewise geodesic arc  and $\beta$ an  geodesic arc on $\Su$, such that the initial point of $\alpha$ is the terminal point of $\beta$ and 
the initial point of $\beta$ is the terminal point of $\alpha$. Then
$$
\left| I([\alpha  \beta ])-I(\beta  \alpha  \beta) \right|<\delta, 
$$
where $\delta \to 0$ when $\len(\beta) \to \infty$ and $I(\beta  \alpha  \beta)$ is bounded above.
\end{lemma}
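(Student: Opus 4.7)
I lift the configuration to the universal cover $\Ha$ and argue via hyperbolic geometry. Starting from a lift $\tilde y_1$ of $y$, lift $\beta\alpha\beta$ to a piecewise geodesic $\tilde\beta_1 \tilde\alpha \tilde\beta_2$ with successive vertices $\tilde y_1, \tilde x_1, \tilde y_2, \tilde x_2$. Let $T$ be the deck transformation with $T(\tilde y_1) = \tilde y_2$; by equivariance of lifting, $T(\tilde x_1) = \tilde x_2$ as well. The isometry $T$ represents the closed curve $[\beta\alpha]$ based at $y$ (freely homotopic to $[\alpha\beta]$), and its axis $A$ is a lift of the closed geodesic $\gamma$ of length $\ell = \len(\gamma)$ in this class. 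Writing $\eta$ for the geodesic arc from $\tilde y_1$ to $\tilde x_2$, the definitions give
\[ I([\alpha\beta]) = \len(\alpha) + \len(\beta) - \ell \quad\text{and}\quad I(\beta\alpha\beta) = 2\len(\beta) + \len(\alpha) - \len(\eta), \]
so the task reduces to
\[ I(\beta\alpha\beta) - I([\alpha\beta]) = \len(\beta) + \ell - \len(\eta) \to 0. \]

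To compute $\len(\eta)$, I introduce Fermi coordinates $(u, h)$ along $A$; in these coordinates $T$ acts by $(u, h) \mapsto (u+\ell, h)$. Writing $\tilde y_1 = (p_1, h_1)$ and $\tilde x_1 = (q_1, h_2)$ gives $\tilde y_2 = (p_1+\ell, h_1)$ and $\tilde x_2 = (q_1+\ell, h_2)$. A direct asymptotic expansion of the Fermi distance formula $\cosh d = \cosh h_1' \cosh h_2' \cosh(u_2-u_1) \mp \sinh h_1' \sinh h_2'$ yields, uniformly in $h_1', h_2'$,
\[ d\bigl((u_1, h_1'), (u_2, h_2')\bigr) = |u_2 - u_1| + \log(\cosh h_1' \cosh h_2') + O\bigl(e^{-|u_2 - u_1|}\bigr). \]
Applied to the pairs $(\tilde y_1, \tilde x_1)$ and $(\tilde y_1, \tilde x_2)$ the $\log$-terms cancel on subtraction, and (with $q_1 - p_1 > 0$, matching the orientation of the lift of $[\beta\alpha]$ along $A$) one obtains
\[ \len(\eta) - \len(\beta) = \ell + O\bigl(e^{-\len(\beta)}\bigr), \]
as required.

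\textbf{Main obstacle.} The asymptotic above is valid only provided the heights $h_1, h_2$ (the Fermi distances of $\tilde y_1, \tilde x_1$ from $A$) remain bounded, equivalently $|q_1 - p_1| \to \infty$ with $\len(\beta)$. This does not follow immediately from $I(\beta\alpha\beta) \le \Delta$; the bound is obtained from an analysis of the $T$-symmetric quadrilateral $(\tilde y_1, \tilde x_1, \tilde y_2, \tilde x_2)$. The triangle inequalities $\len(\eta) \le \len(\beta) + d(\tilde x_1, T\tilde x_1)$ and $\len(\eta) \le d(\tilde y_1, T\tilde y_1) + \len(\beta)$, combined with the exact formula $d(z, Tz) = 2\sinh^{-1}(\cosh(d(z, A)) \sinh(\ell/2))$ and the hypothesis $\len(\eta) \ge 2\len(\beta) + \len(\alpha) - \Delta$, rule out the scenario where either $h_i$ grows with $\len(\beta)$ while $\ell$ stays bounded (in that scenario $\len(\eta) - \len(\beta)$ would remain $O(1)$, forcing $I(\beta\alpha\beta)$ to diverge). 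One extracts bounds on $h_1, h_2$ depending only on $\Delta$ (equivalently $I([\alpha\beta]) \le \Delta + O(1)$, whence Lemma \ref{lemma-proj-closed} gives the required height bound), at which point the asymptotic computation yields $\delta = O(e^{-\len(\beta)})$.
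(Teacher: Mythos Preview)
Your approach is correct in outline but takes a genuinely different route from the one the paper intends. The paper says the proof is ``similar to the proof of Lemma~\ref{lemma-long-arc}'': one splits $\beta=\beta^-\beta^+$ at its midpoint $m$, observes $[\alpha\beta]=[\beta^+\alpha\beta^-]$, and reduces to showing that the geodesic arc $\hat\gamma$ straightening $\beta^+\alpha\beta^-$ (from $\tilde m$ to $T\tilde m$) has length close to $\ell$. The New Angle Lemma, applied with the long segments $\beta^{\pm}$, forces the angles between $\hat\gamma$ and $\tilde\beta$ at both ends to be small; since $\beta^-$ and $\beta^+$ are collinear at $T\tilde m$, the bending angle between $\hat\gamma$ and $T\hat\gamma$ is small, and one concludes exactly as in Lemma~\ref{lemma-long-arc}. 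Your Fermi-coordinate computation is instead a direct asymptotic expansion; it is longer but yields the explicit rate $\delta=O(e^{-\len(\beta)})$, which the paper's argument does not.

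There is, however, a genuine gap in your resolution of the ``main obstacle''. Your case analysis only rules out the scenario ``$h_i\to\infty$ while $\ell$ stays bounded''; you do not address the possibility that $h_i$ and $\ell$ both grow. You then simply assert that ``one extracts bounds on $h_1,h_2$ depending only on $\Delta$ (equivalently $I([\alpha\beta])\le\Delta+O(1)$)'' without proving it. This bound is true, but it needs an argument. One clean way: from your triangle inequalities you get $I(\beta\alpha)\le\Delta$, and then by Lemma~\ref{lemma-long-arc} (with the middle $\beta$ as long segment) $I(\beta\alpha\beta\alpha\beta)\le 2\Delta+o(1)$; combining $d(\tilde y_1,T^2\tilde x_1)\le h_1+h_2+\len(\beta)+2\ell$ with $h_1+h_2\le I([\alpha\beta])+4$ from Lemma~\ref{lemma-proj-closed} then forces $I([\alpha\beta])\le 2\Delta+O(1)$. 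Alternatively, the hypothesis makes the $T$-periodic lift a local $(\Delta,2\len(\beta))$-quasigeodesic, and local-to-global plus the Morse lemma gives the height bound directly. Either fills the gap; as written, your argument does not.
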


\begin{proof} The proof is similar to the proof of Lemma \ref{lemma-long-arc} and is left to reader.

\end{proof}

\subsection{The  Sum of Inefficiencies Lemma} The following is the Sum of Inefficiencies Lemma. Let $\Su$ denote a closed hyperbolic Riemann surface

\begin{lemma}[Sum of Inefficiencies Lemma]\label{lemma-ineff+} Let $\epsilon,\Delta>0$ and $n \in \N$. There exists $L=L(\epsilon,\Delta,n)>0$ such that the following holds. Let $\alpha_1,...,\alpha_{n+1}=\alpha_1, \beta_1,...\beta_n$, be geodesic arcs on the surface $\Su$ such that $\alpha_1 \beta_1  \alpha_2  \beta_2 ... \alpha_n  \beta_n $ is a piecewise geodesic closed curve on $\Su$. If
$I( \alpha_i  \beta_i  \alpha_{i+1}) \le \Delta$, and $\len( \alpha_i ) \ge L$, then

$$
\left| I([ \alpha_1  \beta_1  \alpha_2  \beta_2  ...  \alpha_n  \beta_n  ]) - \sum\limits_{i=1}^{n} I(\alpha_i \beta_i \alpha_{i+1} ) \right| \le \epsilon.
$$
\end{lemma}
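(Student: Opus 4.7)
The plan is to reduce the statement about the closed piecewise geodesic curve to an analogous statement for open arcs, then prove the arc version by induction on $n$ using the Long Segment Lemma for arcs (Lemma \ref{lemma-long-arc}). The bridge between the two will be the Long Segment Lemma for closed curves (Lemma \ref{lemma-long-closed}).

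First I would establish the following open-arc version by induction on $n$: there exists $L_1 = L_1(\epsilon,\Delta,n)$ such that, whenever $\len(\alpha_i)\ge L_1$ for all $i$,
\[
\left| I(\alpha_1 \beta_1 \alpha_2 \beta_2 \cdots \alpha_n \beta_n \alpha_{n+1}) - \sum_{i=1}^n I(\alpha_i \beta_i \alpha_{i+1}) \right| < \epsilon/2,
\]
and moreover the total inefficiency satisfies $I(\alpha_1 \beta_1\cdots \alpha_{n+1}) \le n\Delta + \epsilon/2$. In the inductive step I would write the arc as
\[
\bigl(\alpha_1 \beta_1 \cdots \beta_{n-1}\bigr)\cdot \alpha_n \cdot \bigl(\beta_n \alpha_{n+1}\bigr)
\]
and apply Lemma \ref{lemma-long-arc} with the long middle geodesic arc $\alpha_n$ to split off the last corner inefficiency $I(\alpha_n \beta_n \alpha_{n+1})$, paying an additive error that tends to $0$ as $\len(\alpha_n)\to\infty$ with the surrounding inefficiencies kept under control. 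The inductive hypothesis applied to $\alpha_1\beta_1\cdots \beta_{n-1}\alpha_n$ then supplies the remaining $n-1$ corner terms, and the upper bound $(n-1)\Delta + \epsilon/2$ carried through the induction verifies the boundedness hypothesis required by Lemma \ref{lemma-long-arc}.

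Next I would close up using Lemma \ref{lemma-long-closed}, applied with $\beta := \alpha_1$ (a long geodesic arc) and $\alpha := \beta_1 \alpha_2 \cdots \alpha_n \beta_n$:
\[
\bigl| I([\alpha_1 \beta_1 \cdots \alpha_n \beta_n]) - I(\alpha_1 \beta_1 \alpha_2 \cdots \alpha_n \beta_n \alpha_1)\bigr| < \epsilon/2,
\]
once $L$ is large enough, with the boundedness of $I(\alpha_1\beta_1\cdots\alpha_n\beta_n\alpha_1)$ supplied by Step 1 (taking $\alpha_{n+1}:=\alpha_1$, whence this inefficiency is at most $n\Delta + \epsilon/2$). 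Combining the two estimates, and noting that the open-arc Step 1 approximates $I(\alpha_1 \beta_1\cdots\alpha_n \beta_n \alpha_1)$ by the target sum $\sum_{i=1}^n I(\alpha_i\beta_i\alpha_{i+1})$ to within $\epsilon/2$, produces the desired bound.

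The main obstacle is bookkeeping rather than a new geometric idea: one has to choose $L = L(\epsilon,\Delta,n)$ large enough that every application of Lemmas \ref{lemma-long-arc} and \ref{lemma-long-closed} yields an additive error below $\epsilon/(4n)$, while the cumulative inefficiency bound, which grows at most linearly in the number of corners and therefore stays $\le n\Delta + O(\epsilon)$, continuously satisfies the ``bounded above'' hypotheses of those lemmas. Once this quantitative coordination between $L$, $\Delta$, $n$ and $\epsilon$ is arranged, the argument is essentially mechanical.
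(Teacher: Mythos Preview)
Your proposal is correct and is exactly the intended argument: the paper's proof is the single sentence ``It directly follows from the Long Segment Lemma,'' and what you have written is a careful unpacking of that sentence, using Lemma~\ref{lemma-long-arc} inductively for the open-arc estimate and Lemma~\ref{lemma-long-closed} to close up. The bookkeeping you flag (controlling the accumulated inefficiency by $n\Delta + O(\epsilon)$ so that the boundedness hypotheses of the Long Segment Lemmas are met at each step) is precisely the content the paper suppresses.
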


\begin{proof} It directly follows from the Long Segment Lemma for closed curves.

\end{proof}

\begin{remark} In particular,  we can leave out the $\beta$'s in the above lemma, and write 
$$
\left| I([ \alpha_1  \alpha_2   ...  \alpha_n  ]) - \sum\limits_{i=1}^{n} I(\alpha_i \alpha_{i+1}) \right| \le \epsilon,
$$
providing that $I(\alpha_i \alpha_{i+1}) \le \Delta$, and $\len(\alpha_i ) \ge L$. Moreover,  by the Long Segment Lemma for Angles (for $L$ large enough) we have
$$
\left| I([ \alpha_1  \alpha_2   ...  \alpha_n  ]) - \sum\limits_{i=1}^{n} I(\theta_i) \right| \le 2\epsilon,
$$
where $\theta_i=\Ang(t(\alpha_i ),i ( \alpha_{i+1} ))$.

\end{remark}

A  more general version of the Sum of Inefficiencies Lemma is as follows (the proof is the same).

\begin{lemma}\label{lemma-ineff+-1} Let $\epsilon,\Delta>0$ and $n \in \N$. There exists $L=L(\epsilon,\Delta,n)>0$ such that the following holds. Let $\alpha_1,...,\alpha_{n+1}=\alpha_1$ and 
$\beta_{11},...\beta_{1j_{1}},... \beta_{n1},...\beta_{nj_{n}}$, be geodesic segments on $\Su$ such that $\alpha_1  \beta_{11} ... \beta_{1j_{1}}  ...   \alpha_{n}  \beta_{n1} ...  \beta_{nj_{n}} $ is a
piecewise geodesic closed curve on $\Su$. If  $I(\alpha_i  \beta_i  \alpha_{i+1}) \le \Delta$, and $\len( \alpha_i ) \ge L$, then

$$
\left| I([ \alpha_1  \beta_{11} ... \beta_{1j_{1}}  ...   \alpha_{n}  \beta_{n1} ...  \beta_{nj_{n}} ]) - \sum\limits_{i=1}^{n} I(\alpha_i \beta_{i1}...\beta_{ij_{i}} \alpha_{i+1}) \right| \le \epsilon.
$$
\end{lemma}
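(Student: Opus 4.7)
The plan is to reduce the general case to iterated applications of the Long Segment Lemmas (Lemma \ref{lemma-long-arc} and Lemma \ref{lemma-long-closed}), exactly as in the proof of Lemma \ref{lemma-ineff+} sketched above. Write $B_i = \beta_{i1}\beta_{i2}\cdots\beta_{ij_i}$, so that the piecewise geodesic closed curve is $[\alpha_1 B_1 \alpha_2 B_2 \cdots \alpha_n B_n]$ and the local pieces whose inefficiencies we wish to sum are $\eta_i = \alpha_i B_i \alpha_{i+1}$, with $I(\eta_i)\le \Delta$ by hypothesis. The key point is that each long arc $\alpha_i$ separates one local piece from the next, so the Long Segment Lemmas allow us to decouple them with controllable error.

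First, I would split each $\alpha_i$ at its midpoint as $\alpha_i = \alpha_i^- \alpha_i^+$, so $\len(\alpha_i^\pm)\ge L/2$. Using Lemma \ref{lemma-long-closed} on the long geodesic arc $\alpha_1 = \alpha_1^-\alpha_1^+$, I can pass from the closed-curve inefficiency $I([\alpha_1 B_1 \alpha_2 \cdots B_n])$ to the open-arc inefficiency $I(\alpha_1^+ B_1 \alpha_2 \cdots B_n \alpha_1^-)$ up to an error $\delta_0$ that goes to zero as $L\to\infty$, given that the total open inefficiency is bounded (which follows from $\sum I(\eta_i)\le n\Delta$ and monotonicity). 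Then I apply Lemma \ref{lemma-long-arc} successively at each long arc $\alpha_2, \alpha_3, \ldots, \alpha_n$: each application splits the arc inefficiency at the midpoint of the corresponding $\alpha_i$ into the two adjacent contributions, with an error $\delta_i$ that goes to zero as $L\to\infty$ provided the inefficiencies on both sides stay bounded above. After $n$ such splittings the open-arc inefficiency becomes the sum
\[
\sum_{i=1}^{n} I\bigl(\alpha_i^+ \beta_{i1}\cdots\beta_{ij_i}\alpha_{i+1}^-\bigr).
\]

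The last adjustment is to promote each $\alpha_i^+ B_i \alpha_{i+1}^-$ back to $\alpha_i B_i \alpha_{i+1}$: since $\alpha_i^-$ is a subarc of $\alpha_i$ (and similarly $\alpha_{i+1}^+$ of $\alpha_{i+1}$), one more application of Lemma \ref{lemma-long-arc} at the midpoints handles this with an additional error that vanishes as $L\to\infty$. Choosing $L = L(\epsilon,\Delta,n)$ large enough that the total accumulated error across all $O(n)$ applications is at most $\epsilon$ gives the claim.

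The main obstacle to watch is the inductive control of the hypothesis of Lemma \ref{lemma-long-arc}, namely that the inefficiencies of the two pieces being split off remain bounded above at each stage; this is needed so that $\delta\to 0$ uniformly. This is handled by observing that, after each application, the two new inefficiencies are bounded in terms of the original ones (by monotonicity plus the error just introduced), so a crude bound like $2n\Delta+1$ suffices to choose $L$ uniformly for all $n$ applications. The rest is bookkeeping, and no new geometric input beyond Lemmas \ref{lemma-long-arc} and \ref{lemma-long-closed} is required, which is why the author simply says the proof is the same as for Lemma \ref{lemma-ineff+}.
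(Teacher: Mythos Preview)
Your proposal is correct and follows essentially the same approach as the paper, which simply states that the result follows directly from the Long Segment Lemma (the paper's proof is a single sentence). Your explicit midpoint-splitting of each $\alpha_i$ is in fact already the internal mechanism of Lemma~\ref{lemma-long-arc}, so you could streamline by applying Lemmas~\ref{lemma-long-arc} and~\ref{lemma-long-closed} as black boxes: first use Lemma~\ref{lemma-long-closed} with $\beta=\alpha_1$ to pass from $I([\alpha_1 B_1\cdots \alpha_n B_n])$ to $I(\alpha_1 B_1\cdots \alpha_n B_n\alpha_1)$, then peel off one term $I(\alpha_i B_i\alpha_{i+1})$ at a time via Lemma~\ref{lemma-long-arc} applied at the long geodesic $\alpha_{i+1}$---this avoids the final ``promotion'' step entirely.
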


\begin{proof} It directly follows from the Long Segment Lemma.

\end{proof}

We will use the theory of inefficiency for two purposes. First, to control the geometry of piecewise geodesic arcs and closed curves, and second, to precisely estimate the length of the associated geodesic arcs and closed curves.

For example, suppose $[\alpha,\beta]$ is a closed curve, and $\alpha$ and $\beta$ meet nearly at right angles at the two places they meet, and $\alpha$ and $\beta$ are both long. Then $\len(\gamma)$ is close to $\len(\alpha)+\len(\beta)-\log 4$ where $\gamma$ is the corresponding closed geodesic.

As a second example, suppose $\alpha_1,\alpha_2,\alpha_3$ is a piecewise geodesic arc, and let $\alpha_{12}$ be the geodesic arc homotopic rel endpoints to $\alpha_1 \alpha_2$, and likewise define $\alpha_{23}$ and $\alpha_{123}$.
Then

$$
\len(\alpha_{123})=\len(\alpha_1)+\len(\alpha_2)+\len(\alpha_3)+I(\alpha_{12})+I(\alpha_{23}),
$$ 
provided that $I(\alpha_{12}),I(\alpha_{23}) \le \Delta$ and $\len(\alpha_2) \ge L(\Delta)$.

\section{The Geometric Square Lemma } In this section we will prove the Geometric Square Lemma (Lemma \ref{lemma-square}), which will then be reformulated as the Algebraic Square Lemma in the next section. This section is probably the hardest and most technical section in the paper, and the reader who is still struggling to have a clear idea of where we are going may wish to skip to the next section and see how the Algebraic Square Lemma follows from the Geometric one.

From now on we can think of the surface $\Su$ as being fixed. We also fix $\epsilon>0$. However, for the reader's 
convenience we always emphasize how quantities may depend on $\Su$ and $\epsilon$.

\subsection{Notation and preliminary lemmas} By an oriented closed geodesic
$C$ on $\Su$ we will mean an isometric immersion $C : \TT_C \to \Su$, where
$\TT_C = \R / \len (C)$, and $\len (C)$ is the length of $C$. To simplify the
notation, by $C : \R \to \Su$ we also denote the corresponding lift (such a lift
is uniquely determined once we fix a covering map $\pi : \R \to \TT_C$). We
call $\TT_C$ the parameterizing torus for $C$ (because $\TT_C = \R / \len (C)$ is a $1$-torus).
By a point on $C$ we mean $C (p)$ where $p \in \TT_C$, or $p \in \R$. Given
two points $a, b \in \R$, we let $C [a, b]$ be the restriction of $C : \R \to
\Su$ to the interval $[a, b]$. If $b < a$ then the orientation of the segment
$C [a, b]$ is the negative of the orientation for $C$. Of course $C [a + n
\len (C), b + n \len (C)]$ is the same creature for $n \in \Z$. By $C' (p)$ we
denote the unit tangent vector to $C$ with the appropriate orientation.

Recall that $\TB \Ha$ denotes the unit tangent bundle, where elements of $\TB
( \Ha)$ are pairs $(p, u)$, where $p \in \Ha$ and $u \in \TB_p \Ha$. The
tangent space $\TB_p$ has a complex structure and given $u \in \TB \Su$, by
$\sqrt{-1} u \in \TB_p \Su$ we denote the vector obtained from $u$ by rotating for
$\frac{\pi}{2}$.

Recall that for $u, v \in \TB_p \Ha$ we let $\Ang (u, v)$ denote the
unoriented angle between $u$ and $v$. If $u \in \TB_p \Ha$ then $u \at q \in
\TB_q \Ha$ denotes the vector $u$  parallel transported to $q$ along the geodesic
segment connecting $p$ and $q$. We use the similar notation for points in $\TB
\Su$, except that in this case one always has to specify the segment between
$p$ and $q$ along which we parallel transport vectors from $\TB_p \Su$ to
$\TB_q \Su$.

We refer to the following lemma as the Convergence Lemma. The proof is left to the reader.

\begin{lemma}
  \label{lemma-convergence}Suppose $A$ and $B$ are oriented geodesics in $\Ha$
  that are $E$-nearly homotopic, and let
  \[ a : \left[ - \frac{\len (A)}{2}, \frac{\len (A)}{2} \right] \to \Ha,
     \hspace{0.25em} \hspace{0.25em} \hspace{0.25em} b : \left[ - \frac{\len
     (B)}{2}, \frac{\len (B)}{2} \right] \to \Ha, \]
  denote the unit time parametrization. Set $l = \frac{1}{2} \min ( \len (A),
  \len (B))$. Then there exists $0 \le t_0 \le E$, such that for $t \in [- l,
  l]$ the following inequalities hold
  \begin{enumerate}
    \item $d (a (t), b (t + t_0)) \le e^{|t| + E + 1 - l}$,
    
    \item $\Ang (a' (t) @b (t + t_0), b' (t + t_0)) \le e^{|t| + E + 1 - l}$.
  \end{enumerate}
\end{lemma}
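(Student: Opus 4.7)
Plan.
The lemma captures the exponential convergence of two long geodesic segments in $\Ha^2$ whose endpoints are pairwise within distance $E$; I prove it via standard hyperbolic trigonometry. Extend $a,b$ to unit-speed parametrizations of the full geodesics $\tilde A,\tilde B$. If $\tilde A=\tilde B$ the lemma is immediate, so assume otherwise and focus on the generic ultraparallel case (the asymptotic and intersecting configurations are handled by limit arguments or ruled out once $l$ is large given $E$). Let $d_0\ge 0$ be the common-perpendicular length, and let $\alpha,\beta\from\R\to\Ha$ be the unit-speed parametrizations with $\alpha(0),\beta(0)$ at its feet and tangent vectors agreeing after parallel transport along it. Writing $a(t)=\alpha(t+u_a)$ and $b(t)=\beta(t+u_b)$, the hyperbolic distance formula reads
\[
\cosh d\bigl(\alpha(s_1),\beta(s_2)\bigr)=\cosh(s_1-s_2)+(\cosh d_0-1)\cosh s_1\cosh s_2.
\]
Setting $l_A=\len(A)/2$ and $l_B=\len(B)/2$, a triangle-inequality estimate in the geodesic quadrilateral $a(-l_A)a(l_A)b(l_B)b(-l_B)$ gives $|l_A-l_B|\le E$, and the first-term contribution from the formula applied at the two endpoint pairs gives $|u_a-u_b|\le E$.

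Summing the two endpoint distance inequalities, together with the identity
\[
\cosh(l_A-u_a)\cosh(l_B-u_b)+\cosh(l_A+u_a)\cosh(l_B+u_b)=\cosh(l_A+l_B)\cosh(u_a+u_b)+\cosh(l_A-l_B)\cosh(u_a-u_b),
\]
I obtain $(\cosh d_0-1)\cosh(l_A+l_B)\cosh(u_a+u_b)\le 2(\cosh E-1)$, hence
\[
\sinh(d_0/2)\le\sinh(E/2)\,\bigl[\cosh(l_A+l_B)\cosh(u_a+u_b)\bigr]^{-1/2}.
\]
Swapping the labels $A$ and $B$ if necessary (permissible since the statement is symmetric under $A\leftrightarrow B$, as $l=\tfrac{1}{2}\min(\len A,\len B)$), I arrange $u_a\ge u_b$ and take $t_0=u_a-u_b\in[0,E]$. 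With this choice the distance formula specialises to $\sinh(d(a(t),b(t+t_0))/2)=\sinh(d_0/2)\cosh(t+u_a)$. Using $\cosh(t+u_a)\le e^{|t|+|u_a|}$, the pointwise estimate $|u_a|\le\tfrac12|u_a+u_b|+E/2$, and $\cosh(l_A+l_B)\cosh(u_a+u_b)\ge\tfrac14 e^{l_A+l_B+|u_a+u_b|}$, the dependence on $|u_a+u_b|$ cancels and $\tfrac12(l_A+l_B)\ge l$ yields $\sinh(d/2)\le e^{|t|+E-l}$; a routine inversion of $\sinh$ then produces the claimed bound $d\le e^{|t|+E+1-l}$.

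For the angle bound I argue in Fermi coordinates along $\alpha$: the geodesic $\beta(s)$ has coordinates $(t(s),\rho(s))$ with $\sinh\rho(s)=\sinh(d_0)\cosh s$, and an explicit computation of $\beta'(s)$ together with parallel transport of it to $\alpha(s+u_a-u_b)$ shows that the angle between $a'(t)@b(t+t_0)$ and $b'(t+t_0)$ is controlled by the same quantity $\sinh(d_0/2)\cosh(t+u_a)$ that governs the distance, producing the second inequality at the same exponential rate. The main delicate point is the bookkeeping around the shifts $u_a,u_b,t_0$ and invoking the $A\leftrightarrow B$ symmetry to force $t_0\in[0,E]$ rather than merely $[-E,E]$; the rest is standard hyperbolic trigonometry, and the angle estimate is a routine parallel-transport computation once the distance formula is in hand.
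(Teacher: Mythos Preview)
The paper gives no proof of this lemma; it states explicitly that ``the proof is left to the reader.'' So there is nothing to compare against, and your proposal is supplying the omitted argument. Your approach via the common perpendicular and the formula
\[
\cosh d(\alpha(s_1),\beta(s_2))=\cosh(s_1-s_2)+(\cosh d_0-1)\cosh s_1\cosh s_2
\]
is the natural one. The product identity you quote is correct, and the chain of estimates leading to $\sinh(d/2)\le e^{|t|+E-l}$ and thence $d\le e^{|t|+E+1-l}$ is sound. The angle bound via Fermi coordinates is only sketched, but once the distance estimate is in hand it is indeed routine.

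There is one genuine gap: your claim that the statement is symmetric under $A\leftrightarrow B$, invoked to force $t_0\in[0,E]$ rather than merely $|t_0|\le E$, is false. The conclusion asks that $d(a(t),b(t+t_0))$ be small for $t\in[-l,l]$, which is not the same condition with $a$ and $b$ exchanged. Concretely, take $A$ and $B$ to be equal-length segments of a common geodesic with $B$ translated forward by $E$; then $d(a(t),b(t+t_0))=E+t_0\ge E$ for every $t_0\ge 0$, while the asserted bound $e^{E+1-l}$ is arbitrarily small. So the lemma as literally written cannot hold, and your swap cannot rescue it. The statement should almost certainly read $|t_0|\le E$ (this is all the paper ever uses), and your argument establishes exactly that version.

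A minor remark: the intersecting case is not ``ruled out once $l$ is large given $E$''; two geodesics meeting at a small angle can carry $E$-nearly homotopic segments of any length. The analogous distance formula $\cosh d=\cosh s_1\cosh s_2-\cos\theta\,\sinh s_1\sinh s_2$ has the same structure, though, and your estimates go through with $(\cosh d_0-1)$ replaced by $(1-\cos\theta)$, so this is easily patched.
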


Let $(p, u)$ and $(q, v)$ be two vectors from $\TB ( \Ha)$. We define the
distance function
\[ \dist ((p, u), (q, v)) = \max ( \Ang (u \at q, v), d (p, q)) . \]
(We do not insist that $\dist$ is a metric on $\TB \Ha$).

Let $\alpha : [a, b] \to \Ha$ be a unit speed geodesic segment. We let $i
(\alpha) = \alpha' (a)$, and $t (\alpha) = \alpha' (b)$. We have the following
lemma (we omit the proof).

\begin{lemma}
  \label{lemma-C-1}Let $\epsilon, L > 0$. There exists a constant $\epsilon'
  (L)$ with the following properties. Suppose that $\alpha : [a_0, a_1] \to
  \Ha$ and $\beta : [b_0, b_1] \to \Ha$ are $\epsilon$-nearly homotopic, that
  is $d (\alpha (a_i), \beta (b_i)) \le \epsilon$. Suppose that $a_1 - a_0 >
  L$, and $\epsilon < 1$. Then
  \[ \dist (\alpha' (a_i), \beta' (b_i)) \le \epsilon (1 + \epsilon' (L)), \]
  with $\epsilon' (L) \to 0$ as $L \to \infty$.
\end{lemma}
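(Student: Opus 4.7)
The distance component $d(\alpha(a_i),\beta(b_i))\le\epsilon$ holds by hypothesis, which is already consistent with $\dist\le\epsilon(1+\epsilon'(L))$, so the real content is to bound the angle $\Ang(\alpha'(a_i)\at\beta(b_i),\beta'(b_i))$. I would set up the geodesic quadrilateral $Q$ in $\Ha$ with vertices $P_0=\alpha(a_0)$, $P_1=\alpha(a_1)$, $Q_1=\beta(b_1)$, $Q_0=\beta(b_0)$, long sides $\alpha,\beta$, and short connecting sides $\sigma_0,\sigma_1$ of length $\epsilon_0,\epsilon_1\le\epsilon$. Let $\alpha_i,\beta_i$ denote the interior angles of $Q$ at $P_i,Q_i$. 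Since parallel transport along a geodesic in $\Ha$ preserves the angle with the geodesic's tangent, a direct computation along $\sigma_0$ (keeping track of the relative orientation of the two vectors at $Q_0$ with respect to $\sigma_0'(\epsilon_0)$) gives
\[ \Ang(\alpha'(a_0)\at\beta(b_0),\beta'(b_0))=|\alpha_0+\beta_0-\pi|. \]

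Drop perpendiculars from $Q_0$ to $\alpha$ (foot $F_0$, signed along-distance $l_0$ from $P_0$ in the $\alpha'(a_0)$ direction) and from $P_0$ to $\beta$ (foot $G_0$, signed along-distance $l_0'$ from $Q_0$ in the $\beta'(b_0)$ direction). The hyperbolic right-triangle formulas give $\cos\alpha_0=\tanh(l_0)/\tanh(\epsilon_0)$ and $\cos\beta_0=\tanh(l_0')/\tanh(\epsilon_0)$. For small $\epsilon$, these linearize to
\[ \alpha_0+\beta_0-\pi \;\approx\; -\frac{l_0+l_0'}{\epsilon_0}, \]
so it suffices to show $|l_0+l_0'|\le\epsilon\cdot\epsilon_0\cdot(1+\epsilon'(L))$. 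This is where the Convergence Lemma (Lemma \ref{lemma-convergence}) comes in: applied with $E=\epsilon$ and $l=(a_1-a_0)/2$, it guarantees that $\alpha$ and $\beta$ are exponentially close near the midpoint of $Q$, with distance and tangent-angle bounded by $e^{\epsilon+1-l}$. Comparing the local trigonometry at $P_0,Q_0$ with the much tighter control at a well-chosen interior point forces the two geodesics to be ``nearly parallel'' at their starting points, so that the signed along-distances $l_0,l_0'$ nearly cancel and yield the required bound.

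The main obstacle is the careful hyperbolic-trigonometric bookkeeping: one must handle the various configurations (perpendicular feet lying forward or backward of the endpoints, degenerate cases where $\alpha$ and $\beta$ nearly share a geodesic and the perpendicular foot on a segment falls outside it), and ensure that the Taylor expansion errors of the $\tanh$-formulas combine with the exponential decay from the Convergence Lemma into a single error term $\epsilon'(L)$ with $\epsilon'(L)\to 0$ as $L\to\infty$, producing exactly the leading coefficient $\epsilon$ rather than some larger multiple.
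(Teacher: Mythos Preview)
The paper explicitly omits the proof of this lemma (``We have the following lemma (we omit the proof)''), so there is no argument to compare against directly. Your reduction to the angle bound and your formula $\Ang(\alpha'(a_0)\at\beta(b_0),\beta'(b_0))=|\alpha_0+\beta_0-\pi|$ (for the appropriate orientation of the quadrilateral) are correct. However, the linearization you write down is not: from $\cos\alpha_0\approx l_0/\epsilon_0$ and $\cos\beta_0\approx l_0'/\epsilon_0$ one actually obtains
\[
\alpha_0+\beta_0-\pi\;\approx\;-\,\frac{l_0+l_0'}{\epsilon_0\sqrt{1-(l_0/\epsilon_0)^2}},
\]
and the factor $1/\sqrt{1-(l_0/\epsilon_0)^2}$ blows up when $\sigma_0$ is nearly tangent to $\alpha$, so your reduction to bounding $|l_0+l_0'|$ by $\epsilon\cdot\epsilon_0$ does not go through as stated. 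More importantly, steps 5--6 are where the entire content lies, and you have only asserted that midpoint control from the Convergence Lemma ``forces'' the right cancellation at the endpoints without saying how; this is not a proof but a restatement of what needs to be shown.

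A much shorter route avoids the perpendiculars and the Convergence Lemma entirely. Let $u\in\TB_{Q_0}\Ha$ be the unit vector pointing toward $P_1$. In the triangle $Q_0P_1Q_1$ the hyperbolic law of sines gives
\[
\sin\Ang\big(u,\beta'(b_0)\big)\;\le\;\frac{\sinh d(P_1,Q_1)}{\sinh d(Q_0,P_1)}\;\le\;\frac{\sinh\epsilon}{\sinh(L-\epsilon)},
\]
so this angle is at most $\epsilon\cdot O(e^{-L})$ (uniformly for $\epsilon<1$). In the triangle $P_0Q_0P_1$, transporting $\alpha'(a_0)$ around the boundary rotates it by the area, hence $\Ang(\alpha'(a_0)\at Q_0,u)$ differs from the angle at $P_1$ (which is again $\le\epsilon\cdot O(e^{-L})$ by the law of sines) by at most $\operatorname{Area}(P_0Q_0P_1)$. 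But any hyperbolic triangle with a side of length $\epsilon_0$ has area at most $2\arcsin\tanh(\epsilon_0/2)\le\epsilon_0\le\epsilon$ (the supremum is the doubly-ideal case; one checks $\frac{d}{dx}\,2\arcsin\tanh(x/2)=\operatorname{sech}(x/2)\le 1$). Adding the three contributions gives $\Ang(\alpha'(a_0)\at Q_0,\beta'(b_0))\le\epsilon\big(1+O(e^{-L})\big)$, which is exactly the claim with $\epsilon'(L)=O(e^{-L})$.
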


\subsection{The Preliminary Geometric Square Lemma (the PGSL)}

Suppose $C_{ij}$, $i,j=0,1$, are four closed geodesics on $\Su$, and imagine that $C_{ij}$ is covered by two overlapping arcs $C_{ij}^{+}$ and $C_{ij}^{-}$, where  $C_{i0}^{+}$ and $C_{i1}^{+}$ are nearly homotopic
and likewise for $C_{0j}^{-}$ and $C_{1j}^{-}$. The Geometric Square Lemma (GSL) states that

$$
\sum (-1)^{ij}C_{ij}=0.
$$
The full statement of the GSL is given in Section 5.3.

The following is the Preliminary Geometric Square Lemma. We have added the hypothesis $(5)$ to the GSL (Lemma \ref{lemma-square}), so as to find points
in the two convergence intervals  of the four curves, that are nearly diametrically opposite.

\begin{lemma}[Preliminary Geometric Square Lemma]
  \label{lemma-square-prel}Let $E, \epsilon > 0$. There exist constants $K = K
  (\epsilon, E) > 0$ and $R_0 ( \Su, \epsilon, E) > 0$ with the following
  properties. Suppose that we are given four oriented geodesics $C_{ij} \in
  \Gamma_{\epsilon, R}$, $i, j = 0, 1$, and for each $ij$ we are given 4
  real numbers $x^-_{ij} < x^+_{ij} < y^-_{ij} < y^+_{ij} < x^-_{ij} + \len
  (C_{ij})$. Assume that
  \begin{enumerate}
    \item The inequalities $x^+_{ij} - x^-_{ij} > K$, and $y^+_{ij} - y^-_{ij}
    > K$, hold.
    
    \item The segments $C_{ij} [x^-_{ij}, x^+_{ij}]$ and $C_{i' j'} [x^-_{i'
    j'}, x^+_{i' j'}]$ are $E$-nearly homotopic, and likewise the segments
    $C_{ij} [y^-_{ij}, y^+_{ij}]$ and $C_{i' j'} [y^-_{i' j'}, y^+_{i' j'}]$
    are $E$-nearly homotopic, for any $i, j, i', j' \in \{0, 1\}$.
    
    \item The segments $C_{0 j} [x^-_{0 j}, y^+_{0 j}]$ and $C_{1 j} [x^-_{1
    j}, y^+_{1 j}]$ are $E$-nearly homotopic.
    
    \item The segments $C_{i 0} [y^-_{i 0}, x^+_{i 0} + \len
    (C_{i0})]$ and $C_{i 1} [y^-_{i 1}, x^+_{i 1} + \len (C_{i1})]$ are $E$-nearly homotopic.
    
    \item $y^+_{00} - x^-_{00} \ge R + K$, and $x^+_{00} + \len (C_{00}) -
    y^-_{00} \ge R + K$.
  \end{enumerate}
  Then for $R > R_0$, we have
  \begin{equation}
    \label{square-1} \sum _{i, j = 0, 1} (- 1)^{i + j} C_{ij} = 0,
  \end{equation}
  in $\Pant_{10 \epsilon, R}$ homology.
\end{lemma}

\begin{remark} The hypothesis (5) is satisfied provided that $y^{+}_{00}-y^{-}_{00} \ge R$.
\end{remark}

\begin{proof}
  Set $K = 1+E- \log \epsilon$. For simplicity we write $\len
  (C_{ij}) = \len_{ij}$. We claim that we can find $x_{00} \in [x^-_{00} +
  \frac{K}{2}, x^+_{00} - \frac{K}{2} + 1]$ and $y_{00} \in [y^-_{00} +
  \frac{K}{2}, y^+_{00} - \frac{K}{2}]$ such that $y_{00} - x_{00} = R$. If
  $y^-_{00} \le x^-_{00} + R$, we let $x_{00} = x^-_{00} + \frac{K}{2}$ and
  $y_{00} = x_{00} + R$. If $y^-_{00} \ge x^-_{00} + R$, we let $y_{00} =
  y^-_{00} + \frac{K}{2}$, and $x_{00} = y_{00} - R$.
  
  By the Convergence Lemma (Lemma \ref{lemma-convergence}), and by the choice
  of the constant $K$, we can find $x^-_{ij} < x_{ij} < x^+_{ij}$ and
  $y^-_{ij} < y_{ij} < y^+_{ij}$ so that
  \[ \dist (C'_{ij} (x_{ij}), C'_{00} (x_{00})), \dist (C'_{ij} (y_{ij}),
     C'_{00} (y_{00})) \le \epsilon, \]
  and the pairs of geodesic segments $C_{0 i} [x_{0 i}, y_{0 i}]$ and $C_{1 i}
  [x_{1 i}, y_{1 i}]$, and $C_{i 0} [y_{i 0}, x_{i 0} + \len_{i0}]$ and $C_{i
  1} [y_{i 1}, x_{i 1} + \len_{i 1}]$ are $\epsilon$-nearly homotopic.
  
  Let $I_{ij} = y_{ij} - x_{ij}$ and $J_{ij} = x_{ij} + \len_{ij} - y_{ij}$, so $I_{ij}+J_{ij}=\len_{ij}$.
  Then $I_{00} = R$ and $J_{00} = \len_{00} - R$, so $|J_{00} - R| < 2
  \epsilon$.
  
  Also, by the triangle inequality we have $|I_{01} - R| = |I_{01} - I_{00} |
  < 2 \epsilon$. So
  \[ |J_{01} - R| \le |I_{01} - R| + | \len_{01} - 2 R| \le 4 \epsilon . \]
  Then
  \[ |J_{1 j} - R| \le |J_{0 j} - R| + |J_{1 j} - J_{0 j} | < 6 \epsilon, \]
  so
  \[ |I_{1 j} - R| \le |J_{1 j} - R| + | \len_{1 j} - 2 R| < 8 \epsilon . \]
  Therefore we get $|I_{ij} - R|, |J_{ij} - R| < 8 \epsilon$ for $i, j \in
  \{0, 1\}$.

\begin{figure}
  {
    
    \input{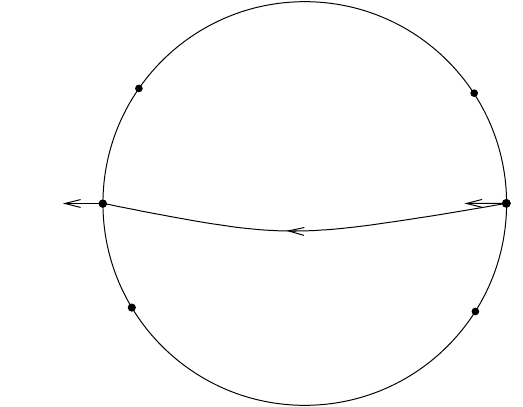_t}
  }
  \caption{The Preliminary Geometric Square Lemma}
  \label{fig-prelim}
\end{figure}

  We take
  \[ \alpha_{00} \in \Conn_{\epsilon, R + \log 4} (\sqrt{-1}C' (x_{00}), - \sqrt{-1}C'
     (y_{00})), \]
  and let $\alpha_{ij}$ be the geodesic arc connecting $x_{ij}$ and $y_{ij}$
  that is $\epsilon$-nearly homotopic to $\alpha_{00}$ (see Figure \ref{fig-prelim}). 
Then $\dist (i  (\alpha_{ij}), i (\alpha_{00})), \dist (t (\alpha_{ij}), t (\alpha_{00}))
  \le 2 \epsilon$. Therefore, because $\dist (C'_{ij} (x_{ij}), C'_{00}
  (x_{00})) \le \epsilon$ and $\dist (C'_{ij} (y_{ij}), C'_{00} (y_{00})) \le
  \epsilon$, we have
  \[ \alpha_{ij} \in \Conn_{3 \epsilon, R + \log 4} (\sqrt{-1}C' (x_{ij}), - \sqrt{-1}C'
     (y_{ij})). \]
  Define $\Pi_{ij}$ as the pants generated from $C_{ij}$ by adding the third
  connection $\alpha_{ij}$. Denote by $A_{ij}$ and $B_{ij}$ the other two
  cuffs of $\Pi_{ij}$, oriented such that  \[ \partial \Pi_{ij} = C_{ij} - A_{ij} - B_{ij}, \]
  where $A_{ij}$ is freely homotopic to the closed broken geodesic $C_{ij}
  [x_{ij}, y_{ij}]  \alpha^{- 1}_{ij}$, and $B_{ij}$ to $C_{ij} [y_{ij},
  x_{ij} + \len_{ij}] \alpha_{ij}$.
  
  Applying Lemma \ref{lemma-ineff+}, we obtain

  \[ | \len (A_{ij}) - 2 R| < |I_{ij} - R| + 10 \epsilon < 20 \epsilon \]
  and similarly $| \len (B_{ij}) - 2 R| < 20 \epsilon$, so $\Pi_{ij} \in
  \Gamma_{10 \epsilon, R}$. Finally, $A_{i 0} = A_{i 1}$, and $B_{0 j} = B_{1
  j}$, so
  \[ 0=\sum _{i, j = 0, 1} (- 1)^{i + j} \partial \Pi_{ij} = \sum _{i, j = 0, 1}
     (- 1)^{i + j} C_{ij}, \]
  in $\Pant_{10 \epsilon, R}$ homology, which proves the lemma.
\end{proof}

\subsection{The Geometric Square Lemma}

\begin{lemma} [Geometric Square Lemma]\label{lemma-square} Let $E, \epsilon > 0$. There exist constants $K_1 =
  K_1 ( \Su, \epsilon, E) > 0$ and $R_0 ( \Su, \epsilon, E) > 0$ with the
  following properties. Suppose that we are given four oriented geodesics
  $C_{ij} \in \Gamma_{\epsilon, R}$, $i, j = 0, 1$, and for each $ij$ we are
  given 4 points $x^-_{ij} < x^+_{ij} < y^-_{ij} < y^+_{ij} < x^-_{ij} + \len
  (C_{ij})$. Assume that
  \begin{enumerate}
    \item The inequalities $x^+_{ij} - x^-_{ij} > K_1$, and $y^+_{ij} -
    y^-_{ij} > K_1$, hold.
    
    \item The segments $C_{ij} [x^-_{ij}, x^+_{ij}]$ and $C_{i' j'} [x^-_{i'
    j'}, x^+_{i' j'}]$, are $E$-nearly homotopic, and likewise the segments
    $C_{ij} [y^-_{ij}, y^+_{ij}]$ and $C_{i' j'} [y^-_{i' j'}, y^+_{i' j'}]$,
    are $E$-nearly homotopic, for any $i, j, i', j' \in \{0, 1\}$.
    
    \item The segments $C_{0 j} [x^-_{0 j}, y^+_{0 j}]$ and $C_{1 j} [x^-_{1
    j}, y^+_{1 j}]$ are $E$-nearly homotopic.
    
    \item The geodesic segments $C_{i 0} [y^-_{i 0}, x^+_{i 0} + \len
    (C_{ij})]$ and $C_{i 1} [y^-_{i 1}, x^+_{i 1} + \len (C_{ij})]$ are $E$-nearly homotopic.
  \end{enumerate}
  Then for $R > R_0$, we have
  \begin{equation}
    \label{square-2} \sum _{i, j = 0, 1} (- 1)^{i + j} C_{ij} = 0,
  \end{equation}
  in $\Pant_{100 \epsilon, R}$ homology.
\end{lemma}

\begin{proof} Below we use $L_0 = L_0 ( \Su, \epsilon, E)$ and $K_0 = ( \Su, \epsilon, E)$ to denote two sufficiently large constants whose values 
will be determined in the course of the argument. The constant $Q_0$ can depend on $K_0$ and $L_0$. The constants $K_1$ and $R_0$ (from the statement 
of the GSL) can depend on $K_0$ and $L_0$ and $Q_0$. Each of these constants will be implicitly  defined as a maximum of expressions 
in terms of constants which precede the given constant in the partial order of dependence which we just have described.

If we cannot apply the PGSL, then possibly interchanging the roles of the $x$' and the $y$'s, we find that
  
\begin{align*}
x^+_{00}  &\le y^-_{00}-{\len}_{00}+R+K({\epsilon},E)\\
&< y^-_{00}-R+K({\epsilon},E)+1,
\end{align*}
where $K = K (\epsilon, E)$ is the constant from the previous lemma. We then
let $y_{00} = y^-_{00} + Q_0$, and let $w_{00} = y_{00} - R$ (we assume that $Q_0 > K$). Then
  \begin{equation}
    \label{e-1} w_{00} > x^+_{00} +10,
  \end{equation}
  provided $Q_0>K+11$, and
$$  
y^-_{00} + Q_0 \le y_{00} \le y^+_{00} + Q_0-K_1,
$$
which implies

\begin{equation} \label{e-2} 
y^-_{00} + 2(E-\log \epsilon) +10 \le y_{00} \le y^+_{00} -2(E+\log \epsilon)-10 ,
\end{equation}
provided $Q_0 \ge 2(E-\log \epsilon) +10$ and $K_1 \ge Q_0+ 2(E-\log \epsilon) +10$.

Therefore by the Convergence Lemma we can find $y_{ij}$ in the
interval $[y^-_{ij}, y^+_{ij}]$ such that $\dist (C'_{ij} (y_{ij}), C'_{00}
  (y_{00})) \le \epsilon$. We then let
  
\begin{align*}
w_{ij} &= y_{ij}-R\\
&\ge x^+_{ij}+10 \\
(\text{provided} \, Q_0 &> E+K+12) \\
&\ge x^-_{ij}+K_1\\
&\ge x^{-}_{ij}+2(E- \log \epsilon) +10 \\
(\text{provided} \, K_1 &> 2 (E-\log \epsilon) +10).
  \end{align*}

  It follows from Lemma \ref{lemma-convergence} that $C_{i 0} [w_{i 0}, y_{i 0}]$ and $C_{i 1} [w_{i 1}, y_{i 1}]$ are
  $\epsilon$ $C^1$ nearly homotopic (two segments are $C^1$ nearly homotopic if the two initial and the two  terminal vectors are $\epsilon$ close in the tangent bundle respectively).
The point is that these two segments are contained into much larger segments that are $E$-nearly homotopic.

\begin{figure}
  {
    
    \input{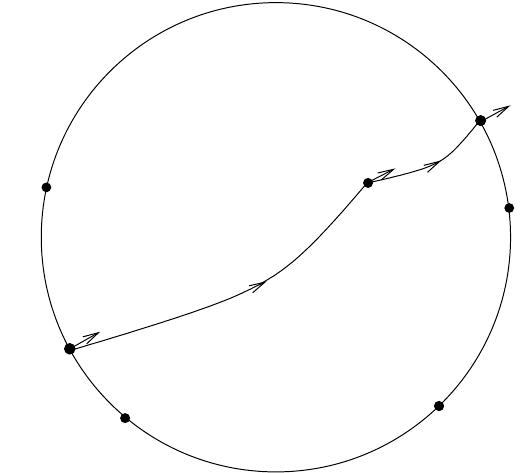_t}
  }
  \caption{The Geometric Square Lemma}
  \label{fig-geom}
\end{figure}

  Let $(q, v) \in \TB \Su$, and take $\beta_{i 0} \in \Conn_{\epsilon, L_0}
  (v, -\sqrt{-1}C'_{i0} (w_{i 0}) )$ (where we assume that $L_0 > L_0 (\epsilon, \Su)$ and
  $L_0 (\epsilon, \Su)$ is the constant from the Connection Lemma (Lemma \ref{lemma-connection})). We take
  $\alpha_{00} \in \Conn_{\epsilon, R + \log 4 - L_0} (\sqrt{-1}C'_{00} (y_{00}), v)$ (see Figure \ref{fig-geom}).
  Then we find $\alpha_{ij} \in \Conn_{3 \epsilon, R + \log 4 - L_0} (\sqrt{-1}C'_{ij}
  (y_{ij}),  v)$, and $\beta_{ij} \in \Conn_{3 \epsilon, L_0} (v, -\sqrt{-1}C'_{ij}
  (w_{ij})$ such that $\alpha_{00}$ and $\alpha_{ij}$ are $\epsilon$-nearly
  homotopic and $\beta_{i 0}$ and $\beta_{i1}$ are $2 \epsilon$-nearly
  homotopic for every $i, j = 0, 1$.
  
  We let $\Pi_{ij}$ be the pair of pants generated by the geodesic segment
  $C_{ij} [w_{ij}, y_{ij}]$, the broken geodesic segment $\beta^{- 1} 
  \alpha^{- 1}_{ij}$, and the geodesic segment $\left( C_{ij} [y_{ij}, w_{ij}
  + \len_{ij}] \right)^{- 1}$. The reader can verify that it is a topological pair of
  pants.
  
  We let $A_{ij}$ be the closed geodesic freely homotopic to $\alpha_{ij}
   \beta_{ij}  C_{ij} [w_{ij}, y_{ij}]$ and let $B_{ij}$ be the one
  for $C_{ij} [y_{ij}, w_{ij} + \len_{ij}]  \beta^{- 1}_{ij} 
  \alpha^{- 1}_{ij}$. Then $\partial \Pi_{ij} = C_{ij} - A_{ij} - B_{ij}$.
  
  Using the second inequality from the remark just after the Sum of Inefficiencies Lemma (see  Lemma \ref{lemma-ineff+}), we find that $| \len (A_{ij}) - 2 R| \le 13 \epsilon$, and $| \len
  (B_{ij}) - 2 R| \le 15 \epsilon$.  Hence $\Pi_{ij} \in \Pant_{10 \epsilon, R}$.

  Observe that $A_{i 0} = A_{i 1}$, so
  \[ \sum _{i, j = 0, 1} (- 1)^{i + j} C_{ij} - \sum _{i, j = 0, 1} (- 1)^{i + j} \partial \Pi_{ij}=\sum _{i, j = 0, 1} (- 1)^{i + j} B_{ij} . \]
  Let the $a^{-}_{ij}$,  $a^{-}_{ij}$, $b^{-}_{ij}$ and  $b^{-}_{ij}$ be real numbers and $B_{ij} : \R \to B_{ij}$ be a
  parametrization of the geodesic $B_{ij}$ so that $B_{ij} (a^-_{ij})$,
  $B_{ij} (a^+_{ij})$, $B_{ij} (b^-_{ij})$, $B_{ij} (b^+_{ij})$ are the
  projections of points $q$, $C_{ij} (y^+_{ij})$, $C_{ij} (x^-_{ij})$ and
  $C_{ij} (x^+_{ij})$ respectively onto the geodesic $B_{ij}$. The points $q$,
  $C_{ij} (y^+_{ij})$, $C_{ij} (x^-_{ij})$ and $C_{ij} (x^+_{ij})$ belong to
  the broken geodesic $C_{ij} [y_{ij}, w_{ij} + \len_{ij}]  \beta^{-
  1}_{ij}  \alpha^{- 1}_{ij}$, and we project them to $B_{ij}$ by
  choosing lifts of $B_{ij}$ and $C_{ij} [y_{ij}, w_{ij} + \len_{ij}] 
  \beta^{- 1}_{ij}  \alpha^{- 1}_{ij}$ in $\Ha$ that have the same
  endpoints and then use the standard projection onto the lift of $B_{ij}$.
  
  It follows from the Convergence Estimate that each of $q$, $C_{ij}
  (y^+_{ij})$, $C_{ij} (x^-_{ij})$ and $C_{ij} (x^+_{ij})$ are within distance
  $1$ of the corresponding projections on $B_{ij}$. Then
  
\begin{align*}
b^+_{ij}-b^-_{ij} &\ge x^+_{ij}-x^-_{ij}-2\\
&\ge K_0\\
\text{(provided} \, K_1 &>K_0+2),
\end{align*}
and
\begin{align*}
a^+_{ij}-a^-_{ij} &\ge R-L_0-3+ K_1-Q_0-E-1 \\
&\ge R+K_0 \\
\text{(provided} \, K_1&>K_0+Q_0+L_0+E+4).
  \end{align*}
  
  Assuming that $K_0 \ge K (10 \epsilon, E + 2)$ (where $K$ is the constant
  from the PGSL) we find that the differences
  $b^+_{ij} - b^-_{ij}$ and $a^+_{ij} - a^-_{ij}$ satisfy the lower bound from
  the PGSL (observe that $b^{+}_{ij}$ and $b^{-}_{ij}$ are $E+2$ close and similarly for the $a$'s).
  
  Also, the $B_{ij}$'s are in $\Gamma_{10 \epsilon, R}$.  So we apply the
  PGSL to show that
  \[ \sum _{i, j = 0, 1} (- 1)^{i + j} B_{ij} = 0 \]
  in $\Pant_{100 \epsilon, R}$ homology.

Here we explain why the assumptions of the PGSL are satisfied. For each $i$, piecewise geodesics $\alpha{i0}^{-}C_{i0}[y_{i0},x_{i0}^{+}]$ and  $\alpha{i1}^{-}C_{i1}[y_{i1},x_{i1}^{+}]$, are $E$-nearly homotopic;
it follows from Lemma \ref{lemma-proj-arc} that $B_{i0}[a_{i0}^{-},a_{i0}^{+}]$ and  $B_{i1}[a_{i1}^{-},a_{i1}^{+}]$ are $(E+4)$-nearly homotopic.

Likewise, $C_{0j}[x_{0j}, w_{0j}] \beta_{0j}^{-1}\alpha_{0j}^{-1}C_{0j}[y_{0j},y_{0j}^{+}]$ and  $C_{1j}[x_{1j}, w_{1j}] \beta_{1j}^{-1}\alpha_{1j}^{-1}C_{1j}[y_{1j},y_{1j}^{+}]$ are $E$-nearly homotopic
(because the individual segments  are) and hence $B_{0j}[b_{0j}^{-}, a_{0j}^{+}]$ and $B_{1j}[b_{1j}^{-}, a_{1j}^{+}]$ are.

\end{proof}

\begin{random} Randomization remarks for the GSL. Let $\epsilon,E>0$. Every constant $K$ below may depend only on $\epsilon$, $\Su$ and $E$.

Below we will define a partial map $g:\left(\overset{....}\Gamma_{1,R}\right)^{4} \to \R \Pant_{100\epsilon,R}$ 
 such that

\begin{enumerate}
\item $g$ is defined on any input  $(C_{ij},x^{\pm}_{ij},y^{\pm}_{ij})$ that satisfies the hypothesis $1-4$ of GSL, 
\item $\sum(-1)^{i+j} C_{ij}=\partial g(C_{ij},x^{\pm}_{ij},y^{\pm}_{ij})$,
\item $g$ is $K$-semirandom with respect to  measures classes $\Sigma^{\boxtimes 4}_{\overset{....}{\Gamma}}$ on $\left(\overset{....}{\Gamma}_{1,R}\right)^{4}$ and
$\sigma_{\Pant}$ on $\Pant_{1,R}$.

\end{enumerate}

We first define a partial function $g_0:\left(\overset{....}\Gamma_{1,R}\right)^{4} \to \R \Pant_{10\epsilon,R}$ that is defined on inputs  $(C_{ij},x^{\pm}_{ij},y^{\pm}_{ij})$ that satisfy the extra hypothesis $(5)$ from the PGSL. Given such an input, we follow the construction of the PGSL to construct $x_{ij}$ and $y_{ij}$, and we observe that because these new points are bounded distance from the old ones, the map $(C_{ij},x^{\pm}_{ij},y^{\pm}_{ij}) \to (C_{ij},x_{ij},y_{ij})$ is  $K$-semirandom  as a partial map from 
$\left(\overset{....}\Gamma_{1,R}\right)^{4}$ to     $\left(\overset{..}\Gamma_{1,R}\right)^{4}$, with respect to the measure classes $\Sigma^{\boxtimes 4}_{\overset{....}{\Gamma}}$ on $\left(\overset{....}{\Gamma}_{1,R}\right)^{4}$ and $\Sigma^{\boxtimes 4}_{\overset{..}{\Gamma}}$ on $\left(\overset{..}{\Gamma}_{1,R}\right)^{4}$.

Then we take a random third connection 
$$
\alpha_{00} \in \Conn_{\epsilon,R+\log 4}(\sqrt{-1}C'_{00}(x_{00}),-\sqrt{-1}C'_{00}(y_{00})).
$$
Likewise for $\alpha_{ij}$. Adding the third connection $\alpha_{ij}$ to $C_{ij}$ we obtain the pants $\Pi_{ij}(\alpha_{ij})$. 
We claim that distinct $\alpha_{ij}$ lead to distinct pants $\Pi_{ij}(\alpha_{ij})$. The third connection $\alpha_{ij}$ is $\epsilon$-close to the unique simple  geodesic arc on $\Pi_{ij}(\alpha_{ij})$ that is orthogonal to $\gamma_{ij}$ at both ends. On the other hand, no two distinct  $\alpha_{ij}$ are $\epsilon$-close, so assuming that the injectivity radius of the surface $\Su$ is at least $2\epsilon$   we find that distinct $\alpha_{ij}$ give distinct $\Pi_{ij}(\alpha_{ij})$.

So, for each input $(C_{ij},x_{ij},y_{ij})$, by adding a random third connection $\alpha_{ij}$ we construct the pants $\Pi_{ij}(\alpha_{ij})$. 
So far, we have been using the term ``random" to mean arbitrary. In these randomization remarks we will also interpret the phrase ``a random element of a finite set $S$" as ``the random element
of $\R S$, namely $\frac{1}{|S|}\sum_{x \in S} x$.

We can then think of every map $f:S \to T$ that we have implicitly constructed in the text as the associated linear map $f:\R S \to \R T$ defined by $f(\sum a_i x_i)=\sum a_if(x_i)$. 
So, for example, we let 
$$
\ul{\alpha}_{ij} \in \Conn_{\epsilon,R+\log 4}(\sqrt{-1}C'_{ij}(x_{ij}),-\sqrt{-1}C'_{ij}(y_{ij}))
$$ 
be the random element of 
$$
\Conn_{\epsilon,R+\log 4}(\sqrt{-1}C'_{ij}(x_{ij}),-\sqrt{-1}C'_{ij}(y_{ij})), 
$$
and then $\Pi_{ij}(\ul{\alpha}_{ij})$ is the image of $\ul{\alpha}_{ij}$ by the linear form of the map $\alpha_{ij} \to \Pi_{ij}(\alpha_{ij})$.

In this manner we have constructed a partial map from $\overset{..}{\Gamma}_{1,R} \to \Pi_{1,R}$ (defined by $(C_{ij},x_{ij},y_{ij}) \to \Pi_{ij}(\ul{\alpha}_{ij})$, 
compare with Lemma \ref{lemma-number} ), and we claim that it is $K$-semirandom with respect to $\Sigma_{\overset{..}{\Gamma}}$  and $\sigma_{\Pant}$. To verify this claim we need to show that 
for any given pants $\Pi \in \Pant_{1,R}$, the weight of $\Pi$ is at most $Ke^{-3R}$. Let $C$ be a cuff of $\Pi$, and choose points $x,y \in C$ which lie in certain unit length intervals on  $C$. Let $\Conn$
be the set of all good third connections between $x$ and $y$ (by this we mean all connections $\alpha$ so that $C$ and $\alpha$ produce a pair of pants in $\Pant_{1,R}$). The set $\Conn$ has approximately $e^{R-K}$ elements. Moreover, there is a unique third connection $\alpha \in \Conn$ so that  $\alpha$ and $C$ yield the given pair of pants $\Pi$. So, the total weight of $\Pi$ is at most $e^{K-R}$ times the total weight for the three choices of $C \in \partial \Pi$ (with associated unit intervals), and we conclude that the total weight for $\Pi$ is at most $3e^{K-R}e^{-2R}=Ke^{-3R}$.

Also, the map $\big( \Pi_{ij} \big)_{i,j \in \{0,1\} } \mapsto \sum(-1)^{i+j} \Pi_{ij}$, is of course  $4$-semirandom from  $(\Pant^{4}_{1,R},\Sigma^{\boxtimes 4}_{\Pant})$ to $(\R \Pant_{1,R},\Sigma_{\Pant})$. Composing the above maps we construct the map $g_0$ and see that $g_0$ is $K$-semirandom.

For the general case, similarly as above we first define the map 
$$
h:(C_{ij},x^{\pm}_{ij},y^{\pm}_{ij}) \to \R \Pant_{1,R}
$$ 
according to our second construction, on every input $(C_{ij},x^{\pm}_{ij},y^{\pm}_{ij})$ that satisfies  conditions $(1)-(4)$ of the GSL, but not  condition $(5)$ of the PGSL.

We construct $y_{ij}$ and $w_{ij}$ as before. The map $(C_{ij},x^{\pm}_{ij},y^{\pm}_{ij}) \to (C_{ij},y_{ij},w_{ij})$ is $K$-semirandom. Then we find the connections  
$\alpha_{ij}$ and $\beta_{ij}$. There are at least  $e^{R-K}$ of the $\alpha_{ij}$ (we only fix a single $\beta_{ij}$), and each third connection $\alpha_{ij} \beta_{ij}$ leads to a new pair of pants $\Pi_{ij}(\alpha_{ij}\beta_{ij})$. Let $N$ denote the number of connections $\alpha_{ij}$ (by construction, the number $N$ does not depend on $i$ and $j$).
This defines the map 
$$
h(C_{ij},x^{\pm}_{ij},y^{\pm}_{ij})=\sum \frac{1}{N} \Pi_{ij}(\alpha_{ij}\beta_{ij}),
$$ 
and we can verify that $h$ is $K$-semirandom.

Then we observe that $\partial_{B}:\Pi_{ij} \to B_{ij}$ formed by taking the appropriate boundary curve of the $\Pi_{ij}$ we constructed is $K$-semirandom, so the induced map $\wt{h}:(C_{ij},x^{\pm}_{ij},y^{\pm}_{ij}) \to (B_{ij},a_{ij},b_{ij})$ is as well. So the map $g_1$ defined by 
$g_1(C_{ij},x^{\pm}_{ij},y^{\pm}_{ij})=\sum(-1)^{i+j}\Pi_{ij}+g_0(B_{ij},a^{\pm}_{ij},b^{\pm}_{ij})$ is $K$-semirandom, and hence $g=g_0+g_1$ is as well.

\end{random}

\section{The Algebraic Square Lemma}

We prove the Algebraic Square Lemma which will be used in almost all of our subsequent identities in the Good Pants Homology. In particular it will allow us to encode an element of $\pi_1(\Su,*)$ as a sum of good pants and then prove that the encoding of products of elements of $\pi_1(\Su,*)$ is the sum of their encodings.

\subsection{Notation} Let $* \in \Su$ denote a point that we fix once and for all. By $\pi_1(\Su,*)$ we denote the fundamental group
of a pointed surface.  If $A \in \pi_1(\Su,*)$, we  let $\cdot A \cdot$ be the geodesic segment from $*$ to $*$ homotopic to $A$. 
By $[A]$ we denote the closed geodesic on $\Su$ that is freely homotopic to  $\cdot A \cdot$. 
If $A_1,...,A_n \in \pi_1(\Su,*)$ we let $\cdot A_1 \cdot A_2...\cdot A_n \cdot$ be the piecewise geodesic arc that is the concatenation
of the arcs $\cdot A_i \cdot$. We let  $[\cdot A_1 \cdot A_2...\cdot A_n \cdot]$ be the closed piecewise geodesic that arises from the arc
$\cdot A_1 \cdot A_2...\cdot A_n \cdot$ by noticing that the starting and the ending point of $\cdot A_1 \cdot A_2...\cdot A_n \cdot$ are the same.
By $\len([A])$ is the length of the closed geodesic $[A]$. By $\len(\cd A \cd )$ we mean of course the length of the geodesic arc $\cdot A \cdot$, and in general by $\len(\cdot A_1 \cdot ...\cdot A_n \cdot)$ the length of the corresponding piecewise geodesic arc.

\begin{remark} Observe that for any $X_i \in \pi_1(\Su,*)$, $i=0,...,n-1$, the closed geodesics $[X_jX_{j+1}...X_{n+j-1}]$ are one and the same (we are taking the indices modulo $n$). We will call this rotation and often use it  without warning. 
\end{remark}

We remind the reader that $\cdot AB \cdot$ is a geodesic arc from $*$ to $*$ representing $AB$, while $\cdot A \cdot B \cdot$ is a concatenation of two geodesic arcs.
Similarly $\cdot AB \cdot C \cdot$ is a concatenation of two geodesic arcs, while  $\cdot A \cdot B \cdot C \cdot$ is a concatenation of three, and so on. 

In particular, we have the following statements about the inefficiency function,
$$
I(\cdot A_1 \cdot ...\cdot A_n \cdot)=\sum \len(\cd A_i \cd )-\len(\cd A_1 A_2....A_n \cd ),
$$
and
$$
I([\cdot A_1 \cdot ...\cdot A_n \cdot])=\sum \len( \cd A_i \cd )-\len([A_1....A_n]).
$$
Notice that we may have (and will usually have)
$$
I([\cdot A_1 \cdot ....\cdot A_n \cdot])>I(\cdot A_1 \cdot ....\cdot A_n \cdot).
$$

\subsection{The Algebraic Square Lemma (the ASL)} 

The following is the Algebraic Square Lemma.

\begin{lemma}[Algebraic Square Lemma]\label{lemma-algebraic}  Let $\epsilon, \Delta>0$. There exist  constants $K(\Su,\epsilon,\Delta)=K$ and $R_0=R_0(\Su,\epsilon,\Delta)$ so that for $R>R_0$ the following holds. 
Let $A_i,B_i,U,V \in \pi_1(\Su,*)$, $i=0,1$, be such that

\begin{enumerate}
\item $\big| \len([A_iUB_jV])-2R \big| <2 \epsilon$,  $i,j=0,1$,
\item $I([\cdot A_i \cdot U \cdot B_j \cdot V \cdot ])<\Delta$,
\item $\len(\cd U \cd ),\len(\cd V \cd ) >K$.
\end{enumerate}
Then 
$$
\sum\limits_{ij} (-1)^{i+j} [A_iUB_jV]=0
$$
in $\Pant_{100\epsilon,R}$ homology. 

\end{lemma}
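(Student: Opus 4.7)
The strategy is to apply the Geometric Square Lemma (Lemma~\ref{lemma-square}) to $C_{ij} := [A_i U B_j V]$; by hypothesis (1), each $C_{ij} \in \Gamma_{\epsilon,R}$. Set $E := \Delta + 4$ and let $K_1 = K_1(\Su,\epsilon,E)$ and $R_0=R_0(\Su,\epsilon,E)$ be the constants supplied by the GSL. I will choose the constant $K$ in the ASL statement so that $K$ dominates both $K_1$ and $\Delta$.

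Fix a basepoint lift $\tilde{*} \in \Ha$. For each $(i,j)$, lift the piecewise geodesic $\cdot A_i \cdot U \cdot B_j \cdot V \cdot$ starting at $\tilde{*}$ to obtain a piecewise geodesic arc $\tilde{\eta}_{ij}$ from $\tilde{*}$ to $A_i U B_j V \cdot \tilde{*}$, and let $\hat{\eta}_{ij}$ be the doubly infinite piecewise geodesic obtained by iterating the action of the hyperbolic isometry $A_i U B_j V$. By Lemma~\ref{lemma-proj-closed} together with hypothesis (2), $\hat{\eta}_{ij}$ stays within distance $D := \Delta/2 + 2$ of the axis $\tilde{C}_{ij}$ of $A_i U B_j V$, which is a lift of $C_{ij}$. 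Define $x^\pm_{ij}$ (resp.\ $y^\pm_{ij}$) to be the parameters on $\tilde{C}_{ij}$ of the nearest-point projections of two chosen points lying strictly in the interior of the $U$-subarc (resp.\ $V$-subarc) of $\tilde{\eta}_{ij}$, with parameter spacing $K_1$ and located at distance at least $K_1$ from each endpoint of that subarc. Hypothesis (3) of the ASL gives $\len(\cdot U \cdot),\len(\cdot V \cdot) > K$, so once $K \gg K_1 + D$ all the requested points fit comfortably, and the required strict ordering $x^-_{ij} < x^+_{ij} < y^-_{ij} < y^+_{ij} < x^-_{ij} + \len(C_{ij})$ follows because the chosen points lie, in order, inside the $U$-piece and then inside the $V$-piece of $\tilde{\eta}_{ij}$.

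Hypothesis (1) of the GSL is immediate from the construction. For (2)--(4), the essential point is that across different choices of $(i,j)$ the words $A_i U B_j V$ share long common subwords: the letter $U$ is common to all four, the subword $U B_j V$ is common between $(0j)$ and $(1j)$, and after a cyclic rotation $V A_i U$ is common between $(i0)$ and $(i1)$. For each pairwise comparison needed, I precompose the lift of one of the two piecewise geodesics by the deck transformation that aligns the relevant common subword so that, in $\Ha$, it coincides pointwise with the common subword of the other lift. Then both $\tilde{C}_{ij}$ and the appropriately translated lift of $C_{i'j'}$ lie within $D$ of this common piecewise geodesic arc by Lemma~\ref{lemma-proj-closed}, so the corresponding endpoints of the relevant subsegments of $C_{ij}$ and $C_{i'j'}$ are within $2D = E$ of each other. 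This verifies (2), (3), and (4) simultaneously.

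The Geometric Square Lemma then yields $\sum_{i,j}(-1)^{i+j}C_{ij} = 0$ in $\Pant_{100\epsilon,R}$ homology, as required, with $R_0$ inherited from the GSL. I expect the main obstacle to be the bookkeeping of universal-cover lifts: one must choose the correct $\pi_1$-translate in each pairwise comparison and keep straight which subword is the common one for which hypothesis, so that the inefficiency bound of Lemma~\ref{lemma-proj-closed} does the geometric work uniformly in all four comparisons.
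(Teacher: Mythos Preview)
Your proof is correct and follows essentially the same route as the paper: project the piecewise geodesic $[\cdot A_i\cdot U\cdot B_j\cdot V\cdot]$ onto the closed geodesic $C_{ij}$ using Lemma~\ref{lemma-proj-closed}, read off four marked points, and feed the result into the Geometric Square Lemma. The one simplification the paper makes is in the choice of marked points: rather than picking interior points of the $U$- and $V$-subarcs spaced $K_1$ apart, it simply takes $x^\pm_{ij}$ and $y^\pm_{ij}$ to be the projections of the four occurrences of $*$ (before and after $U$, before and after $V$). Then $x^+_{ij}-x^-_{ij}$ is automatically at least $\len(\cdot U\cdot)-2E>K_1$, and the GSL hypotheses (2)--(4) follow immediately because the relevant segments of $C_{ij}$ are $E$-close to the common subwords $\cdot U\cdot$, $\cdot V\cdot$, $\cdot U\cdot B_j\cdot V\cdot$, and $\cdot V\cdot A_i\cdot U\cdot$ respectively, with no need to introduce auxiliary points or track additional spacing constraints.
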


\begin{proof} For each $i,j \in \{0,1\}$ we project the closed piecewise geodesic $[\cdot A_i \cdot U \cdot B_j \cdot V \cdot ]$ onto the closed geodesic 
$\gamma_{ij}=[A_iUB_jV]$. By Lemma \ref{lemma-proj-closed} we find that each appearance of $*$ is moved at most distance $E=\frac{\Delta}{2}+1$ by the projections. Let 
$\gamma_{ij}(x^{\pm}_{ij})$ and $\gamma_{ij}(y^{\pm}_{ij})$ be the projections of $*$ on $\gamma_{ij}$ before and after $U$, and before and after $V$, respectively. 
Then providing that our $K$ is at least $2E$ plus the corresponding constant from the GSL, we have 
$x^{-}_{ij}<x^{+}_{ij}<y^{-}_{ij}<y^{+}_{ij}<x^{-}_{ij}+\len(\gamma_{ij})$ and the hypotheses of the  Geometric Square Lemma.  We conclude that
$$
\sum\limits_{ij} (-1)^{i+j} [A_iUB_jV]=0
$$
in $\Pant_{100\epsilon,R}$ homology. 

\end{proof}

\begin{random} Randomization remarks for the ASL. Let $\epsilon,\delta>0$. By $K$ we denote any constant that may depend only on $\epsilon$, $\Su$, and $\Delta$.

Below we will define a partial map 
$$
f:G^{2} \times G \times G^{2} \times G \to \R \Pant_{1,R},
$$
such that

\begin{enumerate}

\item $f$ is defined on any input  $(A_i,U,B_j,V)$ that satisfies the assumptions of the ASL, 

\item $\sum(-1)^{i+j}  [A_iUB_jV] =\partial f(A_i,U,B_j,V)$,

\item $f$ is $K$-semirandom with respect to the classes of measures  $\Sigma^{\boxtimes 2}_G \times \Sigma_G \times \Sigma^{\boxtimes 2}_G \times \Sigma_G$ on $
G^{2} \times  G \times G^{2} \times G$ and  $\sigma_{\Pant}$ on $\Pant_{1,R}$.

\end{enumerate}

Let  $h$ be a partial map
$$
h:  G^{2} \times G \times G^{2} \times G \to   \left(\overset{....}\Gamma_{1,R}\right)^{4} 
$$
defined by letting $h(A_i,U,B_j,V)=(C_{ij},x^{\pm}_{ij},y^{\pm}_{ij})$, where $C_{ij}=[A_iUB_jV]$, and $x^{\pm}_{ij}$ and $y^{\pm}_{ij}$ are the points on the parameterizing torus for $C_{ij}$ such that the points  $C_{ij}(x^{\pm}_{ij})$ and $C_{ij}(y^{\pm}_{ij})$ are the corresponding projections of the 4 copies of the base point $*$ (that belong to  the closed piecewise geodesic 
$[\cd A_i \cd U \cd B_j \cd V \cd ]$) to the closed geodesic $C_{ij}$ (these projections were defined above). It follows from Lemma \ref{lemma-R-product} and  Lemma \ref{lemma-R-pro} that $h$ is $K$-semirandom.
Let  $g:\left(\overset{....}\Gamma_{1,R}\right)^{4} \to \R \Pant_{1,R}$ be the $K$-semirandom map from the previous section (see the Randomization remarks for the GSL). Then $f=g \circ h$ is $K$-semirandom.

\end{random}

\subsection{The  Sum of Inefficiencies Lemma in the algebraic notation} The following lemma follows from   Lemma \ref{lemma-ineff+}.

\begin{lemma}[Sum of Inefficiencies Lemma in the algebraic notation]\label{lemma-ineff} Let $\epsilon,\Delta>0$ and $n \in \N$. There exists $L=L(\epsilon,\Delta,n)>0$ such that if $U_1,...,U_{n+1}=U_1,X_1,...X_n \in \pi_1(\Su,*)$, and 
$I(\cdot U_i \cdot X_i \cdot U_{i+1}) \le \Delta$, and $\len(\cd U_i \cd ) \ge L$, then

$$
\left| I([\cdot U_1 \cdot X_1 \cdot U_2 \cdot X_2 \cdot ... \cdot U_n \cdot X_n \cdot ]) - \sum\limits_{i=1}^{n} I(\cdot U_i \cdot   X_i \cdot U_{i+1} \cdot) \right| \le \epsilon.
$$
\end{lemma}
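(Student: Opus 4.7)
The plan is to apply Lemma \ref{lemma-ineff+} essentially verbatim, after translating from the algebraic language of Section 5 back into the geometric language of Section 4. Specifically, I would set $\alpha_i = \cd U_i \cd$ for $i = 1, \ldots, n+1$ (with $\alpha_{n+1} = \alpha_1$ since $U_{n+1} = U_1$) and $\beta_i = \cd X_i \cd$ for $i = 1, \ldots, n$. By the definition of the notation $\cd A \cd$, each $\alpha_i$ and $\beta_i$ is a single geodesic arc on $\Su$ running from $*$ to $*$, so the concatenation $\alpha_1 \beta_1 \alpha_2 \beta_2 \ldots \alpha_n \beta_n$ is a well-defined piecewise geodesic arc on $\Su$, and by the very definition of the dotted notation it coincides with $\cd U_1 \cd X_1 \cd U_2 \cd \ldots \cd U_n \cd X_n \cd$.

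Under this translation the hypothesis $\len(\cd U_i \cd) \ge L$ becomes $\len(\alpha_i) \ge L$, and the hypothesis $I(\cd U_i \cd X_i \cd U_{i+1} \cd) \le \Delta$ becomes $I(\alpha_i \beta_i \alpha_{i+1}) \le \Delta$. Thus I would choose $L = L(\epsilon, \Delta, n)$ to be the constant produced by Lemma \ref{lemma-ineff+} for these values of $\epsilon$, $\Delta$, and $n$. Invoking Lemma \ref{lemma-ineff+} then yields
$$\left| I([\alpha_1 \beta_1 \alpha_2 \beta_2 \ldots \alpha_n \beta_n]) - \sum_{i=1}^{n} I(\alpha_i \beta_i \alpha_{i+1}) \right| \le \epsilon,$$
which, after rewriting each $\alpha_i$ and $\beta_i$ in terms of $U_i$ and $X_i$, is precisely the conclusion of the lemma, provided we interpret $I(U_i X_i U_{i+1})$ in the algebraic shorthand as $I(\cd U_i \cd X_i \cd U_{i+1} \cd)$ (as the opening paragraphs of Section 5 specify).

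There is no real obstacle here: the statement is a pure notational rewriting of its geometric counterpart. The only subtlety worth flagging is verifying that the common base point $*$ plays no special role beyond being the shared endpoint of every $\cd U_i \cd$ and $\cd X_i \cd$, so that the piecewise geodesic arcs and the resulting closed curve in the algebraic notation really are the same objects that appear in Lemma \ref{lemma-ineff+}. Once that is observed, the lemma is a corollary of its geometric predecessor, with no additional estimates required.
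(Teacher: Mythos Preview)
Your proposal is correct and matches the paper's approach: the paper simply states that this lemma follows from Lemma~\ref{lemma-ineff+}, and you have spelled out the obvious dictionary $\alpha_i=\cd U_i\cd$, $\beta_i=\cd X_i\cd$ that realizes this. There is nothing more to it.
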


\begin{remark} In particular,  we can leave out the $X$'s in the above lemma, and write 
$$
\left| I([\cdot U_1  \cdot U_2 \cdot  ... \cdot U_n  \cdot ]) - \sum\limits_{i=1}^{n} I(\cdot U_i \cdot  U_{i+1} \cdot ) \right| \le \epsilon,
$$
providing that $I(\cdot U_i  \cdot U_{i+1} \cdot ) \le \Delta$, and $\len(\cd U_i \cd ) \ge L$. Moreover,  by the Long Segment Lemma for Angles (for $L$ large enough) we have
$$
\left| I([\cdot U_1  \cdot U_2 \cdot  ... \cdot U_n  \cdot ]) - \sum\limits_{i=1}^{n} I(\theta_i) \right| \le 2\epsilon,
$$
where $\theta_i=\Ang(t(\cd U_i \cd ),i (\cd U_{i+1} \cd ))$.

\end{remark}

Similarly, the following lemma follows from Lemma \ref{lemma-ineff+-1}.

\begin{lemma}\label{lemma-ineff-1} Let $\epsilon,\Delta>0$ and $n \in \N$. There exists $L=L(\epsilon,\Delta,n)>0$ such that if $U_1,...,U_{n+1}=U_1 \in \pi_1(\Su,*)$ and 
$X_{11},...X_{1j_{1}},... X_{n1},...X_{nj_{n}} \in \pi_1(\Su,*)$, and 
$I(\cdot U_i \cdot X_i \cdot U_{i+1}) \le \Delta$, and $\len(\cd U_i \cd ) \ge L$, then

$$
\left| I([\cdot U_1 \cdot X_{11} \cdot ...\cdot X_{1j_{1}} \cdot ...  \cdot U_{n} \cdot X_{n1} \cdot ... \cdot X_{nj_{n}} \cdot  ]) - \sum\limits_{i=1}^{n} I(\cdot U_i \cdot X_{i1}...X_{ij_{i}} \cdot U_{i+1} \cdot ) \right| \le \epsilon.
$$
\end{lemma}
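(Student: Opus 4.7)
The plan is to translate the statement into the geometric language of Lemma \ref{lemma-ineff+-1} and invoke that lemma directly. For each $i = 1, \ldots, n+1$ I would set $\alpha_i = \cd U_i \cd$, and for each pair $(i,j)$ I would set $\beta_{ij} = \cd X_{ij} \cd$. These are geodesic arcs on $\Su$, and since $U_{n+1} = U_1$ the concatenation $\alpha_1 \beta_{11} \cdots \beta_{1 j_1} \alpha_2 \cdots \alpha_n \beta_{n1} \cdots \beta_{n j_n}$ is literally the closed piecewise geodesic curve that appears in the bracket on the left hand side.

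The main verification is that each algebraic inefficiency coincides with its geometric counterpart under this dictionary. Both $I(\cd U_i \cd X_{i1} \cd \cdots \cd X_{i j_i} \cd U_{i+1} \cd)$ and $I(\alpha_i \beta_{i1} \cdots \beta_{i j_i} \alpha_{i+1})$ equal the total length of the pieces minus the length of the unique homotopic geodesic arc with the same endpoints, which in both cases is $\cd U_i X_{i1} \cdots X_{i j_i} U_{i+1} \cd$. The same identification works for the closed curve $[\cd U_1 \cd X_{11} \cd \cdots \cd X_{n j_n} \cd]$ versus the closed piecewise geodesic $[\alpha_1 \beta_{11} \cdots \beta_{n j_n}]$, both of which are compared to the closed geodesic $[U_1 X_{11} \cdots X_{n j_n}]$.

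With this dictionary in place, the hypothesis $\len(\cd U_i \cd) \ge L$ becomes $\len(\alpha_i) \ge L$, and $I(\cd U_i \cd X_{i1} \cd \cdots \cd X_{i j_i} \cd U_{i+1} \cd) \le \Delta$ becomes the corresponding geometric inefficiency bound on $\alpha_i \beta_{i1} \cdots \beta_{i j_i} \alpha_{i+1}$. Applying Lemma \ref{lemma-ineff+-1} then yields exactly the desired inequality. The only point requiring attention is that $L$ can be chosen independent of $j_1, \ldots, j_n$; but the constant supplied by Lemma \ref{lemma-ineff+-1} depends only on $\epsilon$, $\Delta$, and the number $n$ of ``long'' segments $\alpha_i$, and not on how many intermediate pieces $\beta_{ij}$ lie between consecutive $\alpha_i$'s. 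No substantive obstacle arises: this lemma is really a notational translation of the geometric one, which is presumably why the authors leave the single-line proof ``It directly follows from the Long Segment Lemma.''
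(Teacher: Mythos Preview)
Your proposal is correct and matches the paper's approach: the paper simply states (in the line preceding the lemma) that it follows from Lemma~\ref{lemma-ineff+-1}, and your dictionary $\alpha_i = \cd U_i \cd$, $\beta_{ij} = \cd X_{ij} \cd$ makes that reduction explicit. One small correction: the quoted line ``It directly follows from the Long Segment Lemma'' is the paper's proof of the \emph{geometric} Lemma~\ref{lemma-ineff+-1}, not of this algebraic one; for Lemma~\ref{lemma-ineff-1} the paper gives no proof block at all beyond the preceding sentence.
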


Finally, we have the Flipping Lemma.

\subsection{The Flipping Lemma} For $X \in \pi_1(\Su,*)$ we let $\ov X=X^{-1}$ denote the inverse of $X$.

\begin{lemma}[Flipping Lemma]\label{lemma-flip} Let $\epsilon, \Delta>0$. There exists a constant $L=L(\epsilon,\Delta)>0$ with the following properties.
Suppose $A,B,T \in \pi_1(\Su,*)$, and 
$$
I(\cd T \cd A \cd \ov T \cd), I(\cd \ov T \cd B \cd T \cd) \le \Delta, 
$$
and $\len(\cd T \cd ) \ge L$. Then
$$
\left| I([\cd T \cd A \cd \ov T \cd B \cd ]) - I( [\cd T \cd \ov A \cd \ov T \cd B \cd ]) \right| < \epsilon,
$$
and therefore 
$$
\left|\len([TA \ov T B])- \len ([ T \ov A  \ov T B]) \right|< \epsilon.
$$
\end{lemma}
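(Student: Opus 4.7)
The plan is to apply the Sum of Inefficiencies Lemma (Lemma \ref{lemma-ineff}) with $n=2$ to each of the two closed piecewise geodesics, and then to exploit the elementary observation that the piecewise geodesic arcs $\cd T \cd A \cd \ov T \cd$ and $\cd T \cd \ov A \cd \ov T \cd$ are reverses of one another and therefore have identical inefficiencies. The closed curves $[\cd T \cd A \cd \ov T \cd B \cd]$ and $[\cd T \cd \ov A \cd \ov T \cd B \cd]$ each decompose into the two ``long'' segments $T$ and $\ov T$ (of length $\ge L$) separated by the ``short'' words $A$ (resp.\ $\ov A$) and $B$; the inefficiencies should therefore split into local contributions at the two junctions, and the only junction that changes when flipping $A$ is one whose local inefficiency is preserved by the flip.

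First I would choose $L = L(\epsilon/2, \Delta, 2)$ from Lemma \ref{lemma-ineff}. Applied to $[\cd T \cd A \cd \ov T \cd B \cd]$ with $U_1 = T$, $X_1 = A$, $U_2 = \ov T$, $X_2 = B$, $U_3 = U_1$, the hypotheses $I(\cd T \cd A \cd \ov T \cd), I(\cd \ov T \cd B \cd T \cd) \le \Delta$ and $\len(\cd T \cd) = \len(\cd \ov T \cd) \ge L$ are satisfied, and the lemma yields
\[
\big| I([\cd T \cd A \cd \ov T \cd B \cd]) - I(\cd T \cd A \cd \ov T \cd) - I(\cd \ov T \cd B \cd T \cd) \big| \le \epsilon/2.
\]
Second I would observe that the piecewise geodesic $\cd T \cd \ov A \cd \ov T \cd$ is literally the reverse of $\cd T \cd A \cd \ov T \cd$ (since reversing $\cd W \cd$ gives $\cd \ov W \cd$), and both the total length of the concatenation and the length of the straightening geodesic segment with the same endpoints are invariant under reversal. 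Hence
\[
I(\cd T \cd \ov A \cd \ov T \cd) = I(\cd T \cd A \cd \ov T \cd) \le \Delta,
\]
so Lemma \ref{lemma-ineff} applies again with $X_1 = \ov A$ to give
\[
\big| I([\cd T \cd \ov A \cd \ov T \cd B \cd]) - I(\cd T \cd \ov A \cd \ov T \cd) - I(\cd \ov T \cd B \cd T \cd) \big| \le \epsilon/2.
\]
Subtracting the two displays and using the reversal identity cancels both local terms, yielding the first claimed bound with error $\epsilon$.

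The length statement is then a routine unpacking of the definition. Since
\[
I([\cd T \cd A \cd \ov T \cd B \cd]) = \len(\cd T \cd) + \len(\cd A \cd) + \len(\cd \ov T \cd) + \len(\cd B \cd) - \len([TA\ov TB]),
\]
and the analogous identity holds for $[T\ov A\ov TB]$, the equalities $\len(\cd A \cd) = \len(\cd \ov A \cd)$ and $\len(\cd T \cd) = \len(\cd \ov T \cd)$ make the ``straight-line'' totals agree, so the difference of closed-geodesic lengths equals the negative of the difference of inefficiencies, which we have just bounded by $\epsilon$. The only real content is Lemma \ref{lemma-ineff}; the main thing to get right is the bookkeeping that flipping $A$ reverses precisely the arc $\cd T \cd A \cd \ov T \cd$ (and affects no other local inefficiency), so the argument involves no further estimates.
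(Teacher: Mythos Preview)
Your proof is correct and follows essentially the same approach as the paper: decompose the closed-curve inefficiency into the two local pieces $I(\cd T \cd A \cd \ov T \cd)$ and $I(\cd \ov T \cd B \cd T \cd)$, then use the reversal identity $I(\cd T \cd A \cd \ov T \cd)=I(\cd T \cd \ov A \cd \ov T \cd)$. The only cosmetic difference is that the paper obtains the decomposition by two separate applications of the Long Segment Lemmas (for arcs and for closed curves, passing through the intermediate arc $\cd T \cd A \cd \ov T \cd B \cd T \cd$), while you invoke the packaged Sum of Inefficiencies Lemma directly---which is itself derived from those same Long Segment Lemmas.
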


\begin{proof} By the Long Segment Lemmas,

$$
\left| I(\cd T \cd A \cd \ov T \cd B \cd T \cd )-I(\cd T \cd A \cd \ov T \cd )-I( \cd \ov T \cd B \cd T \cd ) \right| < \frac{\epsilon}{4} 
$$
and
$$
\left| I(\cd T \cd A \cd \ov T \cd B \cd  )-I(\cd T \cd A \cd \ov T \cd B \cd T \cd ) \right| < \frac{\epsilon}{4}. 
$$
Likewise
$$
\left| I(\cd T \cd  \ov A \cd \ov T \cd B \cd T \cd )-I(\cd T \cd \ov A \cd \ov T \cd )-I( \cd \ov T \cd B \cd T \cd ) \right| < \frac{\epsilon}{4} 
$$
and
$$
\left| I(\cd T \cd \ov A \cd \ov T \cd B \cd  )-I(\cd T \cd \ov A \cd \ov T \cd B \cd T \cd ) \right| < \frac{\epsilon}{4}. 
$$
But $I(\cd T \cd A \cd \ov T \cd )=I(\cd T \cd \ov A \cd \ov T \cd )$, because $\cd T \cd A \cd \ov T \cd $ is the same as $\cd T \cd \ov A \cd \ov T \cd$ with reversed orientation.
So
$$
\left| I([\cd T \cd A \cd \ov T \cd B \cd ]) - I( [\cd T \cd \ov A \cd \ov T \cd B \cd ]) \right| < \epsilon.
$$
Similarly we conclude $\left|\len([TA \ov T B])- \len ([ T \ov A \ov T B]) \right|< \epsilon$.

\end{proof}

\section{Applications of the Algebraic Square Lemma} In this section we will describe the encoding of an element $A$ of $\pi_1(\Su,*)$ as a sum $A_T$ of good pants (the encoding depends on a choice of a sufficiently large  element $T$ of $\pi_1(\Su,*)$).

In brief, we let
$$
A_T=\frac{1}{2}\left( [TA\ov{T}B]-[T\ov{A}\ov{T}B]\right)
$$
for suitable $B$, and then observe that the Algebraic Square Lemma implies that different choices of $B$ give the same element of the good pants homology.

We can then easily prove that 
$$
[TA\ov{T}B]=A_T+B_{\ov{T}},
$$
which we call the Two-Part Itemization Lemma. We want to go one step further and prove that
$$
[TA\ov{T}BTC\ov{T}D]=A_T+B_{\ov{T}}+C_T+D_{\ov{T}},
$$
for suitable $A,B,C,D$ and $T$. 

It turns out that  in order to prove this Four-Part Itemization Lemma we must first prove that
$$
[TA\ov{T}BTC\ov{T}D]=[TA\ov{T}DTC\ov{T}B].
$$
This is indeed the most difficult lemma of this section.

We would then be able to go ahead and prove a Six-Part Itemization Lemma and so forth but the Four-Part Itemization Lemma is sufficient for our purposes.

We state several results and  definitions (notably the definition of $A_T$ in the next lemma), 
which depend on an element $T \in \pi_1(\Su,*)$ and $\Delta>0$. We treat both $T$ and $\Delta$ as parameters, and the exact value of both $T$ and $\Delta$ (that are then 
used in the proof of the main theorem) will be determined in Section 9.

\subsection{The definition of $A_T$} For $A,T \in \pi_1(\Su,*)$, and $\epsilon,R>0$, we let $\FConn_{\epsilon,R}(A,T)$ be the set all
$B \in \pi_1(\Su,*)$ such that $[TA \ov T B],[T \ov A \ov T B] \in \Gamma_{\epsilon,R}$, and $I(\cd \ov T \cd B \cd T \cd )<1$.

\begin{lemma}\label{lemma-A-T} Let $\epsilon,\Delta>0$. There exists a constant $L=L(\Su,\epsilon,\Delta)$ such that if $A,T \in \pi_1(\Su,*)$ and $I(\cd T \cd A \cd \ov T \cd ) \le \Delta$,
and $\len(\cd T \cd ) \ge L$, and  $2R-\len(\cd A \cd )-2\len(\cd T \cd ) \ge L$, then 
\begin{enumerate}
\item  $\FConn_{\epsilon,R}(A,T)$ is non-empty, and $\log  \big| \FConn_{\epsilon,R}(A,T) \big| \ge 2R-\len(\cd A \cd )-2\len(\cd T \cd )-\Delta-L$,
\item $[TA \ov T B]-[T \ov A \ov T B]= [TA \ov T B']-[T \ov A \ov T B']$ in $\Pant_{100\epsilon,R}$ homology for any $B,B' \in \FConn_{\epsilon,R}(A,T)$.
\end{enumerate}
We then let 
$$
A_T=\frac{1}{2} \big( [TA \ov T B]-[T \ov A \ov T B] \big)
$$
for a random $B \in \FConn_{\epsilon,R}(A,T)$.
\end{lemma}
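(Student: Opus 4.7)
The plan is to prove (1) and (2) separately, both by reducing to tools developed earlier in the paper.

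For (1), I would construct $B$ by producing $\cd B \cd$ as a geodesic arc from $*$ to $*$ via the Connection Lemma (Lemma \ref{lemma-connection}). The constraint $I(\cd \ov T \cd B \cd T \cd) < 1$ becomes the requirement that the initial and terminal unit tangent vectors of $\cd B \cd$ make sufficiently small angles with the terminal tangent of $\cd \ov T \cd$ and initial tangent of $\cd T \cd$, respectively. The requirement $[TA\ov TB] \in \Gamma_{\epsilon,R}$ then forces $\len(\cd B \cd)$ to lie in an interval of width $\Theta(\epsilon)$, using
\[
\len([TA\ov TB]) = 2\len(\cd T \cd) + \len(\cd A \cd) + \len(\cd B \cd) - I([\cd T \cd A \cd \ov T \cd B \cd])
\]
together with the Sum of Inefficiencies Lemma (Lemma \ref{lemma-ineff}), which decomposes the closed-curve inefficiency as $I(\cd T \cd A \cd \ov T \cd) + I(\cd \ov T \cd B \cd T \cd) + O(\epsilon)$. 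The target length is near $2R - 2\len(\cd T \cd) - \len(\cd A \cd)$, and the Connection Lemma delivers at least $e^{2R - \len(\cd A \cd) - 2\len(\cd T \cd) - L}$ such $B$, where $L$ absorbs the additive constant $L_0(\Su,\epsilon)$ from Lemma \ref{lemma-connection}, the constants from Lemma \ref{lemma-ineff}, and the $\Delta$. Finally, the Flipping Lemma (Lemma \ref{lemma-flip}) ensures $|\len([TA\ov TB]) - \len([T\ov A \ov TB])| < \epsilon$, so $[T\ov A \ov TB]$ is automatically in $\Gamma_{\epsilon,R}$ (harmlessly enlarging $\epsilon$ if needed).

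For (2), the identity we want is exactly the alternating sum in the Algebraic Square Lemma (Lemma \ref{lemma-algebraic}) under the substitution $A_0 := A$, $A_1 := \ov A$, $U := \ov T$, $B_0 := B$, $B_1 := B'$, $V := T$. Cyclic rotation gives $[A_iUB_jV] = [TA_i\ov TB_j]$, so the ASL conclusion reads
\[
[TA\ov TB] - [T\ov A \ov TB] - [TA\ov TB'] + [T\ov A \ov TB'] = 0
\]
in $\Pant_{100\epsilon,R}$ homology, which rearranges to the desired equality. To invoke the ASL I need to verify its three hypotheses: the length condition follows from $[TA_i\ov TB_j] \in \Gamma_{\epsilon,R}$; the inefficiency condition follows from Lemma \ref{lemma-ineff} applied to the closed piecewise geodesic $[\cd T \cd A_i \cd \ov T \cd B_j \cd]$, yielding an upper bound of $\Delta + 1 + O(\epsilon)$ after observing that $I(\cd T \cd \ov A \cd \ov T \cd) = I(\cd T \cd A \cd \ov T \cd)$ since these arcs are reverses of each other; and the length bound $\len(\cd U \cd), \len(\cd V \cd) \geq L$ holds by hypothesis, with $L$ chosen to dominate the ASL constant $K(\Su, \epsilon, \Delta + 2)$.

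The main obstacle is coordinating the constants: $L$ must simultaneously dominate the constants from the Connection Lemma, the Sum of Inefficiencies Lemma, the Flipping Lemma, and the ASL applied with inefficiency parameter $\Delta + 2$. Each of these depends only on $\Su$, $\epsilon$, and $\Delta$, so one takes $L$ to be their maximum. Nothing is conceptually new here; the lemma is really a packaging of the ASL with a Connection-Lemma-based existence statement, with the Flipping Lemma handling the symmetry between $A$ and $\ov A$.
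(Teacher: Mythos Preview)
Your proposal is correct and follows essentially the same approach as the paper: for (1) you construct $B$ via the Connection Lemma with the appropriate target length and angle constraints, and for (2) you apply the Algebraic Square Lemma with exactly the substitution $A_0=A$, $A_1=\ov A$, $U=\ov T$, $B_0=B$, $B_1=B'$, $V=T$. The only cosmetic difference is that the paper handles the $[T\ov A\ov TB]$ length estimate directly via the Sum of Inefficiencies Lemma (using $I(\cd T\cd\ov A\cd\ov T\cd)=I(\cd T\cd A\cd\ov T\cd)$) rather than invoking the Flipping Lemma, and it builds the inefficiency correction into the target length $R'=2R-\len(\cd A\cd)-2\len(\cd T\cd)-I(\cd T\cd A\cd\ov T\cd)$ from the outset.
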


\begin{remark} The part $(2)$ of Lemma \ref{lemma-A-T} implies that for any  given $B \in \FConn_{\epsilon,R}(A,T)$ we have
$$
A_T=\frac{1}{2} \big( [TA \ov T B]-[T \ov A \ov T B] \big)
$$
in $\Pant_{100\epsilon,R}$ homology. Also, it is important to note that 
$[A]$ is equal to $A_T$ in the standard homology ${\bf H}_1$.
\end{remark}

\begin{proof} Suppose that $\cd B \cd \in \Conn_{\epsilon,R'}(-i(\cd T \cd ),i(\cd T \cd ) )$, where $R'=2R-\len(\cd A \cd )-2 \len(\cd T \cd )-I(\cd T \cd A \cd \ov T \cd )$.
The set $\Conn_{\epsilon,R'}(-i(\cd T \cd ),i(\cd T \cd ) )$ will be non-empty  (by the Connection lemma (Lemma \ref{lemma-connection})) provided $L$ is large.  Then, by the Sum of Inefficiencies Lemma,
$$
\left| \len([TA \ov T B])-2R \right|<\epsilon+O(\epsilon^{2}),
$$
and 
$$
\left| \len([T \ov A \ov T B])-2R \right|<\epsilon+O(\epsilon^{2}),
$$
provided $\len(\cd T \cd )$ is large. Thus, with slight abuse of notation we have
$$
\Conn_{\epsilon,R'}(-i(\cd T \cd ),i(\cd T \cd ) )  \subset \FConn_{\epsilon,R}(A,T ),
$$
and 
$$
\log \left| \Conn_{\epsilon,R'}(-i(\cd T \cd ), i( \cd T \cd ) \right| \ge 2R- \len(\cd A \cd )-2\len(\cd T \cd )-L
$$
if $L$ is large, so we have proved the statement (1) of the lemma.

Again, by the Sum of Inefficiencies Lemma, the inefficiency of the piecewise geodesic $[\cdot T \cdot A \cdot \ov{T} \cdot B \cdot]$ is at most $\Delta+2$. Then the 
statement (2) then follows, provided $L$ (and hence ($\len(\cd T \cd )$) is large, from the Algebraic Square Lemma.

\end{proof}

\begin{random} Randomization remarks for $A_T$.  All constants $K$ may depend only on $\epsilon$, $\Delta$, and $\Su$ and $T \in \pi_1(\Su,*)$. 

Letting $\ul{B}_A \in \R G$ denote the random element of $\FConn_{\epsilon,R}(A,T)$, we consider the map $A \to \ul{B}_A$ from $G$ to $\R G$. If $\len(\cd A \cd ) \in [a,a+1]$, we find that $\len(\cd B \cd ) \in [L_a,R_a]$, where $L_a=2R-a-2 \len(\cd T \cd ) -\Delta -4$ and $R_a=2R-a-2 \len(\cd T \cd )$, for all $B \in \FConn_{\epsilon,R}(A,T)$. Because 

$$
\big| \FConn_{\epsilon,R}(A,T) \big|> e^{L_{a}-L}
$$
(where $L=L(\epsilon,\Su)$ from the Connection Lemma), and $\sigma_a(G) \le K$ (see Appendix for  the definition of $\sigma_a$), we find that for any $X \in G$
$$
(A \to \ul{B}_A )_{*} \sigma_a(X) \le K e^{L-L_{a}},
$$
if $\len(\cd X \cd ) \in [L_a,R_a]$, and $(A \to \ul{B}_A)_{*} \sigma_a(X)=0$ otherwise. This implies

$$
(A \to \ul{B}_A)_{*} \sigma_a \le K \sum_{k=\lfloor L_a \rfloor}^{\lfloor R_b \rfloor} e^{k+L-L_{a}} \sigma_k,
$$
which in turn implies that the map $A \to \ul{B}_A$ is $K$-semirandom with respect to $\Sigma_G$ and $\Sigma_G$.

We define $[A \ov T \ul{B}_A T]$ by
$$
[A \ov T \ul{B}_A T]=\frac{1}{|\FConn_{\epsilon,R}(A,T)|}\sum_{B \in \FConn_{\epsilon,R}(A,T) }[A \ov T B T]
$$

The map $A \to (A,T)$, is $e^{\len(\cd T \cd)}$ semirandom by the remark stated just before the Principles of andomization section in the Appendix.

Then the partial maps from $G$ to $\R G^{4}$ defined by $A \to (A,\ov T ,\ul{B}_A,T)$ and $A \to (\ov A, \ov T, \ul{B}_A,T)$, are $Ke^{2\len(\cd T \cd )}$-semirandom with respect to $\Sigma_G$ and $\Sigma^{4}_G$ and hence the map
$$
A \to A_T=\frac{1}{2}\left([A \ov T \ul{B}_A T]-[\ov A \ov T \ul{B}_A T] \right)
$$
is $Ke^{2\len(\cd T \cd )}$-semirandom with respect to  $\Sigma_G$ and $\sigma_{\Gamma}$.

The map $(A,B') \to (A,\ul{B}_A,B')$ is $K$-semirandom with respect to $\Sigma^{\times 2}_G$ and $\Sigma^{\times 3}_G$, and $(A,\ul{B}_A,B') \to (A,\ov A,\ov T,\ul{B}_A,B',T)$ is $Ke^{2\len(\cd T \cd )}$-semirandom with respect to $\Sigma^{\times 3}_G$ and $\Sigma^{\boxtimes 2}_G \times \Sigma_{G} \times \Sigma^{\times 2}_G \times \Sigma_G$.  

Also, by the Algebraic Square Lemma, the map $(A,\ov A, \ov T, B, B',T) \to  \Pi \in \R \Pant_{1,R}$, such that 
$$
\partial \Pi=[TA \ov T B]-[T \ov A \ov T B]-[TA \ov T B']+[T \ov A \ov T B'],
$$
is $K$-semirandom from  $G^{2} \times G \times G^{2} \times G$ to $\Pant_{1,R}$, with respect to the measure classes  
$\Sigma^{\boxtimes 2}_G \times \Sigma_G \times \Sigma^{\boxtimes 2}_{G} \times \Sigma_G$ and $\sigma_{\Pant}$.
Composing the above mappings we find a $Ke^{2\len(\cd T \cd )}$-semirandom map $g:G^{2} \to \R \Pant_{1,R}$ such that
$$
\partial g(A,B')= A_T -\frac{1}{2}\left([TA \ov T B' ]-[T \ov A \ov T B'] \right).
$$

\end{random}

\begin{remark} At the end of the paper we will see that $T$ and $\Delta$ only depend on $\Su$ and $\epsilon$.

\end{remark}

\subsection{The Two-part Itemization Lemma}

The following lemma is a corollary of the previous one and we refer to it as the Two-part Itemization Lemma.

\begin{lemma}[Two-part Itemization Lemma] Let $\epsilon,\Delta>0$. There exists a constant $L=L(\Su,\epsilon,\Delta)>0$ such that for any $A,B,T \in \pi_1(\Su,*)$ such that $[TA \ov{T} B] \in \Gamma_{\epsilon,R}$, we have
$[TA \ov T B]=A_T+B_{\ov T}$  in $\Pant_{200\epsilon,R}$ homology,  provided that $\len(\cd T \cd ),\len(\cd A \cd ), \len(\cd B \cd )>L$  and $I([\cd T \cd A \cd \ov T \cd B \cd ]) \le \Delta$.
\end{lemma}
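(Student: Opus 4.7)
The plan is to express both $A_T$ and $B_{\ov T}$ via auxiliary choices and then recover $[TA\ov T B]$ by two symmetric applications of the Algebraic Square Lemma.

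First I would choose $B' \in \FConn_{\epsilon,R}(A,T)$ and $A' \in \FConn_{\epsilon,R}(B,\ov T)$, whose existence is guaranteed by Lemma \ref{lemma-A-T}(1): the required inefficiency bounds on the sub-arcs $\cd T \cd A \cd \ov T \cd$ and $\cd \ov T \cd B \cd T \cd$ follow from the Sum of Inefficiencies Lemma applied to the hypothesis $I([\cd T \cd A \cd \ov T \cd B \cd]) \le \Delta$, combined with the length hypotheses. By definition and by Lemma \ref{lemma-A-T}(2), in $\Pant_{100\epsilon,R}$ homology
\[
2 A_T = [TA\ov T B'] - [T\ov A \ov T B'], \qquad 2 B_{\ov T} = [\ov T B T A'] - [\ov T \ov B T A'].
\]

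Next I invoke the ASL (Lemma \ref{lemma-algebraic}) with $(A_0, A_1, U, B_0, B_1, V) = (A, \ov A, \ov T, B, B', T)$. The length hypothesis $\len(\cd T \cd) \ge K$ comes from choosing $L$ large; $|\len[T\ov A \ov T B] - 2R| = O(\epsilon)$ follows from the Flipping Lemma (Lemma \ref{lemma-flip}); the two $B'$-curves are good because $B' \in \FConn_{\epsilon,R}(A,T)$; and the inefficiency bounds for all four curves are controlled via the Sum of Inefficiencies Lemma. Cyclic rotation of each resulting curve then gives
\[
[TA\ov T B] - [TA\ov T B'] - [T\ov A\ov T B] + [T\ov A\ov T B'] = 0
\]
in $\Pant_{100\epsilon,R}$ homology, which rearranges to $2 A_T = [TA\ov T B] - [T\ov A\ov T B]$.

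I then apply the ASL symmetrically with $(A_0, A_1, U, B_0, B_1, V) = (B, \ov B, T, A, A', \ov T)$. The four resulting curves are cyclically equivalent to $[TA\ov T B]$, $[TA'\ov T B]$, $[TA\ov T \ov B]$, $[TA'\ov T \ov B]$. The key algebraic observation is that reversing orientation negates an oriented closed geodesic, i.e., $[W^{-1}] = -[W]$, so $[TA\ov T \ov B] = -[T\ov A\ov T B]$ and $[TA'\ov T \ov B] = -[T\ov{A'}\ov T B]$. The ASL identity then reads
\[
[TA\ov T B] + [T\ov A\ov T B] = [TA'\ov T B] + [T\ov{A'}\ov T B]
\]
in $\Pant_{100\epsilon,R}$ homology. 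The same reversal trick rewrites the definition of $B_{\ov T}$ as $2 B_{\ov T} = [TA'\ov T B] + [T\ov{A'}\ov T B]$, and hence $2 B_{\ov T} = [TA\ov T B] + [T\ov A\ov T B]$. Adding this to the identity for $2A_T$ gives $A_T + B_{\ov T} = [TA\ov T B]$, as required.

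The main obstacle is checking the quantitative hypotheses of the two ASL invocations in a uniform way: one must track how the inefficiency and length bounds propagate through the Flipping Lemma, the $\FConn$-definition, and the Sum of Inefficiencies Lemma, with a mild loss from $\Delta$ to $\Delta + O(\epsilon)$ at each step, and choose $L$ large enough in terms of $\Delta$, $\epsilon$, $\Su$, and the constant $K$ supplied by the ASL.
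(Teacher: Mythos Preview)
Your proof is correct and follows essentially the same route as the paper's. Both arguments rest on the same three ingredients: cyclic rotation, the orientation reversal $[W^{-1}]=-[W]$, and the identity $[TA\ov T B]-[T\ov A\ov T B]=2A_T$ in $\Pant_{100\epsilon,R}$ homology (together with its mirror for $B_{\ov T}$). The paper's version is simply more compact: instead of introducing auxiliary $B'\in\FConn_{\epsilon,R}(A,T)$ and $A'\in\FConn_{\epsilon,R}(B,\ov T)$ and bridging to them via two explicit ASL applications, it writes
\[
[TA\ov T B]=\tfrac12\big([TA\ov T B]-[T\ov A\ov T\ov B]\big)
=\tfrac12\big([TA\ov T B]-[T\ov A\ov T B]\big)+\tfrac12\big([\ov T B T\ov A]-[\ov T\ov B T\ov A]\big)
\]
and identifies the two halves directly with $A_T$ and $B_{\ov T}$ via the remark after Lemma~\ref{lemma-A-T}. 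Your two ASL steps are exactly what is hidden in that last identification, so the difference is purely organizational.
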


\begin{proof} It follows from Lemma \ref{lemma-flip} that $[T\ov{A} \ov{T} B] \in \Gamma_{2\epsilon,R}$.  In order to apply Lemma \ref{lemma-A-T} we need an upper bound on $I(\cdot T \cdot \ov{A} \cdot \ov{T} \cdot)$ and a lower bound on  $2R-\len(\cdot A \cdot) - 2\len(\cdot T \cdot)$.
These follow from $\len(\cdot B \cdot), \len(\cdot T \cdot) \ge L$,  $I([\cd T \cd A \cd \ov T \cd B \cd ]) \le \Delta$,  and the Sum of Inefficiencies Lemma. 

We observe
\begin{align*} 
[TA \ov T B] &=\frac{1}{2} \big( [TA \ov T B]-[\ov B T \ov A \ov T ] \big) \\
&=\frac{1}{2} \big( [TA \ov T B]-[T \ov A \ov T \ov B ] \big) \\
&=\frac{1}{2} \big( [TA \ov T B]-[T \ov A \ov T B ] \big) \\
&+ \frac{1}{2} \big( [\ov{T} BT \ov A ]-[\ov T \ov B T \ov A  ] \big) \\
&=A_T+B_{\ov T},
\end{align*}
in $\Pant_{200\epsilon,R}$ homology.

\end{proof}

\begin{random} The randomization remarks for the Two-part Itemization Lemma. We have implicitly defined a map $g:G^{2} \to \R \Pant_{100\epsilon,R}$ such that $\partial g(A,B)=A_T+B_T-[A \ov T B T]$. The map $g$ is $Ke^{2\len(\cd T \cd)}$-semirandom with respect to $\Sigma_G \times \Sigma_G$ and $\sigma_{\Pant}$.
\end{random}

\begin{remark}  In fact it should be true that
\begin{equation}\label{most} 
[TA_1 \ov T B_1 ...TA_n \ov T B_n]=\sum\limits_{i=1}^{n} (A_i)_T+(B_i)_{\ov T},
\end{equation}
provided $\len(\cd T \cd )$ is large given $I(TA_i \ov T )$ and $I(\ov T B_i T)$. Above we  proved this when $n=1$ (provided $\len(\cd A \cd )$ and $\len(\cd B \cd )$ are large) and we will prove it
in the rest of this section for $n=2$, using the $ADCB$ lemma which we prove next. The general case can be proved by induction using the cases $n=1$ and $n=2$ (but we will only need this statement for $n=1,2$).

\end{remark}

\begin{remark}

\end{remark}

We also observe that under the usual conditions we have $A_{TU}=A_U$ in $\Pant_{100\epsilon,R}$ homology. This follows from the fact that
$2A_{TU}=[TUA \ov U \ov T B]+[TUA \ov U \ov T \ov B]=[UA \ov U \ov T B  T]+[UA \ov U \ov T \ov B T]=2A_{U}$.

\subsection{The $ADCB$ Lemma}  

\begin{claim} Let $\delta, \Delta>0$. There exists $L=L(\Delta,\delta)>0$ with the following properties. Let $A_i,B_i,T \in \pi_1(\Su,*)$, $i=0,1$.
If $\len(\cd T \cd )> L$ and $I(\cdot \ov{T} \cdot  A_i \cdot T \cdot)$  then
$$
\left| \len([A_0TB_0 \ov T A_1 T B_1 \ov T])-\sum\limits_{i=0}^{1}\len( \cd \ov T A_i T \cd )-\sum\limits_{i=0}^{1}\len( \cd  T B_i \ov T \cd )+4\len(\cd T \cd )\right|< \delta.
$$
\end{claim}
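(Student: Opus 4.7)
The plan is to realize $[A_0TB_0\ov TA_1TB_1\ov T]$ as the free homotopy class of a closed eight‐piece piecewise‐geodesic loop $P=\cd A_0\cd T\cd B_0\cd\ov T\cd A_1\cd T\cd B_1\cd\ov T\cd$ based at $*$, and to estimate $I([P])=\len(P)-\len([A_0TB_0\ov TA_1TB_1\ov T])$ via the Sum of Inefficiencies Lemma (Lemma~\ref{lemma-ineff}).

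First I would do the algebraic bookkeeping. Since $\len(P)=4\len(\cd T\cd)+\sum_i\len(\cd A_i\cd)+\sum_j\len(\cd B_j\cd)$, and since by the definition of arc inefficiency
\[
\len(\cd\ov T A_iT\cd)=2\len(\cd T\cd)+\len(\cd A_i\cd)-I(\cd\ov T\cd A_i\cd T\cd),
\]
with the analogous identity for each conjugated $B_j$-arc (with the orientation of the conjugating pair read from the cyclic structure of the word, so that each $B_j$ is flanked by $T$ on the left and $\ov T$ on the right, giving $\cd T B_j\ov T\cd$), substitution into the left-hand side of the claim makes the $\len(\cd T\cd)$-, $\len(\cd A_i\cd)$-, and $\len(\cd B_j\cd)$-terms telescope away, leaving
\[
\mathrm{LHS}=-I([P])+\sum_{i=0}^1 I(\cd\ov T\cd A_i\cd T\cd)+\sum_{j=0}^1 I(\cd T\cd B_j\cd\ov T\cd).
\]

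Second I would apply Lemma~\ref{lemma-ineff} to $[P]$. After a cyclic rotation, take as the four long segments the alternating copies of $\cd T\cd$ and $\cd\ov T\cd$ (each of length $\len(\cd T\cd)$) and as the short intervening pieces the four loops $B_0,A_1,B_1,A_0$; provided $\len(\cd T\cd)\ge L$ with $L=L(\delta,\Delta)$ chosen to exceed the threshold $L(\delta,\Delta,4)$ from Lemma~\ref{lemma-ineff}, and with the (implicit) hypothesis $I(\cd\ov T\cd A_i\cd T\cd), I(\cd T\cd B_j\cd\ov T\cd)\le\Delta$ on the four intermediate inefficiencies, the lemma yields
\[
\Bigl|I([P])-\sum_i I(\cd\ov T\cd A_i\cd T\cd)-\sum_j I(\cd T\cd B_j\cd\ov T\cd)\Bigr|<\delta,
\]
which combined with the bookkeeping above gives $|\mathrm{LHS}|<\delta$.

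The only delicate point is the orientation of the conjugating $T$s around each $B_j$: since each $B_j$ in the cyclic word is sandwiched between $T$ (on the left) and $\ov T$ (on the right), the term naturally produced by the Sum of Inefficiencies Lemma is $I(\cd T\cd B_j\cd\ov T\cd)$ rather than $I(\cd\ov T\cd B_j\cd T\cd)$, and the claim's middle sum must be read with the conjugating pair oriented accordingly because the geodesic-loop lengths $\len(\cd T B\ov T\cd)$ and $\len(\cd\ov T BT\cd)$ depend on the distances $d(T^{\mp 1}\tilde *,\mathrm{axis}([B]))$ respectively and do not agree in general. Once this orientation convention is fixed, there is no further obstacle: the argument is a pure length-decomposition plus a single invocation of Lemma~\ref{lemma-ineff}.
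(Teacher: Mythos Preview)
Your approach is exactly the paper's: the paper's proof consists of the single sentence ``It follows from the Sum of Inefficiencies Lemma,'' and your argument is a careful unpacking of that sentence via the length/inefficiency bookkeeping plus one application of Lemma~\ref{lemma-ineff}. Your observation about the orientation of the conjugating $T$'s around the $B_j$ is well taken: in the cyclic word each $B_j$ is flanked as $T\,B_j\,\ov T$, so the natural term is $\len(\cd T B_j\ov T\cd)$ (consistent with how the claim is actually used in the proof of the $ADCB$ Lemma), and the stated $\len(\cd\ov T B_j T\cd)$ appears to be a typo; likewise the inefficiency bounds $I(\cd\ov T\cd A_i\cd T\cd),\,I(\cd T\cd B_j\cd\ov T\cd)\le\Delta$ are indeed an implicit hypothesis needed to invoke Lemma~\ref{lemma-ineff}.
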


\begin{proof} By the Sum of Inefficiencies Lemma we have that
$$
I([\cdot A_0 \cdot B_0 \cdot \ov{T} \cdot A_1 \cdot T \cdot B_1 \cdot \ov{T} \cdot]) 
$$
is close to
$$
\sum_{i=0,1} I(\cdot \ov{T} \cdot A_i \cdot T \cdot) + I(\cdot T \cdot B_j \cdot \ov{T} \cdot).
$$
By the definition of inefficiency, the number 
$$
\len([A_0TB_0\ov T A_1 T B_1 \ov T]) -4\len(T) -\sum_{i=0,1}\big( \len(A_i)+\len(B_i) \big)
$$
$$
-\sum_{i=0,1} \left( \len(\cdot \ov T A_i T \cdot)+\len(\cdot T B_i \ov T \cdot ) -4 \len(T) - \len (A_i) - \len(B_i) \right)
$$
is small in absolute value. The claim now follows from the Sum of Inefficiencies Lemma.

\end{proof}

\begin{lemma}[ADCB Lemma]\label{lemma-ADCB} Let $\epsilon,\Delta>0$. There exists $L=L(\Su,\epsilon,\Delta)>0$ and $R_0=R_0(\Su,\epsilon,\Delta)>0$ with the following properties.
Let $A,B,C,D,T \in \pi_1(\Su,*)$ such that $\len(\cd B \cd ),\len( \cd D \cd ),\len(\cd T \cd )>L$. If $R>R_0$ and 
$$
I(\cd T \cd A \cd \ov T \cd ),I(\cd T \cd C \cd \ov T \cd ),I(\cd T \cd B \cd  \ov T \cd ),I( \cd  T \cd D \cd \ov T \cd ) \le \Delta
$$
then $[ATB \ov T C T D \ov T]=[ATD \ov T C T B \ov T]$ in $\Pant_{200\epsilon,R}$ homology provided that the curves in question are in $\Gamma_{\epsilon,R}$.
\end{lemma}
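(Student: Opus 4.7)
The plan is to apply the Algebraic Square Lemma (Lemma \ref{lemma-algebraic}) to a carefully engineered choice of inputs so that the two off-diagonal curves of the four-term ASL relation coincide cyclically as closed geodesics, collapsing the relation to $X-Y=0$. Explicitly, take $A_0=B\ov T CTD$, $A_1=D\ov T CTB$, $U=\ov T$, $B_0=A$, $B_1=C$, $V=T$. A direct cyclic-rotation check of $8$-letter words identifies $[A_0UB_0V]$ and $[A_1UB_0V]$ with $X=[ATB\ov T CTD\ov T]$ and $Y=[ATD\ov T CTB\ov T]$ respectively, while both off-diagonals $[A_0UB_1V]=[B\ov T CTD\ov T CT]$ and $[A_1UB_1V]=[D\ov T CTB\ov T CT]$ are cyclically equal to the single closed curve $W_C:=[CTB\ov T CTD\ov T]$. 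The key combinatorial fact is that a $B\leftrightarrow D$ swap at two positions that sit $4$ letters apart in an $8$-letter word of period $4$ amounts to a cyclic rotation by $4$. The ASL relation therefore reduces to $X-W_C-Y+W_C=0$, i.e.\ $X=Y$ in good pants homology.

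To invoke the ASL I would verify its three hypotheses: (i) $\len(\cd U\cd),\len(\cd V\cd)>K$ is immediate from $\len(\cd T\cd)\ge L$; (ii) the closed piecewise geodesic inefficiency $I([\cd A_i\cd U\cd B_j\cd V\cd])$ is bounded by a constant depending only on $\Delta$, obtained by iterating the Long Segment Lemma (Lemma \ref{lemma-long-arc}) and the Sum of Inefficiencies Lemma (Lemma \ref{lemma-ineff-1}) across the four-arc decomposition together with the given hypotheses $I(\cd T\cd A\cd\ov T\cd),\ldots,I(\cd T\cd D\cd\ov T\cd)\le\Delta$; and (iii) the lengths of all four curves are within $2\epsilon$ of $2R$. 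Parts (i), (ii), and the $X,Y$ part of (iii) are immediate.

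The main obstacle is the length of $W_C$: by Sum of Inefficiencies one has $\len(W_C)\approx\len(X)+\len(C)-\len(A)$, so $W_C\in\Gamma_{\epsilon,R}$ only when $\len(A)\approx\len(C)$. To cover the general case I would apply the symmetric dual setup $A_0=ATB\ov T C$, $A_1=CTB\ov T A$, $U=TD$, $B_0=\id$, $B_1=\ov D B$, $V=\ov T$, whose off-diagonals (using the group-theoretic cancellation $TD\cdot\ov D B=TB$) both cyclically equal $W_B=[ATB\ov T CTB\ov T]$, with length error controlled by $|\len(B)-\len(D)|$ instead. When one of the pairs $(A,C)$, $(B,D)$ has comparable lengths the corresponding ASL applies directly; in the remaining case, where both pairs differ substantially, I would chain $X$ to $Y$ via an intermediate good curve produced from an auxiliary element given by the Connection Lemma (Lemma \ref{lemma-connection}) with a prescribed length that matches $\len(A)$ on one side and $\len(C)$ on the other, using two successive ASL invocations. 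The two-application strategy accounts for the conclusion lying in $\Pant_{200\epsilon,R}$ rather than $\Pant_{100\epsilon,R}$ homology.
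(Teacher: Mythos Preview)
Your opening move is elegant: setting $A_0=B\ov T CTD$, $A_1=D\ov T CTB$, $U=\ov T$, $B_0=A$, $B_1=C$, $V=T$ so that the two off-diagonal terms of the ASL coincide cyclically is a genuinely nice trick, and it does give $[ATB\ov T CTD\ov T]=[ATD\ov T CTB\ov T]$ directly in $\Pant_{100\epsilon,R}$ homology whenever $W_C\in\Gamma_{2\epsilon,R}$, i.e.\ when $\len(\cd A\cd)\approx\len(\cd C\cd)$. The paper does something structurally different here: it proves the general exchange relation $\langle X_0,Y_0\rangle-\langle X_0,Y_1\rangle=\langle Y_0,X_1\rangle-\langle Y_1,X_1\rangle$ (one ASL application), introduces an auxiliary $E$ of the right length, and then combines three such relations. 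Your collapse-the-off-diagonals idea is more economical in the special case.

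However, there are two genuine gaps.

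\textbf{The dual setup is broken.} With $U=TD$ and $B_1=\ov D B$, the ASL hypothesis $(2)$ requires $I([\cd A_i\cd TD\cd\ov DB\cd\ov T\cd])$ to be bounded by a constant independent of $R$. But by the New Angle Lemma, $t(\cd TD\cd)$ is exponentially close to $t(\cd D\cd)$ and $i(\cd\ov DB\cd)$ is exponentially close to $i(\cd\ov D\cd)=-t(\cd D\cd)$ once $\len(\cd D\cd)>L$ is large; the bending angle at this junction is therefore within $o(1)$ of $\pi$. By monotonicity, $I([\cd A_i\cd TD\cd\ov DB\cd\ov T\cd])\ge I(\cd TD\cd\ov DB\cd)$, and the latter is essentially $\len(\cd TD\cd)+\len(\cd\ov DB\cd)-\len(\cd TB\cd)$, which is of order $2\len(\cd D\cd)$ minus bounded terms. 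Since $\len(\cd D\cd)$ can be as large as $R$, this inefficiency is unbounded. The group-theoretic cancellation $TD\cdot\ov DB=TB$ is exactly the point: it says the piecewise path backtracks along $D$, and that backtracking \emph{is} the inefficiency. (A correct dual is $A_0'=ATB\ov T C$, $A_1'=CTB\ov T A$, $U=T$, $B_0=D$, $B_1=B$, $V=\ov T$; then the off-diagonals both equal $W_B=[ATB\ov T CTB\ov T]$ with no cancellation, and hypothesis $(2)$ is checkable.)

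\textbf{The general case is not handled.} Even with the dual repaired, you have covered only the two regimes $\len(\cd A\cd)\approx\len(\cd C\cd)$ and $\len(\cd B\cd)\approx\len(\cd D\cd)$; neither need hold (take the four contributions to $2R$ to be roughly $0.1R,0.9R,0.9R,0.1R$). Your sketch ``chain via an auxiliary element matching $\len(A)$ on one side and $\len(C)$ on the other, using two ASL invocations'' does not work as stated: any ASL application of your collapsing type forces \emph{all four} curves to lie in $\Gamma_{2\epsilon,R}$, which pins the length of the auxiliary element to within $O(\epsilon)$ of the element it replaces, so you cannot jump across a gap of order $R$ in one step. The paper resolves this by interpolating: it introduces a sequence $E_0,\dots,E_{2k}$ with $k\asymp R/\epsilon$ whose lengths step from $\len(\cd TB\ov T\cd)$ to $\len(\cd TD\ov T\cd)$ in increments of size $\epsilon/4$, and telescopes $O(R/\epsilon)$ instances of the exchange relation. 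The $200\epsilon$ in the conclusion reflects the fact that the intermediate curves lie only in $\Gamma_{2\epsilon,R}$, not the number of ASL applications.
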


\begin{proof} Let $\left< X,Y \right> =[ATX \ov T C TY \ov T]$, for $X,Y \in \pi_1(\Su,*)$,
and let $\{ X, Y \} = \left< X, Y \right> - \left<Y, X\right>$ when both are in $\Gamma_{\epsilon, R}$. 
We claim that
\begin{equation}\label{e-ob}
\{ X, Y_0 \} = \{X, Y_1 \}
\end{equation}
in $\Pant_{100\epsilon,R}$ whenever $I(TX \ov T),I(TY_i \ov T) \le \Delta$, and the curves in question are in $\Gamma_{\epsilon,R}$.
To verify $(\ref{e-ob})$ we let $A_i=Y_i$, $B_0=ATX \ov T C$, $B_1=CT X\ov T A$, and $U=\ov T $ and $V=T$, where $A_i,B_i,U,V$ are from the statement of the Algebraic Square Lemma.
Since by rotation
\begin{align*}
\left< X,Y_0 \right>&=[Y_0 \ov T A T X \ov T C T] \\
\left< Y_0,X \right>&=[Y_0 \ov T C T X \ov T A T] \\
\left< X,Y_1 \right>&=[Y_1 \ov T A T X \ov T C T]  \\
\left<Y_1,X \right>&=[Y_1 \ov T C T X \ov T A T],
\end{align*}
the equation  $(\ref{e-ob})$ follows from the Algebraic Square Lemma (the hypotheses in the Algebraic Square Lemma follow from the hypothesis of this lemma and the Sum of Inefficiencies Lemma) . 
Likewise, 
\begin{equation}\label{e-ob1}
\{ X_0, Y\} = \{X_1, Y\}
\end{equation}
under the appropriate hypotheses. 

In order to prove the lemma we first suppose that $|\len(\cd TB \ov T \cd  )- \len ( \cd T D \ov T \cd ) |<\frac{\epsilon}{4}$. If $L$ is large enough  (and hence $\len(\cd B \cd )$,$\len(\cd D \cd )$ and $\len(\cd T \cd )$ are large enough),
it follows from the Connection Lemma that we can find a random geodesic arc 
$$
\cd E \cd \in \Conn_{\epsilon,\len(\cd TB \ov T \cd ) -2\len(\cd T \cd ) } (-i(\cd T \cd ),i (\cd T \cd ) ).
$$
For any such $E$ we have   $|\len(\cd TB \ov T \cd )- \len (\cd T E \ov T \cd  )|<\epsilon+O(\epsilon^{2})$. Therefore, by the previous Claim we have that the curves 
$\left< B,E\right>$, $\left< E,B\right>$, $\left< D,E\right>$, and $\left< E,D\right>$ are in $\Gamma_{2\epsilon,R}$.
Then from  $(\ref{e-ob})$ and (\ref{e-ob1}) it follows that  
$$
\{B, D\} = \{B, E\} = \{D, E\} = \{D, B\},
$$
and therefore $\{B, D\} = 0$, in $\Pi_{200\epsilon, R}$ homology. 

More generally, if $\len(\cd B \cd ),\len( \cd D \cd )>L$ let $k$ be the smallest integer such that
$$
k>4\frac{|\len(\cd TD \ov T \cd )- \len (\cd TB \ov T \cd )|}{\epsilon}.
$$
Set
$$
r_i=\frac{i}{2k} \len(\cd TD \ov T \cd )+\frac{2k-i}{2k} \len(\cd TB \ov T \cd )-2\len(\cd T \cd ).
$$
For $0 < i < 2k$ we take random $\cd E_i \cd  \in \Conn_{\epsilon,r_{i}}(t(\cd T \cd ), i( \ov T))$ (observe that $r_i>L-\Delta$), and we let $E_0 = D$ and $E_{2k} = B$. 
Then 
\begin{align*}
\{E_0, E_{2k}\} &= \{E_1, E_{2k}\}&&& \\
	&= \{E_1, E_{2k-1}\} &= \{E_2, E_{2k-1}\}& &\\
	&&= \{E_2, E_{2k-2}\} &= \{E_3, E_{2k-2}\}&\\
	&&& \ddots&\\
	&&&=\{E_{k-1}, E_{k+1}\} &= \{E_k, E_{k+1}\}\\
\end{align*}
and $\{E_k,E_{k+1}\}=0$ in $\Pant_{200\epsilon,R}$ homology by the first case so we are finished.
\end{proof}

\begin{random} The randomization remarks for the $ADCB$ Lemma. All constants $K$ may depend only on $\epsilon$, $\Su$ and $\Delta$. 
We have defined a map $g:G^{4} \to \R \Pant_{1,R}$ such that
$$
\partial g(A,B,C,D)=[ATB \ov T C T D \ov T]-[ATD \ov T C T B \ov T].
$$
In particular, we defined $h:G^5 \to \R \Pant_{1,R}$ so that
$$
\partial h (A,C,X,Y_0,Y_1)=\{X, Y_0\} - \{X, Y_1\}.
$$
This map $h$ is $e^{4\len(\cd T \cd )}K$-semirandom with respect to the measure classes $\Sigma^{\times 2}_G \times \Sigma_G \times \Sigma^{\boxtimes 2}_G$ and $\Sigma_{\Pant}$.

Then $g(A,B,C,D)$ is a sum of $2k$ terms of the form $\partial h(A,C,X,Y_0,Y_1)$, where each of $X,Y_0,Y_1$ is either $B$ or $D$, or $E_i$, which is a 
$K$-semirandom element of $G$ with respect to $\Sigma_G$. Moreover,  the $Y_i$ are always independent from $X$. Therefore, for each choice we make of $X,Y_0,Y_1$ (such as $X=E_i$, , $Y_0=E_{2k-i}$, $Y_1 = E_{2k-i+1}$ or $X=B$, and $Y_0=D$, $Y_1=E$) the map from $(A,B,C,D)$ to $(A,C,X,Y_0,Y_1)$ is $K$-semirandom with respect to $\Sigma^{\times 4}_G$ and $\Sigma_G^{\times 2} \times \Sigma_G \times \Sigma^{\boxtimes 2}_G $. Therefore, noting that $k<\frac{\lfloor 8R \rfloor }{\epsilon}$, we find that $g$ is $K Re^{4 \len(\cd T \cd)}$-semirandom, with respect to $\Sigma^{\times 4}_G $.
\end{random}

\begin{remark} This is a remark to the previous randomization remark. Where $B$ and $D$ are close in length, we can write $\{ B,D \}=\{B,B \}$ by (\ref{e-ob}), and hence $\{ B,D \}=0$. But we are letting $(X,Y_0,Y_1)$ be $(B,D,B)$, and the map $(B,D) \to (B,D,B)$ is not $1$-semirandom for $\Sigma^{\times 2}_G$ and $\Sigma_G \times \Sigma^{\boxtimes 2}_G$ (because $X$ and  $Y_1$ are not independent). This map is only $e^{\len(\cd B \cd )}$-semirandom, which is no good. It is for this reason that we introduce $E$. 
\end{remark}

The following lemma is a corollary of the $ADCB$ Lemma. We call it the Four-part Itemization Lemma.

\begin{lemma}[Four-part Itemization Lemma]\label{lemma-item} Let $\epsilon,\Delta>0$. There exists $L=L(\epsilon,\Delta)>0$ such that for any  $A,B,C,D,T \in \pi_1(\Su,*)$ we have
$$
[A\ov TB T C\ov TD T]-[\ov T \ov D T \ov C \ov T \ov B T \ov A]=2(A_{\ov T}+B_T+C_{\ov T}+D_T)
$$
in $\Pant_{200\epsilon,R}$ homology provided that $\len(\cd T \cd )>L$ and $I(\cd A \cd \ov T \cd B \cd T \cd C  \cd \ov T \cd D \cd T \cd )<\Delta$, and the curve $[A\ov TB T C\ov TD T]$ is in $\Gamma_{\epsilon,R}$.
\end{lemma}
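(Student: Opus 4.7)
The plan is to prove this by a telescoping sequence of single-variable sign-flips combined with one application of the $ADCB$ Lemma. I would set $W_0 = [ATB\ov T CTD\ov T]$ and inductively define
\[
W_1 = [\ov A\, TB\ov T CTD\ov T],\ W_2 = [\ov AT\ov B\ov T CTD\ov T],\ W_3 = [\ov AT\ov B\ov T\ov CTD\ov T],\ W_4 = [\ov AT\ov B\ov T\ov CT\ov D\ov T],
\]
so that $W_{i+1}$ is obtained from $W_i$ by flipping the sign of the $(i{+}1)$st entry of $A, B, C, D$.

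The first task is to establish, in $\Pant_{100\epsilon, R}$ homology, the four telescoping identities
\[
W_0 - W_1 = 2A_{\ov T}, \qquad W_1 - W_2 = 2B_T, \qquad W_2 - W_3 = 2C_{\ov T}, \qquad W_3 - W_4 = 2D_T.
\]
Each would follow by cyclically rotating both curves so that the flipped variable is sandwiched between $T$ and $\ov T$ (or $\ov T$ and $T$) at the beginning of the word, after which Lemma \ref{lemma-A-T} applies directly. For instance, after rotation $W_1 - W_2 = [TB\ov T(CTD\ov T\ov A)] - [T\ov B\ov T(CTD\ov T\ov A)]$, which equals $2B_T$ by Lemma \ref{lemma-A-T} with auxiliary element $CTD\ov T\ov A \in \FConn_{\epsilon, R}(B, T)$. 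Summing the four equations yields
\[
W_0 - W_4 = 2(A_{\ov T} + B_T + C_{\ov T} + D_T).
\]

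The next step is to identify $W_4$ with the second curve appearing in the Lemma. Since the inverse of the word $ATB\ov T CTD\ov T$ is $T\ov D\ov T\ov CT\ov B\ov T\ov A$, the convention $[\bar W] = -[W]$ on oriented closed geodesics together with a cyclic rotation gives $-W_0 = [\ov AT\ov D\ov T\ov CT\ov B\ov T]$. Applying the $ADCB$ Lemma (Lemma \ref{lemma-ADCB}) to swap $\ov B$ and $\ov D$ then shows $W_4 = [\ov AT\ov B\ov T\ov CT\ov D\ov T] = [\ov AT\ov D\ov T\ov CT\ov B\ov T] = -W_0$ in $\Pant_{200\epsilon, R}$ homology. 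The stated second curve $[T\ov D\ov T\ov CT\ov B\ov T A]$ differs from $-W_0$ by one further single-variable flip of $A$, whose contribution is $2A_{\ov T}$ by a final application of Lemma \ref{lemma-A-T}; substituting produces the identity claimed by the Lemma.

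The main obstacle will be the efficiency bookkeeping at every application of Lemma \ref{lemma-A-T}: the auxiliary element must lie in $\FConn_{\epsilon, R}$ of the flipped variable with respect to $T$ or $\ov T$, which requires the piecewise geodesic $\cd\ov T\cd X\cd T\cd$ (or its $T \leftrightarrow \ov T$ analogue) to have inefficiency less than one. Starting from the hypothesis $I(\cd A\cd T\cd B\cd\ov T\cd C\cd T\cd D\cd\ov T\cd) < \Delta$, I would propagate this bound through each single-variable flip using the Long Segment Lemmas (Lemmas \ref{lemma-ineff} and \ref{lemma-ineff-1}) together with the Flipping Lemma (Lemma \ref{lemma-flip}), which controls the effect of replacing a letter by its inverse. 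Choosing $L$ sufficiently large ensures that the resulting inefficiencies fall below $1$, validating each invocation of Lemma \ref{lemma-A-T}.
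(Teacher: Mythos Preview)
Your approach is exactly the paper's: it too flips $A,B,C,D$ one at a time to obtain
\[
[ATB\ov T CTD\ov T]-[\ov A T\ov B\ov T\ov C T\ov D\ov T]=2(A_{\ov T}+B_T+C_{\ov T}+D_T)
\]
in $\Pant_{100\epsilon,R}$ homology, and then applies the $ADCB$ Lemma to get
$[\ov A T\ov B\ov T\ov C T\ov D\ov T]=[\ov A T\ov D\ov T\ov C T\ov B\ov T]$, at which point it declares the proof finished. By rotation this last curve is $[T\ov D\ov T\ov CT\ov B\ov T\,\ov A]$, which is $-W_0$; so what is actually being proved (and what is used later, e.g.\ in the First Rotation Lemma) is
\[
[ATB\ov T CTD\ov T]=A_{\ov T}+B_T+C_{\ov T}+D_T .
\]
In other words the final $A$ in the printed statement is a typo for $\ov A$.

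Your last paragraph---the extra flip of $A$ to reconcile with the printed $[T\ov D\ov T\ov CT\ov B\ov T A]$---is a genuine misstep. If you carry it through, you get $[T\ov D\ov T\ov CT\ov B\ov T A]=-W_0+2A_{\ov T}$, hence
\[
W_0-[T\ov D\ov T\ov CT\ov B\ov T A]=2W_0-2A_{\ov T}=2(B_T+C_{\ov T}+D_T),
\]
which is \emph{not} the stated right-hand side. So ``substituting produces the identity claimed by the Lemma'' is false as written. Drop that step and stop at $W_4=-W_0$; you then have exactly the paper's proof of the (corrected) statement.
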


\begin{proof}  Recall the remark after the statement of Lemma \ref{lemma-A-T}. We have
$$
[A\ov TB T C\ov TD T]-[\ov A \ov T B T C\ov TD T]=2A_{\ov T}
$$
$$
[\ov A \ov T B T C\ov TD T]-[\ov A \ov T \ov B T C \ov T D T]=2B_T
$$
$$
[\ov A \ov T \ov B T C \ov T D T]-[\ov A \ov T \ov B T \ov C \ov T D T]=2C_{\ov T}
$$
$$
[\ov A \ov T \ov B T \ov C \ov T D T]-[\ov A \ov T \ov B T \ov C \ov T \ov D T]=2D_T,
$$
in $\Pant_{100\epsilon,R}$ homology  (all the curves in question lie in $\Gamma_{2\epsilon,R}$ by the Flipping Lemma). So 
$$
[A\ov TB T C\ov TD T]-[\ov A \ov T \ov B T \ov C \ov T \ov D T]=2(A_{\ov T}+B_T +C_{\ov T}+D_T)
$$
in $\Pant_{100\epsilon,R}$ homology.  But 
$$
[\ov A \ov T \ov B T \ov C \ov T \ov D T]-[\ov A \ov T \ov D T \ov C \ov T \ov B T]=0
$$
in $\Pant_{200\epsilon,R}$ homology by the $ADCB$ Lemma so we are finished.

\end{proof}

\begin{random} The randomization remark for the Four-part Itemization Lemma. We have defined $g:G^{4} \to \R \Pant_{1,R}$ such that
$$
\partial g(A,B,C,D)=[ATB \ov T CTD \ov T]-(A_{\ov T}+B_T+C_{\ov C}+D_T).
$$
This map is $KRe^{4\len(\cd T \cd )}$-semirandom with respect to  $\Sigma^{\times 4}_{G}$ and $\sigma_{\Pant}$, for some $K=K(\epsilon,\Su)$.

\end{random}

\section{The $XY$ Theorem}  In this section we prove the $XY$ Theorem which states that 
$$
(XY)_T=X_T+Y_T
$$
for suitable $X,Y$ and $T$. 

The $XY$ Theorem will be the central identity in the last section of the paper;  it will allow us to reduce the encoding of long elements of $\pi_1(S,*)$ to encoding of the generators. To prove the $XY$ Theorem we will first prove two related statements called the First and the Second Rotation Lemmas. These are in turn proven with the Four-Part Itemization Lemma and the estimates from the Theory of Inefficiency.

\subsection{The Rotation Lemmas}

Let $X,Y,Z \in \pi_1(\Su,*)$. Then we have the three geodesic arcs $\cd X \cd$, $\cd Y \cd $, and $\cd Z \cd $. Consider the union of these three geodesic arcs as a $\theta$-graph on the surface $\Su$. This $\theta$-graph generates an immersed pair of pants in $\Su$ if and only if the triples of unit vectors $i(\cd X \cd ),i(\cd Y \cd ),i(\cd Z \cd )$ and $t(\cd X \cd ),t(\cd Y \cd ),t(\cd Z \cd )$, have the opposite cyclic orderings.

The following is the First Rotation Lemma.

\begin{lemma} [First Rotation Lemma] \label{lemma-rotation-1} Let $\epsilon,\Delta>0$. There exists $K=K(\epsilon,\Delta)>0$ with the following properties.
Let $R_i,S_i,T \in \pi_1(\Su,*)$, $i=0,1,2$, such that  
\begin{enumerate}
\item $I(\cd T \cd R_i \cd \ov R_{i+1} \cd \ov T \cd ), I(\cd T \cd S_i \cd \ov S_{i+1} \cd \ov T \cd )<\Delta$, 
\item $\len(\cd T \cd ) \ge K$,
\item $\len(\cd R_i \cd )+\len(\cd S_i \cd )+2\len(\cd T \cd )<R-K$,
\item The triples of vectors $\big(t(\cd T  R_i \cd ) \big)$ and $\big(t(\cd T  S_i \cd ) \big)$, $i=0,1,2$, have opposite cyclic ordering in $\TB_{*} \Su$ (one of them is clockwise and the other one anti-clockwise).
\end{enumerate}
Then 
\begin{equation}\label{equat-1}
\sum\limits_{i=0}^{2}(R_{i+1} \ov{R}_{i})_{T}+\sum\limits_{i=0}^{2}(S_i \ov{S}_{i+1})_{T}=0,
\end{equation}
in $\Pant_{300\epsilon,R}$ homology.

\end{lemma}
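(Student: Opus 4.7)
The plan is to exhibit the identity as the boundary of an explicit pair of pants in $\Pant_{300\epsilon,R}$, with condition~(4) supplying exactly the geometric input needed for the essential immersed pants construction described earlier: three geodesic arcs from $*$ to $*$ generate such a pair of pants precisely when their initial and terminal unit tangent vectors at $*$ have opposite cyclic orderings. Note that in ordinary homology $\bf{H}_1(\Su)$ both sums $\sum_i(R_{i+1}\ov R_i)_T$ and $\sum_i(S_i\ov S_{i+1})_T$ already telescope to zero (since $A_T=[A]$ in~$\bf{H}_1$ by the remark after Lemma~\ref{lemma-A-T}), so the content is to realize this vanishing in $\Pant_{300\epsilon,R}$ homology.

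First I would apply the Connection Lemma (Lemma~\ref{lemma-connection}) to pick, for each $i\in\{0,1,2\}$, a padding element $B_i\in\pi_1(\Su,*)$ with geodesic representative of length $\ell_i=R-2\len(\cd T\cd)-\len(\cd R_i\cd)-\len(\cd S_i\cd)$ (positive by condition~(3), provided $K$ is chosen larger than the Connection Lemma constant $L_0$ and than $\Delta$), whose initial and terminal directions at $*$ are within $\epsilon$ of $-i(\cd T\cd)$ and $i(\cd T\cd)$, so that the concatenation $\cd\ov R_i\ov T\cd\cdot\cd B_i\cd\cdot\cd TS_i\cd$ has inefficiency at most $2\Delta+O(\epsilon)$. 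Let $\mu_i$ be the geodesic arc from $*$ to $*$ homotopic to $\ov R_i\ov TB_iTS_i$. Since each of $\cd \ov R_i\ov T\cd$ and $\cd TS_i\cd$ has length at least $K$, the New Angle Lemma (Lemma~\ref{lemma-new-angle}) forces $i(\mu_i)$ to be within $o(1)$ of $i(\cd\ov R_i\ov T\cd)=-t(\cd TR_i\cd)$ and $t(\mu_i)$ to be within $o(1)$ of $t(\cd TS_i\cd)$. Because negation on the unit circle is a rotation by $\pi$ and hence preserves cyclic orderings of three vectors, condition~(4) implies that the triples $(i(\mu_i))_i$ and $(t(\mu_i))_i$ have opposite cyclic orderings at $*$; consequently the three arcs $\mu_i$ generate an essential immersed pair of pants $\Pi$.

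The three cuffs of $\Pi$ are $C_i=[\mu_i\ov\mu_{i+1}]$, and a cyclic rotation yields
\[
C_i=[\ov B_{i+1}\,T\,(R_{i+1}\ov R_i)\,\ov T\,B_i\,T\,(S_i\ov S_{i+1})\,\ov T],
\]
which fits the four-block pattern $[ATB\ov TCTD\ov T]$ of the Itemization Lemma. The Sum of Inefficiencies Lemma (Lemma~\ref{lemma-ineff}), combined with conditions~(1) and~(3) and the choice of $\ell_i$, gives $|\len(C_i)-2R|<100\epsilon$, so $\Pi\in\Pant_{300\epsilon,R}$. Applying Lemma~\ref{lemma-item} to $C_i$ produces
\[
C_i=2\bigl((\ov B_{i+1})_{\ov T}+(R_{i+1}\ov R_i)_T+(B_i)_{\ov T}+(S_i\ov S_{i+1})_T\bigr)+[\mathrm{flip}_i]
\]
in $\Pant_{300\epsilon,R}$ homology, where $[\mathrm{flip}_i]$ is the curve $[T\ov D\ov T\ov CT\ov B\ov T A]$ furnished by the lemma. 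Summing over $i$, the identity $(\ov X)_{\ov T}=-X_{\ov T}$ causes $\sum_i\bigl((\ov B_{i+1})_{\ov T}+(B_i)_{\ov T}\bigr)$ to telescope to zero, while $\sum_iC_i=\partial\Pi=0$; hence
\[
2\sum_{i=0}^{2}(R_{i+1}\ov R_i)_T+2\sum_{i=0}^{2}(S_i\ov S_{i+1})_T+\sum_{i=0}^{2}[\mathrm{flip}_i]=0.
\]

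To finish I need $\sum_i[\mathrm{flip}_i]=0$ in $\Pant_{300\epsilon,R}$ homology. The plan is to perform a mirror construction with arcs $\nu_i$ homotopic to $\ov S_{i+1}\ov T\ov B_iTR_i$ (obtained by swapping the roles of the $R$- and $S$-triples and inverting the padding, which is permitted because condition~(4) is symmetric in the two triples and $\ov B_i$ still gives a valid connection by the Connection Lemma). The same cyclic-ordering analysis shows the $\nu_i$ generate a second essential pair of pants in $\Pant_{300\epsilon,R}$ whose cuffs, after the same rotation and Itemization bookkeeping, match precisely the curves $[\mathrm{flip}_i]$, giving $\sum_i[\mathrm{flip}_i]=0$. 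Combining this with the displayed identity yields the lemma. The main obstacle will be the quantitative verification in Step~2 that the triples $(i(\mu_i))$ and $(t(\mu_i))$ genuinely have opposite cyclic orderings despite both being clustered near $\pm i(\cd T\cd)$: one must bound the New-Angle-Lemma approximation error strictly below the minimum angular spacing guaranteed by condition~(4), which is what forces $K$ to dominate $\Delta$ and the Connection Lemma constant. A secondary obstacle is the orientation-tracking needed to identify the cuffs of the mirror pants with $[\mathrm{flip}_i]$, which is combinatorially delicate but routine once the first construction is in place.
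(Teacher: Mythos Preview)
Your overall architecture matches the paper's: build three arcs $\ov R_i\ov T(\text{connection})TS_i$, check they form a $\theta$-graph and hence an immersed pair of pants in $\Pant_{O(\epsilon),R}$, then itemize the three cuffs. But two points need correction.

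\textbf{The padding lengths are wrong, and this is a genuine gap.} With your choice $\ell_i=R-2\len(\cd T\cd)-\len(\cd R_i\cd)-\len(\cd S_i\cd)$, the piecewise-geodesic representative of $C_i$ has total arc length exactly $2R$, but its inefficiency is, by the Sum of Inefficiencies Lemma,
\[
I\approx I(\cd T\cd R_{i+1}\cd\ov R_i\cd\ov T\cd)+I(\cd T\cd S_i\cd\ov S_{i+1}\cd\ov T\cd)+O(\epsilon),
\]
and hypothesis~(1) only bounds the first two terms by $\Delta$, not by $\epsilon$. Hence $\len(C_i)\in(2R-2\Delta-O(\epsilon),\,2R)$, so your pants lies only in $\Pant_{O(\Delta),R}$, which is useless when $\Delta\gg\epsilon$. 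The paper fixes this by choosing the connection lengths $r_i$ to satisfy the linear system
\[
r_i+r_{i+1}=2R-\len(\cd TR_{i+1}\ov R_i\ov T\cd)-\len(\cd TS_i\ov S_{i+1}\ov T\cd),
\]
so that the $\Delta$-sized inefficiencies are already absorbed into the arc lengths $\len(\cd TR_{i+1}\ov R_i\ov T\cd)$, $\len(\cd TS_i\ov S_{i+1}\ov T\cd)$, and only the $O(\epsilon)$ bending at the $A_i$-junctions remains. Hypothesis~(3) is exactly what guarantees $r_i>K$ so the Connection Lemma applies.

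\textbf{The mirror construction is unnecessary.} The ``flip'' curve in the Itemization Lemma is (despite the typo $A$ for $\ov A$ in the displayed statement; look at the proof) $[\ov A T\ov D\ov T\ov C T\ov B\ov T]$, which by rotation is $[T\ov D\ov T\ov C T\ov B\ov T\ov A]$, i.e.\ the orientation-reversal of $[ATB\ov TCTD\ov T]$. Thus the lemma really says $2[ATB\ov TCTD\ov T]=2(A_{\ov T}+B_T+C_{\ov T}+D_T)$, i.e.\ $[\mathrm{flip}_i]=-C_i$ and $\sum_i[\mathrm{flip}_i]=-\partial\Pi=0$ automatically. The paper simply writes $[ATB\ov TCTD\ov T]=A_{\ov T}+B_T+C_{\ov T}+D_T$ and moves on; no second pair of pants is needed.
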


\begin{remark} It follows by relabeling that if $\max(\len(\cd R_i \cd ))+\max(\len(\cd S_i \cd ))+2\len(\cd T \cd )<R-K$ and  the triples of vectors $\big(t(\cd T  R_i \cd ) \big)$ and $\big(t(\cd T  S_i \cd ) \big)$, $i=0,1,2$, have the same cyclic ordering in $\TB_{*} \Su$, then 
\begin{equation}\label{equat-2}
\sum\limits_{i=0}^{2}(R_{i} \ov{R}_{i+1})_{T}+\sum\limits_{i=0}^{2}(S_i \ov{S}_{i+1})_{T}=0,
\end{equation}
in $\Pant_{300\epsilon,R}$ homology.

\end{remark}

\begin{proof}  Let $r_i \ge 0$, $i=0,1,2$,  be the solutions of the equations
\begin{equation}\label{equ-con}
r_i+r_{i+1}=2R-\len(\cd T R_{i+1} \ov{R}_{i} \ov T \cd )-\len( \cd T S_i \ov{S}_{i+1} \ov T \cd ). 
\end{equation}

Then we let $A_i$ be a random element of $\Conn_{\epsilon,r_{i}}(-i( \cd T \cd ),i( \cd T \cd ))$. 

Consider the three elements $\ov{R}_{i} \ov T A_i T S_i$ of $\pi_1(\Su,*)$ and the corresponding geodesic arcs  $\cd \ov{R}_{i} \ov T A_i T S_i \cd$. We will show that the corresponding $\theta$-graph
generates an immersed  pair of pants $\Pi_{A}$ in $\Su$. The three cuffs of $\Pi_A$ are the closed curves $[\ov{R}_{i+1} \ov T A_{i+1} T S_{i+1} \ov{S}_{i} \ov T \ov{A}_{i} T R_{i} ]$. We will also show that these closed geodesics have  length $3\epsilon$ close to $2R$, which implies that $\Pi_{A} \in \Pant_{3\epsilon,R}$.

\begin{figure}
  {
    
    \input{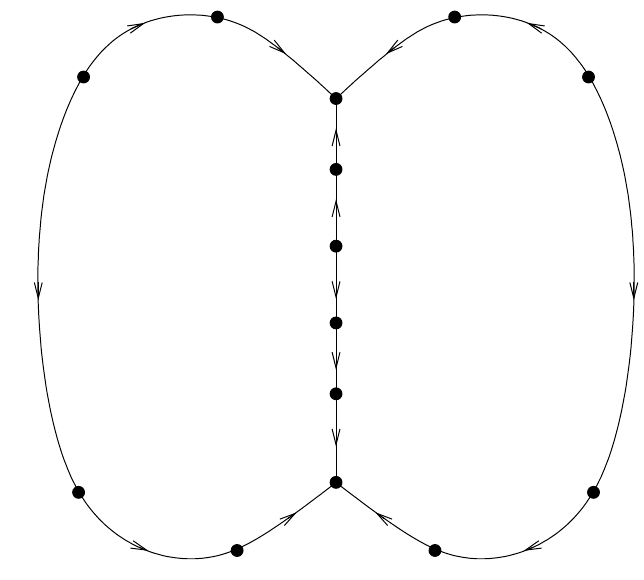_t}
  }
  \caption{The Rotation Lemma}
  \label{fig-rotation}
\end{figure}

We finish the argument as follows. Taking the boundary of $\Pi_A$, we obtain
$$
\sum\limits_{i=0}^{2}  [\ov{R}_{i} \ov T A_{i} T S_{i} \ov{S}_{i+1} \ov T \ov{A}_{i+1} T R_{i+1} ]      =0
$$
in $\Pant_{3\epsilon,R}$ homology. Applying the Four-part Itemization Lemma we find

\begin{align*}
0 &= \sum_{i=0}^{2}  [R_{i+1} \ov{R}_{i} \ov T A_{i} T S_{i} \ov{S}_{i+1} \ov T \ov{A}_{i+1} T  ] \\
&= \sum_{i=0}^{2} \left( (R_{i+1} \ov{R}_{i})_{T}+ (A_{i})_{\ov T}+ (S_{i} \ov{S}_{i+1})_{T}+(\ov{A}_{i+1})_{\ov T}  \right)  \\
&= \sum_{i=0}^{2}(R_i \ov{R}_{i+1})_{T}+\sum_{i=0}^{2}(S_i \ov{S}_{i+1})_{T},
\end{align*}
in $\Pant_{300\epsilon,R}$, because $(A_i)_{\ov T}=-(\ov{A}_i)_{\ov T}$.

We now verify that  $ [\ov{R}_{i} \ov T A_{i} T S_{i} \ov{S}_{i+1} \ov T \ov{A}_{i+1} T R_{i+1} ]  \in \Gamma_{2\epsilon,R}$. By the New Angle Lemma (Lemma \ref{lemma-new-angle}), applied to $\beta=\cdot  \ov T \cdot$ and $\alpha=\cdot T \cdot R_{i} \cdot$,  
for $K$ large enough (and therefore $\len(\cd T \cd )$ large) the angle  $\Ang \big( i(\cd T \cd ),i(\cd TR_{i+1} \ov{R}_{i} \ov T \cd ) \big) \le \frac{\epsilon}{10}$, and likewise 
$\Ang \big( t(\cd \ov T \cd ), t(\cd TR_{i+1} \ov{R}_{i} \ov T \cd ) \big) \le \frac{\epsilon}{10}$, and for the same with $R_i$ replaced with $S_i$. It follows that
$\Ang \big( t(\cd \ov{A}_{i+1} \cd ),i(\cd TR_{i+1} \ov{R}_{i} T \cd ) \big)<2 \epsilon$, and so on, so by the Sum of Inefficiencies Lemma (using equation 
(\ref{angle-ineff}))

\begin{multline*}
\big| \len( [\ov{R}_{i} \ov T A_{i} T S_{i} \ov{S}_{i+1} \ov T \ov{A}_{i+1} T R_{i+1} ]  ) - \len(\cd A_{i} \cd ) -\len(\cd T R_{i+1} \ov{R}_{i} \ov T \cd ) -\len(\cd A_{i+1} \cd)- \\
-\len(\cd T S_i \ov{S}_{i+1} \ov T \cd  ) \big|<O(\epsilon ^{2}),
\end{multline*}

and moreover by (\ref{equ-con}) we have 
$$
\left|\len(\cd A_i \cd ) +\len(\cd TR_{i+1} \ov{R}_{i} \ov T \cd ) +\len(\cd A_{i+1} \cd )
+\len( \cd T S_i \ov{S}_{i+1} \ov T \cd ) -2R \right|<2 \epsilon,
$$
which proves the claim.

We now verify that the $\theta$-graph associated to the geodesic arcs $\cd \ov{R}_{i} \ov T A_i T S_i \cd$ generates an immersed  pair of pants $\Pi_{A}$ in $\Su$.
We find the unique $\theta_0 \in [0,\pi]$ such that $I(\pi-\theta_0)=\Delta+1$ (that is $\pi-\theta_0=2\sec^{-1}(e^{\frac{\Delta+1}{2}}) $). Observe that $I(\cd \ov{R}_i \cd \ov{T} \cd )<
I(\cd  T \cd R_{i+1} \cd \ov{R}_i \cd \ov T \cd ) \le \Delta$. Then $\len(\cd \ov{R}_i  \ov T \cd )>\len(\cd T \cd )-\Delta>K-\Delta$.  
By the Sum of Inefficiencies Lemma for Angles 
\begin{align*}
I(\pi- \Ang \big (i(\cd \ov{R}_i \ov T \cd ) , i(\cd \ov{R}_{i+1} \ov T  \cd ) \big))-1 &<I(\cd T R_{i+1} \cd \ov{R}_i \ov T \cd ) \\ 
& \le  I(\cd T \cd R_{i+1} \cd \ov{R}_i \cd \ov T \cd ) < \Delta.
\end{align*}
Therefore $\Ang \big (i(\cd \ov{R}_i \ov T \cd ),i(\cd \ov{R}_{i+1} \ov T  \cd ) )>\theta_0 $.

On the other hand, by the New Angle Lemma, because the geodesic arc  $\cd \ov{R}_i \ov T \cd$ is long for large enough $K$ (we showed above that $\len(\cd \ov{R}_i \ov T \cd )>K-\Delta$), we have
$\Ang \big( i(\cd \ov{R}_i \ov T \cd ), i(\cd \ov{R}_i \ov T A_i T S_i \cd ) \big)<\frac{\theta_{0}}{2}$, so the cyclic order of the triple of vectors  $i(\cd \ov{R}_i \ov T A_i T S_i \cd )$, $i=0,1,2$,
is the same as of the triple of vectors  $i(\cd \ov{R}_i \ov T \cd )$, and likewise the cyclic order of the triple of vectors  $t(\cd \ov{R}_i \ov T A_i T S_i \cd )$, $i=0,1,2$,
is the same as of the triple of vectors  $t(\cd T S_i  \cd )$. So the corresponding cyclic orderings are opposed and we are finished.

\end{proof}

\begin{random} The randomization remark for the First Rotation Lemma. We let $K=K(\epsilon,\Su)$. 
We have defined $g:G^{6} \to \R \Pant_{1,R}$ such that
$$
\partial g (R_0,R_1,R_2,S_0,S_1,S_2)=\sum_{i=0}^{2}(R_{i+1}\ov{R}_i)_T+(S_i\ov{S}_{i+1})_T.
$$
Let $\Pi$ denote the pants whose $\theta$-graph is made out of the three connections $\cd \ov{R}_i \ov T A_i T S_i \ov{S}_{i+1} \ov T A_{i+1} T R_{i+1} \cd$, $i=0,1,2$.
We can write 
$$
g((R_i),(S_i))=\Pi+\sum_{i-0}^{2}g_1(  R_{i+1} \ov{R}_i,A_i,S_i\ov{S}_{i+1},\ov{A}_{i+1}) ,
$$
where $g_1$ is the map from the $Four-part-Itemization$ Lemma (see the randomization remark). So $g$ is $K(e^{12\len(\cd T \cd)} +Re^{4 \len(\cd T \cd )})$-semirandom with respect to 
$\Sigma^{\times 6}_G $ and $\Sigma_{\Pant}$.

\end{random}

The Second Rotation Lemma is:

\begin{lemma}[Second Rotation Lemma ]\label{lemma-rotation-2} Let $\epsilon,\Delta>0$. There exists $K=K(\epsilon,\Delta)>0$ with the following properties.
Let $R_i,T \in \pi_1(\Su,*)$, $i=0,1,2$, such that  
\begin{enumerate}
\item $I(\cd T \cd R_i \cd \ov R_{i+1} \cd \ov T \cd )<\Delta$, 
\item $\len(\cd T \cd ) \ge K$.
\end{enumerate}
Then 
\begin{equation}\label{equat-3}
\sum\limits_{i=0}^{2}(R_i \ov{R}_{i+1})_{T}=0,
\end{equation}
in $\Pant_{300\epsilon,R}$ homology.

\end{lemma}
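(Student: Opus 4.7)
The plan is to reduce to the First Rotation Lemma (Lemma \ref{lemma-rotation-1}) by pairing the triple $(R_0,R_1,R_2)$ against a clever second triple built from itself. Specifically, I will set $S_0=R_0$, $S_1=R_2$, $S_2=R_1$, so that $(S_i)$ is obtained from $(R_i)$ by the transposition of the last two entries.

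With this choice, I verify the four hypotheses of Lemma \ref{lemma-rotation-1}. The inefficiency bound for the $S$'s is immediate: the three words $S_0\ov S_1=R_0\ov R_2$, $S_1\ov S_2=R_2\ov R_1$, $S_2\ov S_0=R_1\ov R_0$ each differ from some $R_i\ov R_{i+1}$ only by an overall orientation reversal, and $I(\cd T \cd X \cd \ov Y \cd \ov T\cd)=I(\cd T \cd Y \cd \ov X \cd \ov T\cd)$, so hypothesis (1) follows from the given bound on the $R_i$. Hypothesis (2) is the assumption on $\len(\cd T\cd)$. Hypothesis (3) holds for $K$ large enough because the existence of the terms $(R_i\ov R_{i+1})_T$ via Lemma \ref{lemma-A-T} already demands that $2R-\len(\cd R_i\ov R_{i+1}\cd)-2\len(\cd T\cd)$ be large, which bounds the individual $\len(\cd R_i\cd)$'s. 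Hypothesis (4) is the key geometric observation: the triples $(t(\cd TR_i\cd))$ and $(t(\cd TS_i\cd))$ are the \emph{same} three vectors in $\TB_*\Su$, but one is read in an order obtained from the other by an odd permutation, so their cyclic orderings in $\TB_*\Su$ are opposite.

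Applying Lemma \ref{lemma-rotation-1} to the pair of triples $(R_i),(S_i)$ yields
\[
\sum_{i=0}^{2}(R_{i+1}\ov R_i)_T+\sum_{i=0}^{2}(S_i\ov S_{i+1})_T=0
\]
in $\Pant_{300\epsilon,R}$ homology. A direct computation shows
\[
\sum_{i=0}^{2}(S_i\ov S_{i+1})_T=(R_0\ov R_2)_T+(R_2\ov R_1)_T+(R_1\ov R_0)_T=\sum_{i=0}^{2}(R_{i+1}\ov R_i)_T,
\]
so the relation reduces to $2\sum_{i=0}^{2}(R_{i+1}\ov R_i)_T=0$. Using the identity $(\ov A)_T=-A_T$ (an immediate consequence of the definition of $A_T$ in Lemma \ref{lemma-A-T}, via the independence on the choice of $B$), we have $(R_{i+1}\ov R_i)_T=-(R_i\ov R_{i+1})_T$, and hence $\sum_{i=0}^{2}(R_i\ov R_{i+1})_T=0$ in $\Pant_{300\epsilon,R}$ homology, as required.

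The main obstacle to watch out for is a degenerate case of hypothesis (4), namely when the three vectors $t(\cd TR_i\cd)$ fail to have a well-defined cyclic order (e.g.\ two coincide). In that event the transposition yields no genuine flip of orientation and the First Rotation Lemma does not directly apply. One handles this by noting that either the word $R_i\ov R_{i+1}$ becomes trivial (so the corresponding term vanishes and the identity collapses), or by a small perturbation of the offending $R_i$ within its $\FConn$-class to restore general position, the perturbation having no effect on the homology class $(R_i\ov R_{i+1})_T$ by the second part of Lemma \ref{lemma-A-T}.
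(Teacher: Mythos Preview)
Your transposition idea is elegant, but the verification of hypothesis (3) of Lemma \ref{lemma-rotation-1} does not go through, and this is exactly the point where the paper's argument differs from yours.

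You need $\len(\cd R_i\cd)+\len(\cd S_i\cd)+2\len(\cd T\cd)<R-K'$ for each $i$. With $(S_0,S_1,S_2)=(R_0,R_2,R_1)$ this becomes $2\len(\cd R_0\cd)+2\len(\cd T\cd)<R-K'$ and $\len(\cd R_1\cd)+\len(\cd R_2\cd)+2\len(\cd T\cd)<R-K'$. Your justification appeals to the well-definedness of $(R_i\ov R_{i+1})_T$ from Lemma \ref{lemma-A-T}, which gives $\len(\cd R_i\ov R_{i+1}\cd)\le 2R-2\len(\cd T\cd)-L$; combined with the inefficiency bound $I(\cd R_i\cd\ov R_{i+1}\cd)<\Delta$ this yields only
\[
\len(\cd R_i\cd)+\len(\cd R_{i+1}\cd)<2R-2\len(\cd T\cd)-L+\Delta,
\]
a bound of order $2R$, not $R$. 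In the decisive application (Theorem \ref{thm-XY} with $R_0=\id$, $R_1=X$, $R_2=\ov Y$, used at the top of the dyadic tree in Theorem \ref{thm-correction-2}) one has $\len(\cd R_1\cd)+\len(\cd R_2\cd)\approx R$, so your inequality $\len(\cd R_1\cd)+\len(\cd R_2\cd)<R-K'-2\len(\cd T\cd)$ is violated. The transposition trick therefore only proves the lemma under the extra restriction $\len(\cd R_i\cd)\lesssim R/2$, which is too strong.

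The paper avoids this by manufacturing an auxiliary triple $(S_0,S_1,S_2)$ of \emph{bounded} length $L=L(\Su,\epsilon)$, with $t(\cd TS_i\cd)$ pointing at three equally spaced directions. Because the $S_i$ are short, hypothesis (3) for the pair $\big((S_i),(S_i)\big)$ is trivially satisfied, giving $\sum(S_i\ov S_{i+1})_T=0$. Then for the pair $\big((R_i),(S_i)\big)$ hypothesis (3) reads $\len(\cd R_i\cd)+L+2\len(\cd T\cd)<R-K'$, which only needs $\len(\cd R_i\cd)<R-O(1)$ and is compatible with the available bound. One application of Lemma \ref{lemma-rotation-1} (or its Remark, depending on the cyclic order of the $t(\cd TR_i\cd)$) then yields the conclusion. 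The short auxiliary triple is the missing ingredient in your argument.
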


\begin{proof}  Given $T$; We choose $v \in \TB_{*} \Su$ and let $\rho=e^{\frac{2\pi i}{3} }$. We take $L$ sufficiently large so that $\Conn_{\epsilon,L}(t(\cd T \cd ),\rho^{i} v)$ is non-empty, for $i=0,1,2$. Then we choose $\cd S_i \cd  \in \Conn_{\epsilon,L}(t(\cd T \cd ),\rho^{i} v)$. Then $I(\cd T \cd S_i \cd \ov{S}_{i+1} \cd \ov T \cd ) \le \log \frac{4}{3} +O(\epsilon) \le 1$, by the Sum of Inefficiencies for Angles Lemma, so when $\len(\cd T \cd )$ is large we can apply the previous Lemma (see the Remark after  Lemma \ref{lemma-rotation-1}) with $R_i : = S_i$  to obtain
$$
2\sum_{i=0}^{2}(S_i \ov{S}_{i+1})_{T}=0
$$
in $\Pant_{300\epsilon,R}$ homology. 

Then given $R_i$ as in the hypothesis to this lemma, we obtain

$$
\sum_{i=0}^{2}(S_i \ov{S}_{i+1})_{T} +(R_i \ov R_{i+1})_{T}=0,
$$
so 

$$
\sum_{i=0}^{2}(R_i \ov R_{i+1})_{T}=0.
$$

\end{proof}

\begin{random} The randomization remark for the Second Rotation Lemma. All constants $K$ may only depend on $\epsilon$ and $\Su$. We have defined $g:G^{3} \to \R \Pant_{1,R}$ such that
$$
\partial g (R_0,R_1,R_2)=\sum_{i=0}^{2}(R_iR_{i+1})_T.
$$

We are fixing $S_0,S_1,S_2$ of length $L$, so the triple $(S_0,S_1,S_2)$ is $e^{3L}$-semirandom, and the maps $(R_0,R_1,R_2) \to (R_0,R_1,R_2,S_0,S_1,S_2)$ and $(R_0,R_1,R_2) \to (S_0,S_1,S_2,S_0,S_1,S_2)$ are $e^{3L}$ and $e^{6L}$ semirandom respectively.

Then letting $g_{1R}$ be the $g$ for the First Rotation Lemma, we can letter
$$
g(R_0,R_1,R_2)=g_{1R}\big((R_i),(S_i)\big)-\frac{1}{2}g_{1R}\big((S_i),(S_i)\big)
$$
and $g_{1R}$ is $KRe^{12\len(\cd T \cd)}$ semirandom, so $g$ is $KRe^{6L+12\len(\cd T\cd)}$ semirandom.

\end{random}

\subsection{The $XY$ Theorem}

The following theorem follows from  the Second Rotation Lemma. We call it the $XY$ Theorem.

\begin{theorem}[XY Theorem]\label{thm-XY} Let $\epsilon,\Delta>0$. There exists $K=K(\epsilon,\Delta)>0$ with the following properties.
Let $X,Y,T \in \pi_1(\Su,*)$, $i=0,1,2$, such that  
\begin{enumerate}
\item $I(\cd T \cd X \cd Y \cd \ov T \cd ),I(\cd T \cd X \cd \ov T \cd ),I(\cd T \cd Y \cd \ov T \cd ) <\Delta$, 
\item $\len(\cd T \cd ) \ge K$.
\end{enumerate}
Then $(XY)_T=X_T+Y_T$ in $\Pant_{300\epsilon,R}$ homology.

\end{theorem}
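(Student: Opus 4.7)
The plan is to specialize the Second Rotation Lemma (Lemma \ref{lemma-rotation-2}) to a clever triple. Take
$$R_0 = X, \qquad R_1 = \ov Y, \qquad R_2 = 1,$$
the identity element. Then
$$R_0 \ov R_1 = XY, \qquad R_1 \ov R_2 = \ov Y, \qquad R_2 \ov R_0 = \ov X,$$
so once the hypotheses of the Second Rotation Lemma are verified, we get
$$(XY)_T + (\ov Y)_T + (\ov X)_T = 0$$
in $\Pant_{300\epsilon, R}$ homology, and combining with the antisymmetry $(\ov A)_T = -A_T$ yields the desired $(XY)_T = X_T + Y_T$.

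The verification of the three inefficiency bounds is routine. The arc $\cd 1 \cd$ is the constant (zero-length) arc at $*$, so $\cd T \cd R_1 \cd \ov R_2 \cd \ov T \cd$ coincides with the piecewise geodesic $\cd T \cd \ov Y \cd \ov T \cd$ (and similarly for $i=2$). Reversing orientation preserves lengths and sends $\cd \ov A \cd$ to $\cd A \cd$, hence
$$I(\cd T \cd \ov Y \cd \ov T \cd) = I(\cd T \cd Y \cd \ov T \cd) < \Delta, \qquad I(\cd T \cd \ov X \cd \ov T \cd) = I(\cd T \cd X \cd \ov T \cd) < \Delta,$$
and the case $i=0$ gives $I(\cd T \cd X \cd Y \cd \ov T \cd) < \Delta$ directly from hypothesis (1). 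Choosing $K$ to dominate the constant $K(\epsilon, \Delta)$ produced by Lemma \ref{lemma-rotation-2} lets us invoke that lemma.

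The antisymmetry $(\ov A)_T = -A_T$ is immediate from the defining formula $A_T = \tfrac12([TA \ov T B] - [T \ov A \ov T B])$ together with the observation that $\FConn_{\epsilon, R}(\ov A, T) = \FConn_{\epsilon, R}(A, T)$ (the defining condition is symmetric in $A \leftrightarrow \ov A$), so that the same $B$ may be used; Lemma \ref{lemma-A-T}(2) makes the choice of $B$ irrelevant in $\Pant_{100\epsilon, R}$ homology, and a fortiori in $\Pant_{300\epsilon, R}$ homology. Substituting into the rotation identity gives $(XY)_T - Y_T - X_T = 0$, as claimed.

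The main point requiring care is that the Second Rotation Lemma be legitimately applicable with the degenerate choice $R_2 = 1$. Inspecting its proof, one sees that it invokes the First Rotation Lemma, which only imposes an upper bound $\len(\cd R_i \cd) + \len(\cd S_i \cd) + 2\len(\cd T \cd) < R - K$ on the $R_i$'s; no lower bound is required, so the zero-length case is harmless. Should one wish to avoid the identity altogether, the same argument goes through with $R_0 = XZ$, $R_1 = \ov Y Z$, $R_2 = Z$ for any short auxiliary loop $Z$ for which $\cd T \cd X Z \cd \ov Z Y \cd \ov T \cd$, $\cd T \cd \ov Y \cd \ov T \cd$, and $\cd T \cd \ov X \cd \ov T \cd$ have inefficiency less than $\Delta + o(1)$, at the cost of slightly enlarging the constant $K$.
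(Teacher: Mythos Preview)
Your proof is correct and essentially identical to the paper's: the paper sets $R_0=\id$, $R_1=X$, $R_2=\ov Y$ and applies the Second Rotation Lemma, which is just a cyclic relabeling of your choice $R_0=X$, $R_1=\ov Y$, $R_2=\id$. Your additional remarks on the antisymmetry $(\ov A)_T=-A_T$ and on the legitimacy of the degenerate choice $R_i=\id$ are correct elaborations that the paper leaves implicit.
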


\begin{proof} Set $R_0=\id$, $R_1=X$, and $R_2=\ov Y$, and apply the previous lemma.

\end{proof}

\begin{random} We have defined the map $g_{XY}:(X,Y) \to \R \Pant_{1,R}$ such that $\partial g_{XY} (X,Y)=(XY)_T-X_T-Y_T$ (the map $g_{XY}$ is defined on the appropriate subset of $\Sigma^{2}_G$ described in the statement of Theorem \ref{thm-XY}).  This map is $RKe^{12\len(\cd T \cd )}$-semirandom with respect to 
$\Sigma^{\times 2}_G$ and $\sigma_{\Pant}$, where $K=K(\epsilon,\Delta)$. 

\end{random}

\section{The Endgame} In this last section of the main text of the paper, we prove that every good curve is good pants homologous to a sum of encodings of a given set of standard generators for $\pi_1(S,*)$. 

We prove this in three steps:

\noindent
1. We prove in Lemma \ref{lemma-cut-1} that every good curve is good pants homologous to a sum of two encodings of two elements of $\pi_1(S,*)$; these elements are represented by geodesic segments of length about $R$.

\noindent
2. We use the $XY$ Theorem and Lemma \ref{lemma-division} to repeatedly write $X_T=(X_1)_T+(X_2)_T$ where $X_1$ and $X_2$ have length about half that of $X$. This allows us to reduce an encoding of an arbitrary element of $\pi_1(S,*)$ to a sum of encodings of elements of bounded length (bounded in terms of $S$ and $\epsilon$)

\noindent
3. We  use the $XY$ Theorem to reduce the encoding of an element of $\pi_1(S,*)$ of bounded length to a sum of encodings of generators. This requires the proper choice of $T$, which is discussed in Lemma \ref{lemma-direction}.

\subsection{ The good pants homology of short words} The following is the Good Direction Lemma.

\begin{lemma}[Good Direction Lemma]\label{lemma-direction}
For any finite set $W \subset \pi_1(\Su,*)$, we can find $\Delta=\Delta(\Su,W)$, such that for any 
$L$ we can find $T \in \pi_1(\Su,*)$ such that $\len (\cd T \cd )>L$ and $I(\cd T \cd X \cd \ov T \cd )<\Delta$, when $X \in W$.
\end{lemma}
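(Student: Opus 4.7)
The plan is to pick $T$ whose terminal tangent direction $t(\cd T \cd) \in \TB_{*}\Su$ is chosen to avoid a finite set of ``forbidden'' directions, so that the piecewise geodesic $\cd T \cd X \cd \ov T \cd$ has both internal bending angles bounded away from $\pi$ for every $X \in W$ simultaneously. Since by equation (\ref{angle-ineff}) the inefficiency of a single bend $I(\theta)=2\log\sec(\theta/2)$ is finite for $\theta<\pi$, the bound on bending angles will yield a uniform bound on the inefficiency $I(\cd T \cd X \cd \ov T \cd)$.

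Concretely, the only two bending points of $\cd T \cd X \cd \ov T \cd$ are the two copies of $*$ where $\cd T \cd$ meets $\cd X \cd$ and $\cd X \cd$ meets $\cd \ov T \cd$, with bending angles
\[
\theta_1(X,T)=\Ang\bigl(t(\cd T \cd),\,i(\cd X \cd)\bigr),\qquad
\theta_2(X,T)=\Ang\bigl(t(\cd X \cd),\,i(\cd \ov T \cd)\bigr)=\Ang\bigl(t(\cd X \cd),\,-t(\cd T \cd)\bigr).
\]
These equal $\pi$ exactly when $t(\cd T \cd)=-i(\cd X \cd)$ or $t(\cd T \cd)=t(\cd X \cd)$, so the ``forbidden'' set of terminal directions is
$F=\bigcup_{X\in W}\{-i(\cd X \cd),\,t(\cd X \cd)\}\subset\TB_{*}\Su$, a finite set whose cardinality depends only on $W$. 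I would pick any $v\in\TB_{*}\Su$ with $\Ang(v,F)\ge\delta_0>0$ for a $\delta_0=\delta_0(\Su,W)$.

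Given this $v$, for any $L$ the Connection Lemma (Lemma \ref{lemma-connection}) applied at $(p,u)=(q,w)=(*,v)$ produces arcs from $*$ to $*$ of length exceeding $L$ whose initial and terminal tangents are within $\delta_0/2$ of $v$; any such arc is $\cd T \cd$ for a suitable $T\in\pi_1(\Su,*)$, and then $\Ang(t(\cd T \cd),v)<\delta_0/2$. For this $T$ and every $X\in W$, both bending angles satisfy $\theta_i(X,T)\le \pi-\delta_0/2$. It then remains to bound $I(\cd T \cd X \cd \ov T \cd)$ by a constant $\Delta(\Su,W)$ in terms of $\delta_0$ and $\max_{X\in W}\len(\cd X \cd)$. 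This follows from hyperbolic geometry of the three-segment piecewise geodesic: when both bending angles are below $\pi/2$ this is immediate from Lemma \ref{lemma-pomoc}, giving $I\le \log\sec\theta_1+\log\sec\theta_2$; the general case (angles up to $\pi-\delta_0/2$) is handled by subdividing the long outer segments $\cd T \cd$ and $\cd \ov T \cd$ and inserting auxiliary bends that redistribute the turning into two pieces each with bending angle $<\pi/2$, after which Lemma \ref{lemma-pomoc} and monotonicity of inefficiency give the desired bound.

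The main obstacle is the last step: Lemma \ref{lemma-pomoc} as stated requires $\theta<\pi/2$, but the intersection of the corresponding half-plane conditions on $t(\cd T \cd)$ can be empty if the directions $\{i(\cd X \cd),\,-t(\cd X \cd)\}_{X\in W}$ span more than a half-circle, so one cannot always arrange both bending angles below $\pi/2$ by a single choice of $v$. Thus the real content of the proof is to extend Lemma \ref{lemma-pomoc} to bending angles up to $\pi-\delta$, which can be done either by a direct law-of-cosines calculation in $\Ha$ or by inserting an auxiliary nearly-straight geodesic extension at each bend so as to split each bending angle into two sub-bends of angle less than $\pi/2$.
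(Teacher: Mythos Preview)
Your identification of the forbidden directions is wrong, and this is a genuine gap, not a technicality.

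You claim that if the two bending angles $\theta_1,\theta_2$ of $\cd T\cd X\cd\ov T\cd$ are bounded away from $\pi$ (and $\len(\cd X\cd)$ is bounded), then $I(\cd T\cd X\cd\ov T\cd)$ is bounded. This is false. Lift the path to $\Ha$: it runs $p_0\to p_1\to p_2\to p_3$, with the middle segment $p_1p_2$ a lift of $\cd X\cd$. As $\len(\cd T\cd)\to\infty$ with $t(\cd T\cd)=v$ fixed, the endpoint $p_0$ tends to the boundary point $\xi_0$ reached from $p_1$ in direction $-v$, and $p_3$ tends to the point $\xi_3$ reached from $p_2$ in the lift of $-v$ there. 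Since $p_2=X\cdot p_1$ (as lifts of $*$), one has $\xi_3=X(\xi_0)$, so the inefficiency blows up exactly when $\xi_0$ is a \emph{fixed point of the deck transformation $X$}. Those two directions are the ones from $p_1$ toward the endpoints of the axis of $X$; unless $*$ happens to lie on that axis they are \emph{not} equal to $-i(\cd X\cd)$ or $t(\cd X\cd)$. For such a $v$ both bending angles are strictly less than $\pi$, yet $d(p_0,p_3)$ stays bounded while the total length goes to infinity, so $I(\cd T\cd X\cd\ov T\cd)\to\infty$.

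The underlying reason your Lemma~\ref{lemma-pomoc} strategy cannot work is that the middle segment $\cd X\cd$ is short, so the two bends interact: the Sum of Inefficiencies Lemma and the angle-inefficiency formula~(\ref{angle-ineff}) only decouple bends that are separated by a \emph{long} geodesic segment. Splitting the outer segments $\cd T\cd$, $\cd\ov T\cd$ does nothing here, since the problem is at the short middle piece.

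The paper's proof avoids this by working directly with the limit $I(\alpha_\infty^{-1}(v)\cd X\cd\alpha_\infty(v))$ as a continuous $[0,\infty]$-valued function of $v$, showing it is finite except at the (at most two) $v$ corresponding to fixed points of a M\"obius map, and then taking $\Delta$ to be the supremum over a closed arc $J$ of good directions. The Connection Lemma then produces $T$ with $t(\cd T\cd)\in J$.
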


\begin{proof} For any $v \in \TB_{*} \Su$, and $t>0$, we let $\alpha_t(v)$ be the geodesic segment of length $t$ such that $i(\alpha_t(v))=v$, and we let $\alpha_{\infty}(v)$ be the corresponding infinite geodesic ray. We claim that for any $X \in \pi_1(\Su,*)$, and $X \ne id$, there are at most two $v \in \TB_{*} \Su$ such that 
\begin{equation}\label{eq-v}
\lim\limits_{t \to \infty} I(\alpha^{-1}_{t}(v)\cd X \cd \alpha_t(v))=\infty.
\end{equation}

To prove the claim we  lift $\cd X \cd $ to the universal cover $\Ha$, and thus get two lifts of $*$, and hence two lifts of $v$. 
We observe that  $(\ref{eq-v})$ holds if and only if the two lifts of $\alpha_{\infty}(v)$ end at the same point of $\partial{\Ha}$. 
The map that maps one lift of $\alpha_{\infty}(v)$ to the other is the deck transformation that maps one lift of $*$ to the other.
The relation  $(\ref{eq-v})$ holds if and only if $\alpha_{\infty}(v)$  is  a fixed point of the  M\"obius transformation $M$, and since $M$ is not the identity
this can be true for at most two vectors $v$.

\end{proof}

In the remainder of this section  we fix a set of standard generators $g_1,..g_{2n}$ of $\pi_1(\Su,*)$ (here $n$ is the genus of $\Su$). Recall that ${\bf H}_1$ denotes the standard homology on $\Su$.
Let $[g_i]$ denote the corresponding closed curves. For any closed curve $\gamma \subset \Su$ there are unique $a_1,...a_{2n}$ such that $\gamma=\sum a_i [g_i]$ in ${\bf H}_1$.
We define $q:\Gamma \to \R \pi_1(\Su,*)$ by $q(\gamma)=\sum a_i g_i$, where $\Gamma$ is the set of all closed curves on $\Su$.
We extend the definition of $q$ to a map  $q:\pi_1(\Su,*) \to \R \{ g_1,..,g_{2n} \}$ by $q(X)=q([X])$.

For $l \in \N$, we define the set $W_l$ as the set of elements $X \in \pi_1(\Su,*)$ that can be written as a product of at most $l$ generators (or their inverses).

\begin{theorem}\label{thm-short} Let $\epsilon>0$. For all $l \in \N$, and $L>0$, we can find $T \in \pi_1(\Su,*)$ and $R_0$ such that $\len(\cd T \cd )>L$, and for $R>R_0$, and $X \in W_l$, we have
$$
X_T=(q(X))_T
$$
in $\Pant_{300\epsilon,R}$ homology.
\end{theorem}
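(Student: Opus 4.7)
I would prove this by induction on the word length $k$ of $X$ in the generators $g_1, \ldots, g_{2n}$, with the $XY$ Theorem (Theorem \ref{thm-XY}) serving as the inductive engine and the Good Direction Lemma (Lemma \ref{lemma-direction}) applied once at the start to supply a single $T$ that works uniformly for every word encountered. Specifically, let $W \subset \pi_1(\Su,*)$ be the finite set consisting of every element of $W_l$ together with every initial subword $g^{\sigma(1)}_{i_1} \cdots g^{\sigma(j)}_{i_j}$, $0 \le j \le k \le l$. The goal is to find $T$ whose direction is ``good'' for all of $W$ at once.

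Since $W$ is finite, there is a constant $C = C(W)$ bounding $I(\cd X_1 \cd X_2 \cd)$ for every $X_1, X_2 \in W$ (because $I(\cd X_1 \cd X_2 \cd) \le \len(\cd X_1 \cd) + \len(\cd X_2 \cd)$, which is bounded on $W$). Apply the Good Direction Lemma to $W$ to obtain $\Delta_0 = \Delta(W)$, set $\Delta' = \Delta_0 + C$, let $K = K(\epsilon, \Delta')$ be the length threshold from the $XY$ Theorem, and apply the Good Direction Lemma a second time with length bound $\max(L, K)$ to produce $T \in \pi_1(\Su,*)$ with $\len(\cd T \cd) > \max(L, K)$ and $I(\cd T \cd X \cd \ov T \cd) < \Delta_0$ for every $X \in W$. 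A direct length computation gives the additive identity
\[ I(\cd T \cd X_1 \cd X_2 \cd \ov T \cd) = I(\cd T \cd X_1 X_2 \cd \ov T \cd) + I(\cd X_1 \cd X_2 \cd), \]
so whenever $X_1, X_2, X_1 X_2 \in W$, all three inefficiencies required by the $XY$ Theorem are bounded by $\Delta'$. Consequently, for $R$ large,
\[ (X_1 X_2)_T = (X_1)_T + (X_2)_T \]
in $\Pant_{300\epsilon, R}$ homology.

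I now induct on the word length $k$ of $X \in W_l$. For $k = 1$, $X = g_i^{\sigma}$ with $\sigma = \pm 1$; then $q(X) = \sigma g_i$, and from the definition of $A_T$ in Lemma \ref{lemma-A-T} (using $\FConn_{\epsilon,R}(A,T) = \FConn_{\epsilon,R}(\ov A, T)$) one reads off $(\ov A)_T = -A_T$, so $X_T = \sigma (g_i)_T = (q(X))_T$. For the inductive step, factor $X = Y \cdot g_{i_k}^{\sigma(k)}$ with $Y \in W_{k-1}$; since $Y$, $g_{i_k}^{\sigma(k)}$, and $X$ all lie in $W$, the $XY$ identity gives $X_T = Y_T + (g_{i_k}^{\sigma(k)})_T$, the inductive hypothesis gives $Y_T = (q(Y))_T$, and the homomorphism $\pi_1(\Su,*) \to \bf{H}_1(\Su)$ gives $q(X) = q(Y) + \sigma(k) g_{i_k}$. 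The $\R$-linearity of the operator $(\cdot)_T$ on formal sums then yields
\[ X_T = (q(Y))_T + \sigma(k) (g_{i_k})_T = (q(X))_T, \]
completing the induction.

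The main obstacle is logistical rather than conceptual: one must guarantee that the hypotheses of the $XY$ Theorem are satisfied uniformly at every step of the induction, which is exactly what forces the ``prepackaging'' of every partial product into the single finite set $W$ and the choice, via the Good Direction Lemma, of one common direction $T$. The only genuine new computation is the additive identity for the bent inefficiency $I(\cd T \cd X_1 \cd X_2 \cd \ov T \cd)$, which converts control of the unbent quantity $I(\cd T \cd X_1 X_2 \cd \ov T \cd)$ into control of the bent one at the cost of the bounded bend $I(\cd X_1 \cd X_2 \cd)$.
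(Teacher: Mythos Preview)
Your proof is correct and follows essentially the same route as the paper's: choose $T$ via the Good Direction Lemma applied to $W_l$, then induct on word length using the $XY$ Theorem to peel off one generator at a time. In fact you are slightly more careful than the paper on one point: the $XY$ Theorem requires the four-piece bound $I(\cd T \cd X_1 \cd X_2 \cd \ov T \cd) < \Delta$, and you supply this explicitly via the additive identity $I(\cd T \cd X_1 \cd X_2 \cd \ov T \cd) = I(\cd T \cd X_1 X_2 \cd \ov T \cd) + I(\cd X_1 \cd X_2 \cd)$, whereas the paper lists only the three-piece quantities and leaves this adjustment implicit. Note also that your set $W$ of initial subwords is already all of $W_l$, since $W_l$ consists of group elements expressible as products of at most $l$ generators and is therefore closed under taking subwords; so the ``prepackaging'' step is a no-op and you may simply work with $W_l$ throughout, as the paper does.
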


\begin{remark} Here we extended the partial map $(\cd )_T:\pi_1(\Su,*) \to \R \Gamma_{\epsilon,R}$ (given by $X \mapsto X_T$) to a partial map  $(\cd )_T:\R \pi_1(\Su,*) \to \R \Gamma_{\epsilon,R}$.
We remind the reader that $X_T$ depends implicitly on $R$ and $\epsilon$.
\end{remark}

\begin{proof} We take $\Delta=\Delta(W_l)$ and $T=T(W_l,L)$ from the previous lemma, so $\len(\cd T \cd )>L$ and $I(\cd T \cd X \cd \ov T)< \Delta$, for all $X \in W_l$. If $X \in W_1$, then $q(X)=X$
or $q(X)=- \ov X$, so $X_T=(q(X))_T$.

Take $1 \le k<l$, and assume $X_T=((q(X))_T$ in $\Pant_{300\epsilon,R}$ homology for all $X \in W_{k}$. Then for any $X \in W_{k+1}$ we can write $X=g_{i}^{\sigma}Y$, for some $i \in \{1,..,2n \}$, and  $\sigma=\pm 1$, and $Y \in W_{k}$. Then $X_T=(g^{\sigma}_{i})_T +Y_T$ by the $XY$ Theorem (see Theorem \ref{thm-XY}) which requires 

$$
I(\cd T \cd X \cd \ov T \cd ), I(\cd T \cd g^{\sigma}_i \cd \ov T \cd ), I(\cd T \cd Y \cd \ov T \cd )<\Delta,
$$
and $Y_T=(q(Y))_T$ by assumption, so $X_T=((q(X))_T$. We conclude the theorem by induction.

\end{proof}

\begin{random} The Randomization remarks for Theorem \ref{thm-short}. Given $l,L,T$ and $R$ (and $\epsilon$) we have implicitly defined the map
$g_W:W_l \to \Pant_{300\epsilon,R}$ such that $\partial g_W(X)=X_T-(q(X))_T$. The map $g_W$ arises from a sum of at most $l$ applications of the $XY$ Theorem
so $g_W$ is $K(\Su)RKe^{12 \len(\cd T \cd )}$-semirandom, because every measure in $\Sigma_G$ has total mass at most $K(\Su)$.
\end{random}

\subsection{Preliminary lemmas}
We now observe that every good curve is good pants homologous to $(X_0)_T+(X_1)_T$ for suitable $X_0$ and $X_1$ from $\pi_1(\Su,*)$.

\begin{lemma}\label{lemma-cut-1} There exists a universal constant $\wh{\epsilon}>0$ such that for every  $0<\epsilon <\wh{\epsilon}$, there exist  constants $L=L(\epsilon,\Su)>0$ and $R_0=R_0(\epsilon,\Su)>0$, with the following properties.  For any $\gamma \in \Gamma_{\epsilon,R}$ and $T \in \pi_1(\Su,*)$, $\len(\cd T \cd )>L$, we can find $X_0,X_1 \in \pi_1(\Su,*)$ such that 
\begin{enumerate}
\item $|\len(\cd X_i \cd ) -(R+2L- \log 4 )|<\frac{1}{2}$, 
\item  $ \Ang(t(\cd T \cd ), i(\cd X_i \cd ) ), \, \Ang(t(\cd X_i \cd ), i(\cd \ov T  \cd ) ) \le \frac{\pi}{6}$,
\item  $\gamma=(X_0)_T+(X_1)_T$ in $\Pant_{300\epsilon,R}$ homology, 
\end{enumerate}
for $R>R_0$.
\end{lemma}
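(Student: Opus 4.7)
The approach is to cut $\gamma$ at two points and form two loops at $*$ whose product represents $\gamma$ in $\pi_1(\Su,*)$, then realize the homological relation via the $XY$ Theorem and a direct pants argument. First, apply the Connection Lemma (Lemma \ref{lemma-connection}) to find geodesic arcs $\sigma_0, \sigma_1$ from $*$ to points $p_0, p_1 \in \gamma$, each of length close to $L$, with initial tangent $i(\sigma_i) \in \TB_* \Su$ lying in a cone of angular radius at most $\pi/6$ around $t(\cd T \cd)$, and meeting $\gamma$ approximately orthogonally at $p_i$. Choose $p_0, p_1$ so that they divide $\gamma$ into sub-arcs $\gamma_0 : p_0 \to p_1$ and $\gamma_1 : p_1 \to p_0$ of length close to $R$. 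The Connection Lemma delivers such arcs provided $L$ is at least a constant depending on $\Su$ and $\epsilon$; the small flexibility (of order $\epsilon$) in the selection of $p_0, p_1$ lets us enforce condition $(1)$ exactly.

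Define $X_0 = \sigma_0 \gamma_0 \sigma_1^{-1}$ and $X_1 = \sigma_1 \gamma_1 \sigma_0^{-1}$ as elements of $\pi_1(\Su, *)$, so that $X_0 X_1 = \sigma_0 \gamma \sigma_0^{-1}$ represents $\gamma$ as a loop at $*$. By iterated applications of the Long Segment Lemma for arcs (Lemma \ref{lemma-long-arc}), the inefficiency of the piecewise geodesic $\sigma_i \gamma_i \sigma_{1-i}^{-1}$ is close to the sum $I(\pi/2) + I(\pi/2) = \log 4$, since both bending angles at $p_i$ and $p_{1-i}$ are close to $\pi/2$. Hence $\len(\cd X_i \cd) \approx L + R + L - \log 4 = R + 2L - \log 4$, giving condition $(1)$. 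Condition $(2)$ follows from the New Angle Lemma (Lemma \ref{lemma-new-angle}): the long segment $\sigma_i$ guarantees that $i(\cd X_i \cd)$ and $t(\cd X_i \cd)$ are close to $i(\sigma_i)$ and $-i(\sigma_{1-i})$ respectively, both within $\pi/6$ of $\pm t(\cd T \cd)$ by construction.

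For condition $(3)$, apply the $XY$ Theorem (Theorem \ref{thm-XY}) to get $(X_0)_T + (X_1)_T = (X_0 X_1)_T$ in $\Pant_{300\epsilon, R}$ homology; the required bounds $I(\cd T \cd X_i \cd \ov T \cd), I(\cd T \cd X_0 X_1 \cd \ov T \cd) \le \Delta(\Su, \epsilon)$ follow from the angle condition $(2)$. Writing $X_0 X_1 = \sigma_0 \gamma_{\mathrm{loop}} \sigma_0^{-1}$, where $\gamma_{\mathrm{loop}}$ represents $\gamma$ as a loop at $*$ via $\sigma_0$, a further application of the $XY$ Theorem together with the identity $(\sigma_0^{-1})_T = -(\sigma_0)_T$ reduces the claim to $(\gamma_{\mathrm{loop}})_T = \gamma$ in good pants homology. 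I would establish this by constructing two pants $\Pi^+, \Pi^-$ in $\Pant_{300\epsilon, R}$, each having $\gamma$ as a cuff, with the other two cuffs being $-[T \gamma_{\mathrm{loop}} \ov T B]$ and an auxiliary curve $+D$ for $\Pi^+$, and $+[T \ov{\gamma_{\mathrm{loop}}} \ov T B]$ and $-D$ for $\Pi^-$ (with $B \in \FConn(\gamma_{\mathrm{loop}}, T)$ random). The boundary sum $\partial(\Pi^+ + \Pi^-) = 2\gamma - [T \gamma_{\mathrm{loop}} \ov T B] + [T \ov{\gamma_{\mathrm{loop}}} \ov T B]$ vanishes in good pants homology, yielding $\gamma = (\gamma_{\mathrm{loop}})_T$.

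The main obstacle is the construction of $\Pi^+, \Pi^-$ with matched third cuff $\pm D$. This requires choosing third connections of $\gamma$ (arcs from $\gamma$ back to $\gamma$ that generate the two pants) with compatible endpoints and lengths, so that the resulting third cuffs are genuinely inverses of each other. The Connection Lemma provides many candidate third connections, and the independence of $(\gamma_{\mathrm{loop}})_T$ from the choice of $B \in \FConn(\gamma_{\mathrm{loop}}, T)$ (Lemma \ref{lemma-A-T}(2)) provides the flexibility needed to align the two constructions. Throughout, the Sum of Inefficiencies Lemma is invoked to verify that all three cuffs of each pants lie in $\Gamma_{300\epsilon, R}$.
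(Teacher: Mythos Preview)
Your construction of $X_0,X_1$ and the verification of (1) and (2) match the paper's. The gap is in your argument for (3).

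You propose to apply the $XY$ Theorem to obtain $(X_0)_T+(X_1)_T=(X_0X_1)_T$. But the $XY$ Theorem requires $I(\cd T\cd X_0\cd X_1\cd\ov T\cd)<\Delta$ with $\len(\cd T\cd)\ge K(\epsilon,\Delta)$, and here this inefficiency is not bounded. By construction $t(\cd X_0\cd)$ is close to $-i(\sigma_1)$ and $i(\cd X_1\cd)$ is close to $i(\sigma_1)$ (via the New Angle Lemma, since the last segment of each piecewise representative is $\sigma_1^{\pm 1}$ of length $\approx L$). Hence the bending angle $\Ang(t(\cd X_0\cd),i(\cd X_1\cd))$ is close to $\pi$, and by the Long Segment Lemma for angles $I(\cd X_0\cd X_1\cd)$ is large---of order $2L$, since $\len(\cd X_0\cd)+\len(\cd X_1\cd)\approx 2R+4L-2\log 4$ while $\len(\cd X_0X_1\cd)\le 2L+2R$. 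So you would need $\Delta$ of order $L$ and hence $\len(\cd T\cd)>K(\epsilon,\Delta)$ with $K$ growing in $\Delta$; since the lemma only gives you $\len(\cd T\cd)>L$, this is circular. (Your subsequent step invoking $(\sigma_0)_T$ is also ill-posed, since $\sigma_0$ is an arc from $*$ to a point of $\gamma$, not an element of $\pi_1(\Su,*)$.)

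The paper avoids the $XY$ Theorem entirely here. It picks a single auxiliary $\cd A\cd\in\Conn_{\epsilon/10,\,R+\log 4-2L-2\len(\cd T\cd)}(-i(\cd T\cd),i(\cd T\cd))$ and observes that the three arcs $\cd X_0\cd$, $\cd\ov{X}_1\cd$, $\cd\ov T A T\cd$ (all from $*$ to $*$) form a $\theta$-graph generating a single pair of pants $\Pi\in\Pant_{\epsilon,R}$ with cuffs $\gamma=[X_0X_1]$, $-[X_0\ov T\ov A T]$, $-[X_1\ov T A T]$. Thus $\gamma=[X_0\ov T\ov A T]+[X_1\ov T A T]$ in $\Pant_{\epsilon,R}$ homology. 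The Simple Itemization Lemma then gives $[X_0\ov T\ov A T]=(X_0)_T+(\ov A)_{\ov T}$ and $[X_1\ov T A T]=(X_1)_T+A_{\ov T}$; the $A$-terms cancel because $(\ov A)_{\ov T}=-A_{\ov T}$. This single-pants construction replaces both your $XY$ step and your proposed pair $\Pi^{\pm}$ with matched third cuff $\pm D$.
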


\begin{proof} We take at random two points $x_0$ and $x_1$ on the parameterizing torus $\TT_{\gamma}$ that are $\hl(\gamma)$ apart and we let $w_i \in \TB_{x_{i}} \Su$ be $-\sqrt{-1}\gamma'(x_i)$.
We let $\gamma_i$ be the subsegment of $\gamma$ from $x_i$ to $x_{i+1}$ (where $x_2=x_0$). 

For $i=0,1$ we take $\alpha_i \in \Conn_{ \frac{\epsilon}{10},L}(t(\cd T \cd ),w_i)$, where $L=L(\epsilon,\Su)$ is the constant from the Connection Lemma (that is, we choose $L$ so that  the set 
$\Conn_{ \frac{\epsilon}{10},L}(t(\cd T \cd ),w_i)$ is non-empty).  Observe that the piecewise geodesic arc $\alpha_0 \gamma_0 \alpha^{-1}_1$ begins and ends at the point $*$, so we let $X_0 \in \pi_1(\Su,*)$ denote the corresponding element of $\pi_1(\Su,*)$. Similarly we let $X_1 \in \pi_1(\Su,*)$ be the element that corresponds to the curve   $\alpha_1 \gamma_1 \alpha^{-1}_0$.

It follows from the Remark after Lemma \ref{lemma-ineff+} that the inequality (1) of the statement of the lemma holds. On the other hand,  by the New Angle Lemma the angle $\Ang(i(\cd X_0 \cd ),i(\alpha_0))$ is as small as we want providing that $\len(\alpha_0)>L$ is large enough (here we use that the inefficiency $I(\alpha_0 \gamma_0 \alpha^{-1}_1)$ is bounded above). Since by construction the angle $\Ang(i(\alpha_0),t(\cd T \cd ))$ is less than $\frac{\epsilon}{10}$ we conclude that for $L$ large enough we have $\Ang(t(\cd T \cd ), i(\cd X_0 \cd ) )<\frac{\pi}{6}$. Other cases are treated similarly.

Let $\cd A \cd $ be a random element of  $\Conn_{ \frac{\epsilon}{10},R'}(-i(\cd T \cd ),i(\cd T \cd ))$, where $R'=R+\log 4 -2L-2\len(\cd T \cd )$. Then
$$
\left| \len([X_0 \ov T \ov A T ])-2R \right|<\epsilon,
$$
$$
\left| \len([X_1 \ov T A T])-2R \right| <\epsilon,
$$
so $\gamma=[X_0 \ov T \ov A T ]+[X_1 \ov T A T]$ in $\Pant_{\epsilon,R}$ homology, because the three curves bound a good pair of pants.

Moreover, $[X_0 \ov T \ov A T ]=(X_0)_T+ (\ov A)_{\ov T}$, and $[X_1 \ov T A T]=(X_1)_T+A_{\ov T}$ in $\Pant_{100\epsilon,R}$ homology by the Two-part Itemization Lemma. 
Since $(\ov A)_{\ov T}=-A_{\ov T}$ we conclude $\gamma=(X_0)_T+(X_1)_T$ in $\Pant_{300\epsilon,R}$ homology.

\end{proof}

\begin{random} The randomization remarks for Lemma \ref{lemma-cut-1}. We have defined the maps $q_C:\Gamma_{1,R} \to \R G$ (by $q_C(\gamma)=X_0+X_1$) and $g_C:\Gamma_{1,R}  \to \R \Pant_{1,R}$, such that $\partial g_C(\gamma)=\gamma-(q_C(\gamma))_T$ (where $A \to A_T$ maps $\R G \to \R \Gamma_{1,R}$).

The map $q_C$ is $e^{L}K$-semirandom   with respect to  $\sigma_{\Gamma}$ and $\Sigma_G$. The map $g_C$ is $e^{2\len(\cd T \cd )}K(\Su,\epsilon)$ semirandom with respect to $\sigma_{\Gamma}$ and $\sigma_{\Pant}$, where  $K=K(\Su,\epsilon)$.

\end{random}

We have the following definition.  For any $X,T \in \pi_1(\Su,*)$, $X \ne \id$, we let
$$
\theta^{T}_{X}= \max \{ \Ang(t(\cd T \cd ), i(\cd X \cd ) ),  \Ang(t(\cd X \cd ), i(\cd \ov T  \cd ) ) \}.
$$

\begin{lemma}\label{lemma-division} For $L>L_0(\Su)$, and   $X,T \in \pi_1(\Su,*)$, $X \ne \id$, 
then we can write $X=X_0 X_1$, for some $X_0,X_1 \in \pi_1(\Su,*)$, such that

\begin{enumerate}

\item $\left| \len(\cd X_i \cd ) - (\frac{\len(\cd X \cd )}{2}+L- \log 2) \right|<\frac{1}{2}$,

\item $I(\cd X_0 \cd X_1 \cd ) \le 2L+3$,

\item  $\theta^{T}_{X_{i}} \le \max \{\theta^{T}_{X}+e^{L+4} e^{-\len(\cd X_{i} \cd )},\frac{\pi}{6} \}$.

\end{enumerate}

\end{lemma}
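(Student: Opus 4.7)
The plan is to work in the universal cover $\Ha$ and choose $X_0$ by selecting an appropriate lift of the basepoint $*$. Let $\ell=\len(\cd X\cd)$, fix a lift $*_0\in\Ha$ of $*$, and set $*_1=X\cdot *_0$, so that the lift of $\cd X\cd$ is the geodesic from $*_0$ to $*_1$, with midpoint $m$. Any other lift of $*$ has the form $*_2=g\cdot *_0$ for some $g\in\pi_1(\Su,*)$; setting $X_0=g$ and $X_1=X_0^{-1}X$, the identity $X_0X_1=X$ is automatic, while $\len(\cd X_0\cd)=d(*_0,*_2)$ and $\len(\cd X_1\cd)=d(*_2,*_1)$. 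The problem thus reduces to choosing $*_2$.

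I would first locate the ideal position of $*_2$: on the perpendicular bisector of $*_0*_1$ at Fermi distance $h$ from $m$, where the right-triangle identity $\cosh(d(*_0,*_2))=\cosh(\ell/2)\cosh(h)$ forces $\cosh(h)\approx e^L/2$ and hence $h\approx L$; the $-\log 2$ in condition (1) is precisely the asymptotic correction from $\cosh\sim\tfrac12e^{\cdot}$. Incorporating the $\tfrac12$-tolerance of (1), the admissible region $\mathcal{R}\subset\Ha$ for $*_2$ is a Fermi-coordinate box of side lengths $O(1)$ at height $h\approx L$; using the area element $\cosh(h)\,ds\,dh$ it has hyperbolic area $\asymp e^L$, and by exponential $\pi_1(\Su,*)$-orbit growth it contains $\asymp e^L$ lifts of $*$ once $L>L_0(\Su)$.

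For every $*_2\in\mathcal{R}$, two of the four angle bounds in (3) are automatic when $\ell$ is not too small. The right-triangle identity $\sin\phi_0=\sinh(h)/\sinh(d(*_0,*_2))$ for $\phi_0=\Ang(i(\cd X\cd),i(\cd X_0\cd))$ gives $\phi_0\asymp e^{-\ell/2}$, which sits inside the allowed perturbation $e^{L+4}e^{-\len(\cd X_0\cd)}\asymp e^{4-\ell/2}$; hence $\Ang(t(\cd T\cd),i(\cd X_0\cd))\le\theta^T_X+O(e^{-\ell/2})$, and the analogous estimate at $*_1$ gives $\Ang(t(\cd X_1\cd),i(\cd\ov T\cd))\le\theta^T_X+O(e^{-\ell/2})$; both sit inside the $\theta^T_X+e^{L+4}e^{-\len(\cd X_i\cd)}$ branch of the maximum. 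A direct hyperbolic-trigonometric computation of the right-triangle angle at $*_2$ (which is $O(e^{-L})$) further shows that in $T_{*_2}\Ha$ the vectors $-t(\cd X_0\cd)$ and $i(\cd X_1\cd)$ are within $O(e^{-L})$ of each other, so the two remaining bounds, on $\Ang(t(\cd X_0\cd),i(\cd\ov T\cd))$ and $\Ang(t(\cd T\cd),i(\cd X_1\cd))$, collapse into a single requirement: place the projected tangent $t(\cd X_0\cd)\in\TB_*\Su$ inside the $\pi/6$-cone around $-t(\cd T\cd)$.

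The main obstacle is arranging this last condition. As $*_2$ ranges over the $\asymp e^L$ lifts of $*$ inside $\mathcal{R}$, the projected pair $(i(\cd X_0\cd),t(\cd X_0\cd))\in\TB_*\Su\times\TB_*\Su$ is approximately equidistributed; this is the mixing statement underlying Lemma \ref{lemma-connection}, and it immediately yields a positive fraction of lifts with $t(\cd X_0\cd)$ in any cone of width $\pi/3$ (and simultaneously $i(\cd X_0\cd)$ near $t(\cd T\cd)$, which is what handles the small-$\ell$ case where the "automatic" estimates above degenerate and we must lean on the $\pi/6$-branch of the maximum for both angles of $\theta^T_{X_0}$). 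For $L>L_0(\Su)$ at least one such lift exists. Picking such a $*_2$, condition (1) is immediate from $*_2\in\mathcal{R}$, condition (2) follows from $I(\cd X_0\cd X_1\cd)=\len(\cd X_0\cd)+\len(\cd X_1\cd)-\ell\le 2L+1-\log 4<2L+3$, and condition (3) is what we have arranged. The equidistribution of the tangent pair on $\TB_*\Su\times\TB_*\Su$ is the step that cannot be read off from elementary hyperbolic trigonometry and requires a mixing argument of the same flavor as the proof of the Connection Lemma.
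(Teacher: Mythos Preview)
Your approach is essentially the same as the paper's, just phrased in the universal cover rather than on the surface. The paper takes the midpoint $\alpha(y)$ of $\cdot X\cdot$ and directly invokes the Connection Lemma to produce an arc $\beta\in\Conn_{1/20,L}(t(\cdot T\cdot),\sqrt{-1}\,\alpha'(y))$ from $*$ to $\alpha(y)$; then $X_0\sim\alpha[0,y]\beta^{-1}$ and $X_1\sim\beta\,\alpha[y,\ell]$. Your lift $*_2$ is precisely the endpoint of the lift of $\alpha[0,y]\beta^{-1}$ starting at $*_0$, your Fermi box is encoded by the condition $t(\beta)\approx\sqrt{-1}\,\alpha'(y)$, and your cone condition on $t(\cdot X_0\cdot)$ is the condition $i(\beta)\approx t(\cdot T\cdot)$. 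The only real difference is that the paper treats the existence step as a single black-box call to Lemma~\ref{lemma-connection}, whereas you unpack it as orbit equidistribution in $\Ha$; this makes the paper's argument shorter and spares you the separate discussion of small $\ell$ (the bound $e^{L+4-\len(\cdot X_i\cdot)}$ is already vacuous there, so no extra angular control on $i(\cdot X_0\cdot)$ is needed).
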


\begin{proof} We let $\alpha=\cd X \cd $, then $\alpha:[0,\len(\cd X \cd )] \to \Su$ is the unit speed parametrization with $\alpha(0)=\alpha(\len(\cd X \cd ))=*$. We let $y=\frac{\len(\cd X \cd )}{2}$. Then for $L$ large enough, we can find
$\beta \in \Conn_{\frac{1}{20},L}(t(\cd T \cd ), \sqrt{-1} \alpha'(y))$ (as always, $L$ is determined by the Connection Lemma).

Then $\alpha[0,y]\beta^{-1}$ begins and ends at $*$, so it represents some $X_0 \in \pi_1(\Su,*)$.  Likewise $\beta\alpha[y,\len(\cd X \cd )]$ represents some $X_1 \in \pi_1(\Su,*)$, and $X=X_0X_1$.
Moreover, it follows from the Remark after Lemma  \ref{lemma-ineff+} that
$$
\left| \len(\cd X_i \cd ) - \left( \frac{\len(\cd X \cd )}{2}+L- \log 2 \right) \right|<\frac{1}{2}.
$$
The condition (2) follows immediately from (1).

Let $\theta=\Ang(i(\cd X \cd ), i(\cd X_0 \cd ) )$. Then by the hyperbolic law of sines, assuming that $\len(\cd X_i \cd ) \ge 1$ (which follows if we assume that $\len(\alpha) \ge L-1$ is at least 1) 
we obtain
$$
\sin(\theta) \le \frac{\sinh(L+1)}{\sinh(\len(\cd X_0 \cd ) )} \le e^{L+2-\len(\cd X_{0} \cd ) } ,
$$
Therefore
$$
\Ang(t(\cd T \cd ), i(\cd X_0 \cd ) ) \le \Ang(t(\cd T \cd ), i(\cd X \cd ) ) +e^{L+4-\len(\cd X_{0} \cd ) }.
$$

By similar reasoning we find that $\Ang(t(\cd X_0 \cd ),-i(\beta)) \le e^{2-L}<\frac{\pi}{12}$, assuming that $L$ is large enough. Also by construction 
$\Ang(-i(\beta),t( \cd T \cd ))<\frac{1}{20}<\frac{\pi}{12}$, so $\Ang(t(\cd X_0 \cd ),i(\cd \ov T \cd ) ) \le \frac{\pi}{6}$. We proceed similarly for $X_1$.

\end{proof}

\begin{random} The randomization remarks for Lemma \ref{lemma-division}. We have defined $\wh{q}_D:G \to G^{2}$ such that  $\wh{q}_D(X)=(X_0,X_1)$. 
If $\len(\cd X \cd ) \in [a,a+1]$, then  $\len(\cd X_0 \cd ),\len(\cd X_1 \cd ) \in [\frac{a}{2}+L',\frac{a}{2}+L'+1]$, where $L'=L-\log 2 -\frac{1}{2}$. 

Moreover, given $(X_0,X_1) \in G^2$ there is at most one $X$ such that $\wh{q}_D(X)=(X_0,X_1)$ (because $X=X_0X_1$). We conclude that 
$$
(\wh{q}_D)_{*} \sigma_a \le e^{2L'+2} \sigma_{ \frac{a}{2} +L'} \times \sigma_{ \frac{a}{2} +L'}, 
$$
and hence $\wh{q}_D$ is $e^{2L'+2}$-semirandom. It follows that the map $q_D:G \to \R G$ defined by $X \to X_0+X_1$, is  $2e^{2L'+2}$-semirandom.

\end{random}

\subsection{Proof of Theorem \ref{thm-correction-1}} The following theorem implies Theorem \ref{thm-correction-1}. 
Recall  $\{g_1, ...,g_{2n} \}$ denotes a standard basis for $\pi_1 (\Su,*)$, where $n$ is the genus of $\Su$.

\begin{theorem} \label{thm-correction-2} Let $\epsilon > 0$. There exists $R_0=R_0(\Su,\epsilon) > 0$  with the following properties. 
There exists $T \in \pi_1(\Su,*)$, where $T$ depends only on $\epsilon$ and $\Su$, such that  for every $R > R_0$ and every $\gamma \in \Gamma_{\epsilon,R}$ we have
$$
\gamma= \sum_{i=1}^{2g} a_i (g_i)_{T},
$$
in $\Pant_{300\epsilon,R}$ homology, for some $a_i \in \Q$.
\end{theorem}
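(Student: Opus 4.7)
The plan is to reduce any $\gamma \in \Gamma_{\epsilon, R}$ to a $\Q$-linear combination of $(g_i)_T$ by iterating the Division Lemma until every piece is a short word, then invoking Theorem~\ref{thm-short}. I would first fix the constants and choose $T$ once and for all. Let $L > L_0(\Su, \epsilon)$ be large enough to apply Lemma~\ref{lemma-cut-1}, Lemma~\ref{lemma-division}, the $XY$ Theorem and the Simple Itemisation Lemma. Let $l = l(\Su, L)$ be chosen so that every $X \in \pi_1(\Su,*)$ with $\len(\cd X \cd) \le 4L$ lies in $W_l$ (finite by discreteness). Apply the Good Direction Lemma to the finite set $W_l \cup \{g_1, \ldots, g_{2g}\}$ to obtain $\Delta_0$ and a single $T$ with $\len(\cd T \cd)$ exceeding every threshold below and $I(\cd T \cd X \cd \ov T \cd) < \Delta_0$ for all $X \in W_l \cup \{g_1, \ldots, g_{2g}\}$.

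Second, apply the Cutting Lemma~\ref{lemma-cut-1} to $\gamma$ to write $\gamma = (X_0)_T + (X_1)_T$ in $\Pant_{300\epsilon, R}$ homology, where $\theta^T_{X_i} \le \pi/6$ and $\len(\cd X_i \cd) \approx R$. Then recursively apply the Division Lemma~\ref{lemma-division}: whenever a word $X$ in the current collection satisfies $\len(\cd X \cd) > 4L$, split it as $X = X' X''$ with $\len(\cd X' \cd), \len(\cd X'' \cd) \approx \len(\cd X \cd)/2 + L$. By estimate (3) of Lemma~\ref{lemma-division}, the angle $\theta^T_{X'}$ is bounded by $\max\{\theta^T_X + e^{L+4} e^{-\len(\cd X' \cd)}, \pi/6\}$; summing these exponentially small corrections over the entire recursion tree (whose bottom lengths are $\approx 2L$) gives a total angle drift bounded by $e^{-L+O(1)}$, so $\theta^T_{X}$ stays close to $\pi/6$ at every node. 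By the Sum of Inefficiencies for Angles, this forces $I(\cd T \cd X \cd \ov T \cd)$ to be uniformly bounded by some $\Delta_1$ throughout the recursion. Therefore at each internal node the hypothesis of the $XY$ Theorem is satisfied, giving $X_T = (X')_T + (X'')_T$ in $\Pant_{300\epsilon, R}$ homology.

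Third, since the length recursion $\ell \mapsto \ell/2 + L$ contracts toward $2L$ whenever $\ell > 2L$, after $O(\log R)$ rounds every leaf word $Y_k$ in the tree has $\len(\cd Y_k \cd) \le 4L$ and therefore lies in $W_l$. Summing the $XY$ identities at the internal nodes of the tree yields
\[
\gamma \;=\; \sum_{k} (Y_k)_T
\]
in $\Pant_{300\epsilon, R}$ homology. By Theorem~\ref{thm-short} applied to our choice of $T$ (valid precisely because $T$ was selected for this $l$ and $L$), each $(Y_k)_T = (q(Y_k))_T = \sum_j a_{kj} (g_j)_T$ in $\Pant_{300\epsilon, R}$ homology. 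Collecting coefficients gives $\gamma = \sum_{j=1}^{2g} b_j (g_j)_T$ with $b_j = \sum_k a_{kj} \in \Q$, which is the desired identity.

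The principal obstacle is the uniformity: a single $T$ must work at every depth of an $O(\log R)$-deep recursion, whose intermediate words cannot be pre-listed as a finite set. This is resolved by separating the two roles of $T$: the short-word role (handled by the Good Direction Lemma on the finite set $W_l \cup \{g_1, \ldots, g_{2g}\}$, giving a uniform $\Delta_0$), and the long-word role (handled geometrically via the angle propagation bound (3) of Lemma~\ref{lemma-division}, which yields inefficiency control $\Delta_1$ depending only on the initial $\pi/6$ bound from the Cutting Lemma plus a convergent geometric series of corrections). Once $L$ and $l$ are fixed from $\Su$ and $\epsilon$, the choice of $T$ depends only on these, hence only on $\Su$ and $\epsilon$ as required.
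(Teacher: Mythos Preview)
Your proposal is correct and follows essentially the same approach as the paper: cut $\gamma$ into two pieces via Lemma~\ref{lemma-cut-1}, recursively halve via Lemma~\ref{lemma-division} while propagating the angle bound $\theta^T_X$ through a convergent geometric series, apply the $XY$ Theorem at each internal node, and finish with Theorem~\ref{thm-short} on the short leaves. The only cosmetic differences are that the paper bounds $I(\cd T \cd X \cd \ov T \cd)$ via Lemma~\ref{lemma-pomoc} (giving the explicit $\log 4$) rather than the Sum of Inefficiencies for Angles, and chooses $T$ by directly quoting Theorem~\ref{thm-short} rather than the Good Direction Lemma; your closing paragraph on why a single $T$ suffices is a nice clarification of exactly the point the paper leaves implicit.
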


\begin{remark} To prove Theorem \ref{thm-correction-1} we take $h_i=(g_i)_T$. Since $(g_i)_T$ is equal to the closed curve on $\Su$  that corresponds to $g_i$ in the standard homology ${\bf H}_1$, it follows that $h_i$ is a basis for   ${\bf H}_1$ (with rational coefficients).
\end{remark}

\begin{proof} We take $L$ that is sufficiently large for Lemma \ref{lemma-cut-1} and Lemma \ref{lemma-division}. We let $l \in \N$ be such that $X \in W_l$ whenever $\len(\cd X \cd )<2L+5$.
Then by Theorem \ref{thm-short} we can find $T$ such that $\len(\cd T \cd )>L$ and $\len(\cd T \cd )> K(\epsilon,2L+3)$, where $K(\epsilon,\Delta)$ is the constant from Theorem \ref{thm-XY}, and such that $X_T=(q(X))_T$ in $\Pant_{300\epsilon,R}$ homology for all $X \in W_l$. We take $R>R_0(\Su,\epsilon,L)$ from Lemma \ref{lemma-cut-1}, and $R>R_0(L,T)$ from Theorem \ref{thm-short}. 

Fix any $\gamma \in \Gamma_{\epsilon,R}$. 

By Lemma \ref{lemma-cut-1} we can find $X_0,X_1 \in \pi_1(\Su,*)$ such that $|\len(\cd X_i \cd ) -(R+2L -\log 4)|<\frac{1}{2}$, and 

\begin{equation}\label{name-1}
\gamma=(X_0)_T+(X_1)_T
\end{equation}
in $\Pant_{300\epsilon,R}$ homology. Observe that $q(\gamma)=q(X_0)+q(X_1)$.

By Lemma \ref{lemma-division} we can write $X_0=X_{00}X_{01}$, where 
\begin{equation}\label{name-2}
\len(\cd X_{0i} \cd ) \in \big[ \frac{R}{2}+2L,\frac{R}{2}+2L+1 \big]
\end{equation}
and the conclusions of Lemma \ref{lemma-division} hold. And likewise for $X_1$.

Let $N=\lfloor \log_{2} R \rfloor -1$. For every $0 \le k \le N$, we define sets $\Xe_k$ by letting $\Xe_0=\{X_0,X_1 \}$ and the set $\Xe_{k+1}$ is the set of children of elements of $\Xe_k$.
Each set $\Xe_k$ has  $2^{k+1}$ elements and the elements of $\Xe_k$ are not necessarily distinct. ( For the pedantic reader we proceed as follows: $\Xe_k$ is a set of ordered pairs of the form $(a,X)$, when $0 \le a <2^k$ and $X \in \pi_1(\Su,*)$. If $X_0, X_1$ are constructed from $X$ according to Lemma \ref{lemma-division}  we let the children of $(a,X)$ be $(2a+i,X_i)$ for $i=0,1$. Then we let $\Xe_0=\{(i,X_i) :\, i=0,1\}$ and let $\Xe_{k+1}$ be the set of children of $X_i$.)

Moreover, for any $X \in \Xe_k$ we have
$$
\len(\cd X \cd ) \in \big[ R 2^{-k}+2L,R 2^{-k}+2L+1 \big].
$$

We claim that 
$$
\theta^{T}_{X}<\frac{\pi}{3}
$$
for every $X$ in any $\Xe_k$. For any such $X$ we can find a sequence $Y_0, Y_1,...,Y_k$, so that $Y_0=X_0$ or $Y_0=X_1$ and $Y_k=X$, and where $Y_{i+1}$ is a child of $Y_i$.
It follows from the equation $(\ref{name-2} )$ that $\len(\cd Y_{i+1} \cd ) \le \len(\cd Y_i \cd )-1$, and $\len(\cd Y_k \cd ) \ge 2L-2$. 
\begin{align*}
\theta^{T}_{Y_{k}} &\le \frac{\pi}{6}+\sum_{i=0}^{k} e^{L+4-\len(\cd Y_{i} \cd )} \\
&\le \frac{\pi}{6}+\frac{e}{e-1} e^{L+4-(2L-3) } <\frac{\pi}{3},
\end{align*}
assuming $L>8$.

By Lemma \ref{lemma-pomoc} we have $I(\cd T \cd X \cd \ov T \cd ) \le \log 4$ for every $X$ in every $\Xe_k$. Therefore, we can apply The $XY$ Theorem (see Theorem \ref{thm-XY}) and conclude that

\begin{equation}\label{name-3}
Y_T=(Y_0Y_1)_T
\end{equation}
whenever $Y$ is a non-trivial node of our tree and $Y_0$ and $Y_1$ are its two children.

It follows by $(\ref{name-1})$ and $(\ref{name-3})$ applied recursively that 
$$
\gamma=\sum_{X \in \Xe_{N}} X_T,
$$
in $\Pant_{300\epsilon,R}$ homology. We know that if $X \in \Xe_N$ then $X \in W_l$, so 
$X_T=(q(X))_T$. Therefore
$$
\gamma=\sum_{X \in \Xe_{N}} (q(X))_T=\sum ((q(\gamma))_T,
$$
so we are finished.

\end{proof}

\begin{random} Randomization remarks for the proof of Theorem \ref{thm-correction}. We have determined  $T\equiv T(\Su,\epsilon)$, so $e^{\len(\cd T \cd )}=K(\Su,\epsilon)$.
We have implicitly defined the map $g:\Gamma_{\epsilon,R} \to \R \Pant_{300\epsilon,R}$ such that 
$\partial g(\gamma)=\gamma-(q(\gamma))_T$. We note that $q=q^{N}_D \circ q_C$, where $q_C(\gamma)=X+X'$ from Lemma \ref{lemma-cut-1}, $q_D(X)=X_0+X_1$, from Lemma \ref{lemma-division}.

Moreover, 
$$
g(\gamma)=g_{C}(\gamma)+\sum_{i=0}^{N-1} g_{XY}(\wh{q}_{D}(q^{i}_{D}(q_{C}(\gamma))))+g_W(q^{N}_{D}(q_C(\gamma)))
$$
where $g_C$ is the map from Lemma \ref{lemma-cut-1}, $q_D$ and $\wh{q}_D$ are the maps from Lemma \ref{lemma-division}, $g_{XY}$ is the map from Theorem \ref{thm-XY}, $N$ is the number of times we iterate the division (the application of Lemma \ref{lemma-division}), and $g_W$ is from Theorem \ref{thm-short}.

By far the most important point is that $q_D$ is $K=K(\Su,\epsilon)$-semirandom, so $q^{i}_D$ is $K^{i}$-semirandom, for any $i \le N$ (recall that $N \le \lfloor \log_{2} R \rfloor$) and therefore 
$K^{i} \le R^{\log_{2} K}$ so the map $q^{i}_D$ is $P(R)$-semirandom, where $P(R)$ denotes a polynomial in $R$).

\end{random}

\subsection{The proof of Theorem \ref{thm-correction} } The map $\phi$ from  Theorem \ref{thm-correction} is defined to be equal to the map $g$ from the Randomization remarks for Theorem \ref{thm-correction-2}. We take $h_i=(g_i)_T$.  Then $\partial \phi(\gamma)=\gamma-(q(\gamma))_T$, and $(q(\gamma))_T  \in  \R \{h_1,..,h_{2n} \}$,  for any $\gamma \in \R \Gamma_{\epsilon,R}$. 
Moreover, the map $\phi$ is $P(R)$-semirandom as shown in those Randomization remarks. This implies the estimate $(3)$ of the statement of Theorem \ref{thm-correction}  and we are finished.

\section{Appendix 1}  

\subsection*{Introduction to randomization} Let $(X,\mu)$ and $(Y,\nu)$ denote two measure spaces (where $\mu$ and $\nu$ are  positive measures). 

\begin{definition} We say that a  map $g:(X,\mu) \to (Y,\nu)$ is $K$-semirandom with respect to $\mu$ and $\nu$ if $g_{*} \mu \le K \nu$.
\end{definition}

By $\R X$  we denote the vector space of finite formal sums (with real coefficients) of points in $X$. There is a natural inclusion map  $\iota:X \to \R X$, where $\iota(x) \in \R X$ represents  the corresponding sum. Then every map $\wt{f}:\R X \to S$, where $S$ is any set, induces the map $f:X \to S$ by letting $f=\wt{f} \circ \iota$.

Let $f:X \to \R Y$ be a map. Then we can write $f(x)=\sum_{y} f_x(y) y$, where the function $f_x:Y \to \R$ is non-zero for at most finitely many points of $Y$. We define $|f|:X \to \R Y$ by
$$
|f|(x)=\sum_{y}|f_x(y)|y.
$$
We define the measure $|f|_{*} \mu$ on $Y$ by
$$
|f|_{*} \mu (V)=\int\limits_{X} \left( \sum_{y} |f_x(y)| \chi_V (y) \right) d\mu(x),
$$
for any measurable set  $V\subset Y$, and   $\chi_V (y)=1$, if $y \in V$ and  $\chi_V (y)=0$, if $y \notin V$.

\begin{definition} Let $(X,\mu)$ and $(Y,\nu)$ be two measure spaces (with positive measures $\mu$ and $\nu$). A map $f:X \to \R Y$ is $K$-semirandom if  $|f|_{*} \mu \le K \nu$.
A linear map $\wt{f}:\R X \to \R Y$ is $K$-semirandom with respect to measures $\mu$ and $\nu$ on $X$ and $Y$ respectively,  if the induced map $f:X \to \R Y$ is $K$-semirandom.
\end{definition}

The following propositions are elementary.

\begin{proposition}\label{prop-assoc} Let $X$, $Y$ and $Z$ denote three measure spaces. If $f:\R X \to \R Y$ is $K$-semirandom, and $f:\R Y \to \R Z$ is $L$-semirandom, then $g \circ f:\R X \to \R Z$ is $KL$-semirandom.
\end{proposition}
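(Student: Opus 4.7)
The plan is to unwind the definitions and reduce everything to the testing-against-indicator-functions inequality that defines semirandomness. First I would fix notation by writing, for each $x\in X$, $f(x)=\sum_{y}f_{x}(y)\,y\in\R Y$, and similarly $g(y)=\sum_{z}g_{y}(z)\,z\in\R Z$. Linearity then gives
\[
(g\circ f)(x)=\sum_{y}f_{x}(y)\,g(y)=\sum_{z}\Bigl(\sum_{y}f_{x}(y)g_{y}(z)\Bigr)z,
\]
so $(g\circ f)_{x}(z)=\sum_{y}f_{x}(y)g_{y}(z)$ and, by the triangle inequality, the pointwise bound
\[
\bigl|(g\circ f)_{x}(z)\bigr|\le \sum_{y}|f_{x}(y)|\,|g_{y}(z)|
\]
holds. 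All sums are finite because $f$ and $g$ each send points to formal sums with finite support, and the defining equality $g\circ f=\wt{g}\circ\wt{f}\circ\iota$ used above only requires linearity of $\wt{g}$.

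The next step is to upgrade the hypothesis $|f|_{*}\mu\le K\nu$ from indicator functions to arbitrary non-negative measurable functions. Namely, for any measurable $h\ge 0$ on $Y$,
\[
\int_{X}\sum_{y}|f_{x}(y)|\,h(y)\,d\mu(x)\;\le\; K\int_{Y} h\,d\nu.
\]
This is immediate for $h=\chi_{V}$ (that is the definition), extends by linearity to non-negative simple functions, and then follows for general non-negative measurable $h$ by monotone convergence. This is the only mildly non-trivial move, but it is completely standard measure theory; the interchange of sum and integral is justified because $\sum_{y}|f_{x}(y)|h(y)$ is a finite sum for each $x$ and everything in sight is non-negative.

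Finally, I would combine the two ingredients. Fix a measurable $V\subset Z$ and set $h(y)=\sum_{z}|g_{y}(z)|\chi_{V}(z)$, a non-negative measurable function on $Y$ with $\int_{Y}h\,d\nu=|g|_{*}\nu(V)\le L\rho(V)$. Then
\begin{align*}
|g\circ f|_{*}\mu(V)
&=\int_{X}\sum_{z}\bigl|(g\circ f)_{x}(z)\bigr|\chi_{V}(z)\,d\mu(x)\\
&\le \int_{X}\sum_{y}|f_{x}(y)|\sum_{z}|g_{y}(z)|\chi_{V}(z)\,d\mu(x)\\
&=\int_{X}\sum_{y}|f_{x}(y)|\,h(y)\,d\mu(x)\\
&\le K\int_{Y}h\,d\nu\;\le\;KL\,\rho(V),
\end{align*}
which is precisely $|g\circ f|_{*}\mu\le KL\,\rho$, i.e.\ $g\circ f$ is $KL$-semirandom. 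There is no real obstacle here; the proof is essentially a bookkeeping exercise, and the only point that deserves a sentence is the extension of the semirandomness inequality from indicator functions to non-negative measurable integrands.
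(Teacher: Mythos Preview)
Your argument is correct and is precisely the elementary verification the paper has in mind; the paper itself declares this proposition ``elementary'' and omits the proof entirely. The one tacit point is the measurability of $h(y)=\sum_{z}|g_{y}(z)|\chi_{V}(z)$, but in every application in the paper the spaces involved are countable (or the maps are explicit enough) that this is automatic.
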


\begin{proposition}\label{prop-conv} If $f_i:\R X \to \R Y$ is $K_i$-semirandom, $i=1,2$, and $\lambda_i \in \R$, then the map $(\lambda_1 f_1+\lambda_2 f_2):\R X \to \R Y$ 
is $(|\lambda_1|K_1+|\lambda_2|K_2)$-semirandom.
\end{proposition}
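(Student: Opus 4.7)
The plan is to unwind the definition of $K$-semirandom for maps $X \to \R Y$ and show that the pointwise triangle inequality in $\R Y$ propagates to the pushforward measures. The core observation is that the operation $f \mapsto |f|$ is subadditive on linear combinations, and the pushforward of positive combinations of $|\cdot|$-valued measures is monotone.

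First I would fix a measurable set $V \subset Y$ and the measures $\mu$ on $X$, $\nu$ on $Y$. Writing $f_i(x) = \sum_y (f_i)_x(y) \, y$ for $i = 1,2$, the linear combination is
\[
(\lambda_1 f_1 + \lambda_2 f_2)(x) = \sum_y \bigl(\lambda_1 (f_1)_x(y) + \lambda_2 (f_2)_x(y)\bigr)\, y.
\]
Applying the ordinary triangle inequality coefficientwise gives
\[
\bigl|\lambda_1 f_1 + \lambda_2 f_2\bigr|(x) = \sum_y \bigl|\lambda_1 (f_1)_x(y) + \lambda_2 (f_2)_x(y)\bigr|\, y \le |\lambda_1|\,|f_1|(x) + |\lambda_2|\,|f_2|(x),
\]
where the inequality is understood coefficientwise on $\R Y$.

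Next, I would insert this into the definition of $|\,\cdot\,|_*\mu$. For any measurable $V \subset Y$,
\[
\bigl|\lambda_1 f_1 + \lambda_2 f_2\bigr|_* \mu (V) = \int_X \sum_y \bigl|\lambda_1 (f_1)_x(y) + \lambda_2 (f_2)_x(y)\bigr|\, \chi_V(y)\, d\mu(x),
\]
and by the previous coefficientwise bound this is at most
\[
|\lambda_1| \int_X \sum_y |(f_1)_x(y)| \chi_V(y)\, d\mu(x) + |\lambda_2| \int_X \sum_y |(f_2)_x(y)| \chi_V(y)\, d\mu(x) = |\lambda_1|\,|f_1|_*\mu(V) + |\lambda_2|\,|f_2|_*\mu(V).
\]
Finally, using the assumption that each $f_i$ is $K_i$-semirandom, i.e.\ $|f_i|_*\mu \le K_i \nu$, the right-hand side is bounded by $(|\lambda_1|K_1 + |\lambda_2|K_2)\nu(V)$, which is the required inequality.

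There is essentially no obstacle here; the proposition is a bookkeeping consequence of the triangle inequality combined with linearity of integration. The only thing to be mildly careful about is that the coefficientwise inequality on $\R Y$ has no intrinsic meaning as an inequality between formal sums, but it does have meaning once we integrate against the indicator of a Borel set, which is exactly what the definition of $|f|_*\mu$ requires.
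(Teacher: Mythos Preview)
Your proof is correct. The paper does not actually give a proof of this proposition; it simply declares it (together with the preceding one) to be elementary, and your unwinding of the definition via the triangle inequality is exactly the routine verification the authors had in mind.
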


\begin{remark} We say that $f:X \to Y$ is a partial map if it is defined on some measurable subset  $X_1 \subset X$. The notion of a semirandom maps generalizes to the case of  partial maps by letting  a partial map $f:X \to Y$ be $K$-semirandom if the restriction $f:X_1 \to Y$ is $K$-semirandom, where  the corresponding  measure on $X_1$ is the restriction of the measure from $X$.
Every  statement we make about semirandom maps has its  version for a partial semirandom map. In particular, if  $f:X \to Y$ is  $K$-semirandom then the restriction of $f$ onto any $X_1 \subset X$ is $K$-semirandom. Moreover,  trivial partial maps (those that are defined on an empty set) are $K$-semirandom for any $K \ge 0$. 
\end{remark}

A measure class on a  space $X$ is a subset of $\Mes(X)$ where $\Mes(X)$ is the set of measures on $X$.

\begin{definition} Let $X$ and $Y$ be measure spaces and let $\Me \subset \Mes(X)$ and $\Ne \subset \Mes(Y)$ be measures classes on $X$ and $Y$ respectively (all measures from $\Me$ and $\Ne$ are  positive measures). We say $f:X \to Y$ is $K$ semirandom with respect to $\Me$ and $\Ne$ if for every $\mu \in \Me$ there is $\nu \in \Ne$ such that $f$ is $K$-semirandom with respect to $\mu$ and $\nu$, that is $f_{*} \mu \le K \nu$.
\end{definition}

In a similar fashion as above we define the notion of a semirandom map $f:\R X \to \R Y$ with respect to classes of measures $\Me$ and $\Ne$ on $X$ and $Y$ respectively. The following proposition follows from Proposition \ref{prop-assoc}.

\begin{proposition}\label{prop-assoc-1} Let $X$, $Y$ and $Z$ denote three measure spaces, with classes of measures $\Me$, $\Ne$ and $\Ze$ respectively. If $f:\R X \to \R Y$ is $K$-semirandom with respect to $\Me$ and $\Ne$, and $f:\R Y \to \R Z$ is $L$-semirandom with respect to $\Ne$ and $\Ze$, then $g \circ f:\R X \to \R Z$ is $KL$-semirandom with respect to $\Me$ and $\Ze$.
\end{proposition}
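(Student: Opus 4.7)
The plan is to reduce this to Proposition \ref{prop-assoc}, which is the analogous composition statement for fixed measures rather than measure classes. Since the definition of ``$K$-semirandom with respect to $\Me$ and $\Ne$'' requires only that for each $\mu \in \Me$ there exist some $\nu \in \Ne$ witnessing the semirandom bound, the proof amounts to chaining two such existential statements and quoting the single-measure version.

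More precisely, first I would fix an arbitrary $\mu \in \Me$. Applying the hypothesis that $f : \R X \to \R Y$ is $K$-semirandom with respect to $\Me$ and $\Ne$, I obtain some $\nu \in \Ne$ with $|f|_{*} \mu \le K \nu$, i.e., $f$ is $K$-semirandom with respect to the pair $(\mu, \nu)$. Then, applying the hypothesis that $g : \R Y \to \R Z$ is $L$-semirandom with respect to $\Ne$ and $\Ze$ to this particular $\nu$, I obtain some $\xi \in \Ze$ with $|g|_{*} \nu \le L \xi$, i.e., $g$ is $L$-semirandom with respect to the pair $(\nu, \xi)$.

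Now I would invoke Proposition \ref{prop-assoc} applied to the individual measures $\mu$, $\nu$, $\xi$: the composition $g \circ f : \R X \to \R Z$ is $KL$-semirandom with respect to $\mu$ and $\xi$. Since $\mu \in \Me$ was arbitrary and we have produced an element $\xi \in \Ze$ for which the bound holds, this is precisely the statement that $g \circ f$ is $KL$-semirandom with respect to the classes $\Me$ and $\Ze$.

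There is no real obstacle here; the only thing to watch is that the witness $\xi \in \Ze$ is allowed to depend on $\mu$ (which is permitted by the definition, since the existential quantifier ranges over $\Ne$ for each fixed input measure). In particular, one does not need any uniformity of the bound across $\Me$, nor any extra compatibility between the classes $\Me$, $\Ne$, $\Ze$; the proof is a straightforward chaining of existentials combined with the single-measure composition lemma.
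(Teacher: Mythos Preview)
Your proposal is correct and matches the paper's approach: the paper simply states that the proposition follows from Proposition \ref{prop-assoc}, and your argument is exactly the natural unpacking of that claim, chaining the two existential witnesses and invoking the single-measure composition lemma.
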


We say that a class of measures $\Me$ is convex if it contains all convex combinations of its elements. The following proposition then follows from Proposition \ref{prop-conv}

\begin{proposition}\label{prop-conv-1} If $f_i:\R X \to \R Y$ is $K_i$-semirandom with respect to classes of measures $\Me$ and $\Ne$, $i=1,2$, and if $\Ne$ is convex, then for  $\lambda_i \in \R$, the map $(\lambda_1 f_1+\lambda_2 f_2):\R X \to \R Y$  is $(|\lambda_1|K_1+|\lambda_2|K_2)$-semirandom with respect to $\Me$ and $\Ne$.
\end{proposition}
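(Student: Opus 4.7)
The plan is to reduce the statement to two observations: a pointwise triangle inequality in $\R Y$, and convexity of $\Ne$.

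First I would establish the pointwise bound $|\lambda_1 f_1 + \lambda_2 f_2|(x) \le |\lambda_1|\,|f_1|(x) + |\lambda_2|\,|f_2|(x)$ in $\R Y$. Writing $f_i(x) = \sum_{y} a^{(i)}_{x}(y)\, y$ we have $(\lambda_1 f_1 + \lambda_2 f_2)(x) = \sum_y (\lambda_1 a^{(1)}_x(y) + \lambda_2 a^{(2)}_x(y))\, y$, so the coefficient-wise triangle inequality $|\lambda_1 a^{(1)}_x(y) + \lambda_2 a^{(2)}_x(y)| \le |\lambda_1||a^{(1)}_x(y)| + |\lambda_2||a^{(2)}_x(y)|$ yields the claim when we apply the definition of $|\cdot|$.

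Next I would push this through the definition of $|\cdot|_* \mu$. Integrating the pointwise bound against $\mu$ and testing on an arbitrary measurable $V \subset Y$ gives
\[
|\lambda_1 f_1 + \lambda_2 f_2|_*\mu \;\le\; |\lambda_1|\, |f_1|_*\mu + |\lambda_2|\, |f_2|_*\mu
\]
as measures on $Y$. Then for the given $\mu \in \Me$, semirandomness of the $f_i$ produces $\nu_1, \nu_2 \in \Ne$ with $|f_i|_*\mu \le K_i \nu_i$, and therefore
\[
|\lambda_1 f_1 + \lambda_2 f_2|_*\mu \;\le\; |\lambda_1|K_1\, \nu_1 + |\lambda_2|K_2\, \nu_2.
\]

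Finally, I would use convexity of $\Ne$ to repackage the right-hand side. Set $S = |\lambda_1|K_1 + |\lambda_2|K_2$. If $S > 0$, then $\nu := (|\lambda_1|K_1/S)\, \nu_1 + (|\lambda_2|K_2/S)\, \nu_2$ is a convex combination of elements of $\Ne$, hence lies in $\Ne$, and the previous bound becomes $|\lambda_1 f_1 + \lambda_2 f_2|_*\mu \le S\, \nu$, which is exactly the $S$-semirandomness condition. If $S = 0$, both $\lambda_i K_i$ vanish; then $|\lambda_1 f_1 + \lambda_2 f_2|_*\mu = 0$ and any $\nu \in \Ne$ witnesses $0$-semirandomness (the case $\Ne = \emptyset$ forces $\Me = \emptyset$ as well, making the statement vacuous). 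There is no substantive obstacle here; the only point requiring care is that one must produce a single $\nu \in \Ne$ rather than a sum of two elements of $\Ne$, which is precisely where the convexity hypothesis on $\Ne$ is used, exactly as it was invoked in the analogous (non-class) statement Proposition \ref{prop-conv}.
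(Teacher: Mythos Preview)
Your proof is correct and is exactly the argument the paper has in mind: the paper merely states that the result ``follows from Proposition~\ref{prop-conv}'', and you have filled in the details, in particular the crucial step of using convexity of $\Ne$ to combine the two witnesses $\nu_1,\nu_2$ into a single $\nu\in\Ne$.
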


\begin{remark} The space $\R X$ is naturally contained in the space $\Mes(X)$, and in a similar way we can define the notion of a semirandom map $f:\Mes(X)  \to \Mes(Y) $. 
\end{remark}

\subsection*{Natural measure classes} Let $X_i$, $i=1,..,k$, denote  measure spaces with measure classes $\Me_i$. Let $X_1 \times ... \times X_k$ denote the product space and by 
$\pi_i:(X_1\times ... \times X_k) \to X_i$ we denote the coordinate projections. By $\Me_1 \times \Me_2...\times \Me_k$ we denote the set of measures on $X_1 \times ... \times X_k$
that arise as the convex combinations of all standard products $\mu_1 \times ... \times \mu_k$ with $\mu_i \in \Me_i$.
We also define a natural class of measures $\Me_1 \boxtimes \Me_2 ...\boxtimes \Me_k$ on  $X_1\times ... \times X_k$ as
$$
\Me_1 \boxtimes \Me_2 ...\boxtimes \Me_k = \{\mu \in \Mes(X_1\times ... \times X_k) :\, (\forall i) (\exists \mu_i \in \Me_i)((\pi_i)_{*} \mu \le \mu_i )  \}.
$$

This produces a large class of measures even if each $\Me_i$ consists of a single measure. If each $\Me_i$ is convex then $\Me_1 \boxtimes \Me_2 ...\boxtimes \Me_k$ is as well.
If $X_i=X$ and $\Me_i=\Me$, then the standard product measure on $X^{k}$ is $\Me ^{\times k}$ and the other class of measures is denoted by $\Me^{\boxtimes  k}$.

We define the class $\Le_1$ of Borel  measures on $\R$ by saying that $\mu \in \Le_1$ if $\mu[x,x+1) \le 1$, for all $x \in \R$. This is a closed convex class of measures.
Likewise we define the class of measures $\Le_1$ on $\R / \lambda \Z$, for $\lambda>1$, by  saying that $\mu \in \Le_1$ if $\mu[x,x+1) \le 1$, for all $x \in \R / \lambda \Z$.
The class of measures $\Le_1$ is the class of measures that are controlled by the Lebesgue measure at the unit scale.

We consider the following spaces  and their measure classes. In this paper, we define  several maps (or partial maps) between these spaces (or their powers) and prove they are semirandom.
We have

\begin{enumerate}

\item The space of curves $\Gamma_{1,R}$ with the measure class containing the single measure $\sigma_{\Gamma}$ which is defined by setting $\sigma_{\Gamma}(\gamma)=Re^{-2R}$, for every $\gamma \in \Gamma_{1,R}$. We may assume that $\epsilon$ is small enough so that $\Gamma_{\epsilon,R} \subset \Gamma_{1,R}$.

\item The space of pants $\Pant_{1,R}$ with the measure class containing the single  measure $\sigma_{\Pant}$ given by $\sigma_{\Pant}(\Pi)=e^{-3R}$. We may assume 
that $\epsilon$ is small enough so that $\Pant_{300\epsilon,R} \subset \Pant_{1,R}$.

\item Let $\overset{.}{\Gamma}_{1,R}=\{(x,\gamma): \, \gamma \in \Gamma_{1,R},\, x \in \TT_{\gamma} \}$  denote  the space of pointed curves (recall that $\TT_{\gamma}=\R / \len(\gamma)\Z$ is the parameterizing torus for $\gamma$). The space  $\overset{.}{\Gamma}_{1,R}$ is really just the union of parameterizing tori $\TT_{\gamma}$ for curves $\gamma \in \Gamma_{1,R}$. By $\Sigma_{\overset{.}{\Gamma}}$ we denote the measure class on $\overset{.}{\Gamma}_{1,R}$, such that $\mu \in \Sigma_{\overset{.}{\Gamma} }$ if the restriction  $\mu_{\gamma}=\mu|_{\TT_{\gamma} }$ is in $e^{-2R}\Le_1$, where $\Le_1$ is the measure class on the circle $\TT_{\gamma}$ that was defined above. 

\item Let $\overset{.k}{\Gamma}_{1,R}=\{(x_1,...,x_k,\gamma): \, \gamma \in \Gamma_{1,R},\, x_i \in \TT_{\gamma} \}$  denote  the space of curves with $k$ marked points. The space $\overset{.k}{\Gamma}_{1,R}$ is canonically contained in $\big( \overset{.}{\Gamma}_{1,R} \big)^{k}$. The measure class $\Sigma_{ \overset{k.}{\Gamma} }$ on $\overset{.k}{\Gamma}_{1,R}$ is the restriction of 
$\Sigma^{\boxtimes k}_{\overset{.}{\Gamma} }$ on the image of $\overset{.k}{\Gamma}_{1,R}$ in  $\big( \overset{.}{\Gamma}_{1,R} \big)^{k}$.

\item The space $G=\pi_1(\Su,*)$ with the measure class $\Sigma_G$ that is the  convex closure of the collection of measures $\sigma_{a}$ on $G$, where $\sigma_a$ is defined  
so that for $X \in G$ we have  $\sigma_a(X)=\nu_{a}(\len(\cd X \cd ))e^{-\len(\cd X \cd)}$, where  $\nu_{a}(x)=1$, if $ x \in [a,a+1]$, and $\nu_{a}(x)=0$ otherwise.

\end{enumerate}

We observe that there exists a constant $K=K(\Su)$ such that for any measure $\mu$ in any of the above defined measure classes, the total measure of $\mu$ is bounded by $K$.

Finally we consider the map $\partial: \Pant_{1,R} \to \R \Gamma_{1,R}$ defined by $\partial \Pi=\gamma_0+\gamma_1+\gamma_2$, where $\gamma_i$ are the three oriented boundary curves of $\Pi$. 
We observe that $\partial$ is $K(\Su)$-semirandom from $\sigma_{\Pant}$ to $\sigma_{\Gamma}$.

\subsection*{Standard maps are semirandom}  We consider several standard mappings and prove they are semirandom. 

\begin{lemma}\label{lemma-R-1} Let $l>0$ and  $a,b \le l-1$. Then for any $Z \in G=\pi_1(\Su,*)$ such that $\len(\cd Z \cd )=l$, there are at most $Ke^ {\frac{a+b-l}{2}}$  ways of writing $Z=XY$, with
$\len(\cd X \cd ) \in [a,a+1]$ and $\len(\cd Y \cd ) \in [b,b+1]$, for some $K=K(\Su)$.

\end{lemma}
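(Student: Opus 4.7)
My plan is to reduce the count to an orbit-counting problem in the universal cover. Fix a lift $\wt Z \subset \Ha$ of $\cd Z \cd$ with endpoints $\wt{*}$ and $g\wt{*}$, where $g \in \KG$ is the deck translation corresponding to $Z$. A decomposition $Z = XY$ with $\len(\cd X \cd) \in [a, a+1]$ and $\len(\cd Y \cd) \in [b, b+1]$ is in bijection with an orbit point $p := X \cdot \wt{*}$ lying in the region
\[
R := \{p \in \Ha : d(\wt{*}, p) \in [a, a+1], \ d(p, g \wt{*}) \in [b, b+1]\},
\]
since $X$ is uniquely recovered from $p$ and then $Y = X^{-1}Z$. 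Thus the count equals $|\KG \cdot \wt{*} \cap R|$.

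If $a + b + 2 < l$, then $R$ is empty by the triangle inequality applied to the path $\cd X \cd\,\cd Y \cd$, and the bound is trivial. Otherwise, I will bound the area of $R$ via the map $F : \Ha \to \R^2_{\ge 0}$, $F(p) = (d(\wt{*}, p), d(g \wt{*}, p))$. This map is generically $2$-to-$1$ (the two preimages of $(r, s)$ being reflections of each other across $\wt Z$) with Jacobian $|\sin\theta|$, where $\theta$ is the interior angle at $p$ of the triangle with vertices $\wt{*}$, $p$, $g\wt{*}$. By the area formula and the hyperbolic law of cosines,
\[
\sin^2\theta(r, s) = \frac{(\cosh l - \cosh(r-s))(\cosh(r+s) - \cosh l)}{\sinh^2 r \, \sinh^2 s},
\]
so for $(r, s) \in [a, a+1] \times [b, b+1]$ with $a, b \le l - 1$ (which keeps $|r - s|$ bounded away from $l$), an elementary asymptotic analysis gives $\sin\theta(r,s) \gtrsim e^{(l - r - s)/2}$ up to a bounded factor. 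Integrating $2/\sin\theta$ over the square then yields $\Area(R) \le C \, e^{(a+b-l)/2}$ for some $C = C(\Su)$.

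Finally, because $\KG$ is cocompact, any measurable subset of $\Ha$ of area $A$ contains at most $K(\Su)(A + 1)$ orbit points of $\wt{*}$. Applying this to $R$ and noting that $R$ being non-empty forces $a + b \ge l - 2$, so $e^{(a+b-l)/2} \ge e^{-1}$, the additive constant is absorbed into the exponential, yielding the bound $K(\Su)\, e^{(a+b-l)/2}$ on the number of decompositions, as claimed. The main technical point is the area estimate: the Jacobian $\sin\theta$ vanishes as $r + s \downarrow l$, so $1/\sin\theta$ has a singularity, but it is of $1/\sqrt{u}$ type in $u := r + s - l$ and therefore integrable. Checking this carefully, and verifying that the contribution of the degenerate strip $u$ near $0$ is still dominated by $O(e^{(a+b-l)/2})$, is the one calculation that requires care; everything else is bookkeeping.
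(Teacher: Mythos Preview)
Your argument is correct and takes a genuinely different, more analytic route than the paper. The paper drops the perpendicular from the vertex $p$ (the lift of $*$ where $\cd X\cd$ meets $\cd Y\cd$) onto the segment $\wt Z$, and uses hyperbolic Pythagoras in the two resulting right triangles to confine both the foot of the perpendicular (along $\wt Z$) and its length $t$ to intervals of bounded width, with $t=\tfrac{a+b-l}{2}+O(1)$. Hence $p$ lies in a hyperbolic disc of radius $\tfrac{a+b-l}{2}+O(1)$ about a fixed point, and the orbit count follows from the area of that disc. You instead compute the area of the exact region $R$ via the change of variables $p\mapsto(r,s)$ with Jacobian $\sin\theta$. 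The paper's argument is shorter and avoids the integrable-singularity analysis entirely; yours identifies the sharp region rather than just a containing disc, at the cost of a more delicate estimate.

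Two small imprecisions are worth flagging. First, $a,b\le l-1$ does not by itself keep $|r-s|$ bounded away from $l$ (take $a=l-1$, $b=0$); what one actually gets is $|r-s|\le l-\min(a,b)$, so your lower bound $\cosh l-\cosh(r-s)\asymp e^l$ requires, say, $\min(a,b)\ge 1$. The complementary case is harmless: one of $X,Y$ then has length below $2$, there are at most $K(\Su)$ such elements, and the other factor is determined. Second, the claim that an arbitrary measurable set of area $A$ meets the orbit in at most $K(\Su)(A+1)$ points is false as stated (the orbit itself has area zero); the correct statement is that orbit points are $2\rho$-separated for $\rho$ the injectivity radius, so the count is bounded by the area of $\Ne_\rho(R)$ divided by the area of a $\rho$-ball, and $\Ne_\rho(R)$ is contained in a region of the same form with slightly widened intervals, whose area obeys the same estimate.
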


\begin{proof}  Suppose that $X$ and $Y$ satisfy the given conditions. Consider a triangle in $\Ha$ whose sides are lifts of $\cd X \cd $, $\cd Y \cd $ and $\cd Z \cd$ (these lifts are denoted the same as the arcs we are lifting). Then we drop the  perpendicular $t$ from the vertex $z$ opposite to $\cd Z \cd$ to the side $\cd Z \cd$, and let $a'$ and $b'$ be the lengths of the subintervals of $\cd Z \cd$
that meet at the endpoint of $t$ on $\cd Z \cd$ (then $a'+b'=\len(\cd Z \cd)$). For simplicity, set $t= \len(t)$
We find that
$$
a \le \len(\cd X \cd) \le t+a' \le \len(\cd X \cd) +\log 2 \le a+2,
$$
and likewise $b \le t+b' \le b+2$. 
So 
$$
t \in \left[ \frac{a+b-l}{2},\frac{a+b-l}{2}+2 \right],
$$
and 
$$
a' \in \left[ \frac{a-b+l}{2}-2,\frac{a-b+l}{2}+2 \right].
$$
Therefore, the  vertex $z$  must lie in a disc of radius $\frac{a+b-l}{2}+4$ around the point on $Z$ that is $\frac{a-b+l}{2}$ away from the initial point of $Z$. 
It follows that there are at most  $K(\Su)e^{\frac{a+b-l}{2}}$ lifts of the base point in this disc, and we are finished.

\end{proof}

Let $p:G \times G \to G$ be the product map, that is $g(X,Y)=XY$.

\begin{lemma}\label{lemma-R-product} The map $p:G \times G \to G$ is $K$-semirandom with respect to $\Sigma_G \times \Sigma_G$ on $G^{2}$ and $\Sigma_G$ on $G$, for some $K=K(\Su)$

\end{lemma}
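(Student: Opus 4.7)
The plan is to reduce, by the bilinear structure of the product measure class and the linearity of pushforward (cf.\ Propositions \ref{prop-assoc-1} and \ref{prop-conv-1}), to the case of pure products: it suffices to produce, for each pair of non-negative integers $(a,b)$, a measure $\nu_{a,b} \in \Sigma_G$ and a constant $K = K(\Su)$ such that $p_*(\sigma_a \times \sigma_b) \le K\,\nu_{a,b}$.

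The next step would be to compute the pointwise estimate. For $Z \in G$ with $\len(\cd Z \cd) = l$,
\[ p_*(\sigma_a \times \sigma_b)(Z) \;=\; \sum_{XY=Z} \sigma_a(X)\,\sigma_b(Y) \;\le\; e^{-a-b}\cdot N(Z), \]
where $N(Z)$ is the number of factorizations $Z = XY$ with $\len(\cd X \cd) \in [a,a+1]$ and $\len(\cd Y \cd) \in [b,b+1]$. Lemma \ref{lemma-R-1} gives $N(Z) \le K_0(\Su)\,e^{(a+b-l)/2}$, so $p_*(\sigma_a \times \sigma_b)(Z) \le K_0\,e^{-(a+b+l)/2}$. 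Because $\len(XY) \le \len(X) + \len(Y)$, this pushforward is supported where $l \le a+b+2$.

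Finally I would construct $\nu_{a,b}$ as the geometric combination
\[ \nu_{a,b} \;=\; \sum_{c = 0}^{a+b+2} \min\!\bigl(1,\, e^{(c-a-b)/2}\bigr)\,\sigma_c, \]
whose total coefficient sum is bounded by a universal constant (the partial sum over $c \le a+b$ is a geometric series dominated by $(1-e^{-1/2})^{-1}$, and the tail $c \in \{a+b+1, a+b+2\}$ adds a bounded remainder). After rescaling by this constant, $\nu_{a,b}$ lies in the convex closure $\Sigma_G$. For $Z$ with $\len(Z) \in [c, c{+}1]$ one has $\sigma_c(Z) = e^{-l}$, so $\nu_{a,b}(Z)$ is at least a universal constant multiple of $e^{(c-a-b)/2}\,e^{-l}$, which matches the bound $K_0\,e^{-(a+b+l)/2}$ up to a constant depending only on $\Su$. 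This yields the required inequality $p_*(\sigma_a \times \sigma_b) \le K \nu_{a,b}$. The only genuinely geometric input is the counting estimate of Lemma \ref{lemma-R-1}; everything else is exponential bookkeeping, so I do not anticipate a substantive obstacle.
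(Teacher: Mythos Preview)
Your approach is essentially the paper's: reduce by convexity to pure products $\sigma_a\times\sigma_b$, invoke the counting Lemma~\ref{lemma-R-1}, and dominate the pushforward by a geometric combination of the $\sigma_c$. However, there is a gap in your invocation of Lemma~\ref{lemma-R-1}: that lemma carries the hypothesis $a,b\le l-1$, which you do not verify. The support of $p_*(\sigma_a\times\sigma_b)$ extends down to $l\approx |a-b|$, and in the range $l\le b$ (say $a\le b$) Lemma~\ref{lemma-R-1} is not available as stated; its proof drops a perpendicular to the side $\cd Z\cd$ and uses that the foot lands on the segment, which can fail for such obtuse triangles.

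The paper closes this gap by a case split: when $b\le l-1$ it uses Lemma~\ref{lemma-R-1} exactly as you do, obtaining $\sigma(Z)\le Ke^{-l}e^{-(a+b-l)/2}$; when $l-1\le b$ it instead uses the trivial count $N(Z)\le Ke^{a}$ (there are at most $Ke^{a}$ choices of $X$, and $Y=X^{-1}Z$ is determined), giving $\sigma(Z)\le Ke^{-l}e^{-(b-l)}$. The resulting two-piece bound is then packaged into a single element of $\Sigma_G$ just as you do, with two geometric tails rather than one. Once you insert this small case analysis, your argument goes through and matches the paper's.
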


\begin{proof} Let $a,b \in [0,\infty)$, and assume $b \ge a$. Recall the measures $\sigma_a$ on $G$, and let $\sigma=p_{*}(\sigma_a \times \sigma_b)$. We must show that $\sigma \le K\Sigma_G$.

Let $Z \in G$, and let $l=\len(\cd Z \cd)$. If $a \le b \le l-1$, then 
$$
\sigma(Z) \le Ke^{ \frac{a+b-l}{2}} e^{-a}e^{-b}=Ke^{-l}e^{-\frac{a+b-l}{2}}.
$$
(If $l>a+b+2$ then $\sigma(Z)=0$). 

If $l-1 \le b$, then because there are at most $Ke^{a}$ $X$'s in $G$ for which $\sigma_a(X)>0$, we find 
$$
\sigma(Z) \le Ke^{a}e^{-a}e^{-b}=Ke^{-l}e^{-(b-l)}.
$$
Then we see that 
$$
\frac{1}{K} \sigma  \le  \sum_{k=\lfloor b-a-1 \rfloor}^{\lfloor b+1 \rfloor }e^{-(b-k)}\sigma_k+\sum_{k=\lfloor b \rfloor}^{\lfloor a+b+3 \rfloor }e^{-\frac{(a+b-k)}{2} } \sigma_k,
$$
so $\sigma \le K\Sigma_G$.

\end{proof}

We define  a partial map $\proj:G \to \overset{.}{\Gamma}_{1,R}$ as follows. Given $A \in G$,  we let $\gamma=[A]$, and $z \in \gamma$ be the projection of the base point $*$ to $\gamma$. As always, the projection  is defined by  choosing lifts of $\cd A \cd$ and $\gamma$ in $\Ha$ that have the same endpoints and then we project a lift of $*$ to the lift of $\gamma$, where the lift of $*$ belongs to the lift of $\cd A \cd$. We let $\proj (A)=(\gamma,z)$.

\begin{lemma}\label{lemma-R-pro}  The map $\proj:G \to \overset{.}{\Gamma}_{1,R}$ is  $K(\Su)$-semirandom with respect to $\Sigma_G$ and $\Sigma_{\overset{.}{\Gamma}}$.
\end{lemma}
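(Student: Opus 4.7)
The plan is to reduce the lemma to a Fermi-tube counting estimate in the universal cover $\Ha$, and to verify the measure bound $\le K(\Su) e^{-2R}$ per unit interval of $\TT_\gamma$ directly.

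First I would use convexity: $\Sigma_G$ is by definition the convex closure of the atomic measures $\sigma_a$, and $\Sigma_{\overset{.}{\Gamma}}$ is convex, so it suffices to show $\proj_*\sigma_a \le K(\Su)\mu$ for some $\mu \in \Sigma_{\overset{.}{\Gamma}}$, uniformly in $a$. Unwrapping the definition of $\Sigma_{\overset{.}{\Gamma}}$, the required bound is
$$
S_a(\gamma, I) := \sum_{A} e^{-\len(\cd A \cd)} \le K(\Su) e^{-2R}
$$
for every $\gamma \in \Gamma_{1,R}$ and every unit interval $I \subset \TT_\gamma$, where the sum runs over $A \in G$ with $[A] = \gamma$, $\len(\cd A \cd) \in [a, a+1]$, and with the $\proj$-image $z$ lying in $I$. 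Since $\len(\cd A \cd) \ge \len([A]) \ge 2R-1$ forces $a \ge 2R-1$ in any nontrivial case, I may assume this throughout.

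Next I would set up coordinates in $\Ha$. Fix a lift $\wt{*}$ of $*$, a primitive representative $A_0$ of $\gamma$, and a lift $\wt{\gamma}_0$ of $\gamma$ stabilised by $A_0$. Every contributing $A$ has a unique expression $A = g A_0^n g^{-1}$ with $n \in \Z \setminus \{0\}$ and coset $g\langle A_0\rangle \in G/\langle A_0\rangle$ determining the axis $\wt{\gamma}_A = g\wt{\gamma}_0$. With $d := d(\wt{*}, \wt{\gamma}_A)$, the hyperbolic distance formula for two points at distance $d$ from a common geodesic with foot-points $|n|\len(\gamma)$ apart gives
$$
\cosh(\len(\cd A \cd)) = \cosh^2(d) \cosh(|n|\len(\gamma)) - \sinh^2(d),
$$
so $\len(\cd A \cd) = |n|\len(\gamma) + 2d - 2\log 2 + o(1)$ for large $d$ or $|n|$. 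The constraint $\len(\cd A \cd) \in [a, a+1]$ therefore pins $d$ to an $O(1)$-window around $d_n := (a - |n|\len(\gamma))/2 + \log 2$ and forces $|n|\len(\gamma) \le a + O(1)$.

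The main step is to count, for each admissible $n$, the cosets $g\langle A_0\rangle$ that can occur. The condition on the projection $z \in I$ together with the distance constraint means $g^{-1}\wt{*}$ must lie in the Fermi tube
$$
T_n := \{p \in \Ha : d(p, \wt{\gamma}_0) \in [d_n - C, d_n + C],\ \pi_{\wt{\gamma}_0}(p) \in \wt I\},
$$
where $\wt I$ is a chosen unit lift of $I$ to $\wt{\gamma}_0$ and $C$ is absolute. Using Fermi coordinates around $\wt{\gamma}_0$, a direct integration gives $\Area(T_n) = O(e^{d_n})$; since $G\cdot\wt{*}$ is discrete of density $1/\Area(\Su)$ in $\Ha$, there are $O(e^{d_n})$ orbit points in $T_n$. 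Crucially, because $|\wt I| = 1 < \len(\gamma)$ and $\langle A_0\rangle$ acts on $\wt{\gamma}_0$ by translations of length $\len(\gamma)$, each $\langle A_0\rangle$-orbit meets $T_n$ at most once, so the number of admissible cosets satisfies $N_n \le K(\Su) e^{d_n} = K(\Su) e^{(a - |n|\len(\gamma))/2}$.

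Finally I would sum over $n$ and use $\len(\gamma) \ge 2R - 1$:
$$
S_a(\gamma, I) \le \sum_{n \ne 0} N_n\, e^{-a} \le K(\Su) \sum_{n \ne 0} e^{-(a + |n|\len(\gamma))/2} \le K(\Su) e^{-a/2 - R},
$$
with the geometric series in $|n|$ absorbed into $K(\Su)$; since $a \ge 2R - 1$, the right-hand side is $O(e^{-2R})$, as required. The main obstacle is the Fermi-tube count in the previous step: one must carefully separate the contribution of the winding number $n$ (dominated by $|n| = 1$) from that of different lifts of $\gamma$ (controlled by the area estimate), and verify the ``at most one per $\langle A_0\rangle$-orbit'' clause that makes the counting tight rather than off by a factor of $R$.
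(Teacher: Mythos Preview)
Your argument is correct and is essentially the same as the paper's, only spelled out in much greater detail. The paper's proof is two lines: it cites the disc-counting method of the preceding Lemma~\ref{lemma-R-1} to assert that the number of $Z\in G$ with $[Z]=\gamma$, $\proj(Z)\in J$, and $\len(\cd Z\cd)\le l$ is at most $Ke^{(l-2R)/2}$, and then sums the geometric series $\sum_k e^{(k-2R)/2}e^{-k}\le Ke^{-2R}$. Your Fermi-tube count is exactly this estimate made explicit: the tube $T_n$ plays the role of the disc of radius $\tfrac{a+b-l}{2}+4$ in Lemma~\ref{lemma-R-1}, and your area bound $\Area(T_n)=O(e^{d_n})$ together with the injectivity-radius lattice-point count recovers $N_n\le K(\Su)e^{(a-\len(\gamma))/2}$.

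One minor remark: your sum over winding numbers $n$ is harmless but largely vacuous. If $A_0$ is primitive with $[A_0]=\gamma$, then $[gA_0^n g^{-1}]$ has length $|n|\len(\gamma)$, so $[A]=\gamma$ forces $|n|=1$; the geometric tail in $|n|$ never actually appears. (If $\gamma$ were non-primitive you would take $A_0$ to be the primitive root and a single value of $|n|$ would contribute.) This does not affect the bound, and your final estimate $S_a(\gamma,I)\le K(\Su)e^{-a/2-R}\le K(\Su)e^{-2R}$ matches the paper's exactly.
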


\begin{proof} Let $J$ be a unit interval on a curve $\gamma \in \Gamma_{1,R}$. We have seen in the two previous proofs that there are at most $Ke^{\frac{l-2R}{2} }$ many $Z \in G$ for which $\len(\cd Z \cd) \le l$, and $\proj(Z)  \in J$. Therefore, if $\sigma \in \Sigma_G$, then
$$
\proj_{*} \sigma  (\gamma,J) \le K \sum_{k=\lfloor 2R \rfloor }^{\infty} e^{ \frac{k-2R}{2} } e^{-k} \le K e^{-2R},
$$ 
and we are finished.

\end{proof}

Another standard map we consider is the projection  map $\overset{.}{\Gamma}_{1,L} \to \Gamma_{1,L}$ given by $(\gamma,x) \to \gamma$. This map is clearly $1$-semirandom. 
Going in the opposite direction, we have the map $\gamma \to (\gamma,x)$ which assigns to $\gamma \in \Gamma_{1,R}$ a random point $x \in \gamma$. This map is really defined as a map $\Mes(\Gamma_{1,R}) \to \Mes(\overset{.}{\Gamma}_{1,R})$, and we observe that it is $1$-semirandom as well.

\begin{remark} We also observe that for $T \in G$, the map $\{1\} \to G$ defined by $1 \to T$ is $e^{\len(\cd T \cd)}$ semirandom with respect to the unit measure on $\{1\}$ and $\Sigma_{G}$.

\end{remark}

\subsection*{The principles of randomization} After almost every lemma or theorem we prove in Sections 4-9, we have added a ``Randomization remark" which considers the functions
that we have implicitly defined, states their domain and range, and argues that the functions are semirandom with respect to a certain measure class. In the remarks we have followed the following principles:

1. When we write ``a random element" (of a finite set $S$) which the reader was previously told to read as ``an arbitrary element", we now mean ``the random element" of $\R S$, namely
$$
\frac{1}{|S|}\sum_{x \in S} x.
$$
If $a \in \R S \subset \Mes(S)$ and $\Me$ is a measure class on $S$ we say that $a$ is a $K$-semirandom element of $S$, with respect to $\Me$, if there exists $\mu \in \Me$ such that $a \le K \mu$.

2. We can replace at will any map $f:X \to Y$ (or $f:X \to \R Y$) by the linear extension $f:\R X \to \R Y$. This can cause confusion if you think about it the wrong way so we offer the following example to clarify what is going on. 

In the hard case of the GSL, we take a random third connection (meaning the random third connection), and then cancel out one square $(A_{ij})$, $i,j=0,1$, of boundaries to get a formal sum of squares $(B_{ij})$ of curves. We then find, for each new square (in the formal sum) a second third connection at random from a set depending on $(B_{ij})$, to complete the argument. The right way to think of the randomization (and linearizion) is that the first operation defines a partial map 
$$
q_1:\big( \overset{....}{\Gamma}_{1,R} \big)^{4} \to \R  \big( \overset{....}{\Gamma}_{1,R} \big) ^{4} 
$$
and the second operation defines 
$$
g_0:\Gamma^{4}_{1,R} \to \R \Pant_{1,R},
$$
so we can write $g_0 \circ q_1$ by extending $g_0$ to a map from $\R \Gamma^{4}_{1,R}$ to $\R \Pant_{1,R}$ linearly.
The danger is that one may try to imagine $g_0$ acting on a formal sum of curves by taking the random element from  $\R \Conn_{\epsilon,R}(\cd , \cd )$. 

So we will imagine that we are defining functions from $X$ to $Y$, or from $X$ to $\R Y$, and only think of them as functions from $\R X$ to $\R Y$ when we want to compose them.

3. We want to use the measure class $\Sigma_G \times \Sigma_G=\Sigma^{\times 2}_G$ on $G^{2}=\{ (X,Y) \}$ when we want to form the product $XY$. We want to use the measure class 
$\Sigma_G \boxtimes \Sigma_G=\Sigma^{\boxtimes 2}_G$ on $G^2$ if we want to be able to let $X=Z$ and $Y=Z$ for some $Z \in G$. 

For example, for the ASL, we use the measure class $\Sigma^{\boxtimes 2}_G \times \Sigma_G \times \Sigma^{\boxtimes 2}_G \times \Sigma_G$ on six-tuples 
$(A_0,A_1,U,B_0,B_1,V)$ in $G^{6}=G^2 \times G \times G^2 \times G$. This is basically the largest measure class for which the maps $\pi_{ij}:G^6 \to G^4$ defined by 
$\pi_{ij}(A_0,A_1,U,B_0,B_1,V)=(A_i,U,B_j,V)$ are $1$-semirandom with respect to the measure class $\Sigma^{\times 4}_G$ on $G^4$.

This is exactly what we want, because we have to form the words $A_iUB_jV$, but we need the freedom to assign to $A_0$ and $A_1$ (or $B_0$ and $B_1$) the same value.

\section{Appendix 2}  

We develop the theory of equidistribution and counting, based on the uniformly exponential mixing of the geodesic flow, that we need to prove Theorem \ref{main estimate-0}.

\subsection*{Equidistribution of equidistant lines}  

The group $\PSLR$ acts on the unit tangent bundle $\TB \Ha$ on the left (we refer to this action as the action by isometries). Namely, if $v \in \TB \Ha$ and $h \in \PSLR$ then $h \cdot v=h(v)$ is the resulting vector in $\TB \Ha$. Moreover, if $u$ and $v$ are two vectors in $\TB \Ha$ then there exists a unique element $h \in \PSLR$ such that $h \cdot v=u$. This enables us to identify the unit tangent bundle $\TB \Ha$ with $\PSLR$ as follows. Choose a vector $v_0 \in \TB \Ha$. We identify $v_0$ with the identity element $\J$ in $\PSLR$. A vector $v \in \TB \Ha$ is identified with the unique element $h \in \PSLR$ so that $h \cdot v_0=v$.

The group $\PSLR$ also acts on $\TB \Ha$ on the right (we sometimes refer to this action as the action by instructions). This action is uniquely defined by the following two properties:

\begin{enumerate}

\item $h \cdot v_0=v_0 \cdot h$, 
\item $g\cdot (v \cdot h)=(g \cdot v) \cdot h$,
\end{enumerate}
for any $h,g \in \PSLR$.

Now we describe the following factorization of elements of $\PSLR$. Let $t \in \R$ and let $g_t \in \PSLR$ denote a one-parameter Abelian group of hyperbolic transformations. 
By $\text{Axis}(g_t)$ we denote the oriented axis of the hyperbolic transformations $g_t$ (it is oriented from the repelling fixed point to the attracting fixed point of a transformation $g_t$ for $t>0$).  
Let $Y_a$, where $a \in \R$, denote another such family so that the oriented angle between $\text{Axis}(Y_a)$ and  $\text{Axis}(g_t)$ is $\frac{\pi}{2}$. Denote by $p_0 \in \Ha$ the point 
$p_0= \text{Axis}(Y_a) \cap \text{Axis}(g_t)$  and let $v_0 \in \TB \Ha$ denote the vector at $p_0$ that is tangent to  $\text{Axis}(g_t)$. Also, let $R_c \in \PSLR$, $c \in \R$, denote the  rotation for angle $c$ about the point $p_0$.

\begin{lemma}\label{factor} Any $h \in \PSLR$ can be uniquely written as $h=Y_a \cdot  g_t \cdot R_c$, for some choice of $a,t,c \in \R$.
\end{lemma}

\begin{proof} We identify $\PSLR$ with the unit tangent bundle $\TB \Ha$ in the usual way so that the identity in $\PSLR$ is identified with the vector $v_0 \in \TB \Ha$. Let $v \in \TB \Ha$ be the vector corresponding to some $h \in \PSLR$ and suppose that $v$ is based at the point $p \in \Ha$. For simplicity set $\alpha(g)=\text{Axis}(g_t)$ and $\alpha(Y)=\text{Axis}(Y_a)$.  Let $\beta$ be the oriented geodesic in $\Ha$ that contains $p$ and is orthogonal to $\alpha(Y)$ and that points to the left of $\alpha(Y)$. 

\begin{enumerate} 

\item Let $u$ be the tangent vector to $\beta$ at the point $p$. By $c$ we denote the oriented angle between the vectors $v$ and $u$.

\item Let $t$ denote the signed distance from $p$ to $\alpha(Y)$ along the oriented geodesic $\beta$.

\item Let $q=\beta \cap \alpha(Y)$ and let $a$ denote the signed distance  from $p_0$ to $q$ along the oriented geodesic $\alpha(Y)$.

\end{enumerate}

Then clearly $h=Y_a \cdot g_t \cdot R_c$ and such $a$, $t$ and $c$ are  unique.

\end{proof}

We equip $\PSLR$ with the following distance function. Let $h_j \in \PSLR$, $j=1,2$, and let $v_j \in \TB \Ha$ denote the  vectors corresponding to $h_j$ (the vector $v_j$ is based at the point $p_j \in \Ha$).
We let 
$$
d_{\PSLR}(h_1,h_2)=d(p_1,p_2)+\Ang(u_1,v_2),
$$
where $u_1 \in \TB \Ha$ is the parallel transport of the vector $v_1$ at the point $p_2$ (recall that $\Ang(u,v)$ is the unoriented angel between vectors $u$ and $v$).
For $h \in \PSLR$ we denote by $||h||$ the distance between $h$ and the identity element  $\J  \in \PSLR$. 

We leave the proof of the following lemma to the reader.

\begin{lemma}\label{wave} There are universal constants $\delta_0,K_0>0$ such that providing $||h|| \le \delta_0$ then 

$$
h \cdot g_t = Y_a \cdot g_{t+b} \cdot  R_c,
$$
where $|a|+|b|+|c| \le  K_0 ||h||$.

\end{lemma}

We now discuss the equidistribution of the equidistant lines on a closed Riemann surface $S$. Let $\alpha:\R \to S$ be a unit speed geodesic, and let $\wh{\alpha}:\R \to \TB S$ be the leftward normal unit vector field, given by
$\wh{\alpha}(s)=i\alpha'(s)$ (here $i$ denotes the imaginary unit in the tangent space to $S$ at the point $\alpha(s) \in S$). Let $t \in \R$ and consider the vector field $g_t(\wh{\alpha})$. Then the vectors from the field   $g_t(\wh{\alpha})$
are orthogonal to the line that is equidistant (at distance $t$) from the geodesic $\alpha$.  In what follows  we assume that the Liouville measure $\Lambda$ on $\TB S$ is normalized so that $\Lambda(\TB S)=1$.

\begin{theorem}\label{equidistant} Let $f:\TB S \to \R$ be any $C^1$ function. Then for $a \ge C_1 e^{-qt}$, we have
$$
\left| \frac{1}{a}\int\limits_{0}^{a} f(g_t(\wh{\alpha}(s)) \, ds -\int\limits_{\TB S} f \, d\Lambda \right| \le C_2 e^{\frac{-qt}{5}} \big( \frac{1}{a}+||f||_{C^{1}} \big),
$$
where the positive constants $C_1$, $C_2$ and $q$ depend only on $S$. 
\end{theorem}

\begin{proof} We let $\psi_{\eta}:\PSLR \to [0,\infty)$ be such that

\begin{enumerate}

\item $\psi_{\eta}$ is supported in $B_{\eta}(\J)$ which is the ball of radius $\eta$ centered at $\J$.

\item $\int\limits \psi_{\eta} =1$, where we integrate with respect to the Haar measure on $\PSLR$. 

\item $||\psi_{\eta}||_{C^{1}} \le K_1 \eta^{-4}$, for some universal constant $K_1$.

\item $\psi_{\eta}(X)=\psi_{\eta}(X^{-1})$ for $X \in \PSLR$.

\end{enumerate}

We can arrange that $(3)$ holds because $\psi_{\eta}$ needs to reach the height of $\eta^{-3}$ in a space of size $\eta$ (so the derivative of $\psi_{\eta}$ is proportional to $\eta^{-4}$).
For simplicity we let $\psi=\psi_{\eta}$.

If $u,v \in \TB \Ha$ then there is a unique $g \in \PSLR$ such that $u \cdot g=v$. We let $\psi(u,v)=\psi(g)$ (condition $(4)$ above implies that $\psi(u,v)=\psi(v,u)$). Then for $a<b$ and $X \in \TB S$ we let
$$
\wh{\alpha}_{a,b} (X)=\int\limits_{a}^{b} \psi(\wh{\alpha}(s),X) \, ds,
$$
and we let $\wh{\alpha}_a=\wh{\alpha}_{0,a}$. 

Then 

$$
||\wh{\alpha}_{a,b}||_{C^{1}} \le (b-a)||\psi||_{C^{1}}
$$
and 
$$
\int\limits_{\TB S} \wh{\alpha}_{a,b} \, d\Lambda=b-a.
$$

Applying the factorization lemma above (Lemma \ref{wave}) we find that 

\begin{align*}
\int\limits_{\TB S} f(X) \wh{\alpha}_a (g_{-t} X) \, d\Lambda(X) &= \int\limits_{0}^{a} \int\limits_{B_{\eta}(\J)}   f( \wh{\alpha}(s) \cdot h \cdot g_t)  \psi(h)  \, ds d\Lambda(h) \\
&= \int\limits_{0}^{a} \int\limits_{B_{\eta}(\J)}   f( \wh{\alpha}(s) \cdot Y_a \cdot g_{t+b} \cdot  R_c )  \psi(h)  \, ds d\Lambda(h) \\
&= \int\limits_{0}^{a} \int\limits_{B_{\eta}(\J)}   f( \wh{\alpha}(s+a(t,h)) \cdot g_{t+b(t,h)} \cdot R_{c(t,h)} )  \psi(h) \, ds d\Lambda(h) \\
&= \int\limits_{0}^{a} \int\limits_{B_{\eta}(\J)}   f( \wh{\alpha}(s+a(t,h))  \psi(h) \, ds d\Lambda(h) +O\big( ||f||_{C^{1}} \eta a \big),
\end{align*}
where the last equality follows from the upper bounds on $b(t,h)$ and $c(t,h)$ from Lemma \ref{wave}. This yields the inequalities

\begin{align*}
\int\limits_{K_0 \eta}^{a-K_0 \eta} f(g_t(\wh{\alpha}(s)) \, ds - K_0 ||f||_{C^{1}} \eta &\le  
\int\limits_{\TB S} f(X)\wh{\alpha}_a(g_{-t} X) \, d\Lambda(X) \\ 
&\le \int\limits_{-K_0 \eta}^{a+K_0 \eta} f(g_t(\wh{\alpha}(s)) \, ds + K_0||f||_{C^{1}} \eta.
\end{align*}

On the other hand, by exponential mixing 

\begin{align*}
\left| \int\limits_{\TB S} ((g_t)_{*} \wh{\alpha}_a)(X)f(X) \, d\Lambda(X) - \int\limits_{\TB S} \wh{\alpha}_a \, d\Lambda  \int\limits_{\TB S} f \, d\Lambda\right| \\
\le Ce^{-qt}||\wh{\alpha}_a||_{C^{1}}||f||_{C^{1}} \le Ce^{-qt}a\eta^{-4}||f||_{C^{1}},
\end{align*}
where $C,q>0$ depend only on $S$. So we obtain, for $a>2K_0\eta$,

\begin{align*}
\int\limits_{0}^{a}f(g_t(\wh{\alpha}(s))) \, ds  &\le \int\limits_{\TB S} f ( (g_t)_{*} \wh{\alpha}_{-K_{0}\eta,a+K_{0}\eta} ) \, d\Lambda +K_0||f||_{C^{1}}\eta \\
&\le (a+2K_0 \eta) \int\limits_{\TB S} f \, d\Lambda + Ce^{-qt}a \eta^{-4} ||f||_{C^{1}} +K_0\eta ||f||_{C^{1}}
\end{align*}

and likewise
\begin{align*}
\int\limits_{0}^{a}f(g_t(\wh{\alpha}(s))) \, ds \ge (a-2K_0\eta)\int\limits_{\TB S} f\, d\Lambda- (Ce^{-qt} a \eta^{-4} +K_0\eta)||f||_{C^{1}}.
\end{align*}
Letting $\eta=e^{-\frac{1}{5} qt}$, the theorem follows.

\end{proof}

\subsection{Counting good connections}  Let $\beta$ be another geodesic on $\Su$ and define $\wh{\beta}:\R \to \TB \Su$ in analogy to $\wh{\alpha}$. For intervals $I$ and $J$ in $\R$ we let 
$$
M_{I,J}=\{g_t(\wh{\beta}(s)): \, (s,t)\in I \times J\},
$$
be a $2$-submanifold of $\TB \Su$. 

We recall the normal flow $Y$ on $\TB \Su$. Then $\wh{\alpha}'(s)=Y(\wh{\alpha}(s))$ for any geodesic $\alpha:\R \to \TB \Su$, and we let 
$$
Y^t \equiv \frac{1}{\cosh t}(g_t)_{*}Y
$$
be the distance $t$ flow, so
$$
\frac{\partial}{\partial s}\big ( g_t(\wh{\alpha}(s) ) \big)=(\cosh t) Y^t(g_t(\wh{\alpha}(s))).
$$

For $t$ large $Y^t$ is close to the negative horocyclic flow.

\begin{theorem}\label{thm-count-0} Let $f$ be a $C^1$ function with compact support on $M_{I,J}$. Then 

\begin{align*}
&\left| \frac{1}{a\cosh t} \sum_{g_t(\wh{\alpha}(s)) \in M_{I,J}}  f(g_t(\wh{\alpha}(s))) - \int\limits_{M_{I,J}}  f \, \iota_{Y^{t}}dV \right| \\
&\le C e^{\frac{-qt}{5}} \big( \frac{1}{a}+||f||_{C^{1}} \big),
\end{align*}
provided that $|I|,|J| <\delta$ and $1 \ge a \ge C_1 e^{-qt}$, where $C,C_1, \delta>0$ depend only on $\Su$ and $||f||_{\infty} \le 1$. (Here $\iota_{Y^{t}}dV$ is the contraction of the volume form $dV$ by the vector $Y^t$.)

\end{theorem}

\begin{proof} The assumptions on $M_{I,J}, t$ and $a$ imply that the map $Q:M_{I,J} \times (0,\epsilon) \to \TB \Su$, defined by $Q(q,r,s)=Y^t_{s}\big( g_r(\wh{\beta}(q)) \big)$, is injective for some $\epsilon=\epsilon(\Su)$. We let $\psi$ be a $C^1$ bump function on $(0,\epsilon)$, and let $\wt{f}:Q\big( M_{I,J} \times (0,\epsilon) \big) \to \R$ be defined by
$$
\wt{f}\big( Q(q,r,s) \big)= f( g_r(\wt{\beta}(q)) ) \psi(s). 
$$

Then 
$$
||\wt{f}||_{C^{1}} \le C(\Su)||f||_{C^{1}},
$$
and 
$$
\int\limits_{\TB \Su} \wt{f} \, d\Lambda= \int\limits_{M} f\, \iota_{Y^{t}} d\Lambda.
$$

Moreover,
$$
\left| \sum_{g_t(\wh{\alpha}(s)) \in M_{I,J}}  f(g_t(\wh{\alpha}(s))) - \cosh t \int\limits_{0}^{a}   \wt{f}(g_t(\wh{\alpha}(s)))  \,ds  \right| \le ||f||_{\infty}.
$$
This inequality holds because every time the curve $Y_s \big( g_t(\wh{\alpha}(0)) \big)$ (for $s \in [0,\cosh t]$) crosses $M_{I,J}$, it goes through $Q$ (and contributes the same amount to the sum and the integral), except that the curve may start in $Q$ and miss $M_{I,J}$, and the terminal point may end in $Q$, contributing more to the sum than to the integral. For both endpoints the error is at most $|f||_{\infty}$, and the error has different signs at the two endpoints, so the total error is at most $||f||_{\infty}$.

Therefore, by Theorem \ref{equidistant},

\begin{align*}
&\left| \frac{1}{a\cosh t} \sum_{g_t(\wh{\alpha}(s)) \in M_{I,J}}  f(g_t(\wh{\alpha}(s))) - \int\limits_{M_{I,J}}  f \, \iota_{Y^{t}}dV \right| \\
&\le \frac{1}{a \cosh t}||f||_{\infty}+  C e^{\frac{-qt}{5}} \big( \frac{1}{a}+||f||_{C^{1}} \big) \\
&\le C e^{\frac{-qt}{5}} \big( \frac{1}{a}+||f||_{C^{1}} \big) .
\end{align*}

\end{proof}

If $\alpha$ and $\beta$ are two geodesic segments, and $\epsilon,L>0$, we let $\Conn_{\epsilon,L}(\alpha,\beta)$ be the set of $(r,s,t)$ such that $g_t(\wh{\alpha}(r))=\wh{\beta}(s)$ and $t \in [L,L+\epsilon]$.

\begin{theorem}\label{thm-count-1} Letting $\delta=e^{-\frac{qL}{40}}$, and $\alpha, \beta$ geodesic segments of length $\delta^2$, the number of orthogeodesics connections from one side of $\alpha$ to  one side of $\beta$, of length in the interval $[L,L+\delta^2]$, is given by
$$
\frac{1}{8\pi^2 \chi(\Su)} \delta^6 e^{L}\big(1+O(\delta) \big),
$$ 
where the big $O$ constant depends only on $\Su$.
\end{theorem}

\begin{proof}

We let $M=M_{I,J}$, where $I=[0,\delta^2]$ and $J=[-\delta^2,0]$. We want to count the number of $s \in [0,\delta^2]$ for which $g_L(\wh{\alpha}(s)) \in M$.

Let 
$$
M^{+}=M_{[-\delta^3,\delta^2+\delta^3],[-\delta^2-\delta^3,\delta^3] }
$$
be a slightly larger surface and we let $f^+$ be a $C^1$ function on $M^+$ that is equal to $1$ on $M$. We can arrange 
$$
||f^+||_{C^{1}} \le 10\delta^{-3},
$$
and $f^+$ takes values in in $[0,1]$.

Then

$$
\int\limits_{M^{+}}\big| f^+-\chi_M \big| \, \iota_{Y^{L}}dV \le 10\delta^5,
$$
and given our normalization of the Liouville volume form $dV$ we have
$$
\int\limits_{M} \, \iota_{Y^{L}}dV=\frac{1}{4\pi^2 |\chi(\Su)|} \delta^4+O(\delta^8).
$$

Putting all this together and applying Theorem \ref{thm-count-0} we have 
\begin{align*}
\frac{1}{ \delta^2 \cosh L } \# \Conn_{\epsilon, L } (\alpha,\beta) &\le \frac{1}{\delta^2 \cosh L } \sum_{g_L (\wh{\alpha}(s) ) \in M^{+}, s\in [0,a] } \, f^{+} \big( g_L(\wh{\alpha}(s) ) \big) \\
&\le \int\limits_{M^{+}} f^+ \, \iota_{Y^{L}}dV + C\delta^8 \big( \frac{1}{a}+||f||_{C^{1}} \big) \\
&\le \frac{\delta^4}{4\pi^2|\chi(\Su)|} +O(\delta^8)+10\delta^5+C\delta^8(\delta^{-2}+\delta^{-3}) \\
&\le \frac{\delta^4}{4\pi^2|\chi(\Su)|} +C\delta^5.
\end{align*}

We can analogously define $f^-$ supported on $M$, with $f^- \equiv 1$ on $M_{[\delta^3,\delta^2-\delta^3],[-\delta^2+\delta^3,-\delta^3] }$ and prove that
$$
\frac{1}{ \delta^2 \cosh L } \# \Conn_{\epsilon, L } (\alpha,\beta) \ge \frac{\delta^4}{4\pi^2|\chi(\Su)|} -C\delta^5.
$$
Since $\cosh L=\frac{e^L}{2}\big(1+O(e^{-2L}) \big)$, the theorem follows.

\end{proof}

\end{document}